\crefname{diag}{Diagram}{Diagrams}
\theoremstyle{definition}
\newtheorem{thm}{Theorem}[section]
\newtheorem{prop}[thm]{Proposition}
\newtheorem*{prop*}{Proposition}
\newtheorem{lem}[thm]{Lemma}
\newtheorem{ex}[thm]{Example}
\newtheorem{defn}[thm]{Definition}
\newtheorem{rmk}[thm]{Remark}
\newtheorem*{con*}{Construction}
\newtheorem{notation}[thm]{Notation}
\newtheorem*{not*}{Notation}
\newtheorem*{obs*}{Observation}
\DeclareMathOperator{\Spec}{Spec}
\renewcommand\subsubsection{\@startsection{subsubsection}{3}{\z@}%
                                     {-3.25ex\@plus -1ex \@minus .2ex}%
                                     {-1em}%
                                     {\normalfont\normalsize\bfseries}}
\date{26th February 2024}
\title{Incarnations of the Fourier Transform in Algebraic Geometry}
\begin{document}

\maketitle

\tableofcontents

\section{Introduction}
The \textit{“$\ell$-adic Fourier transform”} was first introduced by Deligne in a letter to Kazhdan in 1976. Subsequently, Laumon \cite{laumon1987transformation} used this Fourier transform to give a simplified proof of the Weil conjectures. The main ingredients used in the definition, were the construction of the derived category of $\overline{\mathbb{Q}}_\ell$-adic sheaves, and that of the derived pushforward with compact supports. Let  $\mathbb{A}_0$ denote the affine line over a finite field $k$ of characteristic $p$. Let $\pi_1,\pi_2:\mathbb{A}_0\times\mathbb{A}_0\rightarrow\mathbb{A}_0$ correspond to the two projections and $m:\mathbb{A}_0\times\mathbb{A}_0\rightarrow\mathbb{A}_0$ correspond to the multiplication map given by $m(x,y)=xy$. The Fourier transform is then defined as the functor
$$T_\psi: D^b_c(\mathbb{A}_0,\overline{\mathbb{Q}}_l) \rightarrow D^b_c(\mathbb{A}_0,\overline{\mathbb{Q}}_l)$$ given by
$$T_\psi(K_0)=R {\pi_1}_!({\pi_2}^*(K_0)\otimes m^*\mathcal{L}_0(\psi))[1]$$
where $\mathcal{L}_0(\psi)$ is the rank one $\overline{\mathbb{Q}}_\ell$-local system on $\mathbb{A}_0$
induced by $\psi$. Through Grothendieck's framework of the function sheaf dictionary, we see that up to a sign, the $\ell$-adic Fourier transform agrees with the function theoretic transform over the finite group $\mathbb{A}(k)$. Moreover, we see that the transform behaves very similarly to the classical case, with the inversion theorem holding up to a Tate twist. \\

An extension of these ideas to the case of perfect unipotent group schemes is heavily used \cite{boyarchenko2013charactersheavesunipotentgroups} and other modern treatments to gain representation theoretic data about these objects. The correct notion of duality in this setting, inspired by Serre, and later expanded on by Begueri in \cite{begueri1980dualite}, was a key step in this generalization. Explicitly, let $\Gamma$ be a connected commutative unipotent algebraic group, then the functor $\Gamma^*$ (called the Serre dual), on the category $\mathfrak{Perf}_k$ of perfect schemes of characteristic $p$ over a perfect field $k$ given by, 
$$\Gamma^*(S)=\mathrm{Ext}^1_S(\Gamma_S, \mathbb{Q_p}/\mathbb{Z}_p)$$
is exact and representable by a connected commutative algebraic unipotent group. Moreover, if $\Gamma$ is also perfect, the functor $\Gamma \rightarrow \Gamma^*$ is an anti-auto-equivalence on the category $\mathfrak{cpu}_k^\circ$ of connected commutative perfect unipotent group schemes over $k$.\\

In fact, a slight modification of this construction allows us to generalize this to the case of non-commutative unipotent group schemes. This is done by restricting the previous functor to only central extensions. Once again one can show that this functor is representable, however it fails to be involutive as before.\\

In the perfectoid world, the analogous objects that considered are Banach-Colmez spaces associated to vector bundles on the Fargues-Fontaine curve $X_S$ with only positive or only negative slopes. For these objects we are able to define a Fourier transform, 
$$\mathcal{F}_\psi:D_{ét}(BC(\mathcal{E}),\Lambda)\rightarrow D_{ét}(BC(\mathcal{E^\vee}),\Lambda)$$
which gives an equivalence of categories. A key aspect in the definition, in complete analogy the affine line, is the construction of an element $\mathcal{L}_\psi\in D_{ét}([S/\underline{G}],\Lambda)$. However, unlike in the affine case, the theory of duality is quite a bit more complicated, and this is a primary focus of \cite{anslebras}.

\subsection{Acknowledgements}

To my advisor, Arnaud Eteve, I am truly thankful for the time and effort you have spent on me. The respect and sincerity you showed towards me went a long way in helping me feel more assured and comfortable during this past year. \\

To the mathematical community in Bonn, to whom I owe a great deal for lifting me to a level higher than was ever possible alone. Especially Ferdinand Wagner, Chenji Fu, Thomas Manopulo, Konrad Zou, Michele Lorenzi. To Dr. Gebhard Martin, my lecturer in algebraic geometry for the clarity and intricacies in your lectures.\\

To my friends Chen, Pádraig, Ramon and Louis for your companionship, warmth and laughs, and for making Bonn home. To my coaches and captains, Stefan Reddemann and Salah Mirza, for the way you invited me into your teams and gave me a place to play.\\

To my family for the deepest love, to Kyra for everything.

\section{Preliminaries}
In this section, we set up some fundamental concepts that help us build the theory of the Fourier transform on the affine line. In particular, we discuss the  construction of the étale fundamental group and introduce the derived category of $\overline{\mathbb{Q}}_l$-adic sheaves. We then go on to use these tools to explain the framework of the function-sheaf dictionary, and state the Grothendieck trace formula. These turn out to be   critical tools in relating the Fourier transform on the affine line to the more classical setting. The main reference will be \cite{kiehl2013weil}.

\begin{subsection}{The Étale fundamental group}
    Let $X$ be a connected scheme. We say that $f:Y\rightarrow X$ is an étale cover if $f$ is a finite étale morphism. Let $\mathbf{FEt}/X$ be the full subcategory of $\mathbf{Sch}/X$ schemes over $X$ generated by such étale covers. The fact that étale morphisms are flat, and the connectivity assumption on the base, allows us to have a well defined notion of the \textbf{degree} of a cover.\\
    
    By a geometric point $\overline{x}$ of $X$, we mean a morphism $\Spec(L) \rightarrow X$ mapping to a point $x$ of $X$, where $L$ is a separable closure of the residue field at $x$. We call $(X^\prime,\overline{x}^\prime)$ a pointed étale cover of $X$ if, $X^\prime$ is an étale cover and $\overline{x}^\prime$ is a geometric point that lifts $\overline{x}$. We now introduce two key lemmas which give us insight into the structure of $\mathbf{FEt}/X$.

    \begin{lem}\cite[Lemma~1.2.11]{Brianconradet}(Rigidity of pointed étale covers)\label{2.1}
         If $f, g: X^{\prime} \rightrightarrows X^{\prime \prime}$ are $X$-maps to a separated étale $X$-scheme $X^{\prime \prime}$, and $\left(X^{\prime}, \bar{x}^\prime\right)$ is a pointed connected scheme such that $f\left(\bar{x}^{\prime}\right)=g\left(\bar{x}^{\prime}\right)$ in $X^{\prime \prime}(L)$, then $f=g$.

\begin{proof}The closed immersion
$$
\Delta: X^{\prime \prime} \rightarrow X^{\prime \prime} \times_X X^{\prime \prime}
$$
is étale, hence open, so $X^{\prime \prime} \times_X X^{\prime \prime}$ can be written as $X^{\prime \prime} \amalg Y$ with $X^{\prime \prime}$ equal to the diagonal. Since $X^{\prime}$ is connected and (by assumption) the image of the map $f \times g: X^{\prime} \rightarrow X^{\prime \prime} \times_X X^{\prime \prime}$ meets $\Delta$, the image lies in $\Delta$.

\end{proof}
\end{lem}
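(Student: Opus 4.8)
The plan is to package the two morphisms into the single morphism $(f,g)\colon X'\to X''\times_X X''$ and show that its image is forced into the diagonal. First I would recall that, since $X''$ is separated over $X$, the diagonal $\Delta_{X''/X}\colon X''\to X''\times_X X''$ is a closed immersion. On the other hand, since $X''$ is étale over $X$ — in particular locally of finite type and unramified — the very same diagonal is an open immersion (a morphism locally of finite type is unramified precisely when its diagonal is an open immersion; alternatively, $\Delta_{X''/X}$ is an étale monomorphism, hence an open immersion). Combining the two facts, $\Delta(X'')$ is a \emph{clopen} subscheme of $X''\times_X X''$.

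Next I would invoke the connectivity hypothesis. The geometric point $\bar x'\colon \Spec(L)\to X'$ has a well-defined underlying point of $X'$, and by assumption $(f,g)$ sends it into $\Delta(X'')(L)$, so the topological image of that point lies in $\Delta(X'')$. Since $X'$ is connected, the image of $X'$ under $(f,g)$ is a connected subset of $X''\times_X X''$ that meets the clopen set $\Delta(X'')$, hence is contained in it. Because $\Delta$ is a monomorphism onto an open subscheme, $(f,g)$ factors uniquely as $(f,g)=\Delta\circ h$ for a morphism $h\colon X'\to X''$. Post-composing with the two projections $\mathrm{pr}_1,\mathrm{pr}_2\colon X''\times_X X''\to X''$ and using $\mathrm{pr}_i\circ\Delta=\mathrm{id}_{X''}$ yields $f=h=g$, as desired.

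The only genuinely delicate ingredient — the one I would state with care rather than gloss over — is that the diagonal of an étale (indeed, of any unramified) separated morphism is simultaneously open and closed; the remainder is a formal manipulation of connectedness together with the universal property of the fibre product. I would also remark that no finiteness or Noetherian hypothesis on $X'$ is used beyond connectedness, and that the geometric point $\bar x'$ serves only to produce an honest point of $X'$ at which $f$ and $g$ visibly coincide, so that the image of $X'$ provably meets $\Delta(X'')$.
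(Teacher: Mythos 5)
Your proof is correct and follows essentially the same approach as the paper's: identify $(f,g)$ as a map to $X''\times_X X''$, observe that separatedness makes the diagonal closed while étaleness makes it open, and then use connectedness of $X'$ together with the point $\bar x'$ to force the image into $\Delta$. You simply spell out a few steps the paper leaves implicit (the factorization $(f,g)=\Delta\circ h$ and the projection argument yielding $f=h=g$), which is a reasonable elaboration rather than a different route.
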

 This lemma tells us that the order of the group of automorphisms of an étale cover; $|\operatorname{Aut_X}Y|$ is always less than the degree $n$ of the cover. This is because the fiber over the geometric point $\overline{x}$ has exactly  $n$ points.

    \begin{defn}
      An étale cover $Y$ is said to be \textit{Galois} if it is connected and  $|\operatorname{Aut_X}Y|=\mathbf{degree}(Y)$.
    \end{defn}
\begin{lem}\label{2.3}
For every connected étale cover $X^\prime\rightarrow X$, we can find a Galois cover $X^{\prime\prime}\rightarrow X$ which factorizes as a surjection $X^{\prime\prime}\rightarrow X^{\prime}$ . Furthermore, for any two Galois covers $Z$ and $Y$, we can find a third $\widetilde{X}$, with surjective morphisms to both.
\end{lem}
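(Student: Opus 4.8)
The plan is to establish both assertions by the same mechanism: form a fibre product of the given covers, discard the locus where coordinates collide, and then work inside a single connected component, using repeatedly that finite étale morphisms — and the projections out of fibre products of such — are open and closed.

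For the first statement I would proceed as follows. Let $n$ be the degree of $X'\to X$, let $\bar x$ be the geometric point of $X$ below $\bar x'$, and write $F(X')=\{p_1,\dots,p_n\}$ for the fibre of $X'$ over $\bar x$. Form the $n$-fold fibre power $Z=X'\times_X\cdots\times_X X'$, on which the symmetric group $S_n$ acts over $X$ by permuting the factors. As in the proof of \Cref{2.1}, separatedness and étaleness of $X'\to X$ make each partial diagonal $\{x_i=x_j\}$ in $X'\times_X X'$ open and closed; pulling these back along the coordinate projections $Z\to X'\times_X X'$ and intersecting, the $S_n$-stable locus $U\subseteq Z$ on which all $n$ coordinates are pairwise distinct is open and closed in $Z$, hence itself a finite étale $X$-scheme. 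Its geometric fibre $F(U)$ is the set of bijections $\{1,\dots,n\}\to F(X')$, on which $S_n$ acts simply transitively; in particular $U\neq\varnothing$ and $|F(U)|=n!$. I would then choose a connected component $X''$ of $U$, put $m=\deg(X''\to X)=|F(X'')|$, and observe that every projection $X''\to X'$ is finite and étale (a morphism between étale $X$-schemes), hence open and closed, so its image is a non-empty open and closed subset of the connected scheme $X'$ — i.e. $X''\to X'$ is surjective.

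It remains to see that $X''$ is Galois, and this is the substantive step. The key points are: (i) $S_n$ permutes the connected components of $U$ transitively, because $S_n$ is already transitive on $F(U)$ and a group element carrying a geometric point of one component to a geometric point of another must carry the first component onto the second (each geometric point of $U$ lies in a unique component); (ii) since these permutations restrict to isomorphisms between components, all components of $U$ share a common degree $m$, so the number of components is $n!/m$; (iii) by the orbit--stabiliser formula this number also equals $[S_n:H]=n!/|H|$, where $H=\{\sigma\in S_n:\sigma(X'')=X''\}$, so $|H|=m$; and (iv) restriction gives a homomorphism $H\to\operatorname{Aut}_X(X'')$ which is injective because, by the rigidity of \Cref{2.1}, an automorphism fixing a geometric point of the fibre is the identity, and the same rigidity gives $|\operatorname{Aut}_X(X'')|\le\deg X''$. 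Hence $m=|H|\le|\operatorname{Aut}_X(X'')|\le\deg X''=m$, so $|\operatorname{Aut}_X(X'')|=\deg X''$ and $X''$ is Galois.

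For the second statement, given Galois covers $Y$ and $Z$ I would simply take any connected component $W$ of $Y\times_X Z$; exactly as above the two projections $W\to Y$ and $W\to Z$ are finite and open, hence surjective onto the connected targets, and if one additionally wants $\widetilde X\to X$ Galois one applies the first part of the lemma to $W$ to dominate it by a Galois cover $\widetilde X$, which then still surjects onto $Y$ and $Z$. I expect the main obstacle to be precisely the counting argument (i)--(iv): upgrading the automatic inequality $|\operatorname{Aut}_X(X'')|\le\deg X''$ to an equality is the heart of the matter, and it is there that \Cref{2.1} is genuinely used; the remaining work is routine open-and-closed bookkeeping.
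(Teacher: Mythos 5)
The paper states Lemma~\ref{2.3} without proof, so there is no in-paper argument to compare against; your proposal must stand on its own, and it does: it is correct and is the classical argument. You form the $S_n$-stable clopen subscheme $U$ of ordered $n$-tuples of pairwise distinct points inside the $n$-fold fibre power, take a component $X''$, and run a counting argument to upgrade the automatic inequality $|\operatorname{Aut}_X(X'')|\le\deg X''$ to an equality.

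Two small remarks on the write-up. First, in your point (iv) the injectivity of the restriction map $H\to\operatorname{Aut}_X(X'')$ is misattributed: it does not follow from rigidity but from the freeness (simple transitivity) of the $S_n$-action on $F(U)$ that you had already noted --- if $\sigma\in H$ restricts to $\operatorname{id}_{X''}$, then $\sigma$ fixes a point of the free $S_n$-set $F(U)$ (equivalently, evaluating the relation $\operatorname{pr}_i|_{X''}=\operatorname{pr}_{\sigma(i)}|_{X''}$ at a geometric point of $X''$, whose coordinates are pairwise distinct by the construction of $U$, forces $\sigma(i)=i$), hence $\sigma=1$. Rigidity (Lemma~\ref{2.1}) enters only to give the upper bound $|\operatorname{Aut}_X(X'')|\le\deg X''$, as the paper observes in the remark following Lemma~\ref{2.1}. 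Second, for the final sentence you do not actually need to reinvoke part one: when $Y$ and $Z$ are Galois, $\operatorname{Aut}_X(Y)\times\operatorname{Aut}_X(Z)$ acts simply transitively on $F(Y)\times F(Z)=F(Y\times_X Z)$, so your own counting argument (applied verbatim) already shows that any connected component $W\subseteq Y\times_X Z$ is Galois.
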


The rigidity lemma tells us that there is at most one map between pointed étale covers $(X^\prime,\overline{x}^\prime)$ and $(X^{\prime\prime},\overline{x}^{\prime\prime})$ sending $\overline{x}^\prime$ to $\overline{x}^{\prime\prime}$. Furthermore, it tells us that we have a well defined surjective homomorphism of groups  $\operatorname{Aut_X}X^\prime \rightarrow \operatorname{Aut_X}X^{\prime\prime}$. In addition Lemma \ref{2.3} tells us that these groups form an inverse system, leading us to the following definition, 

\begin{defn}
    The \textit{étale fundamental group} of a connected scheme $X$ (with respect to a geometric point $\overline{x}$ ) is the profinite group
$$\pi_1(X,\overline{x})=\varprojlim_{(X^\prime,\overline{x}^\prime)}\operatorname{Aut}_XX^\prime$$
where the inverse limit is taken over connected Galois covers, equipped with a geometric point lying above $\overline{x}$.
\end{defn}

\begin{ex}\label{2.5}
Let us look at the special case where $X=\Spec(K)$ is a point and $K$ is a field. Let us also fix a algebraic closure $\overline{K}$ of $K$ that acts as a geometric point. The finite étale covers of covers are precisely the finite separable field extensions $K^\prime$ of $K$. The Galois covers can be seen to be precisely the Galois extensions of $K$, i.e the separable normal finite extensions.  The group $\operatorname{Aut_K}K^\prime$ is therefore the Galois group of $K^\prime$, and the inverse limit along all the finite Galois extensions is precisely the absolute Galois group, $\operatorname{Gal}(\overline{K}/K)$. 
\end{ex}
\begin{defn}\cite[Definition~2.3]{Jtsim}
    A universal cover $\widetilde{X}$ of X consists of the following data,
    \begin{itemize}
        \item  A partially ordered set I, which is filtered in the sense that for any two objects, there is some object less than or equal to both of them.
        \item For each $i \in I$ a Galois cover $X_i$ of $X$.
        \item  For any two objects $i, j \in I$ with $i<j$ a transition morphism $\phi_{i j}$ : $X_i \rightarrow X_j$ such that $\phi_{j k} \circ \phi_{i j}=\phi_{i k}$.
    \end{itemize}
    
Such that any connected $X' \in \mathbf{FEt}/X$ is covered by $X_i$ for some $i \in I$. Moreover, we say that $\tilde{X}$ is based, and we write $\tilde{\bar{x}}$ for its base point if each $X_i$ is assigned a geometric point $\overline{x}_i$ in a compatible way with the transition maps.
\end{defn}

Lemmas \ref{2.1} and \ref{2.3} give us the existence of a universal cover $(\widetilde{X},\widetilde{\overline{x}})$. For any scheme $Z$ we define the set $\operatorname{Hom}_X(\widetilde{X},Z)$ to be $\varprojlim_i\operatorname{Hom}_X(X_i,Z)$.
\begin{lem}\label{2.7}
    We fix geometric point $\overline{x}$ of $X$. For any $X'\in\mathbf{FEt}/X$, we have a natural isomorphism $\theta_{X'}:\operatorname{Hom}_X(\widetilde{X},X')\rightarrow X'_{\overline{x}}$ prescribed by $\phi\mapsto\phi(\widetilde{\overline{x}})$.
\end{lem}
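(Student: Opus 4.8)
The plan is to show that evaluating a map at the base point embeds each finite set $\operatorname{Hom}_X(X_i,X')$ into $X'_{\overline{x}}$, that these embeddings are compatible with the transition maps $\operatorname{Hom}_X(X_j,X')\to\operatorname{Hom}_X(X_i,X')$, and that their union already exhausts $X'_{\overline{x}}$ once $X_i$ is large enough to dominate $X'$. Concretely, I would first reduce to $X'$ connected: any $X'\in\mathbf{FEt}/X$ is a finite disjoint union of connected finite étale $X$-schemes, and both $X'\mapsto\operatorname{Hom}_X(\widetilde{X},X')$ and the fibre functor $X'\mapsto X'_{\overline{x}}$ send such coproducts to disjoint unions — for the former because each $X_i$ is connected and the index set $I$ is filtered, so any compatible family of $X$-maps out of the $X_i$ factors through one and the same component — and $\theta$ is visibly compatible with these decompositions.

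Next I would dispatch injectivity of $\theta_{X'}$, which comes essentially for free from rigidity. For each $i$, Lemma~\ref{2.1} applied to the connected pointed scheme $(X_i,\overline{x}_i)$ and the separated étale $X$-scheme $X'$ shows that $\operatorname{ev}_i\colon\operatorname{Hom}_X(X_i,X')\to X'_{\overline{x}}$, $f\mapsto f(\overline{x}_i)$, is injective. For $i<j$ the transition map is precomposition with the cover $\phi_{ij}\colon X_i\to X_j$, which is surjective and hence an epimorphism, so it too is injective, and it is compatible with the $\operatorname{ev}$'s since $\phi_{ij}(\overline{x}_i)=\overline{x}_j$. Writing $A_i:=\operatorname{im}(\operatorname{ev}_i)\subseteq X'_{\overline{x}}$, these facts identify $\operatorname{Hom}_X(\widetilde{X},X')$ with the directed union $\bigcup_i A_i$ and $\theta_{X'}$ with its inclusion into $X'_{\overline{x}}$; in particular $\theta_{X'}$ is injective, and naturality in $X'$ is then automatic, since post-composing with a map $X'\to X''$ intertwines the $\operatorname{ev}$'s with the induced map $X'_{\overline{x}}\to X''_{\overline{x}}$.

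It remains to produce a single index $i_0$ with $A_{i_0}=X'_{\overline{x}}$, and this is the heart of the matter. By the universal-cover property I would pick $i_0\in I$ together with an $X$-cover $q\colon X_{i_0}\to X'$. Since $X_{i_0}\to X$ is Galois, $\operatorname{Aut}_X(X_{i_0})$ acts freely on the fibre $(X_{i_0})_{\overline{x}}$ — freeness is again Lemma~\ref{2.1}, because an automorphism fixing a point of the fibre must be the identity — and since $|\operatorname{Aut}_X(X_{i_0})|=\deg(X_{i_0})=|(X_{i_0})_{\overline{x}}|$ this free action is transitive. Given $\xi\in X'_{\overline{x}}$, I would lift it along the surjective finite étale map $q$ (which is therefore surjective on geometric fibres) to some $\eta\in(X_{i_0})_{\overline{x}}$ with $q(\eta)=\xi$, choose $\sigma\in\operatorname{Aut}_X(X_{i_0})$ with $\sigma(\overline{x}_{i_0})=\eta$, and note that $q\circ\sigma\in\operatorname{Hom}_X(X_{i_0},X')$ satisfies $\operatorname{ev}_{i_0}(q\circ\sigma)=\xi$. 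Hence $A_{i_0}=X'_{\overline{x}}$, so $\bigcup_i A_i=X'_{\overline{x}}$ and $\theta_{X'}$ is surjective.

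The step I expect to require genuine care is precisely this last one: upgrading the bare \emph{existence} of a dominating Galois cover to a dominating cover that \emph{realises a prescribed geometric point} of $X'_{\overline{x}}$, which is where the Galois hypothesis — through transitivity of the automorphism action, hence rigidity once more — is indispensable. Everything else (the reduction to connected $X'$ and the identification of $\operatorname{Hom}_X(\widetilde{X},X')$ with the directed union $\bigcup_i A_i$) is routine, using only that all transition maps are injective together with the filteredness of $I$.
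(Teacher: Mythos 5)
The paper states Lemma~\ref{2.7} without proof (it is folklore / cited from the references), so there is no paper argument to compare against; judged on its own merits, your proof is correct and follows the standard Grothendieck--Galois route: reduce to connected $X'$, identify $\operatorname{Hom}_X(\widetilde{X},X')$ with the directed union $\bigcup_i A_i$ of evaluation images using rigidity (Lemma~\ref{2.1}) for injectivity, and for surjectivity exploit that some $X_{i_0}$ dominates $X'$ while $\operatorname{Aut}_X(X_{i_0})$ acts freely, hence transitively by a cardinality count, on the fibre. One small caution: the step ``$\phi_{ij}$ is surjective and hence an epimorphism'' is not valid for arbitrary morphisms of schemes; you should either invoke that $\phi_{ij}$ is faithfully flat (finite étale surjective) and hence an effective epimorphism, or, more in keeping with the rest of the argument, note that injectivity of precomposition with $\phi_{ij}$ also follows directly from rigidity, since two maps $X_j\rightrightarrows X'$ agreeing after $\phi_{ij}$ agree at $\overline{x}_j=\phi_{ij}(\overline{x}_i)$.
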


Consider now the left action of $\pi_1(X,\overline{x})$ on $\operatorname{Hom}_X(\widetilde{X},X')$ given by defining the action of $g\in \operatorname{Aut}_XX_i$ on $\phi\in\operatorname{Hom}_X(X_i,X')$; $g \cdot\phi=\phi\circ g^{-1}$. We call the $\pi_1(X,\overline{x})$-set obtained from $X'$ in this way, $F_{X'}$.

\begin{prop}\cite{zbMATH03370468}
    There is an equivalence of categories between $\mathbf{FEt}/X$ and the category of finite discrete sets with a continuous $\pi_1(X,\overline{x})$-action, given by the functor $X'\mapsto F_{X'}$. This functor is often called the fiber functor.
\end{prop}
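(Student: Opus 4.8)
The plan is to unwind the functor $X' \mapsto F_{X'}$ via \cref{2.7}, which identifies the underlying set of $F_{X'} = \operatorname{Hom}_X(\widetilde X, X')$ with the geometric fibre $X'_{\bar x}$; this is finite because $X' \to X$ is finite, so $F_{X'}$ is a finite set, and I first check the $\pi_1(X,\bar x)$-action is continuous. Any morphism $\widetilde X \to X'$ is induced by a morphism $X_i \to X'$ for some index $i$, so by \cref{2.3} we may take $X_i$ Galois dominating $X'$; then the action on $F_{X'}$ factors through the finite quotient $\operatorname{Aut}_X X_i$, hence every stabiliser contains the open subgroup $\ker(\pi_1(X,\bar x) \to \operatorname{Aut}_X X_i)$ and the action is continuous. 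Thus $F$ does land in the stated category, and it remains to show it is fully faithful and essentially surjective. Both categories have finite coproducts and every object is a finite coproduct of connected (resp.\ transitive) objects; moreover $F$ preserves coproducts, since each $X_i$ is connected so a morphism $\widetilde X \to X' \amalg X''$ lands in a single summand, and likewise a morphism out of a connected cover or a $\pi_1$-map out of a transitive set factors through one summand. These reduce full faithfulness to the case $X', X''$ both connected.

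The key auxiliary fact is that for connected $X'$ the set $F_{X'}$ is a \emph{transitive} $\pi_1(X,\bar x)$-set with open point stabiliser. Picking via \cref{2.3} a Galois $X_i$ with a surjection $q\colon X_i \to X'$, rigidity (\cref{2.1}) embeds $\operatorname{Hom}_X(X_i, X')$ into $X'_{\bar x}$ by $\phi \mapsto \phi(\bar x_i)$. Since $\operatorname{Aut}_X X_i$ acts freely (again by \cref{2.1}) on the $\deg(X_i/X)$-element fibre $X_{i,\bar x}$ and $X_i$ is Galois, the action on $X_{i,\bar x}$ is simply transitive; as $q$ is surjective on fibres (surjectivity being stable under base change), the $\operatorname{Aut}_X X_i$-orbit of $[q]$ exhausts $F_{X'}$, and the same computation identifies the stabiliser of $[q]$ with the preimage of $\operatorname{Aut}_{X'} X_i \le \operatorname{Aut}_X X_i$, an open subgroup. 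Faithfulness now drops out: under \cref{2.7} the map $F(f)$ attached to $f\colon X' \to X''$ is the fibre map $f_{\bar x}$, so if $F(f) = F(g)$ then $f$ and $g$ agree at a point of the nonempty fibre $X'_{\bar x}$, whence $f = g$ by rigidity applied to the connected pointed $X'$ and the separated \'etale $X$-scheme $X''$.

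For fullness, let $X', X''$ be connected and $\alpha\colon F_{X'} \to F_{X''}$ be $\pi_1(X,\bar x)$-equivariant. By \cref{2.3} pick one Galois cover $X_i$ carrying surjections $q\colon X_i \to X'$ and $p\colon X_i \to X''$; replacing $p$ by $p \circ g$ for suitable $g \in \operatorname{Aut}_X X_i$ we may assume $\alpha([q]) = [p]$. Equivariance over the stabiliser of $[q]$ --- which is the preimage of $H := \operatorname{Aut}_{X'} X_i$ --- forces $[p]$ to be $H$-fixed, i.e.\ $p \circ h = p$ for all $h \in H$ by rigidity. Since $X_i \to X'$ is itself Galois --- an intermediate cover of the Galois cover $X_i \to X$, as one reads off from $X_i \times_X X_i \cong \coprod_{g \in \operatorname{Aut}_X X_i} X_i$ --- with group $H$ and $X' = X_i/H$, the $H$-invariant morphism $p$ descends to $f\colon X' \to X''$ with $f \circ q = p$; then $F(f)$ and $\alpha$ are $\pi_1(X,\bar x)$-maps agreeing on the orbit generator $[q]$, so $F(f) = \alpha$.

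Finally, essential surjectivity: a finite continuous $\pi_1(X,\bar x)$-set is a finite disjoint union of transitive ones $\pi_1(X,\bar x)/H$ with $H$ open, and since $F$ preserves coproducts it suffices to realise each of these. Choosing Galois $X_i$ with $\ker(\pi_1(X,\bar x) \to \operatorname{Aut}_X X_i) \subseteq H$, the subgroup $H$ becomes a subgroup $\bar H$ of the finite group $G_i := \operatorname{Aut}_X X_i$, and $G_i$ acts \emph{freely} on $X_i$ (trivial stabilisers on the fibre by \cref{2.1}, spread to all of $X_i$ via $X_i \times_X X_i \cong \coprod_{g \in G_i} X_i$). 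As $X_i \to X$ is affine, the quotient $X_i/\bar H$ is the relative spectrum of the $\bar H$-invariants, and it is finite \'etale over $X$ because after pulling back along $X_i \to X$ --- over which $X_i$ becomes $G_i \times X_i$ with $G_i$ acting by translation --- it becomes $(G_i/\bar H) \times X_i$; one then computes $F_{X_i/\bar H} \cong G_i/\bar H \cong \pi_1(X,\bar x)/H$. I expect this construction of quotient covers by free finite group actions --- which also underlies the descent step in the proof of fullness --- to be the only genuinely non-formal ingredient; everything else follows mechanically from \cref{2.1}, \cref{2.3} and \cref{2.7}.
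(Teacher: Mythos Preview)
The paper does not actually prove this proposition; it simply records the citation \cite{zbMATH03370468} and moves on. Your argument is a correct and self-contained rendition of the standard proof: the reduction to connected/transitive objects via coproduct preservation, faithfulness from rigidity (\cref{2.1}), fullness by descending through a common Galois cover supplied by \cref{2.3}, and essential surjectivity by forming the quotient $X_i/\bar H$ and checking finite-\'etaleness after base change along $X_i \to X$. Your identification of the quotient construction as the only genuinely non-formal step is accurate; everything else is bookkeeping with \cref{2.1}, \cref{2.3}, and \cref{2.7}.
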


Now we move on to consider another presentation of these categories using the language of sheaves. We denote by $Ét(X)$ the étale topos of a scheme $X$. For a set $\Sigma$, we denote by $\underline{\Sigma}_X$ the sheafification of the presheaf $U\mapsto\Sigma$. We call any $\mathscr{F}\inÉt(X)$  a constant sheaf if it is isomorphic to $\underline{\Sigma}_X$ for any $\Sigma$.
\begin{defn}
   An object $\mathscr{F}\in Ét(X)$ is called locally constant if there exists a covering $(X_i\rightarrow X)$ in the étale site of $X$ such that each $\mathscr{F}|_{X_i}$ is constant. If moreover, the associated set over each $X_i$ is finite, we call $\mathscr{F}$ locally constant constructible. We often abbreviate this to "LCC" for short.
\end{defn}
\begin{ex}\label{2.10}
    Let $f:Y\rightarrow X$ be a morphism of schemes. Denote by $\underline{X}'$ the constant sheaf associated to  $X'\in \mathbf{FEt}/X$. We have that  $f^*(\underline{X}')=\underline{Y\times_X X'}$. Indeed this follows from,

    \begin{align*}
        \operatorname{Hom_Y}(f^*\underline{X}',\mathscr{F})&\cong\operatorname{Hom_Y}(\underline{X}',f_*\mathscr{F})\\
        &\cong f_*\mathscr{F}(X)\\
        &\cong \mathscr{F}(Y\times_XX')\\
        &\cong\operatorname{Hom_Y}(\underline{Y\times_XX'},\mathscr{F})
    \end{align*}

\end{ex}

\begin{thm}\label{2.11}[Classification of LCC sheaves] The functor from $\mathbf{FEt}/X$ to $Ét(X)$ given by $X'\mapsto\underline{X}'$, restricts to an equivalence of categories from finite étale schemes to LCC sheaves. 
    
\end{thm}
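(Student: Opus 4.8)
The plan is to identify $X'\mapsto\underline{X}'$ with the restriction to finite étale schemes of the Yoneda embedding: for $X'\in\mathbf{FEt}/X$ the sheaf $\underline{X}'$ is the sheaf of sections $h_{X'}=\operatorname{Hom}_X(-,X')$ on the small étale site of $X$, exactly as implicitly used in the adjunction computation of Example~\ref{2.10}. Three points then remain: (i) each $\underline{X}'$ is LCC; (ii) the functor is fully faithful; (iii) every LCC sheaf is isomorphic to some $\underline{X}'$. Point (ii) is immediate from the Yoneda lemma: a finite étale $X$-scheme is in particular an object of the small étale site and $h_{X'}$ is the representable sheaf, so
$$\operatorname{Hom}_{Ét(X)}(\underline{X}',\underline{Y}')=\underline{Y}'(X')=\operatorname{Hom}_X(X',Y')=\operatorname{Hom}_{\mathbf{FEt}/X}(X',Y'),$$
the last equality because $\mathbf{FEt}/X$ is a full subcategory of $\mathbf{Sch}/X$.

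For (i): a finite étale morphism is, étale-locally on the base, a finite disjoint union of copies of the base. Concretely, decomposing $X'$ into its (finitely many) connected components and trivialising each of them by a connected Galois cover as furnished by Lemma~\ref{2.3} and the Galois theory encoded in the fiber-functor Proposition, one obtains a finite étale surjection $h\colon X''\to X$ with $X''\times_X X'\cong\coprod_{s\in S}X''$ for a finite set $S$; by Example~\ref{2.10}, $h^{*}\underline{X}'=\underline{X''\times_X X'}\cong\underline{S}_{X''}$ is constant with finite value. Since $h$ is finite étale surjective, hence a covering in the étale site, $\underline{X}'$ is LCC.

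The substance is (iii), which I would obtain by comparing both categories to finite continuous $\pi_1(X,\overline{x})$-sets. Let $\omega$ send an LCC sheaf $\mathscr F$ to its stalk $\mathscr F_{\overline{x}}$, equipped with the natural action of $\pi_1(X,\overline{x})=\varprojlim_i\operatorname{Aut}_X X_i$; this stalk is finite (LCC-ness) and the action is continuous (a germ is defined over some $X_i$), so $\omega$ is a functor to $\pi_1(X,\overline{x})$-sets. By Lemma~\ref{2.7} — together with the cofinality of Galois covers among étale neighbourhoods of $\overline{x}$ — there is a natural, $\pi_1$-equivariant isomorphism $(\underline{X}')_{\overline{x}}\cong X'_{\overline{x}}$, that is, $\omega\circ(\underline{-})\cong F_{(-)}$. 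Since $F_{(-)}$ is an equivalence by the fiber-functor Proposition, it suffices to prove that $\omega$ is fully faithful: then $\omega$ is automatically essentially surjective (its composite with $\underline{(-)}$ already surjects onto all of $\pi_1(X,\overline{x})$-sets), hence an equivalence, and from $\omega\circ(\underline{-})\cong F_{(-)}$ with $\omega$ and $F_{(-)}$ equivalences it follows that $\underline{(-)}$ is an equivalence — which is the assertion.

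I expect the full faithfulness of $\omega$ to be the main obstacle. For LCC sheaves $\mathscr F,\mathscr G$ the internal hom $\mathscr H:=\underline{\operatorname{Hom}}(\mathscr F,\mathscr G)$ is again LCC — over a common trivialising cover, where $\mathscr F\cong\underline{S}$ and $\mathscr G\cong\underline{T}$, it becomes $\underline{T^{S}}$ — its global sections are $\operatorname{Hom}_{Ét(X)}(\mathscr F,\mathscr G)$, and its stalk at $\overline{x}$ is $\operatorname{Hom}_{\mathbf{Set}}(\mathscr F_{\overline{x}},\mathscr G_{\overline{x}})$ with the conjugation $\pi_1$-action, so that $\mathscr H_{\overline{x}}^{\pi_1}$ is exactly the set of $\pi_1$-equivariant maps $\mathscr F_{\overline{x}}\to\mathscr G_{\overline{x}}$. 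One is therefore reduced to showing that for every LCC sheaf $\mathscr H$ the canonical map $\mathscr H(X)\to\mathscr H_{\overline{x}}^{\pi_1}$ is a bijection. Injectivity uses connectedness of $X$: the locus where two given sections of $\mathscr H$ have equal germ is open and closed, hence all of $X$ once it contains $\overline{x}$. Surjectivity is the delicate part: a $\pi_1$-fixed germ is represented, after passing to a larger Galois cover if necessary, by an $\operatorname{Aut}_X X_i$-invariant section $s\in\mathscr H(X_i)$, and since $X_i\to X$ is a torsor under the finite group $\operatorname{Aut}_X X_i$ the sheaf axiom identifies $\mathscr H(X)$ with $\mathscr H(X_i)^{\operatorname{Aut}_X X_i}$, into which $s$ descends. (Alternatively, (iii) can be proved by descent directly: an LCC sheaf is étale-locally of the form $h_{\coprod_S U}$, and since finite étale morphisms form a stack for the étale topology these local covers glue, via the gluing data inherited from $\mathscr F$, to a global finite étale $X'\to X$ with $\underline{X}'\cong\mathscr F$; in that variant the obstacle is the effectivity of étale descent for finite étale morphisms.)
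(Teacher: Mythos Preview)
The paper does not actually prove this theorem; it is stated as a known classification result, and the surrounding discussion merely uses it (together with the fiber-functor Proposition cited from \cite{zbMATH03370468}) to close the triangle between $\mathbf{FEt}/X$, LCC sheaves, and finite continuous $\pi_1(X,\overline{x})$-sets. So there is nothing in the paper to compare against beyond that triangle picture, which your argument in fact reproduces.

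Your proof is correct and is essentially the standard one. Yoneda handles full faithfulness cleanly; étale-local triviality of finite étale morphisms, obtained via Lemma~\ref{2.3}, gives (i); and your reduction of (iii) to the bijection $\mathscr H(X)\cong\mathscr H(X_i)^{\operatorname{Aut}_X X_i}$ for a trivialising connected Galois cover $X_i$ is exactly the right mechanism --- the identification $X_i\times_X X_i\cong\coprod_{g\in\operatorname{Aut}_X X_i}X_i$ turns the sheaf equaliser into the fixed-point set. One small point worth making explicit in your write-up: once $\mathscr H|_{X_i}$ is constant and $X_i$ is connected, the map $\mathscr H(X_i)\to\mathscr H_{\overline{x}}$ is already a bijection, so the $\pi_1$-action on the stalk visibly factors through $\operatorname{Aut}_X X_i$ and coincides with the pullback action on $\mathscr H(X_i)$; this is what makes ``$\pi_1$-fixed germ'' synonymous with ``$\operatorname{Aut}_X X_i$-invariant section''. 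The alternative route you sketch at the end --- gluing the local representing objects via effective \'etale descent for finite \'etale morphisms --- is equally valid and has the advantage of not presupposing the equivalence $\mathbf{FEt}/X\simeq\pi_1$-sets.
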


We now construct a functor that relates LCC sheaves, to finite discrete $\pi_1(X,\overline{x})$-sets which is also called the fiber functor. Given an LCC sheaf $\mathscr{F}$ over $(X,\overline{x})$ the pull back sheaf $\overline{x}^*\mathscr{F}$ is, by example \ref{2.10}, a constant sheaf associated to a finite étale scheme over $\overline{x}$. This can be completely described using its global section on $\overline{x}$, which again by example \ref{2.10} is precisely the fiber over the geometric point $\overline{x}$ which inherits the same action as prescribed by lemma \ref{2.7}. Let us make this a bit more precise by introducing a more formal definition of the stalk functor.

\begin{defn}\label{2.12}
    Let $\overline{x}:\operatorname{Spec}k\rightarrow X$ be a geometric point of $X$. An étale neighborhood of $\overline{x}$ is a pair $(U,\overline{u})$, where $U\rightarrow X$ is étale and the diagram,
\[\begin{tikzcd}
	{\operatorname{Spec}k} & U \\
	& X
	\arrow["{\overline{u}}", from=1-1, to=1-2]
	\arrow["{\overline{x}}"', from=1-1, to=2-2]
	\arrow[from=1-2, to=2-2]
\end{tikzcd}\]
commutes. A morphism of étale neighborhoods $(U,\overline{u})\rightarrow (V,\overline{v})$ is an $X$ morphism $U\rightarrow V$ that makes $(U,\overline{u})$ into an étale neighborhood of the geometric point $\overline{v}$ of $V$. Finally, if $\mathcal{F}$ is an étale sheaf on $X$, one sets,
$$\mathcal{F}_{\overline{x}}=\underset{(U,\overline{u})}{\mathrm{colim}} \ \Gamma(U,\mathcal{F})$$
where the colimit is taken over all the étale neighborhoods of $\overline{x}$. This is called the stalk of $\mathcal{F}$ at $\overline{x}$.
\end{defn}

\begin{rmk}\label{2.13}
     If $\sigma\in\mathrm{Gal}(k(\overline{x})/k(x))$, and $(U,\overline{u})$ is an étale neighborhood of $\overline{x}$, we have that $(U,\overline{u}\circ\operatorname{Spec}(\sigma))$ is also an étale neighborhood. We thus get an action on the stalk of an étale sheaf on $X$ by sending elements indexed by the pair $(U,\overline{u})$ to the index $(U,\overline{u}\circ\operatorname{Spec}(\sigma))$. For sheaves over spectrums of fields, this action coincides with the action prescribed by lemma \ref{2.7}, considering example \ref{2.5}. 
\end{rmk}

The previous discussions allow us to relate the various categories at play using the following diagram,

\[\begin{tikzcd}
	{\mathbf{FEt}/X} && {\mathbf{LCC}/X} \\
	{} \\
	& {\text{finite discrete}\ \pi_1(X,\overline{x})-\text{sets}}
	\arrow["{X'\mapsto F_{X'}}"', squiggly, tail reversed, from=1-1, to=3-2]
	\arrow["{X'\mapsto\underline{X}'}"', squiggly, tail reversed, from=1-3, to=1-1]
	\arrow["{\mathscr{F}\mapsto \mathscr{F}_{\overline{x}}}", squiggly, tail reversed, from=1-3, to=3-2]
\end{tikzcd}\]

\end{subsection}
\begin{subsection}{$\overline{\mathbb{Q}}_l$-sheaves}
  We first introduce the category of $\overline{\mathbb{Q}}_l$-sheaves. This will be important in our attempt to relate the Fourier transform on the affine line to the more classical Fourier transform for finite groups. This section closely follows \cite{vcesnavivciusadic}.

\begin{notation}
    Let E denote a finite extension of $\mathbb{Q}_l$. We will denote by $\mathfrak{o}$ its ring of integers, with uniformizer $\pi\in \mathfrak{o}$. We will write $\mathfrak{o}_i$ for $\mathfrak{o}/\pi^i\mathfrak{o}$. Sometimes, we work with a tower of extensions $F'/F/E$ for which we have $\mathfrak{O}',\mathfrak{O},\mathfrak{o}$ the corresponding ring of integers, as well as $(\Pi',\Pi,\pi)$ their respective uniformizers. In the following we denote by $X$, a finite type separated scheme over the finite field $\mathbb{F}_q$ of characteristic $p$.
\end{notation}

\begin{defn}
   A sheaf $\mathscr{F}$ on $X$ is called called constructible if $X$ can be written as the finite union of locally closed subsets $X_i$ such that $\mathscr{F}|_{X_i}$ is LCC.

\end{defn}
\begin{subsubsection}{Artin-Rees categories} A \textit{projective system} of sheaves, is a sequence of $\mathfrak{o}$-modules $\{\mathscr{F}_i\}_{i\geq1}$ with structure morphisms $\mathscr{F}_{i+1}\rightarrow\mathscr{F}_{i}$. Morphisms of projective systems are given by level wise morphism that commute with the structure morphisms.\\

One can form shifts of a projective system $\mathscr{F}[n]$ by setting $(\mathscr{F}[n])_i=\mathscr{F}_{i+n}$, with the obvious structure morphisms. Furthermore, the shifts come equipped with a morphism $\mathscr{F}[n]\rightarrow\mathscr{F}$, given by iterating the structure morphisms. We call a projective system a \textit{null system} if there exists some $n\geq0$, such that $\mathscr{F}[n]\rightarrow\mathscr{F}$ is the zero morphism.\\

 The full subcategory of Null systems $\mathcal{N}$, form a \textit{Serre} subcategory of the abelian category of projective systems, $\mathcal{P}$. We refer to the quotient by the Serre subcategory of null systems, as the Artin-Rees category. The homomorphisms between two objects $\mathscr{F}$ and $\mathscr{G}$ in $\mathcal{P}/\mathcal{N}$, are given by $\operatorname{Hom}_{\mathcal{P}/\mathcal{N}}(\mathscr{F},\mathscr{G})=\varinjlim\operatorname{Hom}_\mathcal{P}(\mathscr{F}',\mathscr{G/G'})$, where $\mathscr{F}'$ is a subobject of $\mathscr{F}$, such that $\mathscr{F}/\mathscr{F}'\in\mathcal{N}$ and $\mathscr{G}'$ is a subobject of $\mathscr{G}$ such that $\mathscr{G}'\in \mathcal{N}$.

\end{subsubsection}

\begin{subsubsection}{$\pi$-adic sheaves}
    We define a $\pi$-adic sheaf to be a projective system $\mathscr{F}_{i\geq1}$ where $\pi^i\mathscr{F}_i=0$ such that the structure morphisms are induced from $\mathscr{F}_{i+1}/\pi^i\mathscr{F}_{i+1}\xrightarrow{\cong}\mathscr{F}_i$. The category of $\pi$-adic sheaves however, is not abelian. We rectify this issue by declaring that an Artin-Rees (A-R) $\pi$-adic sheaf is a sheaf in the A-R category that is isomorphic to a $\pi$-adic sheaf. We call the full subcategory of (A-R) $\pi$-adic sheaves $\operatorname{Sh_{A-R}}(X,\mathfrak{o})$, which is an abelian category. If we denote by $\operatorname{Sh}(X,\mathfrak{o})$ the full subcategory spanned by $\pi$-adic sheaves, then we have the following,
    \begin{prop}
        The natural functor from the category of $\pi$-adic sheaves to $\operatorname{Sh}(X,\mathfrak{o})$ is an isomorphism of categories.
    \end{prop}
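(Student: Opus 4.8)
The plan is as follows. Since the functor is the identity on objects, it is automatically a bijection on objects onto $\operatorname{Sh}(X,\mathfrak{o})$, which by definition is the full subcategory of the A-R category spanned by the $\pi$-adic sheaves; so the whole content of ``isomorphism of categories'' is full faithfulness, i.e.\ that for $\pi$-adic sheaves $\mathscr{F},\mathscr{G}$ the canonical map
$$\operatorname{Hom}_{\mathcal{P}}(\mathscr{F},\mathscr{G})\longrightarrow \operatorname{Hom}_{\mathcal{P}/\mathcal{N}}(\mathscr{F},\mathscr{G})=\varinjlim_{\mathscr{F}',\mathscr{G}'}\operatorname{Hom}_{\mathcal{P}}(\mathscr{F}',\mathscr{G}/\mathscr{G}')$$
is a bijection. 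My first step would be to simplify the colimit. The structure morphisms of a $\pi$-adic sheaf are the reduction isomorphisms $\mathscr{F}_{i+1}/\pi^i\mathscr{F}_{i+1}\xrightarrow{\sim}\mathscr{F}_i$ precomposed with quotient maps, hence surjective, and surjectivity of structure maps is inherited by any quotient system $\mathscr{F}/\mathscr{F}'$. A null system all of whose structure maps are surjective is zero, so the only subobject $\mathscr{F}'\subseteq\mathscr{F}$ with $\mathscr{F}/\mathscr{F}'\in\mathcal{N}$ is $\mathscr{F}$ itself. Hence the colimit collapses to $\varinjlim_{\mathscr{G}'\in\mathcal{N}}\operatorname{Hom}_{\mathcal{P}}(\mathscr{F},\mathscr{G}/\mathscr{G}')$, the filtered colimit over null subobjects $\mathscr{G}'\subseteq\mathscr{G}$ with transition maps induced by $\mathscr{G}/\mathscr{G}'\to\mathscr{G}/\mathscr{G}''$, and the canonical map above is the obvious map of $f$ to its class in this colimit.

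Injectivity is then short: if $f\colon\mathscr{F}\to\mathscr{G}$ becomes $0$, say $f_i(\mathscr{F}_i)\subseteq\mathscr{G}'_i$ for a null subsystem $\mathscr{G}'$ with $\mathscr{G}'_{i+n}\to\mathscr{G}'_i$ zero for all $i$, then in the square relating $f_i$ to $f_{i+n}$ the composite $\mathscr{F}_{i+n}\xrightarrow{f_{i+n}}\mathscr{G}_{i+n}\to\mathscr{G}_i$ lands in $\mathscr{G}'_{i+n}$ and so factors through the zero map $\mathscr{G}'_{i+n}\to\mathscr{G}'_i$; since $\mathscr{F}_{i+n}\to\mathscr{F}_i$ is surjective this forces $f_i=0$. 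For surjectivity, given $g\colon\mathscr{F}\to\mathscr{G}/\mathscr{G}'$ with $\mathscr{G}'$ null, say $\mathscr{G}'_{i+n}\to\mathscr{G}'_i$ zero, I would build an honest $f\colon\mathscr{F}\to\mathscr{G}$ levelwise: for $x\in\mathscr{F}_i$, lift it to $\tilde x\in\mathscr{F}_{i+n}$ along the surjection $\mathscr{F}_{i+n}\to\mathscr{F}_i$, lift $g_{i+n}(\tilde x)\in\mathscr{G}_{i+n}/\mathscr{G}'_{i+n}$ to $\hat z\in\mathscr{G}_{i+n}$, and set $f_i(x)$ to be the image of $\hat z$ under $\mathscr{G}_{i+n}\to\mathscr{G}_i$. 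This is well defined because the two ambiguities both die in $\mathscr{G}_i$: the kernel of $\mathscr{F}_{i+n}\to\mathscr{F}_i$ is $\pi^i\mathscr{F}_{i+n}$, so a different $\tilde x$ moves $g_{i+n}(\tilde x)$ inside $\pi^i(\mathscr{G}_{i+n}/\mathscr{G}'_{i+n})$, whose lifts in $\mathscr{G}_{i+n}$ map into $\pi^i\mathscr{G}_i=0$; and a different $\hat z$ moves it inside $\mathscr{G}'_{i+n}$, which maps to $\mathscr{G}'_i$ by the zero map. One then checks, using that $g$ respects structure maps, that the $f_i$ are compatible, so they assemble to $f\in\operatorname{Hom}_{\mathcal{P}}(\mathscr{F},\mathscr{G})$, and that the composite $\mathscr{F}\xrightarrow{f}\mathscr{G}\to\mathscr{G}/\mathscr{G}'$ equals $g$: indeed lifting along $\mathscr{F}_{i+n}\to\mathscr{F}_i$, applying $\mathscr{G}_{i+n}\to\mathscr{G}_i$, and projecting to $\mathscr{G}_i/\mathscr{G}'_i$ is exactly the structure map of $\mathscr{G}/\mathscr{G}'$, which sends $g_{i+n}(\tilde x)$ to $g_i(x)$.

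The step I expect to be the real obstacle is this last construction in the surjectivity part: one must produce a genuine morphism of projective systems out of data given only modulo a null system, and essentially the sole leverage is the vanishing $\pi^i\mathscr{F}_i=\pi^i\mathscr{G}_i=0$ together with surjectivity of the structure maps of $\pi$-adic sheaves; arranging that well-definedness, compatibility with structure maps, and the identification with $g$ all hold simultaneously is where the care is needed. By contrast, once the first-step reduction (that $\mathscr{F}'=\mathscr{F}$ is forced) is in place, injectivity is essentially immediate, and essential surjectivity onto $\operatorname{Sh}(X,\mathfrak{o})$ is built into its definition.
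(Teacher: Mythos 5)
The paper states this proposition without proof, so there is no in-text argument to compare against; it is left to the references (essentially Kiehl--Weissauer/\v{C}esnavi\v{c}ius). Your argument is the correct standard one, and the overall structure --- collapse the colimit over $(\mathscr{F}',\mathscr{G}')$ to a colimit over null subobjects $\mathscr{G}'$ using surjectivity of the structure maps of a $\pi$-adic system, then prove injectivity and surjectivity of $\operatorname{Hom}_{\mathcal{P}}(\mathscr{F},\mathscr{G})\to\operatorname{Hom}_{\mathcal{P}}(\mathscr{F},\mathscr{G}/\mathscr{G}')$ using $\pi^i\mathscr{F}_i=\pi^i\mathscr{G}_i=0$ and $\ker(\mathscr{F}_{i+n}\to\mathscr{F}_i)=\pi^i\mathscr{F}_{i+n}$ --- is exactly right, and the key reduction (``$\mathscr{F}'=\mathscr{F}$ is forced since a null system with surjective transition maps vanishes'') is identified cleanly. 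Your identification of where the work lives (the lifting argument for surjectivity) is also accurate.

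One small imprecision worth cleaning up in the well-definedness step: when you replace $\tilde{x}$ by $\tilde{x}'$ you change $g_{i+n}(\tilde{x})$ by an element of $\pi^i(\mathscr{G}_{i+n}/\mathscr{G}'_{i+n})$, but the \emph{set} of lifts of such an element in $\mathscr{G}_{i+n}$ is $\pi^i\mathscr{G}_{i+n}+\mathscr{G}'_{i+n}$, not $\pi^i\mathscr{G}_{i+n}$. So the phrase ``whose lifts in $\mathscr{G}_{i+n}$ map into $\pi^i\mathscr{G}_i=0$'' is not quite true in isolation; you additionally need (and do use, a sentence later) that $\mathscr{G}'_{i+n}\to\mathscr{G}'_i\hookrightarrow\mathscr{G}_i$ is the zero map. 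Since you verify both ingredients, the conclusion $f_i(x)$ is well defined holds, but the two ambiguities should be folded into the single statement that any difference of choices lies in $\pi^i\mathscr{G}_{i+n}+\mathscr{G}'_{i+n}$, which maps to $0$ in $\mathscr{G}_i$. You can also streamline the whole surjectivity step by observing that, because $\mathscr{G}'_{i+n}\to\mathscr{G}_i$ is zero and $\pi^i\mathscr{G}_i=0$, the structure map $\mathscr{G}_{i+n}\to\mathscr{G}_i$ factors (as a sheaf morphism) through a map $\mathscr{G}_{i+n}/\mathscr{G}'_{i+n}\to\mathscr{G}_i$, and likewise $\mathscr{F}_{i+n}\to\mathscr{F}_i$ is the quotient by $\pi^i$; then $f_i$ is simply defined by the commutative square, with no element-level lifting needed, and compatibility with transition maps is immediate from naturality of $g$.
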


Since by definition the category $\operatorname{Sh_{A-R}} (X,\mathfrak{o})$ is equivalent to its full subcategory $\operatorname{Sh}(X,\mathfrak{o})$, we can construct a fully faithful functor $E_\mathfrak{o}:\operatorname{Sh_{A-R}} (X,\mathfrak{o})\rightarrow\operatorname{Sh}(X,\mathfrak{o})$, which allows us to view the homomorphism groups in $\operatorname{Sh_{A-R}}(X,\mathfrak{o})$ as $\mathfrak{o}$-modules. This is because $\operatorname{Hom}_{\operatorname{Sh}(X,\mathfrak{o})}(\mathscr{F},\mathscr{G})=\varprojlim\operatorname{Hom}(\mathscr{F}_i,\mathscr{G}_i)$, and each $\operatorname{Hom}(\mathscr{F}_i,\mathscr{G}_i)$ is an $\mathfrak{o}_i$-module.\\

Let $e$ be a natural number. Notice now that the categories of $\pi$-adic sheaves, and that of $\pi^e$-adic sheaves are equivalent. Given a $\pi$-adic sheaf $\mathscr{F}$, one can construct a $\pi^e$-adic sheaf $\mathscr{G}$, by setting $(\mathscr{G})_i=\mathscr{F}_{ie}$, where the transition morphisms $\mathscr{G}_{i+1}/\pi^{ie}\mathscr{G}_{i+1}\xrightarrow{\cong}\mathscr{G}_i$ are induced from the ones of $\mathscr{F}$.\\

Recall that if $F/E$ is a finite extension, for which $\Pi\in F$ and $\pi\in E$ are the respective uniformizers, we have that $\pi\subset(\Pi^e)$ for some natural number $e$ which is the ramification index. This means, that for a $\pi$-adic sheaf $\mathscr{F}$, we can associate a $\Pi^e$-adic sheaf $\mathscr{G}=\{\mathscr{F}_i\otimes_{\mathfrak{o}_i}\mathfrak{O}_{ie}\}_{i\geq0}$, and thus a $\Pi$-adic sheaf which we shall call $\widetilde{S}_{\mathfrak{O}/\mathfrak{o}}(\mathscr{F})$. The composition of the functors $E_\mathfrak{o}$ and $\widetilde{S}_{\mathfrak{O}/\mathfrak{o}}$ gives us functors ${S}_{\mathfrak{O}/\mathfrak{o}}:\operatorname{Sh_{A-R}} (X,\mathfrak{o})\rightarrow \operatorname{Sh_{A-R}} (X,\mathfrak{O})$ which satisfy the following natural compatibilities; ${S}_{\mathfrak{o}/\mathfrak{o}}\cong \operatorname{Id},{S}_{\mathfrak{O}'/\mathfrak{O}}\circ{S}_{\mathfrak{O}/\mathfrak{o}}\cong{S}_{\mathfrak{O}'/\mathfrak{o}}$.

\end{subsubsection}

\begin{subsubsection}{$\overline{\mathbb{Q}_l}$-sheaves.}\label{2.2.3} We first discuss the construction of the category $\operatorname{Sh}(X,E)$. The objects of this category are those of $\operatorname{Sh_{A-R}}(X,\mathfrak{o})$. For $\mathscr{F}\in\operatorname{Sh_{A-R}}(X,\mathfrak{o})$, we write $\mathscr{F}\otimes E$ when we view it as an object in $\operatorname{Sh}(X,E)$. Recall by the previous discussion that homomorphism group between two objects $\mathscr{F},\mathscr{G}\in\operatorname{Sh_{A-R}}(X,\mathfrak{o})$, has an $\mathfrak{o}$-module structure. We define the homomorphism set between $\mathscr{F}\otimes E,\mathscr{G}\otimes E\in\operatorname{Sh}(X,E)$ to be $\operatorname{Hom}(\mathscr{F},\mathscr{G})\otimes_\mathfrak{o}E$. The composition of morphism is given by composition in the first factor and multiplication in the second factor. For an extension of fields $F/E$, the previously constructed functors ${S}_{\mathfrak{O}/\mathfrak{o}}$ along with the inclusion $E\hookrightarrow F$, give functors ${S}_{F/E}:\operatorname{Sh}(X,E)\rightarrow\operatorname{Sh}(X,F)$, which satisfy the same compatibilities as before.\\

We now construct the category of $\overline{\mathbb{Q}}_l$-sheaves denoted $\operatorname{Sh}(X,\overline{\mathbb{Q}}_l)$, in a manner analogous to the direct limit construction, using the categories $\operatorname{Sh}(X,E)$. The objects of $\operatorname{Sh}(X,\overline{\mathbb{Q}}_l)$ are $\mathscr{F}\otimes E \in \operatorname{Sh}(X,E)$, where $\mathscr{F}\otimes E$ and its images, $S_{F/E}(\mathscr{F}\otimes E)$ are identified. To define the set of morphisms between two objects, note that any two objects of $\operatorname{Sh}(X,\overline{\mathbb{Q}}_l)$ can be thought of as living in a common category $\operatorname{Sh}(X,E)$ for some $E$. We therefore define $\operatorname{Hom}_{\operatorname{Sh}(X,\overline{\mathbb{Q}}_l)}(\mathscr{F}\otimes E,\mathscr{G}\otimes E)=\varinjlim_{F/E}\operatorname{Hom}_{\operatorname{Sh}(X,F)}(S_{F/E}(\mathscr{F}\otimes E),S_{F/E}(\mathscr{G}\otimes E))$, where the colimit is taken over all finite extensions.
    
\end{subsubsection}
\begin{subsection}{The Derived category of $\overline{\mathbb{Q}}_l$-sheaves}

\begin{defn}
 We denote by $D_c^b(X,\mathfrak{o}_i)$ the full additive subcategory of the bounded derived category of $\operatorname{Sh}(X,\mathfrak{o}_i)$ with constructible cohomology sheaves.
\end{defn}
\begin{defn}
    A sheaf $\mathscr{F}$ of $\mathfrak{o}_i$-modules is called \textit{flat} if for each geometric point, the stalk is a free $\mathfrak{o}_i$-module. A \textit{perfect complex} is a complex $\mathscr{K}^\bullet\in D_c^b(X,\mathfrak{o}_i)$, which is bounded ($\mathscr{K}^n=0$ for $n$ large or small enough) and each $\mathscr{K}^n$ is a constructible flat sheaf of $\mathfrak{o}_i$-modules. We denote by $D^b_{ct f}(X,\mathfrak{o}_i)$ the full subcategory of complexes in $D_c^b(X,\mathfrak{o}_i)$ that are isomorphic to a perfect complex.
\end{defn}
\paragraph{Construction of $D_c^b(X,\mathfrak{o})$.}We first introduce the right exact functor $-\otimes_{\mathfrak{o}_{i+1}}\mathfrak{o}_i: \operatorname{Sh}(X,\mathfrak{o}_{i+1})\rightarrow \operatorname{Sh}(X,\mathfrak{o}_{i})$, for which the class of flat sheaves is an acyclic class. Therefore, the left derived functor $-\otimes_{\mathfrak{o}_{i+1}}^\mathbf{L}\mathfrak{o}_i:D^b(X,\mathfrak{o}_{i+1})\rightarrow D^b(X,\mathfrak{o}_{i})$ exists, and can be computed using a resolution of flat sheaves.\\

We take the objects of $D_c^b(X,\mathfrak{o})$ to be sequences of complexes $\{\mathscr{K}^\bullet_i\}_{i\geq1}$, such that $\mathscr{K}_i\in D^b_{ct f}(X,\mathfrak{o}_i)$, together with isomorphisms $\phi^{\mathscr{K}}_i:\mathscr{K}^\bullet_{i+1}\otimes_{\mathfrak{o}_{i+1}}^{\mathbf{L}}\mathfrak{o}_i\xrightarrow{\backsimeq}\mathscr{K}^\bullet_{i+1}$. The morphisms between two sequences are morphisms in each level that satisfy the obvious compatibility conditions.\\

 Notice now, that we have a canonical morphism $\mathscr{F}^\bullet_{i+1}\rightarrow\mathscr{F}^\bullet_{i+1}\otimes_{\mathfrak{o}_{i+1}}\mathfrak{o}_i$, which gives us an induced morphism in cohomology $\mathcal{H}^n(\mathscr{F})\rightarrow \mathcal{H}^n(\mathscr{F}^\bullet_{i+1}\otimes_{\mathfrak{o}_{i+1}}\mathfrak{o}_i)$. Using compatible flat resolutions, which exists because of \cite{kiehl2013weil}[lemma~II.5.3], one can arrange that the morphism $\mathscr{F}^\bullet_{i+1}\rightarrow\mathscr{F}^\bullet_{i+1}\otimes_{\mathfrak{o}_{i+1}}\mathfrak{o}_i\cong\mathscr{F}^\bullet_{i+1}\otimes_{\mathfrak{o}_{i+1}}^\mathbf{L}\mathfrak{o}_i\cong\mathscr{F}^\bullet_i$ is an honest map of complexes rather than just being a map in the derived category. This gives us an associated projective system $\{\mathcal{H}^n(\mathscr{K^\bullet_i})\}_{i\geq1}$ for which we have the following result, which also justifies our use of the adjective  "bounded" in $D_c^b(X,\mathfrak{o})$.

\begin{prop}\cite{kiehl2013weil}[Lemma~II.5.5]\label{2.17}
  The projective system $\{\mathcal{H}^n(\mathscr{K^\bullet_i})\}_{i\geq1}$ is an A-R $\pi$-adic sheaf. Moreover, it is zero for $|n|\gg n_0$ . 
\end{prop}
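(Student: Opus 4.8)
The plan is to establish the two assertions separately. The vanishing for $|n| \gg 0$ is quickly dealt with, while the statement that $\{\mathcal{H}^n(\mathscr{K}^\bullet_i)\}_i$ is an A-R $\pi$-adic sheaf is the substantive point and will be handled by a universal-coefficient/Artin--Rees argument.

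\emph{Boundedness.} Iterating the structure isomorphisms $\phi^{\mathscr{K}}_\bullet$ yields $\mathscr{K}^\bullet_i \otimes^{\mathbf{L}}_{\mathfrak{o}_i} \mathfrak{o}_1 \simeq \mathscr{K}^\bullet_1$ for every $i$, so all the complexes in the system share a common derived reduction modulo $\pi$, namely the perfect complex $\mathscr{K}^\bullet_1 \in D^b_{ctf}(X,\mathfrak{o}_1)$; choose $n_0$ with $\mathcal{H}^n(\mathscr{K}^\bullet_1) = 0$ for $|n| > n_0$, which is possible since $\mathfrak{o}_1$ is a field. Testing at a geometric point, $\mathscr{K}^\bullet_i$ becomes a perfect complex over the local Artinian ring $\mathfrak{o}_i$ with residue field $\mathfrak{o}_1$, and the Tor-amplitude of such a complex equals the cohomological amplitude of its derived fibre at the closed point (a Nakayama/minimal-resolution argument); hence this amplitude lies in $[-n_0, n_0]$ uniformly in $i$, and taking the coefficient module to be $\mathfrak{o}_i$ itself gives $\mathcal{H}^n(\mathscr{K}^\bullet_i) = 0$ for $|n| > n_0$. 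As this holds at every geometric point, the sheaves $\mathcal{H}^n(\mathscr{K}^\bullet_i)$, and with them the whole projective system, vanish for $|n| > n_0$.

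\emph{A-R $\pi$-adicity.} Using the compatible flat resolutions of \cite{kiehl2013weil}[Lemma~II.5.3] already invoked above, I would replace $\{\mathscr{K}^\bullet_i\}$ by an isomorphic system $\{\mathscr{L}^\bullet_i\}$ of bounded complexes of flat constructible $\mathfrak{o}_i$-sheaves whose transition maps are honest surjections $\mathscr{L}^\bullet_{i+1} \twoheadrightarrow \mathscr{L}^\bullet_{i+1}/\pi^i \mathscr{L}^\bullet_{i+1} = \mathscr{L}^\bullet_i$; these assemble into a bounded complex $\mathscr{L}^\bullet$ of flat ($\pi$-adically lisse) $\mathfrak{o}$-sheaves with $\mathscr{L}^\bullet/\pi^i \mathscr{L}^\bullet \cong \mathscr{L}^\bullet_i$. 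Because $\mathscr{L}^\bullet$ has flat terms and $\mathfrak{o}$ has global dimension $1$, the universal coefficient sequence for $\mathscr{L}^\bullet \otimes^{\mathbf{L}}_{\mathfrak{o}} \mathfrak{o}_i = \mathscr{L}^\bullet/\pi^i \mathscr{L}^\bullet$ gives, functorially in $i$, short exact sequences
$$0 \longrightarrow \mathscr{H}^n/\pi^i \mathscr{H}^n \longrightarrow \mathcal{H}^n(\mathscr{L}^\bullet_i) \longrightarrow \mathscr{H}^{n+1}[\pi^i] \longrightarrow 0, \qquad \mathscr{H}^m := \mathcal{H}^m(\mathscr{L}^\bullet),$$
in which the left-hand map is the canonical one and is compatible with the transition maps of the system $\{\mathcal{H}^n(\mathscr{L}^\bullet_i)\}$, while computing the boundary map of $0 \to \mathscr{L}^\bullet \xrightarrow{\pi^i} \mathscr{L}^\bullet \to \mathscr{L}^\bullet/\pi^i \to 0$ shows that the transition map $\mathscr{H}^{n+1}[\pi^{i+1}] \to \mathscr{H}^{n+1}[\pi^i]$ on the right-hand term is multiplication by $\pi$. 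Now $\{\mathscr{H}^n/\pi^i \mathscr{H}^n\}_i$ is a $\pi$-adic sheaf by definition; and $\{\mathscr{H}^{n+1}[\pi^i]\}_i$ is a null system, because $\mathscr{H}^{n+1}$ is constructible, hence there is $c$ with $\pi^c \mathscr{H}^{n+1}[\pi^\infty] = 0$ and $\mathscr{H}^{n+1}[\pi^i] = \mathscr{H}^{n+1}[\pi^\infty]$ for $i \ge c$, so that the $c$-fold iterate of its transition maps, being multiplication by $\pi^c$, is the zero morphism on the shift by $c$. Passing to the Artin--Rees category (the Serre quotient by null systems) therefore kills the right-hand term, and the exact sequence exhibits there an isomorphism $\{\mathcal{H}^n(\mathscr{K}^\bullet_i)\} \cong \{\mathcal{H}^n(\mathscr{L}^\bullet_i)\} \cong \{\mathscr{H}^n/\pi^i \mathscr{H}^n\}$ onto a $\pi$-adic sheaf, so the system is A-R $\pi$-adic.

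\emph{Main obstacle.} The delicate point is the second part: one must genuinely produce the honest complex $\mathscr{L}^\bullet$ over $\mathfrak{o}$ so that a true universal coefficient sequence with a single $\operatorname{Tor}_1$-term is available — the naive derived tensor $- \otimes^{\mathbf{L}}_{\mathfrak{o}_{i+1}} \mathfrak{o}_i$ admits no such sequence, $\mathfrak{o}_{i+1}$ being non-regular — and then one must bound the $\pi$-power torsion of $\mathscr{H}^{n+1}$ uniformly in $i$, which is the finiteness (Artin--Rees) input making the error term a null system. The boundedness statement, by contrast, is a routine Nakayama argument once one notices that all the $\mathscr{K}^\bullet_i$ have a common reduction modulo $\pi$.
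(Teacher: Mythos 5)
Your proposal follows the same strategy as the standard proof in Kiehl--Weissauer (which the paper cites without reproving): pass to compatible flat resolutions, assemble an honest complex over $\mathfrak{o}$, and play the short exact sequence $0 \to \mathscr{L}^\bullet \xrightarrow{\pi^i} \mathscr{L}^\bullet \to \mathscr{L}^\bullet_i \to 0$ against the fact that $\mathfrak{o}$ is a DVR to get a genuine three-term universal-coefficient sequence. Your observation that the naive derived reduction $-\otimes^{\mathbf{L}}_{\mathfrak{o}_{i+1}}\mathfrak{o}_i$ cannot yield such a sequence because $\mathfrak{o}_{i+1}$ is not regular is exactly the right reason to pass to $\mathfrak{o}$. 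The boundedness argument is correct and cleanly done: the common derived mod-$\pi$ fibre $\mathscr{K}^\bullet_1$ controls the Tor-amplitude at every geometric point via the minimal resolution/Nakayama argument, and a global bound follows because $\mathscr{K}^\bullet_1$ itself has globally bounded cohomology.

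Two places in the A-R part, however, are asserted rather than proved, and together they form the real content of the lemma rather than side remarks. First, $\mathscr{L}^\bullet := \varprojlim_i \mathscr{L}^\bullet_i$ is not automatically a well-behaved complex of $\mathfrak{o}$-sheaves: you need the formation of inverse limits to commute with stalks (which works here because the transition maps are surjective and the level-wise stalks are finite, so Mittag--Leffler applies), and you must say in which abelian category the universal coefficient sequence lives; otherwise ``$\mathcal{H}^{n+1}(\mathscr{L}^\bullet)$'' is not an honest sheaf. Second, and more seriously, the null-system claim hinges on the uniform bound $\pi^c \,\mathscr{H}^{n+1}[\pi^\infty] = 0$, which you attribute to ``$\mathscr{H}^{n+1}$ is constructible.'' But constructibility of the cohomology sheaves of $\mathscr{L}^\bullet$ as finite-type $\mathfrak{o}$-sheaves is precisely the finiteness statement that has to be established, not quoted: one must check stalk-wise that $(\mathscr{H}^{n+1})_{\overline{x}}$ is a finitely generated $\mathfrak{o}$-module (this follows from the Mittag--Leffler identification of the stalk of $\mathscr{L}^\bullet$ with a bounded complex of finite free $\mathfrak{o}$-modules), and then obtain uniformity of the torsion bound from the fact that $X$ is Noetherian of finite type and admits a finite constructible stratification on which the system is lisse, so the torsion exponent is locally constant on finitely many strata. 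You flag both points as ``the main obstacle,'' which is honest, but a complete proof must actually fill them — they are not formal reductions but the Artin--Rees input the statement is named for.
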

\end{subsection}
\begin{subsubsection}{Construction of $D_c^b(X,\overline{\mathbb{Q}}_l)$}
In a similar manner to before, we first construct $D^b_c(X,E)$. Its objects are those of $D^b_c(X,\mathfrak{o})$. We write again, $\mathscr{K}\otimes E$ when viewing $\mathscr{K}\in D^b_c(X,\mathfrak{o})$ as an object in $D^b_c(X,E)$. Note once again that since $\operatorname{Hom}_{D_c^b(X,\mathfrak{o}_i)}(\mathscr{F}_i,\mathscr{G}_i)$ is an $\mathfrak{o}_i$-module, we have that $\operatorname{Hom}_{D_c^b(X,\mathfrak{o})}(\mathscr{F},\mathscr{G})$ is an $\mathfrak{o}$-module. Once more, we use the localization procedure to define the homomorphism set.\\

For a finite extension $F/E$, we wish to construct functors $T_{\mathfrak{O}/\mathfrak{o}}:D^b_c(X,\mathfrak{o})\rightarrow D^b_c(X,\mathfrak{O})$. We first construct a functor $\widetilde{T}_{\mathfrak{O}/\mathfrak{o}}:D^b_c(X,\mathfrak{o})\rightarrow D^b_c(X,\mathfrak{O})^e$, where the latter category is defined by the property that for $\mathscr{K}^\bullet\in D^b_c(X,\mathfrak{O})^e$ we have $\mathscr{K}^\bullet_i\in D^b_{c tf}(X,\mathfrak{O}_{ie})$. We set $(\widetilde{T}_{\mathfrak{O}/\mathfrak{o}}(\mathscr{K}^\bullet))_i=\mathscr{K}\otimes^\mathbf{L}_{\mathfrak{o}_i}\mathfrak{O}_{ie}$. In \cite{kiehl2013weil}[II.5.5] we see that the structure morphisms are well defined and that this extends to a functor $T_{\mathfrak{O}/\mathfrak{o}}:D^b_c(X,\mathfrak{o})\rightarrow D^b_c(X,\mathfrak{O})$. Using the same localization procedure, we obtain functors $T_{F/E}:D^b_c(X,E)\rightarrow D^b_c(X,F)$ that again satisfy the usual compatibilities. Once more we define $D^b_c(X,\overline{\mathbb{Q}}_l)$ using the same direct limit procedure as in \ref{2.2.3}.
\end{subsubsection}
\begin{subsubsection}{Cohomology sheaves}\label{2.3.2}
We have seen through proposition \ref{2.17}, that for $\mathscr{F}^\bullet\in D^b_c(X,\mathfrak{o})$ we have well defined $\text{A-R}\  \pi$-adic sheaves $\{\mathcal{H}^n(\mathscr{F}^\bullet_i)\}_{i\geq0}\in\operatorname{Sh_{A-R}}(X,\mathfrak{o})$. We will now extend this construction to $D^b_c(X,\overline{\mathbb{Q}}_l)$.\\

We may without loss of generality assume that our cohomology sheaves $\{\mathcal{H}^n(\mathscr{F}^\bullet_i)\}_{i\geq0}$ are in fact honest  $\pi$-adic sheaves. We then have a natural isomorphism of functors $\mathcal{H}^n(T_{\mathfrak{O}/\mathfrak{o}}(-))\cong S_{\mathfrak{O}/\mathfrak{o}}(\mathcal{H}^n(-))$. This is due to the fact that $-\otimes_{\mathfrak{o}_i}\mathfrak{O}_{ie}$ is exact. Indeed, $\mathfrak{O}$ is a free $\mathfrak{o}$-module, and therefore $\mathfrak{O}_{ie}$ is also a free $\mathfrak{o}_{i}$-module.\\

Notice now that the localization procedure for both $D^b_c(X,\mathfrak{o})$ and $\operatorname{Sh}(X,\mathfrak{o})$ both only involve tensoring the morphism set by $E$. We can therefore upgrade the previous construction to the functor $\mathcal{H}^n:D^b_c(X,E)\rightarrow\operatorname{Sh}(X,E)$. The previous natural isomorphism now becomes, $\mathcal{H}^n(T_{F/E}(-))\cong S_{F/E}(\mathcal{H}^n(-))$. This compatibility allows us to define cohomology functors 
\begin{align*}
    \mathcal{H}^n:D^b_c(X,\overline{\mathbb{Q}}_l)\rightarrow\operatorname{Sh}(X,\overline{\mathbb{Q}}_l).
\end{align*}
Furthermore, proposition \ref{2.17} translates into the fact that for $\mathscr{F}\otimes E\in D^b_c(X,\overline{\mathbb{Q}}_l)$, we have that the cohomology sheaves $\mathcal{H}^n(\mathscr{F}\otimes E)$ are zero in large enough degrees.
\end{subsubsection}
\begin{subsubsection}{The derived functors}
We finish this section by introducing the derived functors associated to the derived category just constructed. These functors relate to each other using the usual adjoint identities. Their construction follows a similar method to what we have seen before. One key part of the process, occurs at the level of the derived category $D^b_{ct f}(X,\mathfrak{o}_i)$, where one has to prove that the the associated derived functors behave well with respect to constructible sheaves. \\

Given a morphism $f:X\rightarrow Y$, we have the following functors,
\begin{itemize}
    \item $\mathbf{R}f_*:D^b_c(X,\overline{\mathbb{Q}}_l)\rightarrow D^b_c(Y,\overline{\mathbb{Q}}_l)$,
    \item $f^*:D^b_c(Y,\overline{\mathbb{Q}}_l)\rightarrow D^b_c(X,\overline{\mathbb{Q}}_l)$
    \item $\mathbf{R}f_!:D^b_c(X,\overline{\mathbb{Q}}_l)\rightarrow D^b_c(Y,\overline{\mathbb{Q}}_l)$
    \item $f^!:D^b_c(Y,\overline{\mathbb{Q}}_l)\rightarrow D^b_c(X,\overline{\mathbb{Q}}_l)$
    \item $\mathbf{R}\mathscr{H}om:D^b_c(X,\overline{\mathbb{Q}}_l)^{op}\times D^b_c(X,\overline{\mathbb{Q}}_l)\rightarrow D^b_c(X,\overline{\mathbb{Q}}_l)$
    \item $-\otimes^\mathbf{L}-:D^b_c(X,\overline{\mathbb{Q}}_l)\times D^b_c(X,\overline{\mathbb{Q}}_l)\rightarrow D^b_c(X,\overline{\mathbb{Q}}_l)$
\end{itemize}
These functors satisfy the following adjoint relationships,
\begin{align*}
    \operatorname{Hom}_{D^b_c(X,\overline{\mathbb{Q}}_l)}(f^*\mathscr{K},\mathscr{L})&\cong{Hom}_{D^b_c(X,\overline{\mathbb{Q}}_l)}(\mathscr{K},\mathbf{R}f_*\mathscr{L})\\
    \operatorname{Hom}_{D^b_c(X,\overline{\mathbb{Q}}_l)}(\mathbf{R}f_!\mathscr{K},\mathscr{L})&\cong{Hom}_{D^b_c(X,\overline{\mathbb{Q}}_l)}(\mathscr{K},f^!\mathscr{L}).\\
\end{align*}
\end{subsubsection}
\end{subsection}

\begin{subsection}{The Function-Sheaf Dictionary}
    We now introduce a rough correspondence between complexes in $D^b_c(X,\overline{\mathbb{Q}}_l)$ and functions on the $\mathbb{F}_q$ points of an algebraic variety $X$. This correspondence is not a bijection, but rather serves to give good intuition by relating sheaves to more classical notions.

\begin{subsubsection}{$\overline{\mathbb{Q}}_l$-sheaves and representations}
\begin{defn} The \textit{stalk} $\mathscr{F}_{\overline{x}}$,  of a $\pi$-adic sheaf $\mathscr{F}\in\operatorname{Sh}(X,\mathfrak{o})$ at a geometric point $\overline{x}$, is defined to be the finite $\mathfrak{o}$-module $\varprojlim(\mathscr{F}_n)_{\overline{x}}$.   
\end{defn}
Using theorem \ref{2.11}, and the fact that any finite $\mathfrak{o}$-module $M$ can be represented as $\varprojlim M_n$, where $M_n=M/\mathfrak{o}^{m+1}M$, we have an equivalence of categories between $\pi$-adic sheaves and finite $\mathfrak{o}$-modules with a continuous action of $\pi_1(X,\overline{x})$.
\begin{defn}The \textit{stalk} $(\mathscr{F}\otimes E )_{\overline{x}}$, of an $E$-sheaf $\mathscr{F}\otimes E\in\operatorname{Sh}(X,E)$ at a geometric point $\overline{x}$, is defined to be the finite dimensional $E$-vector space $E\otimes\mathscr{F}_{\overline{x}}$.
\end{defn}
\begin{prop}
We have an equivalence of categories between $E$-sheaves on $X$ and continuous representations of $\pi_1(X,\overline{x})$ on finite dimensional $E$-vector spaces.
\end{prop}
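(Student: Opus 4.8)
The plan is to realize the asserted equivalence as the stalk functor
$\Phi\colon\operatorname{Sh}(X,E)\to\{\text{continuous fin.\ dim.\ }E\text{-representations of }\pi_1(X,\overline{x})\}$,
$\mathscr{F}\otimes E\mapsto(\mathscr{F}\otimes E)_{\overline{x}}=\mathscr{F}_{\overline{x}}\otimes_{\mathfrak{o}}E$, where the $\pi_1$-action is transported from the action on the $\pi$-adic stalk $\mathscr{F}_{\overline{x}}$ described in Remark \ref{2.13}. On morphisms, a map in $\operatorname{Sh}(X,E)$ is by definition a finite sum $\sum_j f_j\otimes\lambda_j$ with $f_j$ a morphism of $\pi$-adic sheaves and $\lambda_j\in E$; applying the $\mathfrak{o}$-level stalk functor to each $f_j$ and extending scalars yields an $E$-linear, $\pi_1$-equivariant map (automatically continuous, being $E$-linear between finite-dimensional $E$-vector spaces), and this assignment is visibly functorial. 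Since the objects of $\operatorname{Sh}(X,E)$ are the objects of $\operatorname{Sh_{A-R}}(X,\mathfrak{o})$ and the latter category is equivalent to the category $\operatorname{Sh}(X,\mathfrak{o})$ of honest $\pi$-adic sheaves, I may and will assume all objects are honest $\pi$-adic sheaves, so that the already-established equivalence $\Phi_{\mathfrak{o}}\colon\operatorname{Sh}(X,\mathfrak{o})\xrightarrow{\ \sim\ }\{\text{f.g.\ }\mathfrak{o}\text{-modules with continuous }\pi_1\text{-action}\}$, $\mathscr{F}\mapsto\mathscr{F}_{\overline{x}}$, is available.

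For full faithfulness, the point is that \emph{both} Hom-sets are obtained from the corresponding $\mathfrak{o}$-level Hom-sets by applying $-\otimes_{\mathfrak{o}}E$: on the sheaf side this is the definition of $\operatorname{Sh}(X,E)$, and on the representation side one must check that for finitely generated $\mathfrak{o}$-modules $M,N$ with continuous $\pi_1$-action the natural map $\operatorname{Hom}_{\mathfrak{o}[\pi_1]}(M,N)\otimes_{\mathfrak{o}}E\to\operatorname{Hom}_{E[\pi_1]}(M\otimes_{\mathfrak{o}}E,N\otimes_{\mathfrak{o}}E)$ is an isomorphism. Injectivity holds because $\operatorname{Hom}_{\mathfrak{o}[\pi_1]}(M,N)$ is an $\mathfrak{o}$-submodule of the finitely generated module $\operatorname{Hom}_{\mathfrak{o}}(M,N)$ and a $\pi_1$-map becomes zero after $\otimes_{\mathfrak{o}}E$ precisely when its image is $\mathfrak{o}$-torsion, i.e.\ when it is a torsion element there. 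Surjectivity holds because, writing $\overline{N}=N/N_{\mathrm{tors}}$ so that $\overline{N}\hookrightarrow N\otimes_{\mathfrak{o}}E$, any $\psi\in\operatorname{Hom}_{E[\pi_1]}(M\otimes E,N\otimes E)$ sends the finitely generated image of $M$ into $\pi^{-k}\overline{N}$ for some $k$, whence $\pi^{k}\psi$ descends to an honest $\pi_1$-equivariant $\mathfrak{o}$-linear map and $\psi=(\pi^{k}\psi)\otimes\pi^{-k}$. Together with full faithfulness of $\Phi_{\mathfrak{o}}$, this gives full faithfulness of $\Phi$.

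For essential surjectivity, given a continuous representation $V$ of $\pi_1$ on a finite-dimensional $E$-vector space I produce a $\pi_1$-stable lattice: fix any $\mathfrak{o}$-lattice $L_0\subset V$; continuity of $\pi_1\to\mathrm{GL}(V)$ and compactness of $\pi_1$ force the image to be bounded, so $gL_0\subseteq\pi^{-k}L_0$ uniformly in $g$ for some $k$, and $L:=\sum_{g\in\pi_1}gL_0$ is then a $\pi_1$-stable $\mathfrak{o}$-submodule of the Noetherian module $\pi^{-k}L_0$, hence finitely generated and of full rank. Its $\pi_1$-action is continuous (each reduction $L/\pi^nL$ carries an action through a finite quotient of $\pi_1$), so $L$ lies in the source of $\Phi_{\mathfrak{o}}$ and corresponds to a $\pi$-adic sheaf $\mathscr{F}$ with $\mathscr{F}_{\overline{x}}\cong L$, giving $\Phi(\mathscr{F}\otimes E)=L\otimes_{\mathfrak{o}}E\cong V$. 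I expect the main technical obstacle to be the Hom-comparison isomorphism of the middle step — in particular, bookkeeping the torsion in the $\pi$-adic stalks so that the two applications of $-\otimes_{\mathfrak{o}}E$ genuinely align — together with making the boundedness/stable-lattice argument precise; both are standard, but this is where the content lies.
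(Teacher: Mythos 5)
Your proposal is correct and follows the same overall strategy as the paper: reduce to the already-established $\pi$-adic equivalence $\Phi_{\mathfrak{o}}$ by producing a $\pi_1$-stable lattice inside any continuous finite-dimensional $E$-representation. The paper only sketches the essential surjectivity step, and its lattice construction differs slightly in mechanism from yours: it uses openness of $L_0$ in $V$ and of the preimage of $L_0$ under the action map to extract an open (hence finite-index, by compactness) subgroup $H\leq\pi_1$ stabilizing $L_0$, then takes the finite sum $L=\sum_{g\in\pi_1/H}gL_0$; you instead use compactness to get a uniform bound $gL_0\subseteq\pi^{-k}L_0$ and take the full sum $\sum_{g}gL_0$, invoking Noetherianity of $\pi^{-k}L_0$ to conclude finite generation. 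Both are standard and equivalent. Your additional full-faithfulness argument (via the comparison $\operatorname{Hom}_{\mathfrak{o}[\pi_1]}(M,N)\otimes_{\mathfrak{o}}E\cong\operatorname{Hom}_{E[\pi_1]}(M\otimes E,N\otimes E)$, which one can also see more cleanly from flatness of $E$ over $\mathfrak{o}$ together with the fact that $\pi_1$-equivariant Homs are the invariants of ordinary Homs, i.e.\ a kernel, and kernels commute with flat base change) fills in a step the paper simply omits.
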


We reduce this case to the previous result. Let $V$ be an $E$-vector space with an action of $\pi_1(X,\overline{x})$. Consider any finite $\mathfrak{o}$-module $L_0$ that spans $V$. Notice that since $L_0$ is open in $V$, $\pi_1(X,\overline{x})\times L_0$ is open in $\pi_1(X,\overline{x})\times V$. Let $\alpha:\pi_1(X,\overline{x})\times V\rightarrow V$ be the action morphism. Again since $L_0$ is open in $V$, we have that $\alpha^{-1}(L_0)$ is open in $\pi_1(X,\overline{x})\times V\rightarrow V$. Therefore the intersection of $\pi_1(X,\overline{x})\times L_0$ and $\alpha^{-1}(L_0)$ is also open. The open subgroup $H$, that corresponds to the first factor of this open set, takes $L_0$ to itself. Furthermore, since $\pi_1(X,\overline{x})$ is compact, $H$ is a finite index subgroup. If we now set $L=\sum g(L_0)$, where $g$ runs over the finite number of representatives of $\pi_1(X,\overline{x})/H$, we get a $\pi_1(X,\overline{x})$-stable $\mathfrak{o}$-submodule.

\begin{defn}
    The \textit{stalk} $(\mathscr{F}\otimes E )_{\overline{x}}$, of an $\overline{\mathbb{Q}}_l$-sheaf $\mathscr{F}\otimes E\in\operatorname{Sh}(X,\overline{\mathbb{Q}}_l)$ at a geometric point $\overline{x}$, is defined to be the finite dimensional $\overline{\mathbb{Q}}_l$-vector space $(\mathscr{F}\otimes E )_{\overline{x}}\otimes\overline{\mathbb{Q}}_l$.
\end{defn}
\begin{prop}\label{2.22}
We have an equivalence of categories between $\overline{\mathbb{Q}}_l$-sheaves on $X$ and continuous representations of $\pi_1(X,\overline{x})$ on finite dimensional $\overline{\mathbb{Q}}_l$-vector spaces.
\end{prop}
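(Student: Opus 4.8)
The plan is to deduce the stated equivalence from the analogous statement for $E$-sheaves established just above, by passing to the filtered $2$-colimit over finite subextensions $E\subset\overline{\mathbb{Q}}_l$. Write $\Phi_E\colon\operatorname{Sh}(X,E)\xrightarrow{\sim}\{\text{continuous finite-dimensional }E\text{-representations of }\pi_1(X,\overline{x})\}$ for the stalk equivalence already proved. The first step is to record that these equivalences are compatible with extension of scalars: for a finite extension $F/E$ there is a natural isomorphism $\Phi_F\circ S_{F/E}\cong(-\otimes_E F)\circ\Phi_E$. This is immediate from the foregoing constructions: the stalk of $S_{F/E}(\mathscr{F}\otimes E)$ is $F\otimes_{\mathfrak{O}}\big(\mathscr{F}_{\overline{x}}\otimes_{\mathfrak{o}}\mathfrak{O}\big)=F\otimes_E(\mathscr{F}\otimes E)_{\overline{x}}$, with $\pi_1(X,\overline{x})$-action obtained by base change, since $-\otimes_{\mathfrak{o}_i}\mathfrak{O}_{ie}$ is exact (being tensor with a free module) and commutes with the formation of stalks.

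Granting this, the second step is to identify both sides of the desired equivalence with filtered $2$-colimits indexed by the poset of finite subextensions of $\overline{\mathbb{Q}}_l$. On the sheaf side this is exactly the definition of $\operatorname{Sh}(X,\overline{\mathbb{Q}}_l)$ recalled in \ref{2.2.3}, so it remains to identify the category of continuous finite-dimensional $\overline{\mathbb{Q}}_l$-representations of $\pi_1(X,\overline{x})$ with $\varinjlim_E\{\text{continuous finite-dimensional }E\text{-representations}\}$; then $\Phi:=\varinjlim_E\Phi_E$ is the functor we want. Full faithfulness of the comparison functor is the easy half: for $E$-representations $V_0,W_0$ one has $\operatorname{Hom}_{\overline{\mathbb{Q}}_l[\pi_1]}(V_0\otimes_E\overline{\mathbb{Q}}_l,\,W_0\otimes_E\overline{\mathbb{Q}}_l)=\big(\operatorname{Hom}_E(V_0,W_0)\otimes_E\overline{\mathbb{Q}}_l\big)^{\pi_1(X,\overline{x})}$, and since $\pi_1(X,\overline{x})$ is compact the formation of invariants of a fixed finite-dimensional continuous representation commutes with the filtered colimit $\overline{\mathbb{Q}}_l=\varinjlim_F F$; moreover every $\pi_1(X,\overline{x})$-equivariant $\overline{\mathbb{Q}}_l$-linear map between continuous representations is automatically continuous, so no topological information is lost on passing to the colimit.

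The essential point, and the step I expect to be the main obstacle, is essential surjectivity of the comparison functor: one must show that every continuous representation $\rho\colon\pi_1(X,\overline{x})\to\mathrm{GL}_n(\overline{\mathbb{Q}}_l)$ is isomorphic to $V_0\otimes_E\overline{\mathbb{Q}}_l$ for some finite extension $E/\mathbb{Q}_l$ and some continuous $E$-representation $V_0$, equivalently that $\rho$ is conjugate to one valued in $\mathrm{GL}_n(E)$. The approach is to use that $\pi_1(X,\overline{x})$ is profinite, hence $\rho(\pi_1(X,\overline{x}))$ is a compact — in particular bounded — subgroup of $\mathrm{GL}_n(\overline{\mathbb{Q}}_l)$; one then argues that such a subgroup stabilises an $\mathfrak{o}_E$-lattice in $\overline{\mathbb{Q}}_l^n$ for a suitable finite $E$ (starting from a lattice it preserves by boundedness, reducing modulo the maximal ideal, using that the image of a compact group in the discrete group $\mathrm{GL}_n(\overline{\mathbb{F}}_l)$ is finite, and controlling the ramification and residue extensions that arise), and the induced action on that lattice furnishes the $E$-form $V_0$. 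This is the genuinely delicate part, because the naive shortcut ``a compact subset of $\overline{\mathbb{Q}}_l$ lies in a finite extension'' is \emph{false} — a convergent sequence of elements of unbounded degree is a counterexample — so the argument has to exploit the group structure of the image together with the compactness of $\pi_1(X,\overline{x})$ in an essential way, in the spirit of but more involved than the lattice argument used in the $E$-coefficient case above. With essential surjectivity in hand, combining the three steps yields the equivalence.
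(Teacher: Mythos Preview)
Your overall strategy---reducing to the $E$-coefficient equivalence and identifying the essential surjectivity of the comparison with the claim that every continuous $\rho\colon\pi_1(X,\overline x)\to\mathrm{GL}_n(\overline{\mathbb{Q}}_l)$ factors through $\mathrm{GL}_n(E)$ for some finite $E/\mathbb{Q}_l$---is exactly the paper's, and your treatment of full faithfulness is in fact more careful than what the paper writes.

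Where you diverge is in the method for the key step. You propose a lattice-and-reduction argument: pass to a $\overline{\mathbb{Z}}_l$-stable lattice, reduce modulo the maximal ideal to get a finite image in $\mathrm{GL}_n(\overline{\mathbb{F}}_l)$, and then ``control the ramification and residue extensions that arise.'' That last clause is where the work hides, and your sketch does not make clear how to carry it out: the open finite-index subgroup landing in $1+M_n(\mathfrak{m}_{\overline{\mathbb{Z}}_l})$ can a priori involve entries of arbitrarily small positive valuation, so bounding the ramification is not automatic. The paper instead uses the Baire category theorem, which sidesteps this entirely. The image $\Gamma=\rho(\pi_1(X,\overline x))$ is compact, hence a Baire space; since $\overline{\mathbb{Q}}_l$ is the union of its countably many finite subextensions $K$, and each $\mathrm{GL}_n(K)\subset\mathrm{GL}_n(\overline{\mathbb{Q}}_l)$ is closed, one writes $\Gamma=\bigcup_K\bigl(\Gamma\cap\mathrm{GL}_n(K)\bigr)$ as a countable union of closed subsets. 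By Baire one of these has nonempty interior in $\Gamma$; being a subgroup it is then open, hence of finite index by compactness, and adjoining the finitely many coset representatives to $K$ produces a finite extension containing all of $\Gamma$. This is shorter and avoids precisely the delicate point you flagged.
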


The essential part of the proof is to show that any representation $\rho:\pi_1(X,\overline{x})\rightarrow GL_n(\overline{\mathbb{Q}}_l)$ factors over $GL_n(K)$ for some finite extension $K/\mathbb{Q}_l$. The group $GL_n(K)$ is Hausdorff since $GL_n$ is separated. Furthermore the image of $\rho$ which we call $\Gamma$ is compact. Therefore, one can use the Baire category theorem to show that one of the interiors of $\Gamma\cap GL_n(K)$ is non-empty, which suffices for the claim.
    
\end{subsubsection}
\begin{subsubsection}{Constructing functions from sheaves.}\label{2.4.2}
For the rest of this section, let $X_0$ be a variety over $\mathbb{F}_q$ and $K_0\in D^b_c(X,\overline{\mathbb{Q}}_l)$. Let $X=X_0\times_{\mathbb{F}_q}\overline{\mathbb{F}}_q$ and $K$ be the pullback of $K_0$ to $X$. Where convenient, we refer to $\mathbb{F}_q$ and $\overline{\mathbb{F}}_q$ as $k$ and $\overline{k}$ respectively. We can then associate a function to $K_0$,

    $$ f^{K_0}:X_0(\mathbb{F}_q)\rightarrow \overline{\mathbb{Q}}_l$$
    which for a  point $x \in X_0(\mathbb{F}_q)$ is given by
    $$\sum_i(-1)^i \Tr(\mathrm{Fr},(\mathcal{H}^i(K_0))_{\overline{x}} )$$

Here $\overline{x}$ is a geometric point over $x\in X_0$, and $\mathrm{Fr}\in\mathrm{Gal}(k(\overline{x}),k(x))$ is the Frobenius element in the Galois group. The notation $\mathcal{H}$ is the same as from section \ref{2.3.2}, from which we have that the sum is finite. Recall from the same, that the cohomology sheaves are $\overline{\mathbb{Q}}_l$-sheaves. Hence, by proposition \ref{2.22} we get an action of $\pi_1(x,\overline{x})=\mathrm{Gal}(k(\overline{x}),k(x))$. In particular $\mathrm{Fr}\in\mathrm{Gal}(k(\overline{x}),k(x))$ corresponds to an element in $GL_n(\overline{\mathbb{Q}}_l)$ for which we can take the trace.

\begin{rmk}
    We can also associate a closely related function $f_g^{K_0}$, by considering the action of the inverse of the Frobenius morphism in $\mathrm{Gal}(k(\overline{x}),k(x))$. We call this element the geometric Frobenius, and use the subscript to distinguish between the two cases. We will be using this function instead of the one described above for some calculations later.
\end{rmk}
\begin{defn}\label{2.25}
    We denote by $\mathrm{Fr}_g$ the action of the geometric Frobenius induced by functionality on $H^i_c(X,\mathcal{F})$ in the following way. Let $f^{-1}\in \mathrm{Gal}(\overline{\mathbb{F}}_q,\mathbb{F}_q)$ represent the inverse Frobenius morphism. Then $f^{-1}$ induces an endomorphism of $\mathrm{Fr}_g=:\mathrm{id}_X\times f^{-1}:X\rightarrow X$. If we denote by $p:X\rightarrow X_0$, the projection morphism, we have the following chain of isomorphisms for a sheaf $\mathcal{F}_0$ on $X_0$,
    $$\mathrm{Fr}_g^*(\mathcal{F})=\mathrm{Fr}_g^*(p^*\mathcal{F}_0)=p^*\mathcal{F}_0=\mathcal{F}.$$

    Therefore we have an induced action on cohomology;
$$H^i_c(X,\mathcal{F})\rightarrow H^i_c(X,\mathrm{Fr}_g^*(\mathcal{F}))\simeq H^i_c(X,\mathcal{F})$$
\end{defn}

\begin{rmk}
We now offer a different perspective on this action which will be useful in later applications. We can identify the pullback of a sheaf to a geometric point, with its global sections. Therefore, the base change theorem tells us that $H^i_c(X,\mathcal{F})\cong (R^i f_!(X_0, {\mathcal{F}}_0))_{\overline{k}}$. let $f\in \mathrm{Gal}(\overline{k},k)$ represent the Frobenius automorphism. We then have the following diagram of étale neighborhoods,
\[\begin{tikzcd}
	{k'} & {k'} \\
	{\overline{k}}
	\arrow["{f^{-1}|_{k'}}", from=1-1, to=1-2]
	\arrow["{\overline{u}\circ f}", from=2-1, to=1-1]
	\arrow["{\overline{u}}"', from=2-1, to=1-2]
\end{tikzcd}\]

Notice now that the sheaf $R^if_!(X_0,\mathcal{F}_0)$ is given as the sheafification of the presehaf $k'\mapsto H^k_c(X_{k'},\mathcal{F}_{k'})$, and that the stalk of the sheaf is isomorphic to the stalk of the presheaf. We therefore have that the above map induces a morphism of global sections in the colimit which defines the stalk.

\[\begin{tikzcd}
	{H^k_c(X_{k'},\mathcal{F}_{k'})^{\overline{u}}} && {H^k_c(X_{k'},\mathcal{F}_{k'})^{\overline{u}\circ f}}
	\arrow["{t\mapsto\alpha(t)}", from=1-1, to=1-3]
\end{tikzcd}\]

where $\alpha:H^k_c(X_{k'},\mathcal{F}_{k'})\rightarrow H^k_c(X_{k'},\mathcal{F}_{k'})$ is the analogous action on cohomology induced by the geometric Frobenius element through functionality. The superscripts simply keep track of the indices in the colimit.\\

Recall now that the action of $\mathrm{Gal}(k(\overline{x})/k(x))$ described in definition \ref{2.12} simply permuted the indices in the colimit, by precomposition with $f$. This gives us the following diagram,

\[\begin{tikzcd}
	{H^k_c(X_{k'},\mathcal{F}_{k'})^{\overline{u}}} && {H^k_c(X_{k'},\mathcal{F}_{k'})^{\overline{u}\circ f}} \\
	{H^k_c(X_{k'},\mathcal{F}_{k'})^{\overline{u}}}
	\arrow["{t\mapsto\alpha(t)}", from=1-1, to=1-3]
	\arrow[dashed, from=1-1, to=2-1]
	\arrow["{\alpha(t)\mapsto\alpha(t)}", from=1-3, to=2-1]
\end{tikzcd}\]

Therefore, we see that the total effect of the action of  $f\in\mathrm{Gal}(k(\overline{x})/k(x))$ on $(R^i f_!(X_0, \mathcal{F}_0))_{\overline{k}}$ is the same as that of the action previously described by functionality in definition \ref{2.25}. This alternative perspective of Galois actions will be useful in the the following theorem.\\

\end{rmk}

\begin{thm}[Grothendieck trace formula]\label{2.23}
    With the same notation as before and $\mathcal{F}\in D^b_c(X,\overline{\mathbb{Q}}_l)$, we have the equality,
    $$\sum_{i\in\mathbb{Z}}\sum_{x\in X_0(\mathbb{F}_q)}(-1)^i\Tr(\mathrm{Fr}_g,(\mathcal{H}^i\mathcal{F}_0)_{\overline{x}})=\sum_i(-1)^i\Tr(\mathrm{Fr}_g,{H}^i_c(X,\mathcal{F}))$$
\end{thm}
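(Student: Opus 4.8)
The plan is to reduce the statement, by a standard dévissage, to the case of a single lisse ($\mathbb{Q}_\ell$-)sheaf placed in degree zero, and then to the case of $\mathbb{F}_\ell$- or $\mathfrak{o}_i$-coefficients where one can invoke the Lefschetz–Verdier/Woods Hole fixed point formula. First I would observe that both sides of the asserted equality are additive in distinguished triangles: for the left-hand side this is because $\mathcal{H}^i$ takes triangles to long exact sequences and the alternating sum of traces over an exact sequence of Galois modules vanishes; for the right-hand side this is the long exact sequence in compactly supported cohomology together with the same alternating-sum principle. Hence, filtering $\mathcal{F}_0$ by its (finitely many, by Proposition \ref{2.17}) cohomology sheaves, it suffices to treat $\mathcal{F}_0 = \mathcal{F}_0[0]$ a single constructible $\overline{\mathbb{Q}}_\ell$-sheaf; and then, stratifying $X_0$ into locally closed pieces on which $\mathcal{F}_0$ is lisse and using the excision triangle $j_!j^*\to \mathrm{id}\to i_*i^*$ for an open–closed decomposition (both sides are again additive), one reduces to $X_0$ smooth with $\mathcal{F}_0$ lisse. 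One must check the trace formula is insensitive to the passage to a finite extension and to clearing denominators, so that one may assume $\mathfrak{o}$-coefficients, and then, by the compatibility of everything with $-\otimes^{\mathbf{L}}_{\mathfrak{o}_{i+1}}\mathfrak{o}_i$ and a passage to the limit over $i$ (using that the cohomology groups are finitely generated $\mathfrak{o}$-modules and Frobenius traces are computed mod $\pi^i$ compatibly), to the case of a constructible sheaf of $\mathfrak{o}_i$-modules, i.e.\ genuinely finite coefficients.

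With finite coefficients the two sides are honest sums of traces of an endomorphism on finite (or finite-dimensional) groups, and the identity becomes the Lefschetz trace formula for the geometric Frobenius acting on $X$. The key geometric input is that the fixed points of the geometric Frobenius endomorphism $\mathrm{Fr}_g\colon X\to X$ are exactly the points of $X_0(\mathbb{F}_q)$, each with "multiplicity one" — concretely, the graph of $\mathrm{Fr}_g$ meets the diagonal in $X\times X$ transversally, because the differential of Frobenius vanishes. I would invoke the Lefschetz–Verdier fixed point formula (or, in the smooth case, Grothendieck's original argument via the cycle class of the graph and the Künneth decomposition of the diagonal in $\ell$-adic cohomology) to get
$$\sum_i (-1)^i \Tr(\mathrm{Fr}_g, H^i_c(X,\mathcal{F})) \;=\; \sum_{x\in \mathrm{Fix}(\mathrm{Fr}_g)} \mathrm{loc}_x,$$
and then identify each local term $\mathrm{loc}_x$ with $\sum_i (-1)^i\Tr(\mathrm{Fr}_g,(\mathcal{H}^i\mathcal{F}_0)_{\overline{x}})$. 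The bridge between the "arithmetic" stalk description on the left of the theorem and the "geometric" $\mathrm{Fr}_g$-action on cohomology is precisely the compatibility spelled out in the remark following Definition \ref{2.25}: the action of $f\in\mathrm{Gal}(\overline{k}/k)$ permuting indices in the colimit defining the stalk of $R^if_!\mathcal{F}_0$ agrees with the functorial geometric-Frobenius action, so the local contribution at $x$ really is the alternating trace on the stalk cohomology.

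The main obstacle I anticipate is the local fixed-point computation — showing each $x\in X_0(\mathbb{F}_q)$ contributes exactly $\sum_i(-1)^i\Tr(\mathrm{Fr}_g,(\mathcal{H}^i\mathcal{F}_0)_{\overline{x}})$ with no spurious factor. This rests on the separability/transversality of Frobenius (its derivative is zero, so $1-d\,\mathrm{Fr}_g = 1$ is invertible on tangent spaces, giving a naive local index of $1$) and on identifying the abstract Lefschetz–Verdier local term with the naive trace on the stalk; the latter identification is itself a theorem (due to Grothendieck, with contributions by Illusie and others) and is the technical heart of the matter. A secondary subtlety is bookkeeping the two Frobenii: one should fix once and for all that $\mathrm{Fr}_g$ on cohomology is inverse to the arithmetic Frobenius, and that $\Tr(\mathrm{Fr}_g,(\mathcal{H}^i\mathcal{F}_0)_{\overline{x}})$ on the left is computed with the same convention, so that no sign or inversion discrepancy creeps in. Everything else — the dévissage, the reduction to finite coefficients, the limit arguments — is routine given the formalism of $D^b_c(X,\overline{\mathbb{Q}}_\ell)$ developed above.
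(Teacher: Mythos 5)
The paper does not prove this theorem. It is stated as a black box (Grothendieck's trace formula, for which the standard reference is the \emph{Rapport} in SGA $4\tfrac12$, or \cite{kiehl2013weil}), and the text following the statement is only there to pin down the two Frobenius actions; the remark preceding it — the one you cite — is indeed the reconciliation needed to make sense of the left-hand side. So there is no proof in the paper to compare against, and your sketch should be judged on its own terms.

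As a plan it is essentially correct: both sides of the identity are additive in distinguished triangles and under open–closed decompositions, so one reduces to a single lisse sheaf on a smooth stratum; the passage from $\overline{\mathbb{Q}}_\ell$ to $E$ to $\mathfrak{o}$ to $\mathfrak{o}_i$ is routine once one notes that the relevant cohomology and stalks are finitely generated and that trace is continuous for the $\pi$-adic topology; and the bookkeeping of geometric versus arithmetic Frobenius is handled exactly by the remark you point to. Two things deserve flagging. First, the route you propose — Lefschetz–Verdier followed by identification of local terms with naive traces on stalks — is the Deligne–Illusie approach, and the local-term identification there is not a supporting lemma but the bulk of the difficulty; Grothendieck's original proof avoids Lefschetz–Verdier entirely, instead using a fibration-by-curves dévissage (Leray) to reduce to the case of curves with lisse coefficients, which is then handled directly. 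If your goal is to actually carry this out, the curve dévissage is the more economical path. Second, your transversality observation ($d\,\mathrm{Fr}_g = 0$, so the graph of Frobenius meets the diagonal transversally with index one) is correct but only directly delivers the local term for the constant sheaf on a smooth $X_0$; for a general constructible $\mathcal{F}_0$ on a possibly singular $X_0$ the naive index computation does not by itself compute the Verdier local term, and you must fall back on one of the two global arguments above. In short: right skeleton, but the step you identify as "the technical heart" is in fact most of the body, and the paper wisely does not attempt it.
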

Here the action on the right hand side is as described in definition \ref{2.25}. Whereas, the action of $\mathrm{Fr}_g=\mathrm{Fr^{-1}} \in \mathrm{Gal}(k(\overline{x})/k(x))=\pi_1(x,\overline{x})$ on $(\mathcal{H}^i \mathcal{F}_0)_{\overline{x}}$ is the one outlined in theorem \ref{2.11}.

\end{subsubsection}   
    
\end{subsection}

\section{The Fourier Transform for the Affine Line}
In this section we detail the construction of the Fourier transform on the affine line. We also calculate some key examples and try to relate it to the classical Fourier transform for finite groups. Throughout this section we fix a prime $l\neq p$, and a geometric point $\overline{x}\rightarrow\mathbb{A}_0$ over the origin.

\begin{subsection}{The Set-up}

Let $k$ be a finite field of characteristic $p$, and let $\mathbb{A}_0$ denote the affine line over $k$. The Artin-Schreier covering given by;
 \begin{align*}
      \mathscr{P}:\mathbb{A}_0  &\rightarrow \mathbb{A}_0\\
 x  &\rightarrow x^q-x
 \end{align*}
 is a finite étale Galois cover, whose Galois group is canonically isomorphic to the additive group of $k$. This implies that $k$ is a quotient of the étale fundamental group $\pi_1(\mathbb{A}_0,\overline{x})$. Therefore any character,

 $$\psi: k\rightarrow\overline{\mathbb{Q}}_l^*$$ gives by precomposition, a character of the étale fundamental group. By proposition \ref{2.22} which gives an equivalence of categories between $\overline{\mathbb{Q}}_l$  sheaves and continuous representations of the étale fundamental group on finite dimensional $\overline{\mathbb{Q}}_l$-vector spaces, we get an associated sheaf $$\mathcal{L}_0(\psi)\in D^b_c(\mathbb{A}_0,\overline{\mathbb{Q}}_l).$$

\begin{defn} We fix $\psi: k \rightarrow \overline{\mathbb{Q}}_l^*$ a nontrivial character. Consider the diagram

\[\begin{tikzcd}
	& {\mathbb{A}_0\times\mathbb{A}_0} && {\mathbb{A}_0} \\
	\\
	{\mathbb{A}_0} && {\mathbb{A}_0}
	\arrow["m"', from=1-2, to=1-4]
	\arrow["{\pi_1}", from=1-2, to=3-1]
	\arrow["{\pi_2}"', from=1-2, to=3-3]
\end{tikzcd}\]

\end{defn}

we define the Fourier transform,
$$T_\psi: D^b_c(\mathbb{A}_0,\overline{\mathbb{Q}}_l) \rightarrow D^b_c(\mathbb{A}_0,\overline{\mathbb{Q}}_l)$$by
$$T_\psi(K_0)=R {\pi_1}_!({\pi_2}^*(K_0)\otimes m^*\mathcal{L}_0(\psi))[1]$$

Where $\pi_1$ and $\pi_2$ correspond to the two projections and $m(x,y)=xy$ is the multiplication map.
\begin{defn}
    Let $G$ be a finite abelian group and $\widehat{G}$ its group of characters. let us fix a character $\psi:G\rightarrow\mathbb{C}$, which gives us an isomorphism $G\xrightarrow{\cong}\widehat{G}$, by sending $g\in G$ to $\psi_g$, where $\psi_g(\gamma)=\psi(g\gamma)$. Let $f:G\rightarrow\mathbb{C}$ be a function. Using the prescribed isomorphism, we define the Fourier transform of the function $f$ to be $FT(f):G\rightarrow\mathbb{C}$, defined by,

    $$FT(f)(g)=\sum_{\gamma\in G}f(\gamma)\overline{\psi_g(\gamma)}$$
    note here that $\overline{\psi_g}=\psi_g^{-1}$.
\end{defn}

We will now prove that (up to a sign) the Fourier transform on the affine line respects the Function-Sheaf dictionary we outlined in section \ref{2.4.2}, and the Fourier transform for finite abelian groups detailed above. 
\begin{thm}\label{Comparison theorem}For all $K_0 \in D^b_c(\mathbb{A}_0,\overline{\mathbb{Q}}_l)$ we have;

\begin{align}
    f_g^{T_\psi K_0}(t)=-\sum_{x\in k}f_g^{K_0}(x)\psi^{-1}_x(t) 
    \tag{$x\in k$}
\end{align}

as functions $\mathbb{A}_0(k)=k\rightarrow \overline{\mathbb{Q}}_l$.

\end{thm}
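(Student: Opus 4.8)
The plan is to compute $f_g^{T_\psi K_0}(t)$ directly by unwinding the definition of $T_\psi$ and applying the Grothendieck trace formula to the fibre of $\pi_1$ over $t$. Fix $t \in \mathbb{A}_0(k) = k$. By proper base change, the stalk (equivalently, the fibre) of $R{\pi_1}_!({\pi_2}^*K_0 \otimes m^*\mathcal{L}_0(\psi))$ at the geometric point over $t$ is computed as compactly supported cohomology of the restriction of ${\pi_2}^*K_0 \otimes m^*\mathcal{L}_0(\psi)$ to the fibre $\pi_1^{-1}(t) \cong \mathbb{A}_0$, with coordinate I will call $x$. On this fibre, ${\pi_2}^*K_0$ restricts to (the constant-along-the-fibre, in the appropriate sense) $K_0$ pulled back along the identity in the $x$-variable, and $m^*\mathcal{L}_0(\psi)$ restricts to the pullback of $\mathcal{L}_0(\psi)$ along the map $x \mapsto xt$. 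Taking the shift $[1]$ into account — which contributes an overall sign $-1$ in the alternating sum of traces — I would write
\begin{align*}
f_g^{T_\psi K_0}(t) = -\sum_i (-1)^i \operatorname{Tr}\!\left(\mathrm{Fr}_g,\, H^i_c\!\left(\mathbb{A}_0 \otimes \overline{k},\, K \otimes (x \mapsto xt)^*\mathcal{L}(\psi)\right)\right).
\end{align*}

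Next I would apply the Grothendieck trace formula (Theorem \ref{2.23}) to the variety $\pi_1^{-1}(t) \cong \mathbb{A}_0$ over $k$ and the complex ${\pi_2}^*K_0 \otimes m^*\mathcal{L}_0(\psi)$ restricted to it: this rewrites the right-hand cohomological sum as $-\sum_{x \in k} f_g^{\mathscr{G}_0}(x)$, where $\mathscr{G}_0$ is that restricted complex on $\mathbb{A}_0$. The key compatibility I need is that the function attached to a tensor product is the pointwise product of the functions, and that the function attached to a pullback is the pullback of the function (both immediate from the definition of $f_g$ via stalks and the multiplicativity of traces on tensor products of Galois representations). Applying this, $f_g^{\mathscr{G}_0}(x) = f_g^{K_0}(x) \cdot f_g^{\mathcal{L}_0(\psi)}(xt)$. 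So it remains to identify $f_g^{\mathcal{L}_0(\psi)}(xt)$ with $\psi_x^{-1}(t) = \psi(xt)^{-1}$, i.e.\ to show that the function attached to the Artin–Schreier sheaf $\mathcal{L}_0(\psi)$ evaluated at a point $a \in k$ is $\psi(a)$ (or its inverse, depending on the Frobenius normalization — this is precisely where the geometric Frobenius $\mathrm{Fr}_g$ rather than $\mathrm{Fr}$ enters, and is the reason the statement is phrased with $\psi^{-1}$).

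The main obstacle, and the step deserving the most care, is this last identification: tracing through the construction of $\mathcal{L}_0(\psi)$ as the rank-one sheaf associated via Proposition \ref{2.22} to the character $\psi \circ (\text{quotient } \pi_1(\mathbb{A}_0,\overline{x}) \twoheadrightarrow k)$ coming from the Artin–Schreier cover $\mathscr{P}: x \mapsto x^q - x$, and checking that the (geometric) Frobenius at a rational point $a \in k$ acts on the stalk $\mathcal{L}_0(\psi)_{\bar a}$ by the scalar $\psi(a)^{-1}$. Concretely this amounts to: the fibre $\mathscr{P}^{-1}(a)$ over $a$ consists of the roots of $y^q - y = a$, Frobenius permutes them by $y \mapsto y + c$ for the unique $c \in k$ with $c = $ (Frobenius-displacement) $= a$ — since if $y^q = y + a$ then $y \mapsto y^q = y + a$ — so Frobenius corresponds to the element $a \in k = \operatorname{Gal}(\mathscr{P})$, hence acts on the one-dimensional representation through $\psi(a)$, and the geometric Frobenius through $\psi(a)^{-1} = \psi_x^{-1}(t)$ when $a = xt$. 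A secondary point requiring attention is the bookkeeping of the shift $[1]$ and making sure the single overall sign in the theorem is accounted for exactly once (it comes from the $[1]$, via $\operatorname{Tr}(\mathrm{Fr}_g, \mathcal{H}^i(K[1])) = -\operatorname{Tr}(\mathrm{Fr}_g,\mathcal{H}^{i+1}(K))$ shifting the alternating sum's parity), and that the proper base change and trace formula are applied to the correct fibre $\pi_1^{-1}(t)$ with its $k$-structure. Once these are in hand, assembling the chain
\begin{align*}
f_g^{T_\psi K_0}(t) = -\sum_{x \in k} f_g^{K_0}(x)\, f_g^{\mathcal{L}_0(\psi)}(xt) = -\sum_{x \in k} f_g^{K_0}(x)\, \psi_x^{-1}(t)
\end{align*}
completes the argument.
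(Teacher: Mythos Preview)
Your proposal is correct and follows essentially the same route as the paper: compute the stalk of $T_\psi K_0$ at $t$ via proper base change (the paper packages this as Lemma~\ref{3.6}), apply the Grothendieck trace formula to the fibre, and finish using the identification $f_g^{\mathcal{L}_0(\psi)}(a)=\psi^{-1}(a)$ (the paper's Lemma~\ref{3.7}), with the sign coming from the shift $[1]$. Your write-up is in fact more explicit than the paper's about why the Artin--Schreier trace function equals $\psi^{-1}$ and about the multiplicativity of trace functions under tensor products, but the skeleton of the argument is identical.
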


\begin{notation}    For $a\in k $, let $$\lambda_a: \mathbb{A}_0 \rightarrow\mathbb{A}_0$$ $$x\mapsto ax$$ 

\end{notation}
\begin{lem}
    For a character $\psi: k \rightarrow \overline{\mathbb{Q}}_l^* $ we have
    $$ \lambda_a^*(\mathcal{L}_0(\psi))=\mathcal{L}_0(\psi_a)$$

\end{lem}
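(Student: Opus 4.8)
The plan is to work entirely through the equivalence of Proposition~\ref{2.22} (together with Theorem~\ref{2.11}). Under it $\mathcal{L}_0(\psi)$ corresponds to the representation $\psi\circ p$ of $\pi_1(\mathbb{A}_0,\overline{x})$, where $p\colon\pi_1(\mathbb{A}_0,\overline{x})\twoheadrightarrow k$ is the surjection cut out by the Artin--Schreier cover $\mathscr{P}$ and its canonical deck group $k$. Since $\lambda_a(0)=0$, the map $\lambda_a$ induces an endomorphism $(\lambda_a)_*$ of $\pi_1(\mathbb{A}_0,\overline{x})$, and pulling back a lisse sheaf amounts to restricting its monodromy along $(\lambda_a)_*$; so $\lambda_a^*\mathcal{L}_0(\psi)$ corresponds to $\psi\circ p\circ(\lambda_a)_*$. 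As $\mathcal{L}_0(\psi_a)$ corresponds to $\psi_a\circ p=\psi\circ(c\mapsto ac)\circ p$, it suffices to prove the identity $p\circ(\lambda_a)_*=(c\mapsto ac)\circ p$ of maps $\pi_1(\mathbb{A}_0,\overline{x})\to k$, i.e. that $\lambda_a$ scales the Artin--Schreier monodromy by $a$.

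Next I would check this at the level of covers. The composite $p\circ(\lambda_a)_*$ is the quotient of $\pi_1$ classifying the cover $\lambda_a^*\mathscr{P}$, with the identification of its deck group with $k$ inherited from $\mathscr{P}$. Writing $\mathscr{P}$ as $\Spec k[u]\to\Spec k[t]$, $t=u^q-u$, with $c\in k$ acting by $u\mapsto u+c$, the fibre product defining $\lambda_a^*\mathscr{P}$ over $\Spec k[s]$ is $\{\,u^q-u=as\,\}$. For $a\in k^\times$ the substitution $w:=a^{-1}u$ is defined over $k$, because the $q$-power map is the identity on $k$ so $a^q=a$, and it satisfies $w^q-w=a^{-1}(u^q-u)=s$; hence it identifies $\lambda_a^*\mathscr{P}$ with the Artin--Schreier cover of $\Spec k[s]$ as étale covers of $\mathbb{A}_0$, and under this identification the deck transformation $u\mapsto u+c$ goes to $w\mapsto w+a^{-1}c$. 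Thus the inherited deck-group identification and the standard one differ by $c\mapsto a^{-1}c$, which is precisely the desired $p\circ(\lambda_a)_*=(c\mapsto ac)\circ p$. The case $a=0$ is separate and immediate: $\lambda_0$ factors through the origin, over which $\mathscr{P}$ splits completely, so $\lambda_0^*\mathcal{L}_0(\psi)\cong\overline{\mathbb{Q}}_l$, and $\psi_0$ being trivial, $\mathcal{L}_0(\psi_0)\cong\overline{\mathbb{Q}}_l$ as well.

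The one thing that genuinely requires care is the bookkeeping of conventions, so that the twist lands on $\psi_a$ and not on $\psi_{a^{-1}}$: the direction of the canonical isomorphism $\operatorname{Gal}(\mathscr{P}/\mathbb{A}_0)\cong k$, whether pullback corresponds to pre- or post-composition with $(\lambda_a)_*$, and the fact that the transported deck transformation $w\mapsto w+a^{-1}c$ gives the twist $c\mapsto a^{-1}c$ on deck groups but the twist $c\mapsto ac$ on monodromy characters. One can double-check the outcome via the function--sheaf dictionary: the trace function of $\lambda_a^*\mathcal{L}_0(\psi)$ at $x\in\mathbb{A}_0(k)$ equals that of $\mathcal{L}_0(\psi)$ at $ax$, i.e. $\psi(ax)=\psi_a(x)$, matching $\mathcal{L}_0(\psi_a)$. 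Everything else is a formal transport of structure along the equivalences of Theorem~\ref{2.11} and Proposition~\ref{2.22}.
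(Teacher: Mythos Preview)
Your argument is correct. The paper states this lemma without proof, so there is nothing to compare your approach against; your write-up fills a genuine gap in the exposition.

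The computation is right: the Artin--Schreier cover $u^q-u=t$ pulls back along $\lambda_a$ to $u^q-u=as$, and for $a\in k^\times$ the substitution $w=a^{-1}u$ (using $a^q=a$) identifies this with the standard cover, carrying the deck transformation labeled $c$ to the one labeled $a^{-1}c$. Your conclusion $p\circ(\lambda_a)_*=(c\mapsto ac)\circ p$ then follows, and composing with $\psi$ yields $\psi_a\circ p$ as required. The case $a=0$ is handled cleanly.

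One small remark: the commutative square
\[
\begin{tikzcd}
\mathbb{A}_0 \arrow[r,"\lambda_a"] \arrow[d,"\mathscr{P}"'] & \mathbb{A}_0 \arrow[d,"\mathscr{P}"] \\
\mathbb{A}_0 \arrow[r,"\lambda_a"'] & \mathbb{A}_0
\end{tikzcd}
\]
(valid because $\mathscr{P}(au)=a\,\mathscr{P}(u)$ for $a\in k$) gives the isomorphism $\lambda_a^*\mathscr{P}\cong\mathscr{P}$ in one stroke and makes the deck-group comparison transparent; this is equivalent to your substitution $w=a^{-1}u$ but saves some of the bookkeeping you flag as delicate. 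Your function--sheaf sanity check via Lemma~\ref{3.7} is a good way to confirm the sign of the twist.
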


\begin{lem}\label{3.6}
    Let $\overline{k}$ be an algebraic closure of $k$. Given a complex $K_0 \in D^b_c(\mathbb{A}_0,\overline{\mathbb{Q}}_l)$ and a geometric point $a \in \mathbb{A}_0(\overline{k})$, we have
    \\
    $$\Bigl( T_\psi(K_0)\Bigr)_a=R\Gamma_c \Bigl( (K) \otimes \mathcal{L}(\psi_a)\Bigr)[1] $$
\end{lem}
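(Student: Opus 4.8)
The plan is to compute the stalk $(T_\psi(K_0))_a$ directly from the definition $T_\psi(K_0) = R\pi_{1!}(\pi_2^*K_0 \otimes m^*\mathcal{L}_0(\psi))[1]$ by applying the proper base change theorem along the fiber of $\pi_1$ over $a$. First I would observe that the fiber of $\pi_1 : \mathbb{A}_0 \times \mathbb{A}_0 \to \mathbb{A}_0$ over the geometric point $a \in \mathbb{A}_0(\overline k)$ is a copy of $\mathbb{A}_{\overline k}$, and that the inclusion $i_a$ of this fiber fits into a cartesian square with $\pi_1$ and the point inclusion $a \hookrightarrow \mathbb{A}_0$. Proper base change (which holds for $R\pi_{1!}$) then gives a canonical isomorphism
\[
  \bigl(R\pi_{1!}(\pi_2^*K_0 \otimes m^*\mathcal{L}_0(\psi))\bigr)_a \;\cong\; R\Gamma_c\bigl(\mathbb{A}_{\overline k},\; i_a^*(\pi_2^*K_0 \otimes m^*\mathcal{L}_0(\psi))\bigr),
\]
and the shift $[1]$ passes through both sides. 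Note that passing to stalks also entails base change from $k$ to $\overline k$, so on the right-hand side $K_0$ and $\mathcal{L}_0(\psi)$ get replaced by their pullbacks $K$ and $\mathcal{L}(\psi)$ to $\mathbb{A}_{\overline k}$; this is the reason the statement is phrased in terms of $K$ rather than $K_0$.

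Next I would identify the restricted sheaf on the fiber. The fiber over $a$ is $\{(y,a) : y \in \mathbb{A}_{\overline k}\}$, parametrized by $y$; under this identification $\pi_2$ restricts to the identity $y \mapsto y$ and $m$ restricts to $y \mapsto ay$, i.e. to the map $\lambda_a$ of the preceding notation. Hence $i_a^*\pi_2^*K_0 \cong K$ and $i_a^* m^*\mathcal{L}_0(\psi) \cong \lambda_a^*\mathcal{L}(\psi)$. By the immediately preceding lemma, $\lambda_a^*\mathcal{L}(\psi) \cong \mathcal{L}(\psi_a)$ (applied over $\overline k$). Since pullback is monoidal, $i_a^*(\pi_2^*K_0 \otimes m^*\mathcal{L}_0(\psi)) \cong K \otimes \mathcal{L}(\psi_a)$, and substituting into the base-change isomorphism yields
\[
  (T_\psi(K_0))_a \;\cong\; R\Gamma_c\bigl(K \otimes \mathcal{L}(\psi_a)\bigr)[1],
\]
which is the claim.

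The main obstacle is bookkeeping rather than any deep input: one must be careful that proper base change in the form used here is legitimate for the functor $R\pi_{1!}$ on $D^b_c(-,\overline{\mathbb{Q}}_l)$ (it is, since $R\pi_!$ was constructed precisely so as to satisfy the usual six-functor compatibilities, including base change), and one must carefully track the fact that "taking the stalk at the geometric point $a$" simultaneously performs the extension of scalars from $k$ to $\overline k$ — so that the naive fiber computation over $k$ must be replaced by its analogue over $\overline k$. A secondary point of care is checking that the identification of $m|_{\text{fiber}}$ with $\lambda_a$ and of $\pi_2|_{\text{fiber}}$ with the identity is compatible with the cartesian square used for base change, but this is a direct diagram chase on affine coordinates.
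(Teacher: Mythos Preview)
The paper states this lemma without proof, so there is nothing to compare against directly; your argument is the standard one and is correct. Proper base change for $R\pi_{1!}$ identifies the stalk with compactly supported cohomology on the fiber, and then the identification of the restricted kernel as $K\otimes\mathcal{L}(\psi_a)$ via the preceding lemma $\lambda_a^*\mathcal{L}(\psi)\cong\mathcal{L}(\psi_a)$ finishes it.

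One minor slip worth fixing: if $\pi_1$ denotes the first projection (as the paper's conventions suggest), the fiber over $a$ is $\{(a,y):y\in\mathbb{A}_{\overline{k}}\}$, not $\{(y,a)\}$. This is harmless for the argument since $m$ is symmetric and $\pi_2$ still restricts to the identity in $y$, but the coordinate description as written is backwards.
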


\begin{lem}\label{3.7}
We have the following identities 
\begin{align*}
    f^{\mathcal{L}(\psi)}(x)&=\psi(x)\\
f_g^{\mathcal{L}(\psi)}(x)&=\psi^{-1}(x)
\end{align*}
\end{lem}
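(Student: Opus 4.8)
The plan is to reduce both identities to one computation: the scalar by which Frobenius acts on the one–dimensional stalk of $\mathcal{L}_0(\psi)$ at a geometric point lying over $x\in\mathbb{A}_0(k)$. First I would observe that $\mathcal{L}_0(\psi)$ is a lisse sheaf of rank one sitting in degree $0$, so $\mathcal{H}^0(\mathcal{L}_0(\psi))=\mathcal{L}_0(\psi)$ and $\mathcal{H}^i(\mathcal{L}_0(\psi))=0$ for $i\neq 0$; hence the alternating sum defining $f^{\mathcal{L}_0(\psi)}(x)$ collapses to $\Tr\bigl(\mathrm{Fr},(\mathcal{L}_0(\psi))_{\overline{x}}\bigr)$, i.e. to the scalar by which the arithmetic Frobenius $\mathrm{Fr}\in\mathrm{Gal}(k(\overline{x})/k(x))=\pi_1(x,\overline{x})$ acts on the line $(\mathcal{L}_0(\psi))_{\overline{x}}$. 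By Proposition \ref{2.22} together with the very definition of $\mathcal{L}_0(\psi)$, this action is the composite $\pi_1(x,\overline{x})\to\pi_1(\mathbb{A}_0,\overline{x})\twoheadrightarrow k\xrightarrow{\psi}\overline{\mathbb{Q}}_l^{*}$, where the surjection is the one cut out by the Artin–Schreier cover $\mathscr{P}$; so it remains only to identify the image $c_x\in k$ of $\mathrm{Fr}$ under $\pi_1(x,\overline{x})\to\pi_1(\mathbb{A}_0,\overline{x})\twoheadrightarrow k$.

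Next I would compute $c_x$ explicitly. By functoriality of $\pi_1$ (compatibility of fibre functors, Lemma \ref{2.7}), $c_x$ is the class in $H^1(\Spec k,\underline{k})\cong k$ of the $k$–torsor obtained by pulling $\mathscr{P}$ back along $x\colon\Spec k\to\mathbb{A}_0$, namely $\Spec k[y]/(y^{q}-y-x)$. Picking a root $y_0\in\overline{k}$, the arithmetic Frobenius sends $y_0\mapsto y_0^{q}=y_0+x$; that is, on the fibre it acts as the deck transformation "translation by $x$", so $c_x=x$ and therefore $f^{\mathcal{L}_0(\psi)}(x)=\psi(x)$. For the second identity, recall (Definition \ref{2.25} and the surrounding discussion) that the geometric Frobenius is the inverse of the arithmetic one; acting on the same line it contributes the reciprocal scalar, giving $f_g^{\mathcal{L}_0(\psi)}(x)=\psi(x)^{-1}=\psi^{-1}(x)$.

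The one genuinely delicate point is the normalisation bookkeeping in the middle step: one must check that the canonical identification $\mathrm{Aut}(\mathscr{P})\cong k$, the surjection it induces on $\pi_1$ through the fibre functor, and the chosen orientation for "arithmetic versus geometric Frobenius" are all taken compatibly, so that one really lands on $\psi(x)$ rather than on $\psi(-x)=\psi^{-1}(x)$. A convenient independent check of the overall sign is the decomposition $\mathscr{P}_{!}\overline{\mathbb{Q}}_l\cong\bigoplus_{\chi\in\widehat{k}}\mathcal{L}_0(\chi)$ of the (regular–representation) pushforward: applying the function–sheaf dictionary to both sides yields $\#\{\,y\in k:y^{q}-y=x\,\}=\sum_{\chi\in\widehat{k}}f^{\mathcal{L}_0(\chi)}(x)$, whose left-hand side equals $q\cdot\mathbbm{1}_{x=0}$ and matches $\sum_{\chi}\chi(x)$ by orthogonality of characters, which is consistent with $f^{\mathcal{L}_0(\chi)}(x)=\chi(x)$. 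Everything else is routine unwinding of the fibre functor and of Definition \ref{2.25}.
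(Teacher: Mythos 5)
The paper states Lemma \ref{3.7} without proof, so there is no argument to compare against; your proposal is therefore the relevant content. The overall strategy is the right one and the reduction steps are sound: since $\mathcal{L}_0(\psi)$ sits in degree $0$ with one-dimensional stalk, the alternating sum collapses to a single trace; Proposition \ref{2.22} identifies that trace with $\psi(c_x)$ for $c_x$ the image of Frobenius under $\pi_1(x,\overline{x})\to\pi_1(\mathbb{A}_0,\overline{x})\twoheadrightarrow k$; and the Artin--Schreier pullback $\Spec k[y]/(y^q-y-x)$ is exactly the torsor to analyze. The concrete observation that arithmetic Frobenius moves a root $y_0$ of $y^q-y=x$ to $y_0^q=y_0+x$ is also correct.

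Where the proof falls short is in the very step you flag as delicate: the claim $c_x=x$ is asserted but not established. Knowing that Frobenius acts on the fibre as translation by $x$ does not by itself tell you whether the projection $\pi_1(\mathbb{A}_0,\overline{x})\twoheadrightarrow \mathrm{Aut}_{\mathbb{A}_0}(\mathscr{P})\cong k$ sends Frobenius to $+x$ or $-x$; the paper's action on $\operatorname{Hom}_X(\widetilde{X},X')$ is $g\cdot\phi=\phi\circ g^{-1}$ (see the discussion before Proposition~2.8), and this inverse must be reconciled with the two anti-isomorphisms introduced by $\Spec$ (in identifying $\pi_1(\Spec k,\overline{x})$ with $\mathrm{Gal}(\overline{k}/k)$ and $\mathrm{Aut}_{\mathbb{A}_0}(\mathscr{P})$ with $k$). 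More importantly, the ``independent check'' you offer is vacuous for the purpose you invoke it for: the identity
\[
\sum_{\chi\in\widehat{k}} f^{\mathcal{L}_0(\chi)}(x)=q\cdot\mathbbm{1}_{x=0}
\]
holds equally well with $f^{\mathcal{L}_0(\chi)}(x)=\chi(x)$ and with $f^{\mathcal{L}_0(\chi)}(x)=\chi(-x)$, since $\chi\mapsto\chi^{-1}$ is a bijection of $\widehat{k}$; orthogonality therefore cannot distinguish the two possible signs, which was precisely the thing in doubt. To actually fix the sign you would need to trace the conventions end to end (fibre functor, $g\cdot\phi=\phi\circ g^{-1}$, the chosen identification $\mathrm{Aut}_{\mathbb{A}_0}(\mathscr{P})\cong k$, and the orientation of Frobenius), or test against a statement in the paper that \emph{breaks} the $\psi\leftrightarrow\psi^{-1}$ symmetry, rather than one that respects it.
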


   We now move on to the proof of \ref{Comparison theorem}. We have the following chain of equalities;
   \begin{align*}
f_g^{T_\psi K_0}(t)&
=\sum(-1)^i\Tr(\mathrm{Fr}_g,\mathcal{H}^i(T_\psi K_0)_{\overline{t}})\\
&=\sum(-1)^i\Tr(\mathrm{Fr}_g,\mathcal{H}^i((T_\psi K_0)_{\overline{t}}))\\
&\overset{\ref{3.6}}{=}
\sum(-1)^i\Tr(\mathrm{Fr}_g,\mathcal{H}^i(R\Gamma_c(K)\otimes\mathcal{L}(\psi_t)[1]))\\
&=\sum(-1)^i\Tr(\mathrm{Fr}_g,H^i_c((K)\otimes\mathcal{L}(\psi_t))[1]))\\
&\overset{\ref{2.23}}{=}\sum_{i\in\mathbb{Z}}\sum_{x\in X_0(\mathbb{F}_q)}(-1)^i\Tr(\mathrm{Fr}_g,((K_0)\otimes\mathcal{L}_0(\psi_t))_{\overline{x}}[1])\\
&\overset{\ref{3.7}}{=}-\sum_{i\in\mathbb{Z}}\sum_{x\in X_0(\mathbb{F}_q)}(-1)^i\Tr(\mathrm{Fr}_g,\mathcal{H}^i(K_0)_x))\psi(-xt)\\
&=-\sum_{x\in X_0(\mathbb{F}_q)}f^{K_0}(x)\psi^{-1}_x(t)
   \end{align*}

\end{subsection}

\begin{subsection}{Properties of the Fourier transform}
We now prove various properties of the Fourier transform that closely align with the classical case. These also give some intuition as to what to expect in the cases of interest in the future. The main theorem regarding the Fourier transform is the following.

\begin{notation}
We will denote by $\overline{\mathbb{Q}}_l(1)$ the Tate twist of the sheaf $\overline{\mathbb{Q}}_l$. This is constructed as the projective limit of $\{\mu_{p^n}\}_{n\geq 0}$ of sheaves of roots of unity. The sheaf $\overline{\mathbb{Q}}_l(d)$ is obtained as the $d$-fold tensor product of $\overline{\mathbb{Q}}_l(1)$, where negative numbers denote the dual sheaf. For a sheaf $K_0 \in D^b_c(\mathbb{A}_0,\overline{\mathbb{Q}}_l)$ we denote the $d^{th}$ Tate twist by,
$$K_0(d):=K_0\otimes_{\overline{\mathbb{Q}}_l}\overline{\mathbb{Q}}_l(d)$$.
\end{notation}
\begin{thm}[Fourier Inversion]

For all $K_0 \in D^b_c(\mathbb{A}_0,\overline{\mathbb{Q}}_l)$ we have a canonical isomorphism;
$$ \Bigl(T_{\psi^{-1}} \circ  T_\psi\Bigr)(K_0)= K_0(-1)$$

\end{thm}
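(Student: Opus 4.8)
The strategy is to reduce the Fourier inversion formula to a computation with the two kernels $m^*\mathcal{L}_0(\psi)$ and $m^*\mathcal{L}_0(\psi^{-1})$, exactly as in Laumon's treatment. First I would unwind the definition: writing $T_{\psi^{-1}}\circ T_\psi(K_0)$ as an iterated pushforward over $\mathbb{A}_0\times\mathbb{A}_0\times\mathbb{A}_0$, with coordinates $(x,y,z)$, one has $T_\psi(K_0)$ living on the middle copy via the projection killing $x$, and then $T_{\psi^{-1}}$ applied on the outer copy killing $z$. Using the projection formula and proper base change (all legitimate in $D^b_c(-,\overline{\mathbb{Q}}_l)$ by the formalism recalled in the preliminaries), I would rewrite the composite as
\[
R p_{1,!}\bigl(p_3^*K_0 \otimes m_{12}^*\mathcal{L}_0(\psi)\otimes m_{23}^*\mathcal{L}_0(\psi^{-1})\bigr)[2],
\]
where $p_1,p_3$ are the projections to the first and third factors and $m_{ij}$ is multiplication of the $i$-th and $j$-th coordinates. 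The key algebraic identity is that $m_{12}^*\mathcal{L}_0(\psi)\otimes m_{23}^*\mathcal{L}_0(\psi^{-1})\cong \mathcal{L}_0(\psi)^{\boxtimes}$ pulled back along the map $(x,y,z)\mapsto y(x-z)$, using that $\mathcal{L}_0$ is a character sheaf: $\mathcal{L}_0(\psi)\otimes\mathcal{L}_0(\psi^{-1})$ along a sum of arguments splits as a pullback of $\mathcal{L}_0(\psi)$ along $(a,b)\mapsto a-b$ after the change of variables.

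The heart of the proof is then the sub-computation that integrating the Artin--Schreier sheaf along the $y$-variable produces a delta sheaf on the diagonal $x=z$. Concretely, I would first push forward along the projection $(x,y,z)\mapsto(x,z)$, i.e. compute $R q_!\bigl(\mathcal{L}_0(\psi)\text{ pulled back along }y\mapsto y(x-z)\bigr)$ on $\mathbb{A}_0\times\mathbb{A}_0$. On the open locus $x\neq z$ the multiplication-by-$(x-z)$ automorphism of $\mathbb{A}_0$ identifies this with $R\Gamma_c(\mathbb{A}_0,\mathcal{L}_0(\psi))$, which vanishes in all degrees because $\psi$ is nontrivial (the standard Artin--Schreier vanishing, provable via the trace formula of Theorem~\ref{2.23} together with the fact that a lisse rank-one nonconstant sheaf on $\mathbb{A}^1$ has no cohomology, or by an explicit $\pi_1$ argument). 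On the diagonal $x=z$ the sheaf is constant and $R\Gamma_c(\mathbb{A}_0,\overline{\mathbb{Q}}_l)=\overline{\mathbb{Q}}_l(-1)[-2]$. Patching these and controlling the extension across the diagonal — this is the delicate point — yields that $R q_!$ of the kernel is $(\delta_\Delta)_*\overline{\mathbb{Q}}_l(-1)[-2]$, where $\delta_\Delta\colon\mathbb{A}_0\hookrightarrow\mathbb{A}_0\times\mathbb{A}_0$ is the diagonal; the shift $[-2]$ cancels the $[2]$ from the two Fourier shifts, and the Tate twist $(-1)$ is exactly the twist appearing in the statement.

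Finally, substituting this back, the composite becomes $R p_{1,!}\bigl(p_3^*K_0\otimes(\delta_\Delta)_*\overline{\mathbb{Q}}_l(-1)\bigr)$, and another application of the projection formula collapses the diagonal to give $K_0(-1)$, canonically. I expect the main obstacle to be the second paragraph: making rigorous the claim that the fibrewise cohomology computation (vanishing off the diagonal, $\overline{\mathbb{Q}}_l(-1)[-2]$ on it) assembles into $(\delta_\Delta)_*\overline{\mathbb{Q}}_l(-1)[-2]$ as an object of the derived category, with no hidden extension. The clean way to do this is not to argue fibrewise but to compactify the $y$-line to $\mathbb{P}^1$, analyze the boundary contribution, and use that the Artin--Schreier sheaf is totally wildly ramified at $\infty$ so the boundary term vanishes; alternatively one invokes the known structure of $T_\psi(\overline{\mathbb{Q}}_l)$ (the Fourier transform of the constant sheaf is the skyscraper at $0$ up to twist and shift) and bootstraps. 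Tracking the Tate twists and the signs/shifts carefully throughout is the bookkeeping that must not be fumbled.
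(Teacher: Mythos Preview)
The paper does not actually prove this theorem: immediately after the statement it says ``We will not be proving this theorem here'', and instead only establishes the key ingredient, the Orthogonality Relations lemma $T_\psi(\overline{\mathbb{Q}}_l)\cong i_{0,*}\overline{\mathbb{Q}}_l(-1)[-1]$. Your outline is a correct and complete sketch of the standard argument (as in Laumon), and the computation you single out as ``the heart of the proof'' --- pushing the kernel along the $y$-variable and obtaining a skyscraper on the diagonal twisted by $(-1)$ --- is precisely that orthogonality lemma, relativized over $\mathbb{A}_0\times\mathbb{A}_0$. So you go further than the paper does: the paper supplies exactly the sub-computation you flag as the delicate point, and leaves the surrounding kernel manipulations (your first and third paragraphs) unaddressed.

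One minor bookkeeping slip: with your conventions ($K_0$ pulled from the third factor, inner transform $T_\psi$ applied first, then $T_{\psi^{-1}}$) the two Artin--Schreier factors should be $m_{23}^*\mathcal{L}_0(\psi)\otimes m_{12}^*\mathcal{L}_0(\psi^{-1})$ rather than the order you wrote; this does not affect the argument since the combined pullback is still along $(x,y,z)\mapsto y(z-x)$.
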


We will not be proving this theorem here, although we would like to highlight a key ingredient that is used in the proof on the theorem. Namely, we would like to show that the Fourier transform of the constant sheaf $\overline{\mathbb{Q}}_l$ on $\mathbb{A}_0$ is essentially the skyscraper sheaf at the origin (up to shifts and twists).

Consider again the mapping $$k \rightarrow Hom(k,\overline{\mathbb{Q}}_l^{\times})$$ given by $$x\mapsto\psi_x$$ for a fixed character $\psi$. the structure theorem for finite abelian groups gives us that this is an isomorphism.\\

Now let us consider the pushforward of the constant sheaf $\overline{\mathbb{Q}}_l$ of $\mathbb{A}_0$ along the Artin-Schreier map, $\mathscr{P}_*(\overline{\mathbb{Q}}_l)$. By the correspondence of finite étale covers and LCC sheaves, one sees that the associated representation is the regular representation of $k$. This implies

    \begin{align}\label{1.0}
        \mathscr{P}_*(\overline{\mathbb{Q}}_l)=\bigoplus_{x\in k}\mathcal{L}_0(\psi_x)
    \end{align}

\begin{lem}
We also have the following calculations from étale cohomology.

$${H}^1(\mathbb{A},\mathbb{Q}_l)={H}^1(\mathbb{A},\mathscr{P}_*(\overline{\mathbb{Q}}_l))=0$$ and
$${H}^1_c(\mathbb{A},\mathbb{Q}_l)={H}^1_c(\mathbb{A},\mathscr{P}_*(\overline{\mathbb{Q}}_l))=0.$$
\end{lem}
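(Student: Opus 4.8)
The strategy is to reduce all four equalities to the single computation $H^i(\mathbb{A},\overline{\mathbb{Q}}_l)=0$ for $i>0$, and then to recall that the latter is a standard consequence of Kummer theory together with Artin's bound on the cohomological dimension of affine schemes.

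First I would dispose of the pushforward sheaves. Since $\mathscr{P}$ is finite, it is in particular affine with finite fibres, so $R^q\mathscr{P}_*=0$ for $q>0$; hence $R\mathscr{P}_*\overline{\mathbb{Q}}_l=\mathscr{P}_*\overline{\mathbb{Q}}_l$ and the Leray spectral sequence collapses to give $H^i(\mathbb{A},\mathscr{P}_*\overline{\mathbb{Q}}_l)\cong H^i(\mathbb{A},\overline{\mathbb{Q}}_l)$. Being finite, $\mathscr{P}$ is also proper, so $R\mathscr{P}_!=R\mathscr{P}_*=\mathscr{P}_*$, and composing with $R\Gamma_c$ on the target affine line yields $H^i_c(\mathbb{A},\mathscr{P}_*\overline{\mathbb{Q}}_l)\cong H^i_c(\mathbb{A},\overline{\mathbb{Q}}_l)$. (One could instead invoke the decomposition (\ref{1.0}), but the vanishing of $H^\bullet(\mathbb{A},\mathcal{L}_0(\psi_x))$ is not yet available at this point, so the argument via finiteness is cleaner.) Thus it suffices to prove the statements for the constant sheaf $\overline{\mathbb{Q}}_l$.

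Second, for ordinary cohomology: working with torsion coefficients $\mathbb{Z}/l^n$ (recall $l\neq p$), Artin vanishing gives $H^i(\mathbb{A},\mathbb{Z}/l^n)=0$ for $i>1$ since $\mathbb{A}$ is a smooth affine curve over a separably closed field, while the Kummer sequence identifies $H^1(\mathbb{A},\mu_{l^n})$ with $\mathcal{O}(\mathbb{A})^\times/(\mathcal{O}(\mathbb{A})^\times)^{l^n}$ (using $\operatorname{Pic}(\mathbb{A})=0$), which vanishes because $\mathcal{O}(\mathbb{A})^\times=\overline{k}^\times$ is divisible. Over $\overline{k}$ we may identify $\mu_{l^n}\cong\mathbb{Z}/l^n$, so $H^i(\mathbb{A},\mathbb{Z}/l^n)=0$ for all $i>0$; passing to the limit gives $H^i(\mathbb{A},\mathbb{Z}_l)=0$, hence $H^i(\mathbb{A},\mathbb{Q}_l)=0$, and extending scalars to $\overline{\mathbb{Q}}_l$ changes nothing. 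For compactly supported cohomology I would use the open–closed decomposition $\mathbb{A}\hookrightarrow\mathbb{P}^1_{\overline{k}}\hookleftarrow\{\infty\}$ and its long exact sequence, together with $H^0(\mathbb{P}^1)=H^0(\{\infty\})=\overline{\mathbb{Q}}_l$ (the restriction map being an isomorphism) and $H^1(\mathbb{P}^1)=0$, to read off $H^1_c(\mathbb{A},\overline{\mathbb{Q}}_l)=0$ (and incidentally $H^0_c=0$, $H^2_c\cong\overline{\mathbb{Q}}_l(-1)$); alternatively Poincaré duality gives $H^1_c(\mathbb{A},\overline{\mathbb{Q}}_l)\cong H^1(\mathbb{A},\overline{\mathbb{Q}}_l)^\vee(-1)=0$ directly.

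There is no real obstacle here: the lemma repackages standard facts. The only points deserving a little care are the justification that $\mathscr{P}_*$ commutes with both $H^\bullet$ and $H^\bullet_c$ — which rests entirely on $\mathscr{P}$ being finite, hence proper — and the bookkeeping in passing from $\mathbb{Z}/l^n$- through $\mathbb{Z}_l$- to $\overline{\mathbb{Q}}_l$-coefficients. If one is content to cite the standard values of $H^\bullet(\mathbb{A}^1_{\overline{k}})$ and $H^\bullet_c(\mathbb{A}^1_{\overline{k}})$, essentially nothing remains to be done.
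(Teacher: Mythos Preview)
Your argument is correct in every step, and indeed the lemma is a repackaging of the standard étale cohomology of $\mathbb{A}^1$ over an algebraically closed field. In the paper itself the lemma is stated as a citation (``the following calculations from étale cohomology'') without proof, so there is no argument to compare against; you have simply supplied the details the paper omits. Your reduction of the $\mathscr{P}_*$ cases to the constant-coefficient case via finiteness/properness of $\mathscr{P}$ is the natural one, and is implicitly what the paper's chain of equalities $H^1(\mathbb{A},\mathbb{Q}_l)=H^1(\mathbb{A},\mathscr{P}_*(\overline{\mathbb{Q}}_l))$ is asserting. The Kummer-sequence computation and the $\mathbb{P}^1$ localization sequence (or Poincar\'e duality) are exactly the standard references one would cite.
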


This along with the identity (\ref{1.0}), gives us that
$${H}^1(\mathbb{A},\mathcal{L}_0(\psi_x))={H}^1_c(\mathbb{A},\mathcal{L}_0(\psi_x))=0.$$

Since $\mathbb{A}$ is affine, Artin vanishing gives us,
$$H_c^0(\mathbb{A},\mathcal{L}_0(\psi_x))=0.$$

\begin{rmk}
    Note that by the description of the action of $G_x:=\mathrm{Gal}(k(\overline{x})/k(x))$ on the stalk $\mathcal{F}_{\overline{x}}$ as outlined in remark \ref{2.13}, we see that the image of $\Gamma(X,F)\in \mathcal{F}_{\overline{x}}$ is contained in the set of elements invariant under the Galois action, i.e $\Gamma(X,F) \subset \mathcal{F}_{\overline{x}}^{G_x}$.
\end{rmk}

By construction we have that stalk of $\mathcal{L}(\psi_x)$ corresponds to the one dimensional representation of $\pi_1(\mathbb{A},\overline{x})$ given by $\psi_x$. This is a nontrivial one dimensional representation whenever $x\neq 0$. In particular this implies by the previous remark, $\Gamma(\mathbb{A},\mathcal{L}(\psi_x)) \subset \mathcal{L}(\psi_x)_{\overline{x}}^{G_x}=\{0\}$. Stated another way $$H^0(\mathbb{A},\mathcal{L}(\psi_x))=0$$
whenever $x\neq0$. Therefore, by Poincaré duality, we have

\begin{equation*}
 {H}_c^2(\mathbb{A},\mathcal{L}_0(\psi_x))= 
\left\{
    \begin{array}{lr}
        0, & \text{if } x \neq 0\\
        \overline{\mathbb{Q}}_l(-1), & \text{if } x= 0
    \end{array}
    \right.
\end{equation*}

\begin{lem}[Orthogonality relations] Let $i_0:\{0\}\hookrightarrow \mathbb{A}_0$ be the inclusion of the origin. We then have a canonical isomorphism,
    $$T_\psi(\overline{\mathbb{Q}}_l)\cong {i_{0}}_*\overline{\mathbb{Q}}_l(-1)[-1].$$

\end{lem}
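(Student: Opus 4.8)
The plan is to compute the Fourier transform $T_\psi(\overline{\mathbb{Q}}_l)$ by evaluating its stalks at every geometric point and then identifying the resulting complex. By Lemma \ref{3.6}, for a geometric point $a\in\mathbb{A}_0(\overline{k})$ we have
\[
\bigl(T_\psi(\overline{\mathbb{Q}}_l)\bigr)_a = R\Gamma_c\bigl(\mathbb{A},\overline{\mathbb{Q}}_l\otimes\mathcal{L}(\psi_a)\bigr)[1] = R\Gamma_c\bigl(\mathbb{A},\mathcal{L}(\psi_a)\bigr)[1],
\]
so the whole computation reduces to knowing the compactly supported cohomology of the Artin--Schreier sheaves $\mathcal{L}(\psi_a)$ on the affine line. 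This is exactly what the preceding lemmas and remarks in the excerpt supply: combining the vanishing ${H}^0_c = {H}^1_c = 0$ with the Poincaré duality computation of ${H}^2_c$, one gets that $R\Gamma_c(\mathbb{A},\mathcal{L}(\psi_a))$ is concentrated in degree $2$, equal to $\overline{\mathbb{Q}}_l(-1)$ when $a=0$ and $0$ otherwise. (Here one also needs that $\psi_a$ for $a\in\overline{k}$ is geometrically nontrivial exactly when $a\neq 0$, which follows from the description of $\mathcal{L}(\psi_a)$ as the pullback along $\lambda_a$ together with the fact that the Artin--Schreier cover is geometrically connected.) After the shift by $[1]$, the stalk of $T_\psi(\overline{\mathbb{Q}}_l)$ is therefore concentrated in cohomological degree $1$, vanishes away from the origin, and equals $\overline{\mathbb{Q}}_l(-1)$ at the origin.

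Having pinned down the stalks, the next step is to upgrade this pointwise information to an identification of complexes. First I would argue that $T_\psi(\overline{\mathbb{Q}}_l)$ has cohomology sheaves supported on $\{0\}$: since the formation of stalks of the cohomology sheaves $\mathcal{H}^i(T_\psi(\overline{\mathbb{Q}}_l))$ commutes with the base change computed in Lemma \ref{3.6} (proper base change for $R\pi_{1!}$), the stalks of $\mathcal{H}^i$ vanish for all geometric points except the origin and all $i\neq 1$. A complex in $D^b_c(\mathbb{A}_0,\overline{\mathbb{Q}}_l)$ whose cohomology sheaves are all supported on the closed point $\{0\}$ is canonically of the form ${i_0}_*(-)$ for a complex on $\{0\}$, namely $i_0^*$ of the original complex; concretely $T_\psi(\overline{\mathbb{Q}}_l)\cong {i_0}_* i_0^* T_\psi(\overline{\mathbb{Q}}_l)$. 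Then $i_0^* T_\psi(\overline{\mathbb{Q}}_l)$ is a complex of $\overline{\mathbb{Q}}_l$-sheaves on a point concentrated in degree $1$ with value $\overline{\mathbb{Q}}_l(-1)$, hence isomorphic to $\overline{\mathbb{Q}}_l(-1)[-1]$, and we obtain $T_\psi(\overline{\mathbb{Q}}_l)\cong {i_0}_*\overline{\mathbb{Q}}_l(-1)[-1]$.

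The main obstacle is making the word ``canonical'' precise and keeping track of the Tate twist and the Galois (Frobenius) action, rather than just working with the underlying geometric complexes. One should really run the argument at the level of $D^b_c(\mathbb{A}_0,\overline{\mathbb{Q}}_l)$ over the finite field $k$, so that the degree-$2$ cohomology ${H}^2_c(\mathbb{A},\mathcal{L}_0(\psi_0))=\overline{\mathbb{Q}}_l(-1)$ carries its correct arithmetic structure; the Poincaré duality statement already quoted gives this, provided one is careful that the trace map and cycle class are the arithmetic ones. The cleanest way to get a genuinely canonical isomorphism is to exhibit the map directly: the adjunction counit (or unit) relating $T_\psi(\overline{\mathbb{Q}}_l)$ with ${i_0}_* i_0^* T_\psi(\overline{\mathbb{Q}}_l)$ provides a canonical morphism, and one checks it is an isomorphism on stalks using the computation above, invoking that a morphism in $D^b_c$ which is a stalkwise isomorphism is an isomorphism. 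The remaining bookkeeping — that $i_0^*T_\psi(\overline{\mathbb{Q}}_l)$ is exactly $\overline{\mathbb{Q}}_l(-1)$ in degree $1$ and nothing else — is then routine given Lemma \ref{3.6} and the cohomology vanishing results already established in the excerpt.
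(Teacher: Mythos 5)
Your proposal follows essentially the same route as the paper: construct a canonical map using the adjunction for the closed immersion $i_0$, then verify that it is an isomorphism by computing the stalks of $T_\psi(\overline{\mathbb{Q}}_l)$ via Lemma \ref{3.6} and the preceding vanishing results for $H^i_c(\mathbb{A},\mathcal{L}(\psi_a))$. The paper unwinds the adjunction as an explicit chain of $\operatorname{Hom}$ isomorphisms (using base change and $m\circ j=\lambda_0$) whereas you phrase the same canonical map via the unit $T_\psi(\overline{\mathbb{Q}}_l)\to {i_0}_* i_0^* T_\psi(\overline{\mathbb{Q}}_l)$ of the $(i_0^*,{i_0}_*)$ adjunction and the identification $i_0^* T_\psi(\overline{\mathbb{Q}}_l)\cong\overline{\mathbb{Q}}_l(-1)[-1]$; one small correction is that this is the \emph{unit}, not the counit, of that adjunction, and its invertibility is a consequence of $j^* T_\psi(\overline{\mathbb{Q}}_l)=0$ on the open complement $j$, which is exactly what your stalk computation establishes.
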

\begin{proof}
        First we construct a canonical map between the two sheaves. Consider the diagram,
\[\begin{tikzcd}
	& {\{0\}\times\mathbb{A}_0} && {\mathbb{A}_0\times\mathbb{A}_0} && {\mathbb{A}_0} \\
	\\
	{\{0\}} && {\mathbb{A}_0} && {\mathbb{A}_0}
	\arrow["j", from=1-2, to=1-4]
	\arrow["{\widetilde\pi_1}"', from=1-2, to=3-1]
	\arrow["\lrcorner"{anchor=center, pos=0.125}, draw=none, from=1-2, to=3-3]
	\arrow["m", from=1-4, to=1-6]
	\arrow["{\pi_1}", from=1-4, to=3-3]
	\arrow["{\pi_2}"', from=1-4, to=3-5]
	\arrow["{i_0}"', from=3-1, to=3-3]
\end{tikzcd}\]

where $\pi_1$ and $\pi_2$ are the canonical projections, and $m$ is the multiplication map. The square on the left is cartesian, equipped with canonical projections $j $ and $\widetilde{\pi}_1$. We therefore have,

\begin{align*}
    &\hspace{15pt}Hom(R {\pi_1}_!({\pi_2}^*(\overline{\mathbb{Q}}_l)\otimes m^*\mathcal{L}_0(\psi))[1],(i_0)_*\overline{\mathbb{Q}}_l)\\
   & \simeq Hom(i^*R {\pi_1}_!({\pi_2}^*(\overline{\mathbb{Q}}_l)\otimes m^*\mathcal{L}_0(\psi))[1],\overline{\mathbb{Q}}_l)\\\tag{base change formula}
   & \simeq Hom(\widetilde{\pi}_{1*}j^*({\pi_2}^*(\overline{\mathbb{Q}}_l)\otimes m^*\mathcal{L}_0(\psi))[1],\overline{\mathbb{Q}}_l)\\
   & \simeq Hom(\widetilde{\pi}_{1*}j^*({\pi_2}^*(\overline{\mathbb{Q}}_l))\otimes j^* (m^*\mathcal{L}_0(\psi)))[1],\overline{\mathbb{Q}}_l)\\
   & \simeq Hom(\widetilde{\pi}_{1*}(\overline{\mathbb{Q}}_l)\otimes j^* (m^*\mathcal{L}_0(\psi)))[1],\overline{\mathbb{Q}}_l)\\\tag{$ m\circ j=\lambda_0$}
   & \simeq Hom(\widetilde{\pi}_{1*}(\overline{\mathbb{Q}}_l) )[1],\overline{\mathbb{Q}}_l)\\
   & \simeq Hom(\overline{\mathbb{Q}}_l [1],\overline{\mathbb{Q}}_l)\\
   &\simeq Hom(\overline{\mathbb{Q}}_l,\overline{\mathbb{Q}}_l) [1]
\end{align*}

which gives a canonical map corresponding to the identity element in $Hom(\overline{\mathbb{Q}}_l,\overline{\mathbb{Q}}_l)$.
\begin{align*}
   (T_\psi(\overline{\mathbb{Q}}_l))_a&=R\Gamma_c(\overline{\mathbb{Q}}_l\otimes\mathcal{L}(\psi_a))[1]\\
   &=R\Gamma_c(\mathcal{L}(\psi_a))[1]
\end{align*}
which agrees with the stalk at $a$ for ${i_{0}}_*\overline{\mathbb{Q}}_l(-1)[-1]$ by our previous calculations.

\end{proof}

\end{subsection}
\pagebreak

\section{Duality for unipotent groups}
A good notion of duality is key to generalizing this theory. We shall first discuss the case of the Serre dual for commutative connected unipotent group schemes, whose first incarnations can be found in \cite{serre1960groupes}. Throughout this section we fix a prime $p$.

\begin{subsection}{The commutative case}

We fix a perffect field $k$. Recall that any scheme $X$ over $k$ comes equipped with an absolute Frobenius morphism $F_X$, which is  the identity on the underlying topological space, and the map on the structure sheaves $F^\sharp_X:\mathcal{O}_X\rightarrow\mathcal{O}_X$, is given by $x\mapsto x^p$.
\begin{subsubsection}{Perfect schemes}
    \begin{defn}
        A scheme  $X$ over $k$ is said to be \textit{perfect} if the absolute Frobenius morphism is an isomorphism.
        \end{defn}

    Let $\mathbf{Sch}_k$ be the category of schemes over $k$. We call $\mathfrak{Perf}_k$  the full subcategory of perfect schemes. The inclusion of $\mathfrak{Perf}_k\hookrightarrow\mathbf{Sch}_k $ has a right adjoint $X\rightarrow X^\mathrm{perf}$ called the perfectization functor.

\begin{rmk}
    Because the fiber product of perfect schemes is perfect,  a group object in $\mathfrak{Perf}_k$ is automatically a group object in $\mathfrak{Sch}_k$, and the notion of "a perfect group scheme over $k$" makes sense. Furthermore, since the perfectization functor is a right adjoint, it preserves limits. Hence if $G$ is a group scheme over $k$, then $G^\mathrm{perf}$ is a perfect group scheme over $k$.
\end{rmk}

\begin{defn}
 A perfect unipotent group over $k$ is a perfect group scheme over $k$ which is isomorphic to the perfectization of a unipotent algebraic group over $k$.
\end{defn}

\begin{rmk}
       If $\Gamma$ is an algebraic group over $k$ we can explicitly describe the perfectization functor. We denote by $\Gamma(p^{-n})$ the pullback of $\Gamma$ along the endomorphism of $k$ given by, $a\rightarrow a^{p^{-n}}$. The absolute Frobenius morphism between $\Gamma(p^{-n})$ and $\Gamma(p^{-n+1})$ is therefore $k$ linear, and we call the limit of this diagram $\Gamma^\mathrm{perf}$. It enjoys a universal property which means that for all perfect group schemes $S$, we have ${\Gamma^\mathrm{perf}}(S)= \Gamma(S)$.
    \end{rmk}
We denote by $\mathfrak{cpu}_k$
the category of all commutative perfect unipotent groups
over k, and by $\mathfrak{cpu}_k^{\circ}\subset \mathfrak{cpu}_k$ the full subcategory formed by connected group schemes. 
\end{subsubsection}
\pagebreak
\begin{subsubsection}{The Duality functor}  \hspace{5pt}

If $\Gamma$ is an connected algebraic unipotent k-group, we define a functor $\Gamma^*$ on $\mathfrak{Perf}_k$ by setting its values on all affine perfect k-schemes $S$ to be,
$$\Gamma^*(S)=\mathrm{Ext}^1_S (\Gamma_S,\mathbb{Q}_p/ 
\mathbb{Z}_p)=\varinjlim_n \mathrm{Ext}^1_S(\Gamma_S,\mathbb{Z}/p^n\mathbb{Z})$$

where $\mathbb{Z}/p^n\mathbb{Z}$ is the discrete group scheme over $S$. We call $\omega_1(\Gamma)$ the kernel of the projection $\Gamma^\mathrm{perf}\rightarrow\Gamma$, so that it sits in an exact sequence on the big fppf site over $k$ as follows,
\begin{align}\label{exact}
    0\rightarrow\omega_1(\Gamma)\rightarrow\Gamma^\mathrm{perf}\rightarrow\Gamma\rightarrow0
\end{align}

We have that the morphism $\Gamma^\mathrm{perf}\rightarrow\Gamma$ is a universal homeomorphism, and therefore the underlying topological space of the kernel is a point, and in particular connected. This implies that $\operatorname{Hom}(\omega_1(\Gamma),\mathbb{Q}/ 
\mathbb{Z})=0$.\\

We now wish to show that $\operatorname{Ext^1}(\omega_1(\Gamma)_S,\mathbb{Z}/p^n\mathbb{Z})=0$. To this end, consider an element in this group given by
$$0\rightarrow\mathbb{Z}/p^n\mathbb{Z}\rightarrow E\rightarrow\omega_1(\Gamma)_S\rightarrow0$$ for which we have the associated sequence of groups of connected components.
$$\pi_0(\mathbb{Z}/p^n\mathbb{Z})\rightarrow\pi_0(E)\rightarrow\pi_0(\omega_1(\Gamma)_S)\rightarrow0$$

By our previous discussion we know that $\pi_0(\omega_1(\Gamma)_S)=0$ as the underlying topological space is a point. This implies that we have the following commutative diagram of group schemes
\[\begin{tikzcd}
	{\mathbb{Z}/p^n\mathbb{Z}} & E \\
	{\pi_0(\mathbb{Z}/p^n\mathbb{Z})} & {\pi_0(E)}
	\arrow[from=1-1, to=1-2]
	\arrow["{\cong}", from=1-1, to=2-1]
	\arrow[from=1-2, to=2-2]
	\arrow[two heads, from=2-1, to=2-2]
\end{tikzcd}\]

We know that the dimension of algebraic groups sum in the expected way in exact sequences. In addition to this, we know that algebraic groups over fields are separated, which implies by base change that the top morphism is a closed immersion. In particular, this implies the that if two connected components of $\mathbb{Z}/p^n\mathbb{Z}$ map to the same connected component of $E$ it would fail to be an injective map on spaces.\\

This isomorphism allows us to construct a retraction for the morphism $\mathbb{Z}/p^n\mathbb{Z}\rightarrow E$, which is what we required to show the triviality of the group of extensions in question. Now applying the above results to the long exact sequence associated to (\ref{exact}), we get that the canonical morphism $\Gamma^*(S)\rightarrow (\Gamma^\mathrm{perf})^*(S)$ is an isomorphism.\\

In the remainder of this section, we try to show that this functor is representable, exact, and involutive. This section will closely follow \cite{begueri1980dualite}.
\end{subsubsection}

\begin{subsubsection}{The case of Witt vectors}
    We denote by $W_n$ the algebraic group over $\Spec(\mathbb{F}_p)$ of Witt vectors of length $n$. It can be described using its functor of points, which sends a ring $R$ over $\mathbb{F}_p$ to the ring of truncated Witt vectors of length $n$. Consider also the Artin-Schreier isogeny which we call $\epsilon_n$,
    $$0\rightarrow \mathbb{Z}/p^n\mathbb{Z} \rightarrow W_n\xrightarrow{F-id}W_n\rightarrow0$$

    For all affine $\mathbb{F}_p$-schemes $S$, we can consider the base change of this exact sequence to S. Call this $\epsilon_{n,S}$. Pulling back $\epsilon_{n,S}$ gives us a morphism,

    $$\partial_n(S):\mathrm{End}(W_{n,S})\rightarrow \mathrm{Ext}^1_S(W_{n,S},\mathbb{Z}/p^n\mathbb{Z})$$

    By the universal property of  $W_{n,S}$ we have that $W_{n,S}(S)=W_n(S)$. This implies that for all $S$-schemes $R$, $W_{n,S}(R)$ is a module over $W_n(S)$, therefore we can view elements $a \in W_n(S)$ as elements in $\mathrm{End}(W_{n,S})$ by considering the multiplication action. Using this identification we arrive at the homomorphism,

    $$\theta_n(S):W_n(S)\rightarrow W_n^*(S)=(W_n^\mathrm{perf})^*(S)$$

    If we restrict this functor to the subcategory of perfect affine $\mathbb{F}_p$-schemes, we arrive at a functor,

    $$\theta_n:W_n^\mathrm{perf}\rightarrow (W_n^\mathrm{perf})^*.$$

    \begin{thm}\label{3.5}
        For all perfect affine $\mathbb{F}_p$-schemes $S$, the homomorphism $$\partial_1(S):\mathrm{End}_S(\mathbb{G}_{a,S})\rightarrow \mathrm{Ext}^1_S(\mathbb{G}_a,\mathbb{Z}/p \mathbb{Z)}$$ is surjective. Furthermore the morphism $\theta_1$ is an isomorphism.
    \end{thm}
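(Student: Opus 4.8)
The plan is to reduce the entire statement to explicit Artin–Schreier theory on $\mathbb{G}_a=W_1$. Write $R=\Gamma(S,\mathcal{O}_S)$, a perfect $\mathbb{F}_p$-algebra. First I would identify $\mathrm{End}_S(\mathbb{G}_{a,S})$ with the twisted polynomial ring $R\{F\}$ of additive polynomials $u(x)=\sum_i a_ix^{p^i}$ (with $F$ the $p$-power map and $Fa=a^pF$): if $f\in R[x]$ satisfies $f(x+y)=f(x)+f(y)$ then comparing coefficients of $x^iy^{n-i}$ forces $a_n\binom{n}{i}=0$ for $1\le i\le n-1$, and since a nonzero $\binom{n}{i}\bmod p$ is a unit of $\mathbb{F}_p\subseteq R$, this kills $a_n$ unless $n$ is a power of $p$. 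Under this identification $\partial_1(S)$ becomes the map $R\{F\}\to\mathrm{Ext}^1_S(\mathbb{G}_{a,S},\mathbb{Z}/p\mathbb{Z})$ sending $u$ to the pullback $u^*\epsilon_{1,S}$, whose total space is the Artin–Schreier curve $\{(x,y):y^p-y=u(x)\}$ with $\mathbb{G}_a$-coordinate $x$.

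Next I would pin down the target. Since $\mathbb{Z}/p\mathbb{Z}$ is étale and $\mathbb{G}_{a,S}$ has connected fibres, $\mathrm{Hom}_S(\mathbb{G}_{a,S},\mathbb{Z}/p\mathbb{Z})=0$, so forgetting the group law embeds $\mathrm{Ext}^1_S(\mathbb{G}_{a,S},\mathbb{Z}/p\mathbb{Z})$ into $H^1_{\mathrm{fppf}}(\mathbb{G}_{a,S},\mathbb{Z}/p\mathbb{Z})$, with image the set of \emph{primitive} classes, i.e.\ those $[f]$ with $f(x+y)-f(x)-f(y)$ a coboundary on $\mathbb{A}^1_S\times\mathbb{A}^1_S$ (given such an $f$, normalized by $f(0)=0$, a $g$ with $g^p-g=f(x+y)-f(x)-f(y)$ reconstructs the group law $(x_1,y_1)\cdot(x_2,y_2)=(x_1+x_2,y_1+y_2+g(x_1,x_2))$ on the curve, associativity being forced by applying $\wp$; this is standard). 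The Artin–Schreier sequence on the affine scheme $\mathbb{A}^1_S$ gives $H^1_{\mathrm{fppf}}(\mathbb{G}_{a,S},\mathbb{Z}/p\mathbb{Z})=R[x]/\wp(R[x])$ with $\wp(g)=g^p-g$, and $\partial_1(S)$ is $u\mapsto[u(x)]$. Here is where perfectness of $R$ enters: the identity $\wp(b\,x^{p^{k-1}m})=b^px^{p^km}-b\,x^{p^{k-1}m}$ together with the existence of $p$-th roots in $R$ lets one push every class down to a representative $c_0+\sum_{\gcd(m,p)=1,\,m\ge1}c_mx^m$, and applied to $u(x)=\sum a_ix^{p^i}$ it collapses all higher terms onto $x$, so that $\mathrm{im}\,\partial_1(S)=R\cdot[x]$.

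The heart of the proof — and the step I expect to be the main obstacle — is the converse: every primitive class already lies in $R\cdot[x]$. Take a reduced representative $f=c_0+c_1x+\sum_{m\ge2}c_mx^m$ with all $m$ coprime to $p$; primitivity reads $-c_0+\sum_{m\ge2}c_m\bigl((x+y)^m-x^m-y^m\bigr)=g^p-g$ for some $g\in R[x,y]$. Comparing constant terms gives $c_0\in\wp(R)$, hence $[c_0]=0$; writing $\tilde g=g-g(0,0)$ and using $(g-g_0)^p=g^p-g_0^p$ in characteristic $p$, the positive-degree part becomes $\sum_{m\ge2}c_m\bigl((x+y)^m-x^m-y^m\bigr)=\tilde g^p-\tilde g$. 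A degree count finishes it: if some $c_m\ne0$, choose the largest such $M\ge2$; then the left side is a nonzero polynomial of degree exactly $M$ (for $M$ not a power of $p$ one has $(x+y)^M-x^M-y^M\ne0$, with coefficient of $xy^{M-1}$ equal to $M\in R^\times$), while $\tilde g^p-\tilde g$ is either $0$ or has degree divisible by $p$, so it cannot equal a nonzero polynomial of degree $M$ coprime to $p$. Hence all $c_m=0$ for $m\ge2$ and $[f]=[c_1x]=\partial_1(S)(c_1)$, proving surjectivity of $\partial_1(S)$; combined with the long exact sequence from applying $\mathrm{Hom}_S(\mathbb{G}_{a,S},-)$ to $\epsilon_{1,S}$, whose incoming map is post-composition with $F-\mathrm{id}$, i.e.\ left multiplication by $F-1$ on $R\{F\}$, this also gives $\ker\partial_1(S)=(F-1)R\{F\}$ and $\mathrm{Ext}^1_S(\mathbb{G}_{a,S},\mathbb{Z}/p\mathbb{Z})=R\cdot[x]$.

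Finally, $\theta_1(S)$ is the composite of $a\mapsto(\text{multiplication by }a)\in R\{F\}$ with $\partial_1(S)$, so it sends $a$ to $[a\,x]$; this is injective (if $ax=h^p-h$ then $\deg h\le0$, forcing $h\in R$ and $a=0$) and, by the previous paragraph, surjective onto $\mathrm{Ext}^1_S(\mathbb{G}_{a,S},\mathbb{Z}/p\mathbb{Z})$, and it is functorial in $S$, so $\theta_1$ is an isomorphism of fppf sheaves $W_1^{\mathrm{perf}}\to\mathrm{Ext}^1(W_1^{\mathrm{perf}},\mathbb{Z}/p\mathbb{Z})$. To see this is the isomorphism onto $(W_1^{\mathrm{perf}})^*=\varinjlim_n\mathrm{Ext}^1_S(\mathbb{G}_a,\mathbb{Z}/p^n\mathbb{Z})$ asserted in the theorem, note that multiplication by $p$ kills $\mathbb{G}_a$ and hence $\mathrm{Ext}^1_S(\mathbb{G}_a,\mathbb{Q}_p/\mathbb{Z}_p)$, so that the inclusion $\mathrm{Ext}^1_S(\mathbb{G}_a,\mathbb{Z}/p\mathbb{Z})\hookrightarrow\mathrm{Ext}^1_S(\mathbb{G}_a,\mathbb{Q}_p/\mathbb{Z}_p)$ induced by $\mathbb{Z}/p\mathbb{Z}=(\mathbb{Q}_p/\mathbb{Z}_p)[p]$ is an equality, using $\mathrm{Hom}_S(\mathbb{G}_a,\mathbb{Q}_p/\mathbb{Z}_p)=0$ in the associated long exact sequence. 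Everything outside the degree bookkeeping of the third paragraph — and the verification that a primitive torsor really is represented by an additive polynomial after passing to perfect-ring representatives — is formal.
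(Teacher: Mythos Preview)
Your argument is correct and in fact more complete than the paper's. The paper explicitly proves \emph{only} the second statement assuming the first: granting surjectivity of $\partial_1(S)$, it observes that $F^*\epsilon_{1,S}=\epsilon_{1,S}$ forces $\partial_1(S)(aF)=\theta_1(S)(F^{-1}(a))$, and since every endomorphism of $\mathbb{G}_{a,S}$ is a polynomial in $F$ with coefficients in $R$, this collapses the image of $\partial_1(S)$ onto the image of $\theta_1(S)$, giving surjectivity of $\theta_1$; injectivity is then the same reducedness argument you give (if $a=(F-1)u$ in $R\{F\}$ then the top coefficient forces $a^{p^{r}}=0$ for some $r$).

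You take a genuinely different route: rather than reducing $\theta_1$ to $\partial_1$ formally, you compute $\mathrm{Ext}^1_S(\mathbb{G}_{a,S},\mathbb{Z}/p\mathbb{Z})$ outright as the group of primitive classes in $R[x]/\wp(R[x])$, use perfectness of $R$ to reduce representatives to exponents prime to $p$, and then run a total-degree argument (using that $R$ is reduced so $\deg(\tilde g^p)=p\deg\tilde g$) to show primitive classes are exactly $R\cdot[x]$. This proves both assertions simultaneously and is self-contained. The paper's reduction $\partial_1(aF)=\theta_1(F^{-1}(a))$ is cleaner conceptually and explains \emph{why} the two maps have the same image without ever touching $H^1$, but it leaves the surjectivity of $\partial_1$ as a black box (deferred to B\'egueri); your approach buys an explicit description of $\mathrm{Ext}^1$ at the cost of the bookkeeping in the third paragraph.
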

Note that an endomorphism of the additive group $\mathbb{G}_a$ over a perfect field $k$ of characteristic $p$, is equivalent to a ring map $\varphi:k[X]\rightarrow k[X]$, which is completely determined by the element $\varphi(X)$, which corresponds to a polynomial in $f(X) \in k[X]$.
\begin{lem}
    The endomorphisms of the additive group $\mathbb{G}_a$ over a perfect field of characteristic $p$ are in bijection with  polynomials $f(X)$ of the form $f(X)=a_0X+a_1X^p+a_2X^{p^2}+\dots+a_mX^{p^m}$.
\end{lem}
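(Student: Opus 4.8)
The plan is to characterize additive endomorphisms of $\mathbb{G}_a$ over a perfect field $k$ of characteristic $p$ by translating the problem into commutative algebra, as already hinted in the remark preceding the lemma. An endomorphism $\varphi \colon \mathbb{G}_a \to \mathbb{G}_a$ corresponds to a $k$-algebra endomorphism of $k[X]$, hence to the choice of a polynomial $f(X) := \varphi^\sharp(X) \in k[X]$; the condition that $\varphi$ is a \emph{group} homomorphism is precisely that $f$ is \emph{additive}, i.e. $f(X+Y) = f(X) + f(Y)$ in $k[X,Y]$. So the whole content is the claim that a polynomial $f \in k[X]$ satisfies $f(X+Y) = f(X)+f(Y)$ if and only if it is a $p$-polynomial $f(X) = \sum_{i=0}^m a_i X^{p^i}$.

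The "if" direction is immediate: each $X^{p^i}$ is additive because the Frobenius is a ring homomorphism in characteristic $p$ (the cross terms in $(X+Y)^{p^i}$ all have coefficients divisible by $p$, hence vanish), and a $k$-linear combination of additive polynomials is additive. For the "only if" direction, I would take $f(X) = \sum_n c_n X^n$ with $f(X+Y) = f(X)+f(Y)$, and extract from the identity $\sum_n c_n\bigl((X+Y)^n - X^n - Y^n\bigr) = 0$ the statement that for every $n$ that is \emph{not} a power of $p$ (including $n=0$, where the constant term $c_0$ must vanish since $f(0)=0$), the coefficient $c_n$ is zero. The cleanest way to see this is to fix $n \geq 1$ and look at the coefficient of a suitable monomial $X^a Y^{n-a}$ with $0 < a < n$ in $f(X+Y)$: it equals $c_n \binom{n}{a}$, and this must be zero for all such $a$. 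So $c_n \neq 0$ forces $\binom{n}{a} \equiv 0 \pmod p$ for all $0 < a < n$, and by Lucas's theorem (or a direct Kummer-type argument on carries in base $p$) this happens exactly when $n$ is a power of $p$.

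The main obstacle — really the only non-formal point — is the elementary number theory: proving that $p \mid \binom{n}{a}$ for all $0 < a < n$ if and only if $n$ is a power of $p$. I would handle this by writing $n$ in base $p$; if $n$ is not a power of $p$ it has either a nonzero digit that is not the leading one, or a leading digit $\geq 2$, and in either case one can choose $0 < a < n$ whose base-$p$ digits are all $\leq$ the corresponding digits of $n$, so that Lucas gives $\binom{n}{a} \not\equiv 0 \pmod p$. Conversely if $n = p^m$, then for $0 < a < p^m$ the digit expansion of $a$ has some digit strictly exceeding the (zero) digit of $n$ in that position, so $\binom{n}{a} \equiv 0 \pmod p$. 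Assembling: $f$ additive $\Rightarrow$ $c_n = 0$ whenever $n$ is not a power of $p$ $\Rightarrow$ $f(X) = \sum_{i=0}^m a_i X^{p^i}$, which is the asserted form (with $a_i := c_{p^i}$); and this correspondence is clearly a bijection since distinct polynomials give distinct endomorphisms. Note perfectness of $k$ is not actually needed for this particular statement, though it is used elsewhere in the section.
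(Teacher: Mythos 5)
Your proof is correct, but it takes a genuinely different route from the one in the paper. Both arguments begin by reducing to the purely algebraic statement that a polynomial $f \in k[X]$ is additive (i.e.\ $f(X+Y)=f(X)+f(Y)$ in $k[X,Y]$) if and only if it is a $p$-polynomial. From there, the paper differentiates the additivity identity with respect to $Y$ and sets $Y=0$ to obtain $f'(X)=f'(0)$, which kills all monomials of degree $>1$ not divisible by $p$; it then writes $f(X)=c_1X+g(X^p)$, shows $g$ inherits additivity in the variable $X^p$, and recurses. You instead compare coefficients of $X^aY^{n-a}$ for $0<a<n$ directly to get $c_n\binom{n}{a}=0$ in $k$, and invoke Lucas's theorem to conclude $c_n=0$ unless $n$ is a power of $p$. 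Your route is more direct and avoids the inductive bookkeeping, at the cost of bringing in Lucas's theorem as a black box; the paper's route uses only the formal derivative and an elementary recursion, which is arguably more self-contained. Both are complete. Your remark that perfectness of $k$ is not actually used in this particular lemma is also correct (and applies equally to the paper's argument — the hypothesis is there because the surrounding section needs it).
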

\begin{proof}
    We note that for $f(X)$ to induce a group scheme morphism it has to satisfy the compatibility conditions with the co-multiplication map. It is therefore forced to be additive, i.e $f(X+Y)=f(X)+f(Y)$. Differentiating this with respect to $Y$ and then setting $Y=0$, gives us the identity $f'(X)=f'(0)$. Setting $f(X)=\sum_{i=0}^dc_iX^i$, we get $\sum_{i=0}^dic_iX^{i-1}=c_1$. Therefore $c_i=0$ when $i$ is not divisible by $p$ and greater than $1$. Using this, along with the fact that $f(0)=0$ we get that $f(X)=c_1X+g(X^p)$ for some $g$.\\
    
    We may assume $g(X)\neq 0$. Note furthermore that $f(0)=0$ implies that $g(0)=0$ as well. We have,
    \begin{align*}
        g(X^p+Y^p)&=g((X+Y)^p)\\
        &=f(X+Y)-c_1(X+Y)\\
        &=f(X)+f(Y)-c_1(X)-c_1(Y)\\
        &=g(X^p)+g(Y^p)
    \end{align*}

    This implies that $g$ is additive with respect to the variable $X^p$, which by the previous part allows us to continue the argument to prove the claim.
    \end{proof}
    \begin{proof}
        We prove only the second statement assuming the first. If we let $S=\Spec(A)$, note that for all $a\in A$ we have an equality of morphisms between $a \circ F$ and $F\circ F^{-1}(a)$. Here we think of $a$ and $F^{-1}(a)$ acting via translations. \\

        Note that $F^*(\epsilon_{1,S})=\epsilon_{1,S}$, which implies that $\partial_1(S)(aF)=\theta_1(S)(F^{-1}(a))$. Note by the previous lemma, any endomorphism of $\mathbb{G}_a$ can be written as polynomials the Frobenius, therefore the surjectivity of $\partial_1$  gives also the surjectivity of $\theta_1$.\\

        For injectivity, note that if $\theta_1(S)(a)=0,$ then there exists $u \in \mathrm{End}_S(\mathbb{G}_{a,S})$ such that $a=(F-id)\circ u$. Again by writing $u$ as a polynomial in the powers of the Frobenius we get that $F^r(a)=0$, for $r$ large enough. Therefore, we have that $a=0$. 
    \end{proof}

\begin{thm}
    The morphism $\theta_n$ for $n\geq 1$ is an isomorphism.
\end{thm}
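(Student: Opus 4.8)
The plan is to reduce the general case $n \geq 1$ to the base case $n=1$ already established in Theorem \ref{3.5}, using an inductive argument on the length of the Witt vectors. The key structural input is the short exact sequence of algebraic groups
\begin{equation*}
0 \rightarrow \mathbb{G}_a \xrightarrow{V^{n-1}} W_n \xrightarrow{R} W_{n-1} \rightarrow 0,
\end{equation*}
where $V$ denotes Verschiebung and $R$ restriction (truncation); this identifies $\mathbb{G}_a = W_1$ with the kernel of truncation $W_n \twoheadrightarrow W_{n-1}$. Perfectizing is exact (it preserves the relevant limits), so we retain a short exact sequence of perfect unipotent groups, and I would apply the contravariant functor $\Gamma \mapsto \Gamma^*$. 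The first task is to check that $(-)^*$ carries this to a short exact sequence $0 \rightarrow W_{n-1}^* \rightarrow W_n^* \rightarrow \mathbb{G}_a^* \rightarrow 0$; this is where one uses the long exact $\mathrm{Ext}$-sequence together with the vanishing statements of the kind proved earlier in the section (e.g.\ that $\mathrm{Hom}(-,\mathbb{Q}_p/\mathbb{Z}_p)$ vanishes on connected groups, and that the higher $\mathrm{Ext}$ contributions vanish or can be controlled), plus the fact that $\mathbb{G}_a^*$ and $W_{n-1}^*$ are already known to be representable so surjectivity on the right is genuine.

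Next I would set up the analogous short exact sequence on the "source" side, namely $0 \rightarrow W_{n-1}^{\mathrm{perf}} \xrightarrow{V} W_n^{\mathrm{perf}} \xrightarrow{w_0} \mathbb{G}_a^{\mathrm{perf}} \rightarrow 0$ coming from Verschiebung and the ghost/zeroth component map, and verify that the maps $\theta_n$ assemble into a morphism of short exact sequences
\begin{equation*}
\begin{tikzcd}
0 \arrow[r] & W_{n-1}^{\mathrm{perf}} \arrow[r] \arrow[d, "\theta_{n-1}"] & W_n^{\mathrm{perf}} \arrow[r] \arrow[d, "\theta_n"] & \mathbb{G}_a^{\mathrm{perf}} \arrow[r] \arrow[d, "\theta_1"] & 0 \\
0 \arrow[r] & W_{n-1}^* \arrow[r] & W_n^* \arrow[r] & \mathbb{G}_a^* \arrow[r] & 0.
\end{tikzcd}
\end{equation*}
Commutativity of the two squares is a compatibility check at the level of functors of points: one must match the pullback-of-$\epsilon_{n}$ description of $\theta_n$ against $\theta_{n-1}$ and $\theta_1$ under Verschiebung and truncation, using that $\epsilon_n$ is built compatibly from $\epsilon_{n-1}$ and $\epsilon_1$ via the same Verschiebung/restriction maps (this is the content of the reference to \cite{kiehl2013weil}[II.5.5]-style bookkeeping, or rather the Witt-vector functoriality of the Artin--Schreier sequence). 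Once the diagram commutes with exact rows, the five lemma immediately gives that $\theta_n$ is an isomorphism, since $\theta_{n-1}$ is an isomorphism by the inductive hypothesis and $\theta_1$ is an isomorphism by Theorem \ref{3.5}.

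The main obstacle I anticipate is the exactness of $0 \rightarrow W_{n-1}^* \rightarrow W_n^* \rightarrow \mathbb{G}_a^* \rightarrow 0$, specifically surjectivity on the right: the long exact $\mathrm{Ext}$-sequence gives exactness up to a connecting map into $\mathrm{Ext}^2_S(W_{n-1,S}, \mathbb{Z}/p^m\mathbb{Z})$, and one must argue this obstruction vanishes (or that the relevant $\mathrm{Ext}^1$ of $\mathbb{G}_a$ already surjects, which is essentially the surjectivity of $\partial_1$ from Theorem \ref{3.5} propagated through the sequence). A clean way around this is to avoid asserting exactness abstractly and instead use surjectivity of $\partial_1(S)$ directly: every class in $W_n^*(S) = \varinjlim_m \mathrm{Ext}^1_S(W_{n,S}, \mathbb{Z}/p^m\mathbb{Z})$ can be pulled back along $\epsilon_{n,S}$ from an endomorphism of $W_{n,S}$, and Witt-vector multiplication lets one realize every such endomorphism class modulo the image of $F - \mathrm{id}$ by an element of $W_n(S)$, exactly as in the proof of Theorem \ref{3.5}; the inductive diagram then only needs left-exactness of $(-)^*$, which follows from right-exactness of the source sequence together with the vanishing $\mathrm{Hom}(\text{connected}, \mathbb{Q}_p/\mathbb{Z}_p) = 0$. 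Injectivity of $\theta_n$ is handled as in the $n=1$ case, writing a witnessing endomorphism as a polynomial in Frobenius and Verschiebung and using that $F$ is invertible on perfect schemes to conclude the relevant element is killed by a power of $F$, hence zero.
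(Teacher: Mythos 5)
The paper actually states this theorem without proof (it appears right before the subsection ``The general case'' with no argument supplied), so there is no internal proof to compare against. On its own merits, your strategy --- induction on $n$ via a Witt-vector short exact sequence, dualizing, and applying the five lemma --- is the standard one (it is essentially Bégueri's), and the overall shape is right. But there are two genuine gaps in the way you have filled it in.

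First, the commutativity of the two squares is not a routine ``compatibility check at the level of functors of points''; it is where the real content lives, and it does not hold on the nose. The Witt-vector multiplication identities are Frobenius-twisted: $V(a)\cdot x = V(a\cdot Fx)$ and $b\cdot V^{n-1}(a) = V^{n-1}(F^{n-1}(b)\cdot a)$, so, for instance, composing $w_0$ with $\theta_1$ and comparing against $(V^{n-1})^*\circ\theta_n$ produces a discrepancy by a power of Frobenius. This is fixable because $F$ is invertible on the perfectization, but one must say so explicitly and build the twist into either the maps in the top row or the definition of $\theta_n$. The paper's own proof of the $n=1$ case already flags exactly this phenomenon via the identity $\partial_1(S)(aF) = \theta_1(S)(F^{-1}(a))$, and your proof needs to propagate that kind of bookkeeping through the diagram, not wave at it. (Incidentally, the citation to \cite{kiehl2013weil}[II.5.5] is misplaced --- that reference concerns $\pi$-adic sheaves and has nothing to do with Witt-vector functoriality.)

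Second, your ``clean way around'' the surjectivity of $W_n^*\rightarrow\mathbb{G}_a^*$ is circular as written: you assert that ``every class in $W_n^*(S)$ can be pulled back along $\epsilon_{n,S}$ from an endomorphism of $W_{n,S}$,'' but that is precisely the surjectivity of $\partial_n$, i.e.\ the $n$-level statement you are trying to establish by induction. If the $\partial_n$-surjectivity argument of Theorem~\ref{3.5} did generalize directly, the induction would be unnecessary. The honest route is the one you mention and then back away from: show the connecting map $\mathrm{Ext}^1_S(\mathbb{G}_{a,S},\mathbb{Z}/p^m\mathbb{Z})\rightarrow\mathrm{Ext}^2_S(W_{n-1,S},\mathbb{Z}/p^m\mathbb{Z})$ vanishes on the image of $\theta_1$ (e.g.\ because that image is already lifted inside $W_n^*$ by pulling back $\epsilon_{n,S}$ along an endomorphism that factors through $w_0$), and use that together with the top row's surjectivity and the inductive isomorphism $\theta_{n-1}$ in a diagram chase. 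With those two repairs, the five-lemma argument closes; the separate ``polynomial in $F$ and $V$'' injectivity argument you sketch at the end is then redundant.
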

\end{subsubsection}
\begin{subsubsection}{The general case}
    Using the theory in the previous section and various structure theorems for unipotent groups, we prove the following theorem.
    \begin{thm}
        Let $k$ be a perfect field of characteristic $p>0$. For all perfect commutative unipotent connected algebraic groups $G$, the functor $G^*$ is representable, and moreover, $G$ and $G^*$ are isogenous. 
    \end{thm}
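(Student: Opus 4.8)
The plan is to proceed by \emph{dévissage}, bootstrapping from the case of Witt vector groups treated above, the two essential inputs being: (i) the duality functor $\Gamma\mapsto\Gamma^{*}$ is exact on the category of connected commutative unipotent groups, and (ii) the classical structure theory over a perfect field, which provides for every $G\in\mathfrak{cpu}_k^{\circ}$ a composition series $0=G_0\subset G_1\subset\dots\subset G_r=G$ with $G_i/G_{i-1}\cong\mathbb{G}_a$, and moreover an isogeny between $G$ and a finite product $\prod_i W_{n_i}^{\mathrm{perf}}$. Throughout, by the identification $\Gamma^{*}(S)\cong(\Gamma^{\mathrm{perf}})^{*}(S)$ established after (\ref{exact}), it suffices to argue with perfect groups.

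First I would establish exactness of $\Gamma\mapsto\Gamma^{*}$. Given a short exact sequence $0\to G'\to G\to G''\to 0$ in $\mathfrak{cpu}_k^{\circ}$ and an affine perfect $S$, the long exact sequence for $\mathrm{Ext}^{\bullet}_S(-,\mathbb{Q}_p/\mathbb{Z}_p)$, combined with the vanishing $\underline{\mathrm{Hom}}_S(H_S,\mathbb{Z}/p^n\mathbb{Z})=0$ for every connected $H$ (a homomorphism to an étale group scheme factors through $\pi_0$), yields
$$0\to(G'')^{*}\to G^{*}\to(G')^{*}\to\mathrm{Ext}^2_S(G'',\mathbb{Q}_p/\mathbb{Z}_p).$$
So exactness reduces to $\mathrm{Ext}^2_S(G'',\mathbb{Q}_p/\mathbb{Z}_p)=0$, and by a further dévissage along the composition series it is enough to check this for $G''=\mathbb{G}_a$. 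Feeding the Artin--Schreier--Witt sequences $0\to\mathbb{Z}/p^n\mathbb{Z}\to W_{n,S}\xrightarrow{F-1}W_{n,S}\to 0$ into $\mathrm{Ext}^{\bullet}_S(\mathbb{G}_a,-)$, this follows from the vanishing of $\mathrm{Ext}^i_S(\mathbb{G}_a,\mathbb{G}_a)$ for $i\ge 2$ together with the fact that, \emph{over a perfect base}, $F-1$ acts surjectively on $\mathrm{Ext}^1_S(\mathbb{G}_a,W_{n,S})$ --- the same Artin--Schreier phenomenon that powers Theorem~\ref{3.5}. I expect this to be the main obstacle: it is precisely here that perfectness and the full strength of the Witt-vector computations of the previous subsection enter.

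With exactness available, representability follows by induction on the length $r$ of the composition series. The case $r\le 1$ is Theorem~\ref{3.5} (and its extension to all $W_n^{\mathrm{perf}}$). For the inductive step, dualizing $0\to G_{i-1}\to G_i\to\mathbb{G}_a\to 0$ gives a short exact sequence $0\to\mathbb{G}_a^{*}\to G_i^{*}\to G_{i-1}^{*}\to 0$; since $\mathbb{G}_a^{*}\cong\mathbb{G}_a^{\mathrm{perf}}$ by Theorem~\ref{3.5} and $G_{i-1}^{*}$ is representable by hypothesis, $G_i^{*}$ is an extension of one connected commutative perfect unipotent group by another, hence is representable by such a group (extensions of algebraic groups by algebraic groups are again algebraic). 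The same filtration gives a dimension count $\dim G^{*}=\sum_i\dim\mathbb{G}_a=r=\dim G$.

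Finally, for the isogeny statement I would note that the finite kernels and cokernels appearing when one dualizes an isogeny become zero in the isogeny category --- the dual of a finite group is finite, e.g. $\mathrm{Ext}^1(\mathbb{Z}/p^n\mathbb{Z},\mathbb{Q}_p/\mathbb{Z}_p)\cong\mathbb{Z}/p^n\mathbb{Z}$ --- so $\Gamma\mapsto\Gamma^{*}$ descends to a contravariant exact functor on $\mathfrak{cpu}_k^{\circ}$ up to isogeny. By the structure theorem $G$ is isogenous to $\prod_i W_{n_i}^{\mathrm{perf}}$, while $(W_{n_i}^{\mathrm{perf}})^{*}\cong W_{n_i}^{\mathrm{perf}}$ was shown above; applying $*$ gives that $G^{*}$ is isogenous to $\prod_i(W_{n_i}^{\mathrm{perf}})^{*}\cong\prod_i W_{n_i}^{\mathrm{perf}}$, which is isogenous to $G$. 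Hence $G$ and $G^{*}$ are isogenous, completing the proof.
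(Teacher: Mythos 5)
Your route to representability is genuinely different from the paper's, and the difference matters. You first try to establish exactness of $\Gamma\mapsto\Gamma^*$ (which forces you to prove $\mathrm{Ext}^2_S(\mathbb{G}_a,\mathbb{Q}_p/\mathbb{Z}_p)=0$) and then induct along a composition series with $\mathbb{G}_a$ graded pieces. The paper sidesteps the $\mathrm{Ext}^2$ problem entirely: it takes a two-term presentation $U_2\xrightarrow{\,g\,}U_1\to G\to 0$ with $U_i$ finite products of $W_{n}^{\mathrm{perf}}$ and with connected kernels at each stage, so that $\mathrm{Hom}_S(\text{connected},\mathbb{Q}_p/\mathbb{Z}_p)=0$ and the \emph{left} exactness of $\mathrm{Ext}^1$ already give $G^*=\ker\bigl(g^*\colon U_1^*\to U_2^*\bigr)$. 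Since $U_1^*$ and $U_2^*$ are representable by the Witt-vector theorem, so is this kernel; no right exactness of duality and no vanishing of higher $\mathrm{Ext}$ is required. Exactness of the duality functor is then stated as a separate theorem and cited to Bégueri. Your isogeny step is essentially the paper's: dualize an isogeny $U\to G$ from a Witt-vector product, note that the kernel $\mathrm{Hom}_k(M,\mathbb{Q}_p/\mathbb{Z}_p)$ is finite and the cokernel is controlled by $\mathrm{Ext}^1(M,\mathbb{Q}_p/\mathbb{Z}_p)$, and use $U^*\cong U$.

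The concrete gap in your proposal is the surjectivity half of exactness, which the rest of your inductive representability argument depends on. You correctly reduce it to $\mathrm{Ext}^2_S(\mathbb{G}_a,\mathbb{Q}_p/\mathbb{Z}_p)=0$ and propose to derive this from the Artin--Schreier--Witt sequences together with (a) the vanishing of $\mathrm{Ext}^i_S(\mathbb{G}_a,\mathbb{G}_a)$ for $i\geq 2$ over a perfect affine base and (b) the surjectivity of $F-1$ on $\mathrm{Ext}^1_S(\mathbb{G}_a,W_{n,S})$. Neither (a) nor (b) is established in the text, and (b) is essentially the content of Bégueri's exactness theorem rather than an immediate corollary of Theorem~\ref{3.5}; you flag this as ``the main obstacle'' but do not close it. If you want exactness as a precursor you must actually prove these, and if your real goal is only the theorem as stated, the paper's kernel presentation is the more economical route precisely because it never needs the surjectivity.
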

\begin{proof}
    We have that all connected commutative connected unipotent groups are a quotient of a finite product of Witt vectors of finite length by a connected subgroup \cite{book:3916}. Therefore there exists an exact sequence,
    $$U_2\xrightarrow{g} U_1\rightarrow G\rightarrow 0$$
    where $U_i=\prod_j W^\mathrm{perf}_{n_{ij}}$. The functor $G^*$ is then the kernel of $g^*$. Through the previous isomorphism in the Witt vector case, we get that $G^*$ is a commutative connected unipotent group scheme.

    On the other hand, we also have that there exists an $k$-isogeny $U\xrightarrow{f} G$ \cite[VIII,~Theorem~1]{serre1960groupes}, where U is a finite product of Witt vectors of finite length. Let $M$ be the finite kernel of $f$, from which we get an exact sequence of $k$-group schemes,
    $$0\rightarrow Hom_k(M,\mathbb{Q}/\mathbb{Z})\rightarrow U^*\xrightarrow{f^*}G^*$$
    if $\overline{k}$ is an algebraic closure of $k$, the group $coker(f^*)(\overline{k})$, which as a subgroup of $\mathrm{Ext}^1_{\overline{k}}(M,\mathbb{Q}/\mathbb{Z})=0$ is also null. Therefore $G^*$ is isogenous to $U^*$ and therefore to $U$ and $G$, which is what we wanted to show. 
\end{proof}
   \begin{thm}\cite[Proposition~1.2.1]{begueri1980dualite}
       The functor $G\rightarrow G^*$ is exact an involutive on the category $\mathfrak{cpu}_k^\circ$.
       \end{thm}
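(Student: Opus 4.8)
The plan is to realise $(-)^{*}$ as the fppf sheaf $\mathrm{Ext}$-functor $\underline{\mathrm{Ext}}^{1}(-,\mathbb{Q}_p/\mathbb{Z}_p)$ over $k$, to isolate the single cohomological input it requires, and then to read off exactness from a long exact sequence and involutivity from a five-lemma reduction to the Witt vector case already settled by $\theta_n$. First I would record that $(-)^{*}$ is a well-defined additive contravariant endofunctor of $\mathfrak{cpu}_k^{\circ}$: it takes values in $\mathfrak{cpu}_k^{\circ}$ by the representability theorem proved above, and since a connected group scheme admits no nonzero homomorphism to an étale one we have $\underline{\mathrm{Hom}}(G,\mathbb{Q}_p/\mathbb{Z}_p)=0$ for every $G\in\mathfrak{cpu}_k^{\circ}$ (exactly as for $\omega_1(\Gamma)$ earlier), so the local-to-global spectral sequence collapses in low degrees and $\Gamma^{*}(S)=\mathrm{Ext}^{1}_{S}(\Gamma_S,\mathbb{Q}_p/\mathbb{Z}_p)$ is the value on $S$ of the fppf sheaf $G^{*}:=\underline{\mathrm{Ext}}^{1}(G,\mathbb{Q}_p/\mathbb{Z}_p)$.

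The technical core is the claim that for every $G\in\mathfrak{cpu}_k^{\circ}$ one has $\underline{\mathrm{Ext}}^{i}(G,\mathbb{Q}_p/\mathbb{Z}_p)=0$ for all $i\neq 1$, i.e.\ $R\underline{\mathrm{Hom}}(G,\mathbb{Q}_p/\mathbb{Z}_p)\simeq G^{*}[-1]$ in the derived category of fppf sheaves over $k$. The case $i=0$ is the Hom-vanishing just recalled. For $i\geq 2$ I would choose a presentation $U_2\xrightarrow{g} U_1\to G\to 0$ with each $U_j$ a finite product of perfected Witt vectors, as in the structure theorem recalled above, and dimension-shift along the associated short exact sequences to reduce to $G=W_n^{\mathrm{perf}}$; the $\mathbb{G}_a$-filtration of $W_n$ together with the Artin--Schreier--Witt sequences $0\to\mathbb{Z}/p^m\mathbb{Z}\to W_m\xrightarrow{F-1}W_m\to 0$ then reduces the statement to a computation with $R\underline{\mathrm{Hom}}(\mathbb{G}_a,\mathbb{G}_a)$ over the \emph{perfect} field $k$, carried out in \cite{begueri1980dualite}; this is precisely where perfectness of $k$ enters. \textbf{I expect this cohomological-dimension statement to be the main obstacle}; the two deductions below are formal once it is available.

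Granting the claim, \emph{exactness} is immediate: applying $R\underline{\mathrm{Hom}}(-,\mathbb{Q}_p/\mathbb{Z}_p)$ to a short exact sequence $0\to G'\to G\to G''\to 0$ in $\mathfrak{cpu}_k^{\circ}$, the long exact sequence of $\underline{\mathrm{Ext}}^{i}$'s has all its terms in degrees $i\neq 1$ equal to zero, so it collapses to $0\to (G'')^{*}\to G^{*}\to (G')^{*}\to 0$. Hence $(-)^{*}$ is exact, and therefore so is $(-)^{**}$, being a composite of two exact contravariant functors. For \emph{involutivity}, the derived biduality morphism $G\to R\underline{\mathrm{Hom}}(R\underline{\mathrm{Hom}}(G,\mathbb{Q}_p/\mathbb{Z}_p),\mathbb{Q}_p/\mathbb{Z}_p)$, combined with the identifications $R\underline{\mathrm{Hom}}(G,\mathbb{Q}_p/\mathbb{Z}_p)\simeq G^{*}[-1]$ and $R\underline{\mathrm{Hom}}(G^{*},\mathbb{Q}_p/\mathbb{Z}_p)\simeq G^{**}[-1]$ (valid since $G^{*}\in\mathfrak{cpu}_k^{\circ}$ as well), yields a natural transformation $\beta\colon\mathrm{id}\Rightarrow (-)^{**}$ on $\mathfrak{cpu}_k^{\circ}$. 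It remains to check that each $\beta_G$ is an isomorphism. For $G=W_n^{\mathrm{perf}}$: since $\theta_n\colon W_n^{\mathrm{perf}}\xrightarrow{\ \sim\ }(W_n^{\mathrm{perf}})^{*}$ is an isomorphism by the theorem proved above, $(\theta_n)^{*}$ is an isomorphism too, and unwinding the pairing that defines $\theta_n$ shows that $\beta_{W_n^{\mathrm{perf}}}$ agrees, up to a unit, with $((\theta_n)^{*})^{-1}\circ\theta_n$, hence is an isomorphism; by additivity the same holds for every finite product of perfected Witt vectors. For general $G$, apply $(-)^{**}$ (exact) to the presentation $U_2\to U_1\to G\to 0$; naturality of $\beta$ gives a morphism from this right exact sequence to $U_2^{**}\to U_1^{**}\to G^{**}\to 0$, and since $\beta_{U_1}$ and $\beta_{U_2}$ are isomorphisms the five lemma (in its cokernel form) forces $\beta_G$ to be an isomorphism. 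The only non-formal point besides the vanishing in the second paragraph is the compatibility between Bégueri's explicit pairing $\theta_n$ and Yoneda biduality, which is again a direct unwinding of definitions.
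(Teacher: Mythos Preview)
The paper does not supply its own proof of this theorem: it records the statement and cites \cite[Proposition~1.2.1]{begueri1980dualite}, then immediately closes the subsection. So there is no internal argument to compare yours against.

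On its own terms your outline is the standard one and is essentially what B\'egueri does: identify $G^{*}$ with the fppf sheaf $\underline{\mathrm{Ext}}^{1}(G,\mathbb{Q}_p/\mathbb{Z}_p)$, establish the vanishing of $\underline{\mathrm{Ext}}^{i}$ for $i\neq 1$, read off exactness from the long exact sequence, and deduce involutivity by checking the biduality map on the Witt building blocks and propagating via the five lemma. Two places where the sketch should be tightened. First, the presentation $U_2\to U_1\to G\to 0$ is only right exact, so ``dimension-shift along the associated short exact sequences'' presupposes that the intermediate kernel and image groups again lie in $\mathfrak{cpu}_k^{\circ}$; this is true (images of connected groups are connected, and over a perfect field closed subgroups of perfect unipotent groups are again perfect unipotent), but it must be said, or else replace the d\'evissage by a composition series with successive quotients $\mathbb{G}_a^{\mathrm{perf}}$, which exists for any connected smooth unipotent group over a perfect field and makes the reduction to $\mathbb{G}_a$ cleaner. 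Second, the claim that $\beta_{W_n^{\mathrm{perf}}}$ agrees ``up to a unit'' with $((\theta_n)^{*})^{-1}\circ\theta_n$ is exactly the nonformal compatibility you flag; in B\'egueri this is made explicit by writing the Yoneda pairing through the Artin--Schreier--Witt sequence, and you should expect to do the same rather than leave it as an unwinding.
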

\end{subsubsection}

\begin{subsection}{The non-commutative case}

We detail a suitable duality theory for non-commutative unipotent groups by modifying the commutative case. More specifically we restrict our attention to central extensions, rather than all extensions as in the previous case. Furthermore we give an alternate presentation of this duality theory in the framework of a moduli space of rank 1 local systems.

\begin{defn}[Central extension]
    A central extension of a group scheme $G$ is an isogeny $\widetilde{G}\rightarrow G$ such that the kernel of the morphism is contained in the center of $G$.
\end{defn}
    We modify our definition of the Serre dual of a unipotent commutative group in two ways; first by modifying the kernel of the extensions we are looking at, and second by restricting our attention to only central extensions.
\end{subsection}
\begin{defn}
    If $G$ is a $k$-group scheme, we define a functor $G^*$, which sends a $k$-scheme $S$ to the set of central extensions of $G_S$ by $\mathbb{Q}_p/\mathbb{Z}_p$. 
    
\end{defn}

\begin{prop}\cite[Proposition~A.30]{chsh}
    If $G$ is a connected perfect unipotent group over $k$, the restriction of the functor $G^*$ to $\mathfrak{Perf}_k$
is represented by an object of $\mathfrak{cpu}_k$, which we also denote by $G^*$.
\end{prop}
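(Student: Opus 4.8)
The plan is to reduce the representability of the non-commutative functor $G^*$ to the already-established commutative case by exploiting the fact that central extensions by a commutative group only "see" the abelianization in a controlled way. First I would unwind the definition: a central extension of $G_S$ by $\mathbb{Q}_p/\mathbb{Z}_p$ is an exact sequence $0 \to \mathbb{Q}_p/\mathbb{Z}_p \to \widetilde{G} \to G_S \to 0$ with the kernel landing in the center of $\widetilde{G}$, and two such are identified if there is an isomorphism of extensions. Since the kernel is central, the commutator map on $\widetilde{G}$ factors through $G_S \times G_S$ and is independent of the chosen set-theoretic lift; this is the standard mechanism by which central extensions of $G$ by an abelian $A$ are classified by a cohomology group $H^2_{\mathrm{central}}(G, A)$ computed, for instance, via the bar resolution on the big fppf site. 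The key structural input is that because $\mathbb{Q}_p/\mathbb{Z}_p$ is commutative and $G$ is unipotent, one has a five-term-type exact sequence relating central extensions of $G$ to central extensions of $G^{\mathrm{ab}}$ together with alternating bilinear forms on $G$; concretely, there is an exact sequence
\[
0 \to \mathrm{Ext}^1(G^{\mathrm{ab}}_S, \mathbb{Q}_p/\mathbb{Z}_p) \to G^*(S) \to \mathrm{Hom}(\wedge^2 G_S, \mathbb{Q}_p/\mathbb{Z}_p)
\]
(the last term recording the commutator pairing), with the first term already known to be representable by an object of $\mathfrak{cpu}_k$ via the commutative theory developed above, since $G^{\mathrm{ab}}$ is again a connected perfect unipotent group.

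With this sequence in hand, the second step is to analyze the outer terms. The subfunctor given by the first term is representable by $(G^{\mathrm{ab}})^*$, which lies in $\mathfrak{cpu}_k$ by the commutative results cited earlier. The third term, the functor $S \mapsto \mathrm{Hom}(\wedge^2 G_S, \mathbb{Q}_p/\mathbb{Z}_p)$ of alternating pairings, is an internal $\mathrm{Hom}$ into $\mathbb{Q}_p/\mathbb{Z}_p$ from a finitely presented commutative unipotent group (the second exterior power, suitably interpreted for the nilpotent group $G$), and the same Serre-duality machinery — representability of $\Gamma \mapsto \Gamma^*$ for connected commutative unipotent $\Gamma$, extended to the appropriate perfect setting — shows it too is pro-representable, and in fact representable by a commutative perfect unipotent group after checking the relevant finiteness. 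One then identifies the image of $G^*(S)$ inside this pairing functor as the subfunctor of pairings that actually extend to a central extension, i.e. those satisfying the cocycle/associativity obstruction; this obstruction is itself represented by a morphism of representable functors, so the image is a closed (kernel-type) subgroup functor, hence representable. Finally, $G^*$ is an extension of this representable image by the representable kernel $(G^{\mathrm{ab}})^*$, and an extension of representable affine group functors (in the perfect-scheme setting, where everything in sight is pro-unipotent) is representable; connectedness and the unipotent property are inherited since both outer terms are connected perfect unipotent, so the middle term lies in $\mathfrak{cpu}_k$.

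The main obstacle I anticipate is making the middle term of the classifying sequence genuinely functorial and controlling the obstruction map cleanly enough that its kernel is visibly representable — in other words, verifying that the passage from "a bilinear alternating pairing" to "an actual central extension realizing it" is governed by an honest morphism of group schemes rather than merely a pointwise condition. This requires care with the fppf-sheafification of the bar-cohomology groups and with the fact that $G$ is only a scheme over a general perfect base $S$, so one cannot naively invoke finite-group or field-valued arguments; one must check that the relevant $\mathrm{Ext}$ and $\mathrm{Hom}$ sheaves are well-behaved, e.g. that higher cohomology of $\mathbb{Q}_p/\mathbb{Z}_p$-coefficient sheaves against unipotent groups vanishes in the needed range, which is precisely the kind of input that the cited Proposition~A.30 in \cite{chsh} packages. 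A secondary, more technical point is ensuring compatibility of the whole construction with the perfectization functor — replacing the kernel $\mathbb{Q}_p/\mathbb{Z}_p$ and passing to $\mathfrak{Perf}_k$ as was done in the commutative case via the exact sequence \eqref{exact} — so that $\omega_1$-type terms contribute nothing and one really lands in $\mathfrak{cpu}_k$ and not merely in some larger category of fppf sheaves.
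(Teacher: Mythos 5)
The paper does not actually prove this proposition; it simply cites \cite[Proposition~A.30]{chsh} and moves on to an alternative perspective via multiplicative local systems. So there is no internal proof to compare against. That said, your proposal has a concrete gap at its foundation.

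The central issue is your claimed classifying sequence
\[
0 \to \mathrm{Ext}^1(G^{\mathrm{ab}}_S, \mathbb{Q}_p/\mathbb{Z}_p) \to G^*(S) \to \mathrm{Hom}(\wedge^2 G_S, \mathbb{Q}_p/\mathbb{Z}_p).
\]
The rightmost map is supposed to send a central extension $0 \to A \to \widetilde{G} \to G_S \to 0$ to its commutator pairing, but this pairing into $A$ does not exist for non-abelian $G$. If $g_1, g_2 \in G_S$ have $[g_1,g_2] \ne 1$, then for any lifts $\tilde g_1, \tilde g_2 \in \widetilde{G}$ the commutator $[\tilde g_1, \tilde g_2]$ maps to $[g_1,g_2] \ne 1$, hence does \emph{not} lie in the kernel $A$. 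The commutator-pairing description of central extensions by an alternating form on $\wedge^2$ is a fact about central extensions of \emph{commutative} groups; extending it to "a suitably interpreted second exterior power of the nilpotent group $G$" is not a cosmetic fix — it is the entire content that would need to be supplied, and the proposition is interesting precisely because $G$ is not assumed commutative. Since the whole strategy of reducing to representable outer terms hinges on the existence and functoriality of this map, the argument does not get off the ground for general unipotent $G$.

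For comparison, the dévissage the paper actually deploys later (in the Heisenberg-group section) avoids exactly this pitfall: it invokes the result of \cite{kamgarpour2008stackyabelianizationalgebraicgroup} (Lemma \ref{3.19}) that $H^2(G,A) \cong \operatorname{Hom}(\Pi_{\mathrm{cent}}(G), A)$, where $\Pi_{\mathrm{cent}}(G)$ is built from central covers of $G$, and then uses the exact sequence
\[
0 \to (G^{\mathrm{ab}})^* \to G^* \to \operatorname{Hom}(\Pi,\mathbb{Q}_p/\mathbb{Z}_p) \to 0
\]
with $\Pi := \operatorname{Im}(\Pi_{\mathrm{cent}}([G,G]) \to \Pi_{\mathrm{cent}}(G))$, a \emph{finite} group scheme (boundedness of $\Pi$ being the content of Theorem \ref{3.22}). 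This is the correct replacement for your $\mathrm{Hom}(\wedge^2 G, \mathbb{Q}_p/\mathbb{Z}_p)$: the last term is controlled not by a bilinear pairing on $G$ but by central covers restricted to $[G,G]$, which is well-defined for arbitrary nilpotent $G$. If you want to prove the representability proposition rather than just cite it, that machinery (or equivalently, an induction on the length of the descending central series, treating one central commutative quotient at a time using the commutative theory already established) is the route to take; the $\wedge^2$-form picture should only appear after you have already reduced to a commutative layer.
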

    We now introduce a different perspective on this duality which will be easier to generalize in later settings. 
\begin{defn}\cite[Definition~1.28]{boyarchenko2013charactersheavesunipotentgroups}
    A multiplicative local system on a perfect connected quasi-algebraic group $H$ over $k$ is a rank 1 $\mathbb{Q}_l$-local system $\mathcal{L}$ equipped with an isomorphism
$$\mu^*(\mathcal{L})\xrightarrow{\sim} pr_1^*(\mathcal{L})\otimes pr_2^*(\mathcal{L})$$

 where $\mu : H \times H \rightarrow H$  denotes the multiplication morphism.
\end{defn}
We can relate these two notions by fixing a homomorphism $\psi:\mathbb{Q}_p/\mathbb{Z}_p \rightarrow \overline{\mathbb{Q}_l^{\times}}$. More precisely if $\widetilde{H}$ is a central extension of $H$ by $\mathbb{Q}_p/\mathbb{Z}_p$, then $\widetilde{H}$ is a $\mathbb{Q}_p/\mathbb{Z}_p$ étale torsor over $H$. The associated rank 1 $\mathbb{Q}_l$-local system induced by the morphism $\psi$, can  be seen to be multiplicative. Furthering these arguments this relation can actually be seen to be a bijection of groups.

\begin{rmk}
    It is important to note that the Serre dual in the context of non-commutative groups is no longer involutive, nor exact in general. Nevertheless, this definition and its associated Fourier transform is useful in a representation theoretic context as outlined in \cite{boyarchenko2013charactersheavesunipotentgroups}.
\end{rmk}
\begin{subsection}{Serre dual of the Perfectization of the  Heisenberg group}
     It will be convenient for us to work in the non-perfect setting first, and then use various adjoint properties of the perfectization functor to transfer our results to the perfect setting.

 \begin{subsubsection}{The Heisenberg group}

\begin{defn}
    Let $W$ be a symplectic vector space over a field $F$, then the Heisenberg group, $H(W)$ is defined to be 
    $$W\oplus F$$
    with group operation
    $$(w_1,t_1) \circ(w_2,t_2)=(w_1+w_2,t_1+t_2+\frac{1}{2}\langle w_1,w_2\rangle)$$
\end{defn}

We can find a Darboux basis $\left\{{e}_j, {f}^k\right\}_{1 \leq j, k \leq n}$ for every symplectic vector space, satisfying $\omega\left({e}_j, {f}^k\right)=\delta_j^k$. Here, $2n$ is the necessarily even dimension of W and $\omega(x,y)$ is the given symplectic form. In this basis, every vector can be written as
$$
v=q^a {e}_a+p_a {f}^a
$$

A vector in $\mathrm{H}(W)$ is then given by
$$
v=q^a {e}_a+p_a {f}^a+t k
$$
where $\{k\}$ is a basis for ${F}$. The group law under this new basis becomes
$$
(p, q, t) \cdot\left(p^{\prime}, q^{\prime}, t^{\prime}\right)=\left(p+p^{\prime}, q+q^{\prime}, t+t^{\prime}+\frac{1}{2}\left(p q^{\prime}-p^{\prime} q\right)\right)
$$
where the multiplication on the right hand side should be understood as the dot product.

Making the substitution $u=t+\frac{1}{2}pq$, we have an isomorphic  matrix presentation where a group element 
$$
v=q^a {e}_a+p_a {f}^a+t k
$$

can be written as 

$$\begin{bmatrix}
    1&p&u\\
    0&I_n&q\\
    0&0&1
\end{bmatrix}$$

where again $p$ and $q$ should be understood in vector notation.

\begin{rmk}
    We can also view the Heisenberg group as a central extension of $W$ by $F$,

    $$0\rightarrow F\rightarrow H(W) \rightarrow W\rightarrow0$$

    Furthermore it is clear to see that the centralizer and commutator subgroup coincide and is isomorphic to $F$. This in particular implies that the abelianization $H^{ab}$ is isomorphic to $W$.
\end{rmk}
\begin{rmk}
    We can transfer these definitions over to the setting of group schemes. A convenient definition that works here is to modify the last remark, so that we end up with an exact sequence,$$0\rightarrow \mathbb{G}_a\rightarrow H(W) \rightarrow \mathbb{G}_a^n\rightarrow0$$
\end{rmk}
\end{subsubsection}  
\begin{subsubsection}{The Set-up}\hspace{5pt}

    We now take a small digression to introduce a few tools that will help us with the calculation.

    \begin{defn}
        Let $G$ be a connected algebraic group over $k$. A group cover of G is a connected algebraic group $\widetilde{G}$ with an isogeny $\widetilde{G}\rightarrow G$. Let \textbf{Cov}(G) be the category of coverings of $G$. One can show that \textbf{Cov}(G) is anti-equivalent to a partially ordered directed poset, where the supremum of two covers is given by the reduced connected component of their fibre product.
    \end{defn}

\begin{defn}
    We let $\Pi_{cent}(G)=\varprojlim A_i$, where the limit is taken over all $A_i$, such that $A_i$ is the kernel of a central group cover in \textbf{Cov}(G).

\end{defn}

Let $A$ be a finite commutative group scheme over $k$. Let 
$$0\rightarrow A\rightarrow \widetilde{G}\rightarrow G\rightarrow0$$ be a central extension. Then $(\widetilde{G})_{red}^\circ$ is a central cover. We construct a map $f_{\widetilde{G}}:\Pi_{cent}(G)\rightarrow A$ in the following way,
$$\Pi_{cent}(G)\twoheadrightarrow \operatorname{ker((\widetilde{G})_{red}^\circ\rightarrow G)\hookrightarrow A}.$$

\begin{lem}\label{3.19}
    \cite{kamgarpour2008stackyabelianizationalgebraicgroup} The mapping $\widetilde{G}\rightarrow f_{\widetilde{G}}$ defines an isomorphism between $H^2(G,A)$; the group of central extensions with kernel $A$, and $\operatorname{Hom}(\Pi_{cent}(G),A)$.
\end{lem}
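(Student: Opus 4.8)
The plan is to identify $\Pi_{cent}(G)$ with the kernel of the pro-universal central cover of $G$, namely $\widehat{G}:=\varprojlim_i \widetilde{G}_i$ with $i$ ranging over all central covers $\widetilde{G}_i\to G$ in $\mathbf{Cov}(G)$ (a cofiltered limit with finite transition maps, so that $\Pi_{cent}(G)=\ker(\widehat{G}\to G)$), and then to observe that any central extension with finite kernel becomes such a cover once one passes to its reduced identity component. Concretely, given a central extension $E\colon 0\to A\to\widetilde{G}\to G\to 0$ with $A$ finite, a dimension count gives $\dim\widetilde{G}=\dim G$, so $(\widetilde{G})_{red}^{\circ}\to G$ is an isogeny whose kernel, being contained in $A$ and central in $\widetilde{G}$, is central; hence $(\widetilde{G})_{red}^{\circ}$ is a central cover and there is a $G$-morphism $\widehat{G}\to(\widetilde{G})_{red}^{\circ}$. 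I would first record the elementary fact that every homomorphism from a geometrically reduced connected (pro-)group scheme to a finite group scheme is trivial: the scheme-theoretic image is a reduced connected finite subgroup scheme, hence $\operatorname{Spec}$ of a finite separable extension of $k$ with a $k$-point, i.e.\ trivial. It follows that the $G$-morphism $\widehat{G}\to(\widetilde{G})_{red}^{\circ}$ is unique, the composite $\widehat{G}\to(\widetilde{G})_{red}^{\circ}\hookrightarrow\widetilde{G}$ is canonical, and restriction to kernels recovers precisely $f_{\widetilde{G}}\colon\Pi_{cent}(G)\twoheadrightarrow\ker\bigl((\widetilde{G})_{red}^{\circ}\to G\bigr)\hookrightarrow A$.

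Additivity of $f$ is then formal: the Baer sum $E_1+E_2$ is the pushout of $\widetilde{G}_1\times_G\widetilde{G}_2$ along $A\times A\xrightarrow{+}A$; the canonical maps $\widehat{G}\to\widetilde{G}_j$ assemble into $\widehat{G}\to\widetilde{G}_1\times_G\widetilde{G}_2\to\widetilde{G}_1+\widetilde{G}_2$, which on kernels reads $\Pi_{cent}(G)\xrightarrow{(f_{E_1},f_{E_2})}A\times A\xrightarrow{+}A$; since $\widehat{G}$ is geometrically reduced and connected, this map factors through $(\widetilde{G}_1+\widetilde{G}_2)_{red}^{\circ}$, so by uniqueness it computes $f_{E_1+E_2}$, whence $f_{E_1+E_2}=f_{E_1}+f_{E_2}$. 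For injectivity, if $f_{\widetilde{G}}=0$ then, because $\Pi_{cent}(G)$ surjects onto $\ker\bigl((\widetilde{G})_{red}^{\circ}\to G\bigr)$, that kernel injects into $A$ by the zero map and is therefore trivial; thus $(\widetilde{G})_{red}^{\circ}\xrightarrow{\ \sim\ }G$, and the inverse isomorphism followed by $(\widetilde{G})_{red}^{\circ}\hookrightarrow\widetilde{G}$ is a group-scheme section of $E$, so $E$ splits.

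For surjectivity, fix $\phi\colon\Pi_{cent}(G)\to A$. Since $A$ is finitely presented and $\widehat{G}$ is a cofiltered limit with affine transition maps, $\phi$ factors through a finite stage: $\phi=\bigl(\Pi_{cent}(G)\twoheadrightarrow A_i\xrightarrow{\bar\phi}A\bigr)$ for some central cover $\widetilde{G}_i$ with kernel $A_i$. Put $B:=A_i/\ker\bar\phi$, so that $B\hookrightarrow A$, and $\widetilde{G}':=\widetilde{G}_i/\ker\bar\phi$, which is a central cover of $G$ with kernel $B$. Pushing out $0\to B\to\widetilde{G}'\to G\to 0$ along $B\hookrightarrow A$ produces a central extension $\widetilde{G}=(A\times\widetilde{G}')/B$ with $B$ embedded anti-diagonally; as $B\hookrightarrow A$ is a monomorphism, $\{e\}\times\widetilde{G}'$ meets the anti-diagonal copy of $B$ trivially, so $\widetilde{G}'\to\widetilde{G}$ is a closed immersion onto a smooth connected subgroup of dimension $\dim G=\dim\widetilde{G}$, i.e.\ onto $(\widetilde{G})_{red}^{\circ}$, with kernel over $G$ equal to $B$. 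Therefore $f_{\widetilde{G}}$ equals $\Pi_{cent}(G)\twoheadrightarrow A_i\to A_i/\ker\bar\phi=B\hookrightarrow A$, which is $\phi$.

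The principal difficulty is less any single step than marshalling the pro-algebraic-group input cleanly: that $\widehat{G}$ and each $\widetilde{G}_i$ are geometrically reduced and connected — so that homomorphisms into finite group schemes vanish and the projections $\Pi_{cent}(G)\to A_i$ are faithfully flat — that a morphism out of the cofiltered limit factors through a finite stage, and, over perfect $k$ as here, that $(\widetilde{G})_{red}$ is a genuine subgroup scheme interacting correctly with the pushout in the surjectivity step even when $A$ has an infinitesimal part. Granting these standard facts, the argument reduces to bookkeeping around the single structural observation that passing to the reduced identity component turns a finite central extension into a central cover.
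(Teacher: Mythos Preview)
Your proof is correct and follows essentially the same approach as the paper: the surjectivity argument in both cases constructs the inverse by factoring $\varphi$ through a finite stage $A_i$, identifying the image $B$ as the kernel of a central cover, and pushing out along $B\hookrightarrow A$. The paper's proof is terser---it only writes out the inverse construction and leaves the verifications to ``one checks''---whereas you additionally supply the additivity and injectivity arguments and the identification of $(\widetilde{G})_{red}^\circ$ inside the pushout, which is helpful but not a different route.
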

\begin{proof}
    If we let $B:=\operatorname{im}(\Pi_{cent}(G))$ for some $\varphi \in \operatorname{Hom}(\Pi_{cent}(G), A)$, we must have that $B$ is a finite quotient of $\Pi_{cent}(G)$. Therefore $B \cong A_i$ for some $i$ in the indexing category. Let $G^{\varphi}$ be the pushforward of the central extension
$$
1 \rightarrow A_i \rightarrow G_i \rightarrow G \rightarrow 1
$$
along the morphism $A_i \xrightarrow{\simeq} B \hookrightarrow A$. One checks that $\varphi \mapsto G^{\varphi}$ is the inverse of $\tilde{G} \rightarrow f_{\tilde{G}}$.
\end{proof}

\begin{defn}\label{3.20}
By restricting group covers from $G$ to $[G,G]$, we get a morphism, $\Pi_{cent}([G,G])\rightarrow \Pi_{cent}(G)$. We set $\Pi:\operatorname{Im}(\Pi_{cent}([G,G])\rightarrow \Pi_{cent}(G))$
\end{defn}
Consider the exact sequence
$$0\rightarrow[G,G]\rightarrow G\rightarrow G^{ab}\rightarrow0$$
from which we get an exact sequence 
$$0\rightarrow (G^{ab})^*\rightarrow G^*\rightarrow \operatorname{Im}((G^{ab})^*\rightarrow G^*)\rightarrow0$$

Now notice that $\mathbb{Q}_p/\mathbb{Z}_p$ can be written as a colimit of $\mathbb{Z}/p^n\mathbb{Z}$, which implies that $\operatorname{Hom}(\Pi_{cent}(G),\mathbb{Q}_p/\mathbb{Z}_p)$ is a subset of the product $$\prod_n\operatorname{Hom}(\Pi_{cent}(G),\mathbb{Z}/p^n\mathbb{Z})$$
that satisfy the appropriate compatibilities. This allows us to only consider finite groups from now on. 

In the language of \ref{3.19} the last term is a subset of the product of terms of the form
\begin{align}
    \operatorname{Im}(\operatorname{Hom}(\Pi_{cent}(G),\mathbb{Z}/p^n\mathbb{Z})\rightarrow \operatorname{Hom}(\Pi_{cent}([G,G]),\mathbb{Z}/p^n\mathbb{Z}))\label{1}
\end{align}
Now using the definition from \ref{3.20} we see that this is nothing but $\operatorname{Hom}(\Pi,\mathbb{Z}/p^n\mathbb{Z})$ and the compatibilities between the maps mean that they assemble to form
$$\operatorname{Hom}(\Pi,\mathbb{Q}_p/\mathbb{Z}_p).$$ Therefore we have a exact sequence of the form,
$$0\rightarrow (G^{ab})^*\rightarrow G^*\rightarrow \operatorname{Hom}(\Pi,\mathbb{Q}_p/\mathbb{Z}_p)\rightarrow0$$
We claim now that the group $\Pi$ is trivial in the case of the Heisenberg group scheme, which by further formal arguments will also show that $\Pi$ is also trivial in the case of its perfectization.

\end{subsubsection}
\begin{subsubsection}{Calculation of $\Pi$ for the Heisenberg group}
    \begin{defn}
        Given a central cover $\widetilde{G}\rightarrow G,$ recall that the restriction of $\widetilde{G}$ to $[G,G]$ is given by the scheme $(\widetilde{G}\times_G[G,G])^\circ_{red}$. Denote by $d(\widetilde{G})$ the degree of the associated morphism to $[G,G]$.
    \end{defn}
    \begin{thm}\label{3.22}
        There exists a constant $C$, depending only on $G$, such that $d(\widetilde{G})< C$ for all central covers $\widetilde{G} \rightarrow G$.
    \end{thm}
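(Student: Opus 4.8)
The plan is to establish the stronger assertion that $d(\widetilde G) = 1$ for \emph{every} central cover $\widetilde G \to G$ of the Heisenberg group scheme, so that the theorem holds with $C = 2$. The mechanism is that $G$ is two-step nilpotent with $[G,G]$ equal to the central copy of $\mathbb{G}_a$, and the commutator map $G \times G \to [G,G]$ is (up to this identification) the non-degenerate symplectic form of $W$; since the kernel of a central cover is central, the commutator of two elements of $\widetilde G$ is unchanged under multiplying the arguments by kernel elements, and this will produce a section of the cover restricted to $[G,G]$.

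Concretely, I would first record in the matrix presentation the identity $[(p,q,u),(p',q',u')] = (0,0,\,pq'-p'q)$, whence $[G,G] = Z(G)$ is the central $\mathbb{G}_a = \{(0,0,\ast)\}$ and the projection $G \to G^{ab} = W$ has the scheme-theoretic section $(p,q) \mapsto (p,q,0)$. Then, given a central cover $\pi : \widetilde G \to G$ with finite central kernel $A$, I would construct a morphism of schemes $c : G \times G \to \widetilde G$ lifting the commutator map $\gamma_G : G \times G \to [G,G] \hookrightarrow G$ as follows: the commutator morphism $\kappa : \widetilde G \times \widetilde G \to \widetilde G$, $(\tilde a, \tilde b) \mapsto [\tilde a, \tilde b]$, is constant on the fibres of the fppf morphism $\pi \times \pi$, because $[z\tilde a, w\tilde b] = [\tilde a, \tilde b]$ for $z, w \in A \subseteq Z(\widetilde G)$, so by fppf descent it factors as $\kappa = c \circ (\pi \times \pi)$. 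Since $\pi \times \pi$ is an epimorphism, $\pi \circ c = \gamma_G$, so $c$ factors through $\widetilde G \times_G [G,G]$, and since $G \times G \cong \mathbb{A}^{2\dim G}$ is integral, it factors through $Y := (\widetilde G \times_G [G,G])^\circ_{\mathrm{red}}$.

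To finish, I would pick a symplectic pair $e, f \in W$ with $\langle e, f \rangle = 1$ and consider $\tilde\sigma : \mathbb{G}_a \to G \times G$, $s \mapsto \big((se,0,0),(0,f,0)\big)$, which by the commutator identity satisfies $\gamma_G \circ \tilde\sigma = \mathrm{id}_{\mathbb{G}_a}$ under $[G,G] \cong \mathbb{G}_a$. Then $c \circ \tilde\sigma : \mathbb{G}_a \to Y$ is a section of the finite surjective morphism $\pi|_Y : Y \to [G,G] \cong \mathbb{G}_a$. As $k$ is perfect, the reduced connected group scheme $Y$ is smooth, hence integral of dimension $1$; a section of $\pi|_Y$ realizes $\mathbb{G}_a$ as a $1$-dimensional closed subscheme of the integral curve $Y$, forcing $Y = \mathbb{G}_a$ and $\pi|_Y$ an isomorphism, i.e. $d(\widetilde G) = 1$.

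The step I expect to be the main obstacle is the construction of $c$ as an honest morphism of schemes: a central extension of group schemes by a finite group need not be Zariski-locally split, so one cannot simply choose lifts globally, and one must instead exploit the independence of $[\tilde a, \tilde b]$ of the chosen lifts together with fppf descent. Minor care is also needed in characteristic $2$, where one should stay in the matrix presentation to avoid dividing by $2$, and in checking that $Y$ is integral, which is where perfectness of $k$ is used.
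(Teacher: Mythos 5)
Your proof is correct for the Heisenberg group, and in that case it proves the sharp statement $d(\widetilde G)=1$, which is also what the paper ultimately needs. The core mechanism is the same as the paper's: the commutator is invariant under central multiplication, so $\tilde c^n$ descends to a map from $G^{2n}$ into the restriction of $\widetilde G$ over $[G,G]$, and then one exhibits an explicit closed immersion $\mathbb{G}_a \hookrightarrow G^2$ (your symplectic pair $(se,0,0),(0,f,0)$ is exactly the paper's matrices $A_\lambda,B$) along which the commutator composed with projection is the identity. You are more careful on two points than the paper's sketch: you actually verify the fppf descent of the commutator morphism (the paper merely asserts it descends), and you explain why a section of the finite surjection $Y\to\mathbb{G}_a$ on the integral curve $Y$ forces an isomorphism.

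The genuine difference is scope. Theorem 3.22 as the paper states and proves it holds for an \emph{arbitrary} connected algebraic group $G$: the paper cites Milne's Cor.~11.33 that some iterated commutator map $c^n:G^{2n}\to[G,G]$ is surjective, restricts to a closed subvariety $X\subseteq G^{2n}$ on which $c^n$ is generically finite, and concludes that $d(\widetilde G)$ divides $\deg(c^n|_X)$ and is therefore bounded; the Heisenberg computation with $n=1$ is then an \emph{application} sharpening the bound to $1$. Your argument bypasses the general surjectivity lemma entirely by using two-step nilpotency and the non-degeneracy of the symplectic form to get $c^1$ with an explicit section, so it proves the special case the paper wants but not the general boundedness statement. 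If the theorem is to be read as a statement about arbitrary $G$ (as the phrase ``depending only on $G$'' and the sentence ``Let us now apply this to the situation of the Heisenberg group'' both suggest), you should either restrict the theorem's hypothesis to the Heisenberg group, or supply the Milne-type argument for the general case and keep your computation as the sharpening.
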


        In particular this means that $\Pi$ as the image of the map $\Pi_{cent}([G,G])\rightarrow\Pi_{cent}(G)$ is finite.\\

    In order to prove Theorem \ref{3.22}, we use a small lemma. For every positive integer $n$, let $c^n: G^{2n} \rightarrow [G,G] $ denote the map
$$(g_1, g_2, g_3, g_4, ..., g_{2n-1}, g_{2n}) \rightarrow [g_1, g_2][g_3, g_4]...[g_{2n-1}, g_{2n}].$$
\begin{lem}
    There exists a positive integer $n$ such that $c^n$ is surjective.
\end{lem}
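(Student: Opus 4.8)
The plan is to realise $[G,G]$ as the union of the ascending chain of constructible sets $\operatorname{Im}(c^n)$ and then to invoke the standard principle that such a chain — being stable under multiplication — must stabilise after finitely many steps to a closed connected subgroup. Concretely, I would first record the elementary properties of $Y:=\operatorname{Im}(c^1)=c(G\times G)\subseteq[G,G]$: since $G$ is connected, $G\times G$ is irreducible, so by Chevalley's theorem $Y$ is a constructible, irreducible subset of $G$; it contains $e=[e,e]$; and it is stable under inversion because $[g,h]^{-1}=[h,g]$. Padding tuples by $[e,e]=e$ gives the inclusions $\operatorname{Im}(c^n)\subseteq\operatorname{Im}(c^{n+1})$ and the identity $\operatorname{Im}(c^n)\cdot\operatorname{Im}(c^m)=\operatorname{Im}(c^{n+m})$, and since the abstract group $[G,G]$ is generated by commutators while $Y=Y^{-1}$ makes inverses unnecessary, one obtains $[G,G]=\bigcup_{n\ge1}\operatorname{Im}(c^n)$.

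Next I would carry out the stabilisation. The Zariski closures $Z_n:=\overline{\operatorname{Im}(c^n)}$ form an ascending chain of irreducible closed subsets of $G$, so the integers $\dim Z_n$ are non-decreasing and bounded by $\dim G$; choose $n_0$ with $Z_n=Z_{n_0}=:Z$ for all $n\ge n_0$. Using that left and right translations are homeomorphisms one checks $Z\cdot\operatorname{Im}(c^{n_0})\subseteq\overline{\operatorname{Im}(c^{2n_0})}=Z$ and then $Z\cdot Z\subseteq Z$, so $Z$ is a closed subsemigroup of $G$ containing $e$; being irreducible it is a closed connected subgroup, since for $z\in Z$ the closed subset $zZ\subseteq Z$ is irreducible of the same dimension as $Z$, hence equals $Z$, giving $z^{-1}\in Z$. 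Now $\operatorname{Im}(c^{n_0})$ is a constructible dense subset of the irreducible variety $Z$, hence contains a dense open $U\subseteq Z$; for any $z\in Z$ the dense opens $U$ and $zU^{-1}$ of $Z$ meet, which yields $z\in U\cdot U\subseteq\operatorname{Im}(c^{2n_0})$. Therefore $\operatorname{Im}(c^{2n_0})=Z$, and as $Z$ is a subgroup containing every commutator and contained in $[G,G]$, we get $Z=[G,G]$; so $n=2n_0$ works (and this incidentally reproves that $[G,G]$ is a closed connected subgroup). If one prefers to avoid field-of-definition subtleties, this whole argument can be run after base change to an algebraic closure, since surjectivity of $c^n$ may be tested on geometric points.

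The main obstacle is exactly this last passage: converting the dimension stabilisation into the honest equality $\operatorname{Im}(c^{2n_0})=Z$ relies on the two standard but slightly delicate inputs that a constructible set dense in an irreducible variety contains a dense open subset, and that any two nonempty open subsets of an irreducible variety intersect. One can bypass doing this by hand by citing the general theorem on subgroups of algebraic groups generated by images of irreducible varieties (as in Borel's or Springer's book on linear algebraic groups), applied to the constant family $Y_i=Y$: it immediately gives that $\langle Y\rangle=Y\cdot Y\cdots Y$ ($n$ factors) for some $n$, i.e. that $c^n$ is surjective onto $[G,G]$.
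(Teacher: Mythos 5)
Your argument is correct, and it is essentially the standard textbook proof of the result the paper merely cites: the paper's ``proof'' is just the single reference to Milne, Cor.~11.33, whereas you reproduce the underlying lemma on subgroups of algebraic groups generated by images of irreducible varieties. The ingredients you use --- Chevalley constructibility of $\operatorname{Im}(c^1)$, the padding relations $\operatorname{Im}(c^n)\cdot\operatorname{Im}(c^m)=\operatorname{Im}(c^{n+m})$, stabilisation of $\dim\overline{\operatorname{Im}(c^n)}$, the translation-homeomorphism argument upgrading the closed irreducible subsemigroup $Z$ to a group, and finally the fact that a dense constructible subset of an irreducible variety contains a dense open whose square fills $Z$ --- are exactly Borel's and Springer's proof. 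One small point worth tidying: the early assertion $[G,G]=\bigcup_n\operatorname{Im}(c^n)$ conflates the abstract commutator subgroup with the algebraic derived subgroup and is not yet justified at that point; but this causes no harm, because the conclusion you actually rely on, namely $Z=\operatorname{Im}(c^{2n_0})=[G,G]$, is derived independently at the end from $Z$ being a closed subgroup squeezed between the set of commutators and $[G,G]$. So the proposal buys a self-contained proof where the paper has only a black-box citation; the paper's route buys brevity.
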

    \begin{proof}
        See \cite{milne2006algebraic} Cor. $11.33$
    \end{proof}

    Consider now the following diagram, where $\widetilde{G}$ is a central extension and $H$ is the corresponding restriction to $[G,G]$. 
\[\begin{tikzcd}
	{\widetilde{G}^n} & {G^n} \\
	&& H & {[G,G]} \\
	&& {\widetilde{G}} & G
	\arrow[from=1-1, to=1-2]
	\arrow["{\tilde{c}^n}"', curve={height=12pt}, from=1-1, to=3-3]
	\arrow[dashed, from=1-2, to=2-3]
	\arrow["{c^n}", curve={height=-12pt}, from=1-2, to=2-4]
	\arrow[curve={height=12pt}, squiggly, from=1-2, to=3-3]
	\arrow[from=2-3, to=2-4]
	\arrow[from=2-3, to=3-3]
	\arrow[from=2-4, to=3-4]
	\arrow[from=3-3, to=3-4]
\end{tikzcd}\]
Here $\tilde{c}^n$ is the obvious analogue to the map $c^n$ for the group $\widetilde{G}$. If denote by $A$ the kernel group scheme of the central cover $\widetilde{G}\rightarrow G$, then since the commutator map is invariant under multiplication by elements in A, we get that the map $\tilde{c}^n$ descends to the squiggly map on the diagram. This in turn induces a map to $H$. From this we get a factorization. 
\[\begin{tikzcd}
	{G^{2n}} \\
	H && {[G,G]}
	\arrow[from=1-1, to=2-1]
	\arrow["{c^n}", curve={height=-6pt}, from=1-1, to=2-3]
	\arrow["{degree=d(\widetilde{G})}"', from=2-1, to=2-3]
\end{tikzcd}\]

Due to the fact that $c^n$ is surjective, we can find a closed subvariety $X\subseteq G^{2n}$ such that the map $c^n$ restricted to X is generically finite. Therefore, we see that the number $d(\widetilde{G})$ divides the degree of $c^n|_X$, and hence is bounded.

Let us now apply this to the situation of the Heisenberg group. Let $A_\lambda ,B$ and $C_\lambda$ respectively be the matrices,

\[A_\lambda=
 \begin{bmatrix}
    1&\lambda&0\\
    0&I_n&0\\
    0&0&1
\end{bmatrix},
B=
\begin{bmatrix}
    1&0&0\\
    0&I_n&1\\
    0&0&1
\end{bmatrix},
C_\lambda=
\begin{bmatrix}
    1&0&\lambda\\
    0&I_n&0\\
    0&0&1
\end{bmatrix}
\]
\\
Then we see that $[A_\lambda,B]=C_\lambda$, which means that we only require two copies of the Heisenberg group scheme to form a surjection onto the commutator subgroup scheme. Furthermore, we have the following commutative diagram. Here $H$ is the Heisenberg group.
\[\begin{tikzcd}
	{\mathbb{G}_a} && {H^2} \\
	&& {\mathbb{G}_a}
	\arrow["{\lambda\rightarrow(A_\lambda,B)}", from=1-1, to=1-3]
	\arrow[from=1-1, to=2-3]
	\arrow[from=1-3, to=2-3]
\end{tikzcd}\]

Since the diagonal map is an isomorphism we can use the cancellative property of closed immersions to show that the top map is a closed immersion. This shows that the number $d(\widetilde{G})=1$, therefore proving that $\Pi$ is trivial in the case of the Heisenberg group scheme.

\end{subsubsection}

\begin{subsubsection}{Transferring ideas to the perfect case }
    Recall that the perfectization functor was defined as an inverse limit in the category of schemes, but when viewed in the category of rings it becomes a direct limit. Due to the fact that direct limits preserve surjections of rings, the perfectization functor in the category of schemes preserves closed immersions.\\

    In particular, it preserves the closed immersion of $\mathbb{G}_a\hookrightarrow H^2$, constructed previously. The above discussion also gives us that the commutator subgroup scheme of $H^\mathrm{perf}$ is $\mathbb{G}_a^\mathrm{perf}$, and also that the abelianization of $H^\mathrm{perf}$ is $(\mathbb{G}_a^n)^\mathrm{perf}$.\\

    The results from the previous section also prove that $\Pi$, in the case of the perfectization of the Heisenberg group scheme, is also trivial. This along with the exact sequence constructed previously
    $$0\rightarrow (H^{ab})^*\rightarrow H^*\rightarrow \operatorname{Hom}(\Pi,\mathbb{Q}_p/\mathbb{Z}_p)\rightarrow0$$
    and theorem \ref{3.5} shows that the Serre dual of the perfectization of the Heisenberg group is isomorphic to $(\mathbb{G}_a^n)^\mathrm{perf}$.

\end{subsubsection}
\end{subsection}

\end{subsection}

\section{The Fargues-Fontaine curve and vector bundles}
In this section, we aim to introduce the Fargues-Fontaine curve, and explore some properties that relate it to more understandable objects. We also detail the associated diamond, which is closely tied to the notion of a Cartier divisor on the curve. Finally, we introduce the notion of vector bundles on the curve and state a few results regarding their cohomologies. We follow the exposition in \cite{fargues2024geometrizationlocallanglandscorrespondence} and \cite{scholze2020berkeley}.
\begin{subsection}{The construction of the Fargues-Fontaine Curve}
As usual, we fix a local non-Archimedean field $E$, which can either be a finite extension of $\mathbb{Q}_p$ or $\mathbb{F}_q((\pi))$. These two cases are known as the \textit{mixed} and \textit{equal characteristic} cases respectively. We fix a uniformizer $\pi$, generating the maximal ideal $\mathfrak{m}$ of $\mathcal{O}_E$; the ring of integers of $E$. We call $q=p^n$ the cardinality of the residue field $\mathfrak{e}=\mathcal{O}_E/\pi$.\\

The construction is motivated by the need to generalize the theory of shtukas, where one has to make sense of what the product "$S\times\operatorname{Spa}(\mathbf Z_p)$" should be. The correct candidate, it turns out, is modeled after the fact that for any perfect $\mathbb{F}_p$-algebra, there exists a unique $\pi$-adically complete, flat $\mathcal{O}_E$-algebra $\widetilde{R}$ such that $\widetilde{R}/\pi=R$.\\

We apply this to the situation where $S=\operatorname{Spa}(R,R^+)\in \operatorname{Perf}_\mathfrak{e}$ is a perfectoid space over $\mathfrak{e}$. Here $R^+$ is perfect, as it is an integrally closed subring within a perfect ring. The fact that $R$ is perfect is the content of \cite[Proposition~6.1.6]{scholze2020berkeley}. If $E$ is of mixed characteristic then this is the $\mathcal{O}_E$-algebra of Witt vectors $W_{\mathcal{O}_E}(R^+)$, and in the equal characteristic case it is $R^+[[\pi]]$.
Let $\varpi$ be a pseudouniformizer in $R$, and let $[-]:R^+\rightarrow W_{\mathcal{O}_E}(R^+)$ denote the Teichmüller lift. We set
$$\mathcal{Y}_S=\operatorname{Spa}(W_{\mathcal{O}_E}(R^+),\operatorname{Spa}(W_{\mathcal{O}_E}(R^+))\backslash V([\varpi])$$

Here, we endow $W_{\mathcal{O}_E}(R^+)$ with the $(\pi,[\varpi])$-adic topology. \\

We can construct a Frobenius morphism on the ring $W_{\mathcal{O}_E}(R^+)$, by lifting the Frobenius on $R^+$. This then descends to an automorphism of $\mathcal{Y}_S$.
\\

\begin{prop}\label{5.1}\cite[Propostion~II.1.1]{fargues2024geometrizationlocallanglandscorrespondence}
The above defines an analytic adic space $\mathcal{Y}_S$ over $\mathcal{O}_E$. Letting $E_{\infty}$ be the completion of $E(\pi^{1/p^\infty})$, the base change
$$\mathcal{Y}_S\times_{\operatorname{Spa}\mathcal{O}_E}\operatorname{Spa}\mathcal{O}_{E_{\infty}}$$
is a perfectoid space, with tilt given by
$$S\times_{\mathbb{F}_q}\Spa\mathbb{F}_q[[t^{1/p^\infty}]]=\mathbb{D}_{S,\Perf}$$
A perfectoid open disc over S.
\end{prop}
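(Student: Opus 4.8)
The plan is to reduce immediately to the case where $S=\operatorname{Spa}(R,R^+)$ is affinoid perfectoid, to set $A=W_{\mathcal{O}_E}(R^+)$ (resp.\ $R^+[[\pi]]$ in equal characteristic) with its $(\pi,[\varpi])$-adic topology, so that $\mathcal{Y}_S=\operatorname{Spa}(A,A)\setminus V([\varpi])$. Analyticity is then essentially free: a point of $\operatorname{Spa}(A,A)$ can fail to be analytic only if it annihilates every topologically nilpotent element of $A$, and such elements lie in $(\pi,[\varpi])$, so the non-analytic locus is $V(\pi)\cap V([\varpi])\subseteq V([\varpi])$, which has been removed. The two substantive points are therefore: (i) the structure presheaf on $\mathcal{Y}_S$ is a sheaf, so that $\mathcal{Y}_S$ is an honest adic space; and (ii) the base change to $\mathcal{O}_{E_\infty}$ is perfectoid with the asserted tilt. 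I would prove (ii) first and deduce (i) from it.

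For (ii): recall that $\mathcal{O}_{E_\infty}=\mathcal{O}_E[\pi^{1/p^\infty}]^{\wedge}_\pi$ is a perfectoid $\mathcal{O}_E$-algebra whose tilt is $\mathcal{O}_{E_\infty}^\flat=\mathbb{F}_q[[t^{1/p^\infty}]]$ (the $t$-adic completion of $\bigcup_n\mathbb{F}_q[t^{1/p^n}]$), with $t^\flat$ the chosen compatible system $(\pi^{1/p^n})_n$. The key ring-theoretic input, the $\mathcal{O}_E$-linear version of the fact that $A_{\inf}$ becomes perfectoid after completed base change to a perfectoid field, is that $A\,\widehat{\otimes}_{\mathcal{O}_E}\mathcal{O}_{E_\infty}\cong A[\pi^{1/p^\infty}]^{\wedge}_\pi$ is integral perfectoid, with tilt $R^+\,\widehat{\otimes}_{\mathbb{F}_q}\mathcal{O}_{E_\infty}^\flat=R^+[[t^{1/p^\infty}]]^{\wedge}$; this is checked by the standard criterion (bounded $\pi$-torsion and bijectivity of Frobenius on the quotient modulo $\pi^{1/p}$), using that $R^+$ is perfect. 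Defining $\mathcal{Y}_{S,E_\infty}:=\operatorname{Spa}\bigl(A\,\widehat{\otimes}_{\mathcal{O}_E}\mathcal{O}_{E_\infty}\bigr)\setminus V([\varpi])$ and passing to the cover by rational subsets, the tilting equivalence for perfectoid spaces (Scholze) shows that $\mathcal{Y}_{S,E_\infty}$ is a perfectoid space and that its tilt is obtained by running the identical construction in equal characteristic on $R^+[[t^{1/p^\infty}]]^{\wedge}$, namely $\operatorname{Spa}\bigl(R^+[[t^{1/p^\infty}]]^{\wedge}\bigr)\setminus V(\varpi)$, with $[\varpi]\leftrightarrow\varpi$ and $\pi\leftrightarrow t$. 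Since $S$ is perfectoid, this last space is exactly $S\times_{\mathbb{F}_q}\operatorname{Spa}\mathbb{F}_q[[t^{1/p^\infty}]]=\mathbb{D}_{S,\Perf}$, the relative perfectoid open disc.

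For (i): by the theorem of Buzzard--Verberkmoes (refined by Kedlaya--Liu), stable uniformity implies sheafiness, so it is enough to show that for every rational $U\subseteq\mathcal{Y}_S$ the Tate ring $\mathcal{O}_{\mathcal{Y}_S}(U)$ is uniform. Because $E$ is discretely valued, $\mathcal{O}_{E_\infty}$ admits an orthonormal $\mathcal{O}_E$-basis containing $1$, so $E\hookrightarrow E_\infty$ splits isometrically as a map of Banach $E$-modules; hence $\mathcal{O}_{\mathcal{Y}_S}(U)\hookrightarrow\mathcal{O}_{\mathcal{Y}_S}(U)\,\widehat{\otimes}_E E_\infty=\mathcal{O}_{\mathcal{Y}_{S,E_\infty}}(U_{E_\infty})$ (with $U_{E_\infty}$ the preimage of $U$) is a closed subring. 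The target is a rational localization of a perfectoid Tate ring by (ii), hence uniform, and a closed subring of a uniform Banach ring is uniform. So $\mathcal{O}_{\mathcal{Y}_S}$ is stably uniform, therefore a sheaf, and $(\mathcal{Y}_S,\mathcal{O}_{\mathcal{Y}_S})$ is an analytic adic space over $\mathcal{O}_E$.

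The main obstacle is the heart of (ii): establishing perfectoidness of $A\,\widehat{\otimes}_{\mathcal{O}_E}\mathcal{O}_{E_\infty}$ and, more delicately, checking that this persists through the rational localizations used to cover $\mathcal{Y}_{S,E_\infty}$ — the annuli where $|[\varpi]|$ is pinched between powers of $|\pi|$ — so that their tilts glue to $\mathbb{D}_{S,\Perf}$; along the way one must confirm that the completed base change $-\,\widehat{\otimes}_{\mathcal{O}_E}\mathcal{O}_{E_\infty}$ commutes with forming these rational localizations. By comparison, analyticity, the orthonormal splitting of $E\hookrightarrow E_\infty$, and the passage from stable uniformity to sheafiness are formal.
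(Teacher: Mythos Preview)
Your proposal is correct and follows essentially the same strategy as the paper: analyticity is immediate from the removal of $V([\varpi])$, sheafiness is deduced from the split inclusion $\mathcal{O}_{\mathcal{Y}_S}(U)\hookrightarrow\mathcal{O}_{\mathcal{Y}_S}(U)\widehat{\otimes}_{\mathcal{O}_E}\mathcal{O}_{E_\infty}$ into something perfectoid, and the tilt is computed from that of $W_{\mathcal{O}_E}(R^+)\widehat{\otimes}_{\mathcal{O}_E}\mathcal{O}_{E_\infty}$. The only notable difference is packaging: the paper first passes to the explicit rational cover $\{|\pi|\le|[\varpi]^{1/p^n}|\ne 0\}$, reduces via Frobenius to the $n=0$ piece, and then invokes the \emph{sousperfectoid} formalism of \cite[6.3]{scholze2020berkeley} rather than Buzzard--Verberkmoes stable uniformity; it also writes out the elements of each $R_n$ explicitly before computing the tilt piece by piece, whereas you argue more globally and appeal to the tilting equivalence.
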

\begin{proof}
    We will first consider the case where $S=\Spa(R,R^+)$. Recall that a point $x\in \Spa(R,R^+)$ is defined to be non-analytic if its kernel is an open subset in $R$.  However, since the vanishing locus of $[\varpi]$ has been excluded, we see that no power of $[\varpi]$ can be contained in the kernel which means that it cannot be open. This is an equivalent definition for a space to be analytic. \\

    We have an open covering of $\mathcal{Y}_S$ by rational subsets defined by the inequality $|\pi|\leq|[\varpi]^{1/p^n}|\neq0$ for $n=1,2\dots$.Let $\Spa(R_n,R_n^+)$ be the rational subset associated to the inequality $|\pi|\leq|[\varpi]^{1/p^n}|\neq0$. The ring $R_n$ is obtained by $[\varpi]$-adically completing $W_{\mathcal{O}_E}(R^+)\left[\frac{\pi}{[\varpi]^{1/p^n}}\right]$ and then inverting $[\varpi]$. We can restrict to completing by $\varpi$ since $\pi^p\subset([\varpi])$ in this ring.\\

    We use a small lemma to give an alternate presentation of the ring $R_n$. Recall that the Tate algebra of a topological ring $A$ is defined to be
    $$A\langle T\rangle=\bigg\{f=\sum a_iT^i\mid a_i\rightarrow 0,i\rightarrow\infty\bigg\}$$

    It is well known that the above then simplifies to $$W_{\mathcal{O}_E}(R^+)\Big\langle\frac{\pi}{[\varpi]^{1/p^n}}\Big\rangle\left[\frac{1}{[\varpi]}\right]$$

    Note also that we can write every element of $W_{\mathcal{O}_E}(A)$ for a complete $\mathbb{F}_q$-algebra $A$ as a Laurent series using the Teichmuller lifts. 

    $$(a_i)\in W_{\mathcal{O}_E}(A)=\sum_{i\geq N} [{a_i}] \pi^i$$

   Expanding on this presentation before inverting $[\varpi]$, we get

   $$\sum_{j,k}[r_{j,k}]\pi^k\bigg(\frac{\pi}{[\varpi^{1/p^n}]}\bigg)^j$$ Such that
   
   $$\sum_{j,k}[r_{j,k}]\pi^k\rightarrow 0 \hspace{5pt}\text{as}\hspace{5pt} j\rightarrow0$$

   We can rewrite this as
\begin{align*}
    \sum_{j,k}[r_{j,k}][\varpi]^{k/p^n}\frac{\pi^k}{[\varpi]^{k/p^n}}\bigg(\frac{\pi}{[\varpi^{1/p^n}]}\bigg)^j\\
    \sum_{j,k}[r_{j,k}][\varpi]^{k/p^n}\bigg(\frac{\pi}{[\varpi^{1/p^n}]}\bigg)^{j+k}\\
    \sum_i\sum_{j+k=i}[r_{j,k}][\varpi]^{k/p^n}\bigg(\frac{\pi}{[\varpi^{1/p^n}]}\bigg)^{i}
\end{align*}
We now claim that $\sum_{j+k=i}[r_{j,k}][\varpi]^{k/p^n}$ tends to zero as $i$ goes to infinity. Let us first explicitly describe the $[\varpi]$-adic topology on $W_{\mathcal{O}_E}(R^+)$. The ideals of generation in this topology are given by $[\varpi]^n \cdot W_{\mathcal{O}_E}(R^+)$, whose elements can be explicitly stated as $(\varpi^n,0,0,0\dots)\cdot(a_0,a_1,a_2,\dots)=(\varpi^na_0,\varpi^{n^q}a_1,\varpi^{n^{q^2}}a_2).$ This implies that since $\sum_k[r_{j,k}]\pi^k$ tended to zero, choosing a fixed $k$, $r_{j,k}\rightarrow0$ as $j\rightarrow\infty$.\\

Consider any valuation $|-|$ on $\Spa(R_n,R_n^+)$ and let $\epsilon\geq0$, by the previous argument we can chose $j_0$ such that $\big|[r_{j,k}] \pi^{k/p^n}\big|\leq\epsilon/2$ whenever $j\geq j_0$. Similarly, by the fact that $[\varpi]$ is topologically nilpotent we choose $k_0$ such that $\big|[r_{j,k}][\varpi]^{k/p^n}\big|\leq\epsilon/2$ for $k\geq k_0$. This implies

\begin{align*}
    &\bigg|\sum_{j+k=i}[r_{j,k}][\varpi]^{k/p^n}\bigg|\\
    \leq&\max_{j+k=i}\bigg|\sum_{j+k=i}[r_{j,k}][\varpi]^{k/p^n}\bigg|
\end{align*}

For $i\geq j_0+k_0$, we have 
\begin{align*}
    &\max_{j+k=i}\bigg|\sum_{j+k=i}[r_{j,k}][\varpi]^{k/p^n}\bigg|\leq\epsilon
\end{align*}
this gives us the claim. Now consider what happens when we invert the element $[\varpi]$. In a similar way to how $\mathbb{Z}_p\langle T \rangle\big[\frac{1}{p}\big]=\mathbb{Q}_p\langle T \rangle$, we have the following description for the ring $R_n$
\begin{align}\label{1.00}
    R_n=\bigg\{\sum_i[r_i]\bigg(\frac{\pi}{[\varpi^{1/p^n}]}\bigg)^i\Bigg|\hspace{5pt}r_i\in R,r_i\rightarrow0\bigg\}
\end{align}

Since the Frobenius automorphism sends $R_{n-1}$ to $R_n$, to see that $\mathcal{Y_S}$ is an adic space, it suffices to check that the structure sheaf on $R_0$ is sheafy. To this end, we use the theory of sousperfectoid spaces in \cite[6.3]{scholze2020berkeley}. Since the canonical injection $R_0\hookrightarrow R_0\widehat{\otimes}_{\mathcal{O}_E}\mathcal{O}_{E_\infty}$ admits a section, it suffices to prove that $R_0\widehat{\otimes}_{\mathcal{O}_E}\mathcal{O}_{E_\infty}$  is perfectoid.\\

Let us call, $$A=R_0\widehat{\otimes}_{\mathcal{O}_E}\mathcal{O}_{E_\infty}$$ and $A^+\subset A$ the integral closure of $R_0^+\widehat{\otimes}_{\mathcal{O}_E}\mathcal{O}_{E_\infty}$. Consider now the ring $$A_0^+=(W_{\mathcal{O}_E}(R^+)\widehat{\otimes}_{\mathcal{O}_E}\mathcal{O}_{E_\infty})\Big[\Big(\frac{\pi}{[\varpi]}\Big)^{\frac{1}{p^\infty}}\Big]^{\wedge}_{[\varpi]}\subset A^+$$

Let us now recall from \cite[Lemma~3.10(ii)]{bhatt2018integral}, that a ring $S$ is called \textit{integral perfectoid} if it is $t$-adically complete with respect to some element $t\in S$ 
such that $t^p$ divides $p$, and if the Frobenius endomorphism $ S/ t S \rightarrow S/t^pS$ is an isomorphism. This is an important notion, as it is proven in the same that if $S$ is integral perfectoid, then $S[\frac{1}{t}]$ is perfectoid in our sense.\\

Since $\pi\subset([\varpi])$, when considering the quotient $A_0^+/[\varpi]$, we can first quotient out by $\pi$, and then further by $[\varpi]$. Combining this with the fact that we have distinguished elements $[\varpi]^{1/p^n}$, and that $ W_{\mathcal{O}_E}(R^+)/\pi =R^+$, implies that we have the following description for $A_0^+/[\varpi]$,

$$A_0^+/[\varpi]=((R^+/\varpi)\widehat{\otimes}_{\mathbb{F}_q}\mathbb{F}_q[\pi^{1/p^\infty}])[t^{1/p^\infty}]/(\pi^{1/p^n}-[\varpi]^{1/p^n}t^{1/p^n},\pi)=(R^+/\varpi)[t^{1/p^\infty}]/(t)$$.

This description clearly shows that the Frobenius on this ring is an isomorphism because it was already assumed to be such on $R^+/\varpi$. To complete the first part of the proof, we observe that the $p^{th}$ power of $[\varpi]^{1/p}$ divides $\pi$, and as a consequence of the previous arguments we have that $A_0^+/[\varpi]^{1/p}\rightarrow A_0^+/[\varpi]$ is also an isomorphism and moreover $A_0^+\big[\frac{1}{[\varpi]^{1/p}}\big]=A$. Thus we have satisfied all the hypothesis required to use \cite[Lemma~3.10(ii)]{bhatt2018integral}.\\

We now consider the tilt of $A$. Note that $A^\flat=(A^+_0)^\flat[\frac{1}{[\varpi]}]$. Notice furthermore that

$$A^+_0/\pi=R^+\otimes_{\mathbb{F}_q}\mathbb{F}_q[t^{1/p^\infty}]/(t)$$
which implies that
$$(A^+_0)^\flat=\varprojlim (A^+_0/\pi)=\varprojlim \Big( R^+\otimes_{\mathbb{F}_q}\mathbb{F}_q[t^{1/p^\infty}]/(t)\Big)=\varprojlim \Big( R^+[t^{1/p^\infty}]/(t)\Big)$$
From \cite[Example~2.0.3]{bhatt2017lecture} we have that if $R$ is a prefect ring of characterstic $p$, and $f\in R$ is a nonzerodivisor then the inverse limit along the frobenious map of $(R/f)$ is the $f$-adic completion of $R$. This gives us that
$$\varprojlim \Big( R^+[t^{1/p^\infty}]/(t)\Big)=R^+[t^{1/p^\infty}]^\wedge_t=R^+\langle t^{1/p^\infty}\rangle$$
which upon inverting $[\varpi]$ implies that
\begin{align}\label{4}
    A^\flat=R\langle t^{1/p^\infty}\rangle.
\end{align}
Now consider the perfection of the open unit disc over $S$. This is given as the increasing union, $\mathbb{D}_{S,\text{Perf}}=\bigcup_{n\geq1}\Spa R\langle T,T/\varpi^n\rangle_{\text{perf}}$, for which we have the rational open subset corresponding to $|T|\leq|\varpi|$ is,
$$\Spa R\langle T,T/\varpi\rangle_{\text{perf}}=\Spa R\langle T/\varpi\rangle_{\text{perf}}=\Spa R\langle (T/\varpi)^{1/p^\infty}\rangle$$

We therefore have an abstract isomorphism with the presentation in (\ref{4}), and moreover the various $R_n$ as in (\ref{1.00}) give a full cover of the open unit disc, corresponding to the other rational open subsets.
\end{proof}

\begin{defn}[étale morphisms of perfectoid spaces. {\cite[Definition~7.5.1]{scholze2020berkeley}}]

\hspace{20pt}\begin{enumerate}
      \item A morphism $f:X\rightarrow Y$ of perfectoid spaces is called \textit{finite étale} if for all open affinoids $\Spa(B,B^+)\subset Y$, the pullback $X\times_Y\Spa(B,B^+)$ is an affinoid space $\Spa(A,A^+)$, where $A$ is a finite étale $B$-algebra, and $A^+$ is the integral closure of $B^+$ in $A$.
      \item A morphism $f:X\rightarrow Y$ of perfectoid spaces is called \textit{étale} if for all $x\in X$, there exists and open set $U\ni x$ and $V\subset f(U)$, such that there is a diagram
\[\begin{tikzcd}
	U && W \\
	& V
	\arrow["open", hook, from=1-1, to=1-3]
	\arrow["{f|_U}"', from=1-1, to=2-2]
	\arrow["{finite\ étale}"{pos=0}, from=1-3, to=2-2]
\end{tikzcd}\]
  \end{enumerate}
\end{defn}

\begin{defn}[Pro-étale morphisms. {\cite[Definition~8.2.1]{scholze2020berkeley}}]

A morphism of affinoid perfectoid spaces $f:\Spa(B,B^+)\rightarrow\Spa(A,A^+)$ is called \textit{affinoid pro-étale} if 
$$(B,B^+)=\widehat{\varinjlim(A_i,A_i^+)}$$
is a completed filtered colimit of pairs $(A_i,A_i^+)$ such that $A_i$ is perfectoid and,
$$\Spa(A,A^+)\rightarrow \Spa(A_i,A_i^+)$$ 
is étale. A morphism of perfectoid spaces is called \textit{pro-étale} if it both locally on the source and target affinoid pro étale.

\end{defn}
\begin{defn}[The big pro-étale site. {\cite[Definition~8.2.6]{scholze2020berkeley}}]
We endow the following categories,
\begin{itemize}
    \item Perfd, the category of all perfectoid spaces
    \item $\text{Perf}\subset\text{Perfd}$, the category of perfectoid spaces of characteristic $p$.
\end{itemize}
with a Grothendieck topology, where a collection of morphisms $f_i:Y_i\rightarrow Y$ is called a cover if all the $f_i$ are pro-étale, and furthermore for any quasi-compact open $U\subset Y$, there exists a finite subset $I_U\subset I$, and $U_i\subset Y_i$ for $i\in I_U$, such that $U$ is the union of all the $f_i(U_i)$.
    
\end{defn}
\begin{prop}[{\cite[Corollary 8.6]{etcoh}}]
    Let $X$ be a perfectoid space. The presheaf on the pro-étale site Perfd, represented by $X$ is a sheaf. 
\end{prop}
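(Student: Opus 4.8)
The statement is that the big pro-étale topology on $\mathrm{Perfd}$ is subcanonical, i.e. that $h_X=\operatorname{Hom}_{\mathrm{Perfd}}(-,X)$ satisfies descent along pro-étale covers. The plan is to peel the problem down, in a sequence of reductions, to a descent statement for the structure presheaves $\mathcal{O}$ and $\mathcal{O}^+$, and then to invoke the known perfectoid descent results. Since the sheaf axiom for a topology generated by a pretopology may be checked on the pretopology, it suffices to fix an affinoid perfectoid space $Y$ and a pro-étale cover of it, and to verify that $h_X(Y)$ is the equalizer of $\prod h_X(Y_i)\rightrightarrows\prod h_X(Y_i\times_Y Y_j)$. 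Using that $Y$ is quasi-compact, any such cover is refined by one of a very special shape: cover each source by affinoid opens that are affinoid pro-étale over rational opens of $Y$, pass to a finite subcover, and take a finite disjoint union; this produces a single surjection $f\colon Y'\to Y$ of affinoid perfectoid spaces that factors as a rational open immersion followed by an affinoid pro-étale map. So the goal becomes $h_X(Y)\xrightarrow{\sim}\mathrm{eq}\bigl(h_X(Y')\rightrightarrows h_X(Y'\times_Y Y')\bigr)$, where $Y'$ and $Y'\times_Y Y'$ are again affinoid perfectoid.

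Next I would reduce to $X$ affinoid. Writing $X=\bigcup_a U_a$ as a union of open affinoid perfectoid subspaces, a descent datum for $h_X$ along $f$ amounts in particular to a morphism $g'\colon Y'\to X$ whose two pullbacks to $Y'\times_Y Y'$ agree; the opens $g'^{-1}(U_a)$ then carry descent data, hence (open immersions being effective for pro-étale descent of analytic spaces) glue to opens $V_a\subseteq Y$, and if each $h_{U_a}$ is already a pro-étale sheaf the restricted maps $g'^{-1}(U_a)\to U_a$ descend to $V_a\to U_a$. These glue on the analytic topology of $Y$ — and $h_X$ is a sheaf for the analytic topology because morphisms of perfectoid spaces are morphisms of locally ringed adic spaces and glue on open covers of the source — to the desired $Y\to X$. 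Hence it is enough to treat $X=\Spa(A,A^+)$ affinoid perfectoid.

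For such $X$ one has the canonical identification $\operatorname{Hom}_{\mathrm{Perfd}}(Y,X)=\operatorname{Hom}\bigl((A,A^+),(\mathcal{O}_Y(Y),\mathcal{O}_Y^+(Y))\bigr)$ in Huber pairs, and since $\operatorname{Hom}((A,A^+),-)$ preserves limits, the equalizer property for $h_X$ follows once one knows that the presheaves $Y\mapsto\mathcal{O}_Y(Y)$ and $Y\mapsto\mathcal{O}_Y^+(Y)$ on affinoid perfectoid spaces satisfy the pro-étale sheaf condition, i.e. that $(\mathcal{O}(Y),\mathcal{O}^+(Y))\to(\mathcal{O}(Y'),\mathcal{O}^+(Y'))\rightrightarrows(\mathcal{O}(Y'\times_Y Y'),\mathcal{O}^+(Y'\times_Y Y'))$ is an equalizer of Huber pairs. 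Using the factorisation of $f$, this is verified on the building blocks: for rational open covers it is Tate acyclicity together with the sheafiness of perfectoid Tate rings; for a finite étale surjection $\Spa(B',B'^+)\to\Spa(B,B^+)$ it is faithfully flat descent, $B\to B'$ being finite étale and surjective hence faithfully flat, together with the compatibility of integral closures for the $\mathcal{O}^+$-part (this is where almost purity enters); and an affinoid pro-étale map is a cofiltered limit of étale ones, so one passes to the completed filtered colimit, the structure ring of a cofiltered limit of affinoid perfectoids being the completion of the colimit of the structure rings, and the degree-zero equalizer property is preserved along it.

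The main obstacle is precisely this last input — pro-étale descent for $\mathcal{O}$ and $\mathcal{O}^+$ — in its finite étale and filtered-colimit parts: the finite étale case rests on almost purity and faithfully flat descent for perfectoid algebras, and the colimit case requires controlling how passing to the integral closure (for $\mathcal{O}^+$) interacts with completed filtered colimits and with the equalizer. In a more streamlined presentation one instead cites that $\mathcal{O}$ is even a $v$-sheaf on perfectoid spaces (Kedlaya–Liu, Scholze); since the $v$-topology is finer than the pro-étale topology this makes the descent input a black box and the argument reduces to the bookkeeping reductions of the first two paragraphs.
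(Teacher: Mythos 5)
The paper supplies no proof of this statement; it simply cites Scholze's ``Étale cohomology of diamonds,'' Corollary~8.6, so there is no in-text argument to compare against. Your sketch reproduces the standard argument from that source: reduce to an affinoid target $X$, refine a pro-étale cover of an affinoid $Y$ to an affinoid one, identify $h_X(Y)$ with morphisms of Huber pairs out of $(A,A^+)$, and thereby reduce the whole statement to pro-étale (equivalently, after black-boxing, $v$-) descent for $\mathcal{O}$ and $\mathcal{O}^+$ on affinoid perfectoid spaces, checked on the generators of the pro-étale topology (rational covers, finite étale surjections, completed filtered colimits of étale maps). This is the correct route, and you correctly observe that citing the $v$-sheaf property of $\mathcal{O}$ is the cleanest way to package the key descent input.

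Two imprecisions worth fixing. First, the factorization you claim for the refined cover --- ``a rational open immersion followed by an affinoid pro-étale map'' --- has the composition order reversed, and the second map is not an immersion: the construction gives an affinoid pro-étale map $\coprod_k Y'_k\to\coprod_k U_k$ followed by a finite, surjective (but not injective) union of rational open immersions $\coprod_k U_k\to Y$. Second, in the reduction to affinoid $X$, descent of opens along the cover is not automatically effective; one must invoke that affinoid pro-étale surjections induce quotient maps on topological spaces and that $|Y'\times_Y Y'|\to|Y'|\times_{|Y|}|Y'|$ is surjective. These are nontrivial lemmas of the reference, not formalities, though they do not alter the overall shape of your argument. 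Finally, as you already flag, the passage to completed filtered colimits is where real care is needed: completion does not generically commute with equalizers, and the $\mathcal{O}^+$ side interacts with integral closures in a way that ultimately requires the almost-mathematics version of descent; left as stated, that paragraph is a gesture at the proof rather than a proof.
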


\begin{lem}
    The absolute product of two perfectoid spaces of characteristic $p$ is again a perfectoid space of characteristic $p$.
\end{lem}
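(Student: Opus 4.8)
The plan is to imitate the construction of $\mathcal{Y}_S$ carried out in Proposition~\ref{5.1}. First I would reduce to the affinoid case: if $X=\bigcup_i X_i$ and $Y=\bigcup_j Y_j$ are coverings by affinoid perfectoid open subspaces, then (open immersions being stable under products) the absolute products $X_i\times Y_j$ are open subspaces covering $X\times Y$ and gluing naturally along their overlaps, so it suffices to construct each $X_i\times Y_j$. Hence assume $X=\Spa(R,R^+)$ and $Y=\Spa(S,S^+)$ with $R,S$ perfectoid $\mathbb{F}_p$-algebras, so that $R$ and $S$ are perfect, and $R^+,S^+$ are perfect as well, being integrally closed subrings of perfect rings. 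Fix pseudouniformizers $\varpi\in R^+$ and $\varpi'\in S^+$, with all their $p$-power roots lying in $R^+$ resp.\ $S^+$.

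Next I would form the ring $T_0^+=R^+\widehat{\otimes}_{\mathbb{F}_p}S^+$, the $(\varpi\otimes 1,\,1\otimes\varpi')$-adic completion of $R^+\otimes_{\mathbb{F}_p}S^+$. The tensor product of perfect $\mathbb{F}_p$-algebras is perfect, its Frobenius being $\varphi_{R^+}\otimes\varphi_{S^+}$, which is an isomorphism; and an adic completion of a perfect $\mathbb{F}_p$-algebra along a finitely generated ideal (with $p$-power roots of the generators available) is again perfect. Thus $T_0^+$ is a perfect, complete $\mathbb{F}_p$-algebra. Exactly as for $\mathcal{Y}_S$, the space $\Spa(T_0^+,T_0^+)$ carries a non-analytic locus, namely the points at which both $\varpi$ and $\varpi'$ become topologically nilpotent, and the absolute product $X\times Y$ is its open complement. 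This complement is covered by the rational subsets
$$U_n^+=\{\,|\varpi|\le|\varpi'^{1/p^n}|\neq 0\,\},\qquad U_n^-=\{\,|\varpi'|\le|\varpi^{1/p^n}|\neq 0\,\},\qquad n\ge 1,$$
in complete analogy with the covering $|\pi|\le|[\varpi]^{1/p^n}|\neq 0$ used in the proof of Proposition~\ref{5.1}.

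It then remains to check that each $U_n^{\pm}$ is affinoid perfectoid. On $U_n^+$ the structure ring is $T_0^+\langle\varpi/\varpi'^{1/p^n}\rangle[1/\varpi']$; here $\varpi'$ is a topologically nilpotent unit, so this is a complete Tate $\mathbb{F}_p$-algebra, and it is perfect, since the Tate-algebra generator $\varpi/\varpi'^{1/p^n}$ admits compatible $p$-power roots (available on $U_{n+1}^+\supseteq U_n^+$, using $|\varpi'|\le 1$). By the characteristic-$p$ criterion for perfectoidness — a complete Tate $\mathbb{F}_p$-algebra is perfectoid if and only if it is perfect \cite{scholze2020berkeley} — the subspace $U_n^+$, and symmetrically $U_n^-$, is affinoid perfectoid. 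Gluing these along rational opens exhibits $X\times Y$ as a perfectoid space, of characteristic $p$ since every ring appearing is an $\mathbb{F}_p$-algebra. One should also record that this space has the correct universal property, i.e.\ is the categorical (absolute) product, which follows from the universal property of rational localizations together with the fact that $R^+\widehat{\otimes}_{\mathbb{F}_p}S^+$ is the coproduct of $R^+$ and $S^+$ among complete $\mathbb{F}_p$-algebras.

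The main obstacle is the same one met in Proposition~\ref{5.1}: the absolute product is not itself affinoid. Indeed $R\widehat{\otimes}_{\mathbb{F}_p}S$ with its natural topology fails to be a Tate ring, since no single topologically nilpotent unit generates the topology — for instance $(\varpi,\varpi')^N\not\subseteq(\varpi\varpi')$ for every $N$. One is therefore forced to pass to the analytic locus and present it through the ``half-space'' covering above; once this is set up, the perfectness of the pieces is essentially formal and the characteristic-$p$ perfectoid criterion finishes the argument. A secondary technical point is the assertion that adic completion preserves perfectness, which must be verified with the $p$-power roots of the generators in hand.
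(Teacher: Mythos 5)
The paper states this lemma without proof, so there is nothing to compare against directly; what follows is an assessment of your argument on its own terms.

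Your overall strategy — reduce to affinoids, form $T_0^+=R^+\widehat{\otimes}_{\mathbb{F}_p}S^+$, carve out an open piece of $\Spa(T_0^+,T_0^+)$, cover by rational subsets, and use the characteristic-$p$ perfectoidness criterion (perfect $\Leftrightarrow$ perfectoid for uniform Tate $\mathbb{F}_p$-algebras) — is the right one. However, the identification of $X\times Y$ inside $\Spa(T_0^+,T_0^+)$ is wrong, and the error propagates into the covering. You claim $X\times Y$ is the complement of the non-analytic locus, i.e.\ of $V(\varpi)\cap V(\varpi')$. But a $T$-point of $X\times Y$ is a pair of continuous maps $(R,R^+)\to(A,A^+)$ and $(S,S^+)\to(A,A^+)$, and since $\varpi$ (resp.\ $\varpi'$) is a unit in $R$ (resp.\ $S$), \emph{both} pseudouniformizers must map to units in $A$. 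Thus $X\times Y$ is the complement of $V(\varpi)\cup V(\varpi')$, which is strictly smaller than the analytic locus. Your cover $U_n^{\pm}$ does cover the analytic locus — $\bigcup_n U_n^+=\{|\varpi'|\ne 0\}$ and $\bigcup_n U_n^-=\{|\varpi|\ne 0\}$ — but e.g.\ $U_n^-$ contains points with $\varpi'=0$, which have no image in $Y$ and hence cannot lie in $X\times Y$; the structure ring $T_0^+\langle\varpi/\varpi'^{1/p^n}\rangle[1/\varpi']$ of $U_n^+$ likewise fails to invert $\varpi$. You have effectively imitated $\mathcal{Y}_S$ when the correct analogue is $Y_S$: in Proposition~\ref{5.1}, $\mathcal{Y}_S=\Spa(W_{\mathcal{O}_E}(R^+))\setminus V([\varpi])$ deliberately allows $\pi=0$ because it lives over $\Spa\mathcal{O}_E$, whereas here both factors are already analytic and both pseudouniformizers must be inverted.

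The fix is routine: replace $U_n^{\pm}$ by the rational subsets $U_n:=U_n^+\cap U_n^-=\{|\varpi'^{p^n}|\le|\varpi|\le|\varpi'^{1/p^n}|\ne 0\}$, whose structure rings invert $\varpi\varpi'$, or equivalently invert $\varpi$ as well as $\varpi'$ in your $U_n^+$-rings. These $U_n$ exhaust $\{|\varpi|\ne 0\}\cap\{|\varpi'|\ne 0\}$, and your perfectness/gluing/universal-property arguments then go through verbatim. (Alternatively, one could observe that the space you actually built is perfectoid and that $X\times Y$ is the open subspace of it where $\varpi\varpi'\ne 0$, hence also perfectoid; but that is not what the proof as written claims.) The technical lemma you flag — that the $I$-adic completion of a perfect $\mathbb{F}_p$-algebra along a finitely generated ideal whose generators admit compatible $p$-power roots is again perfect — is correct and standard, and your use of it is fine.
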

For the next definition, we also denote by $X$, the sheaf represented by $X$. 

\begin{defn}[{\cite[Definition~8.3.1]{scholze2020berkeley}}]
    A \textit{diamond} is a pro étale sheaf on Perf, that has a presentation of the form $X/R$, where $X$ a perfectoid space of characteristic $p$, and $R\hookrightarrow X \times X$ is an equivalence relation that is also a perfectoid space such that the two projections $p_1,p_2:R\rightarrow X$ are pro-étale.
\end{defn}
\begin{defn}[{\cite[Definition~10.1.1]{scholze2020berkeley}}]
    Let $X$ be an analytic pre-adic space over $\Spa \mathbf{Z}_p$. We define a presheaf $X^\diamondsuit$ on Perf as follows. For  $T\in $Perf, let $X^\diamondsuit(T)$ be the set of isomorphism classes of pairs $(T^{\sharp},T^{\sharp}\rightarrow X)$, where $T^{\sharp}$ is an untilt of $T$, and $T^{\sharp}\rightarrow X$ is a map of pre-adic spaces. We denote by $\mathrm{Spd}(R,R^+)$ the diamond associated to $\Spa(R,R^+)$.
\end{defn}
\begin{prop}[{\cite[Lemma~15.6]{etcoh}}] The presheaf $X^\diamondsuit$ is a diamond.
\end{prop}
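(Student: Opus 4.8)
The plan is to realize $X^\diamondsuit$ in the shape demanded by \cite[Definition~8.3.1]{scholze2020berkeley}, that is, as a quotient $Y/R$ of a characteristic $p$ perfectoid space $Y$ by a pro-étale equivalence relation $R$ which is again such a space. First I would record that $X^\diamondsuit$ is a pro-étale sheaf on $\mathrm{Perf}$: given a pro-étale cover $\{T_i\to T\}$ together with compatible data $(T_i^\sharp,\,T_i^\sharp\to X)$, the untilts $T_i^\sharp$ descend to an untilt $T^\sharp$ of $T$ (the assignment $T\mapsto\{\text{untilts of }T\}$ being a pro-étale, even a v-, sheaf), and the maps $T_i^\sharp\to X$ glue to a map $T^\sharp\to X$ because a morphism to an analytic pre-adic space can be built pro-étale-locally on the source. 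It then suffices to produce the presentation $Y/R$.

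Second, I would reduce to the affinoid case. Choosing an open cover $X=\bigcup_i U_i$ by affinoid analytic $U_i=\Spa(R_i,R_i^+)$, the $U_i^\diamondsuit$ form an open cover of $X^\diamondsuit$ by subsheaves, so it is enough to know that each $U_i^\diamondsuit$ is a diamond: if $Y_i\to U_i^\diamondsuit$ are surjective pro-étale maps from characteristic $p$ perfectoid spaces, then $Y:=\coprod_i Y_i$ is perfectoid and surjects onto $X^\diamondsuit$, while the kernel pair $Y\times_{X^\diamondsuit}Y=\coprod_{i,j}Y_i\times_{U_i^\diamondsuit\cap U_j^\diamondsuit}Y_j$ is, on each component, pro-étale over an open subspace of $Y_i$ and hence perfectoid, using that open subspaces, pro-étale base changes, and absolute products of characteristic $p$ perfectoid spaces are again perfectoid. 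Thus $X^\diamondsuit=Y/(Y\times_{X^\diamondsuit}Y)$ would be a diamond once every $U_i^\diamondsuit$ is.

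The crucial input, and the step I expect to be the main obstacle, is that \emph{an affinoid analytic pre-adic space $\Spa(R,R^+)$ over $\Spa\mathbf{Z}_p$ admits a surjective pro-étale map $\widetilde X\to \Spa(R,R^+)$ from a perfectoid space}; this is precisely where analyticity is used in an essential way, and I would invoke \cite{etcoh} for it, the idea being to build $\widetilde X$ as a completed filtered colimit obtained by repeatedly adjoining $p$-power roots of a pseudo-uniformizer (and of $p$) and passing to finite étale covers until the Frobenius becomes surjective modulo a pseudo-uniformizer, the locus where $p$ is topologically nilpotent being handled by perfection. Granting this, $\widetilde X^\diamondsuit$ is represented by the characteristic $p$ perfectoid space $\widetilde X^\flat$ (a $T$-valued point of $\widetilde X^\diamondsuit$ is an untilt $T^\sharp$ of $T$ together with a map $T^\sharp\to\widetilde X$, which is the same as a map $T\to\widetilde X^\flat$, obtained by pulling back the tautological untilt of $\widetilde X^\flat$). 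Since $(-)^\diamondsuit$ commutes with fibre products and carries the pro-étale surjection $\widetilde X\to\Spa(R,R^+)$ to a pro-étale cover, $\widetilde X^\diamondsuit\to\Spa(R,R^+)^\diamondsuit$ is a pro-étale cover whose kernel pair is $(\widetilde X\times_X\widetilde X)^\diamondsuit$; and $\widetilde X\times_X\widetilde X$ is pro-étale over $\widetilde X$, hence perfectoid, so this kernel pair is represented by a characteristic $p$ perfectoid space mapping to $\widetilde X^\flat$ by two pro-étale projections. As $\widetilde X^\diamondsuit\to\Spa(R,R^+)^\diamondsuit$ is an epimorphism of pro-étale sheaves, $\Spa(R,R^+)^\diamondsuit$ is the quotient of $\widetilde X^\flat$ by this equivalence relation, i.e.\ a diamond, completing the reduction. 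The only non-formal ingredient in the whole argument is the existence of the perfectoid pro-étale cover.
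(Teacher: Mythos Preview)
The paper does not supply its own proof of this proposition; it merely records the statement with the citation \cite[Lemma~15.6]{etcoh} and moves on. There is therefore nothing in the paper to compare your argument against.

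That said, your sketch is correct and is essentially the argument Scholze gives in the cited reference: reduce to the affinoid case by gluing, produce a pro-étale perfectoid cover $\widetilde X\to\Spa(R,R^+)$ of the affinoid analytic adic space, identify $\widetilde X^\diamondsuit$ with the representable sheaf $\widetilde X^\flat$, and observe that the kernel pair is again perfectoid because pro-étale-over-perfectoid is perfectoid. You have correctly isolated the only substantive step, namely the existence of the perfectoid pro-étale cover, and you are right that this is where analyticity is essential. One small remark: when you say you would ``invoke \cite{etcoh}'' for that step, be aware that this is precisely the content of the lemma you are trying to prove, so in a self-contained write-up you would want to carry out the colimit construction you describe rather than cite back to the same source.
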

\begin{lem}
    Let $\widehat{\text{Alg}}_{\mathcal{O}_E}$ denote the category of $\pi$-adically complete, $\pi$-torsion free, $\mathcal{O}_E$-algebras and $\text{Perf}_{\mathbb{F}_p}$ denote the category of perfect $\mathbb{F}_p$-algebras. The functor 
$$(-)^\flat:\widehat{\text{Alg}}_{\mathcal{O}_E}\rightarrow \text{Perf}_{\mathbb{F}_p} $$
admits a left adjoint given by $W_{\mathcal{O}_E}(-)$.
\end{lem}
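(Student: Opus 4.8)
The plan is to produce, for every perfect $\mathbb{F}_p$-algebra $R$ and every $A\in\widehat{\mathrm{Alg}}_{\mathcal{O}_E}$, a bijection $\mathrm{Hom}_{\mathcal{O}_E}\bigl(W_{\mathcal{O}_E}(R),A\bigr)\cong\mathrm{Hom}_{\mathbb{F}_p}\bigl(R,A^{\flat}\bigr)$, natural in $R$ and $A$, where $A^{\flat}=\varprojlim\bigl(\cdots\xrightarrow{x\mapsto x^{p}}A/\pi\xrightarrow{x\mapsto x^{p}}A/\pi\bigr)$ is the tilt. First I would check that both sides make sense: $W_{\mathcal{O}_E}(R)$ genuinely lies in $\widehat{\mathrm{Alg}}_{\mathcal{O}_E}$ — it is $\pi$-adically complete and $\pi$-torsion free, because $R$ is perfect hence reduced, which is exactly the unique flat lift statement recalled at the start of the section — and $A^{\flat}$ is a perfect $\mathbb{F}_p$-algebra, Frobenius being invertible on a limit taken along Frobenius. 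Then I would record the reduction that carries the load: since $R$ is perfect, the zeroth projection $A^{\flat}\to A/\pi$ induces a bijection $\mathrm{Hom}_{\mathbb{F}_p}(R,A^{\flat})\xrightarrow{\sim}\mathrm{Hom}_{\mathbb{F}_p}(R,A/\pi)$, because a homomorphism $g\colon R\to A/\pi$ extends uniquely to a Frobenius-compatible family $(g_{n})_{n\ge 0}$ via $g_{n}=g\circ\mathrm{Frob}_{R}^{-n}$ (ring maps commute with Frobenius and $\mathrm{Frob}_{R}$ is bijective). Hence it suffices to build a natural bijection $\rho\colon\mathrm{Hom}_{\mathcal{O}_E}(W_{\mathcal{O}_E}(R),A)\to\mathrm{Hom}_{\mathbb{F}_p}(R,A/\pi)$, $\Phi\mapsto\Phi\bmod\pi$.

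For injectivity of $\rho$, suppose $\Phi_{1},\Phi_{2}\colon W_{\mathcal{O}_E}(R)\to A$ agree modulo $\pi$. Both are $\mathcal{O}_E$-linear and $\pi$-adically continuous, and every element of $W_{\mathcal{O}_E}(R)$ has a convergent expansion $\sum_{i\ge 0}\pi^{i}[r_{i}]$, so the Teichmüller elements $[r]$ topologically generate $W_{\mathcal{O}_E}(R)$ over $\mathcal{O}_E$ and it is enough to see $\Phi_{1}([r])=\Phi_{2}([r])$. Writing $[r]=[r^{1/p^{m}}]^{p^{m}}$, so $\Phi_{i}([r])=\Phi_{i}([r^{1/p^{m}}])^{p^{m}}$, I would invoke the elementary fact that in any $\mathcal{O}_E$-algebra $a\equiv b\ (\mathrm{mod}\ \pi^{k})$ implies $a^{p}\equiv b^{p}\ (\mathrm{mod}\ \pi^{k+1})$ — because $p\in\mathfrak m=(\pi)$, so $\binom{p}{j}$ is divisible by $\pi$ for $0<j<p$. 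Applying this $m$ times to $\Phi_{1}([r^{1/p^{m}}])\equiv\Phi_{2}([r^{1/p^{m}}])\ (\mathrm{mod}\ \pi)$ yields $\Phi_{1}([r])\equiv\Phi_{2}([r])\ (\mathrm{mod}\ \pi^{m+1})$ for all $m$, and since $A$ is $\pi$-adically separated, $\Phi_{1}([r])=\Phi_{2}([r])$.

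For surjectivity I would exhibit an explicit section. Given $\phi\colon R\to A/\pi$, lift it through the bijection above to $\phi^{\flat}\colon R\to A^{\flat}$, apply the Witt functor to obtain an $\mathcal{O}_E$-algebra map $W_{\mathcal{O}_E}(\phi^{\flat})\colon W_{\mathcal{O}_E}(R)\to W_{\mathcal{O}_E}(A^{\flat})$, and postcompose with the canonical map $\theta_{A}\colon W_{\mathcal{O}_E}(A^{\flat})\to A$, which exists for any $\pi$-adically complete $\mathcal{O}_E$-algebra: it sends $\sum_{i}\pi^{i}[a_{i}]$ to $\sum_{i}\pi^{i}a_{i}^{\sharp}$ with $a^{\sharp}=\lim_{m}\widetilde{a_{m}}^{\,p^{m}}$ the multiplicative lift (the limit converging by the same congruence estimate), and it reduces modulo $\pi$ to the projection $A^{\flat}\to A/\pi$. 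Then $\Phi:=\theta_{A}\circ W_{\mathcal{O}_E}(\phi^{\flat})$ has $\Phi\bmod\pi=\mathrm{pr}_{0}\circ\phi^{\flat}=\phi$, so $\rho(\Phi)=\phi$; combined with injectivity this shows $\rho$ is bijective. I would close by checking that $\rho$, and hence the composite bijection with $\mathrm{Hom}_{\mathbb{F}_p}(R,A^{\flat})$, is natural in both variables — immediate from the formulas — and noting that under this adjunction the unit $R\to W_{\mathcal{O}_E}(R)^{\flat}$ is the canonical identification (again because $R$ is perfect) and the counit is $\theta$. The one genuinely substantial input is the existence and ring-homomorphism property of $\theta_{A}$ for an arbitrary $\pi$-adically complete $A$ — equivalently the classical universal property of $\mathcal{O}_E$-Witt vectors of a perfect ring — whose only delicate point is additivity of the Teichmüller formula; this is standard and can be cited, after which everything else is formal.
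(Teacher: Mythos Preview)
The paper states this lemma without proof, so there is nothing to compare against directly. Your argument is the standard one and is correct: the reduction $\mathrm{Hom}_{\mathbb{F}_p}(R,A^{\flat})\cong\mathrm{Hom}_{\mathbb{F}_p}(R,A/\pi)$ via perfectness of $R$, injectivity of $\Phi\mapsto\Phi\bmod\pi$ by the $p$-th power congruence lift and $\pi$-adic separatedness, and surjectivity via Fontaine's $\theta_A$ are exactly the ingredients one expects. Your acknowledgment that the ring-homomorphism property of $\theta_A$ is the only genuinely nontrivial input is accurate; this is precisely the universal property of $\mathcal{O}_E$-Witt vectors of a perfect ring, and the paper itself uses $\theta$ freely later on without proof. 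One cosmetic point: the paper writes $\mathrm{Perf}_{\mathbb{F}_p}$, but since the residue field of $\mathcal{O}_E$ is $\mathbb{F}_q$ and $W_{\mathcal{O}_E}(-)$ is naturally defined on $\mathbb{F}_q$-algebras, the statement is cleanest over $\mathbb{F}_q$; your proof goes through unchanged in either reading.
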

\begin{prop}[{\cite[II.1.2]{fargues2024geometrizationlocallanglandscorrespondence}}\label{5.11}] We have a natural isomorphism,
$$\mathcal{Y}_S^\diamondsuit\cong\mathrm{Spd}\hspace{1pt}\mathcal{O}_E\times S$$
\end{prop}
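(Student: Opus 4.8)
The plan is to identify both sides as pro-étale sheaves on $\mathrm{Perf}$ by matching their $T$-valued points, after first reducing to the affinoid situation. For a general $S\in\mathrm{Perf}_{\mathfrak e}$ the space $\mathcal{Y}_S$ is obtained by gluing the affinoid pieces $\mathcal{Y}_{\operatorname{Spa}(R,R^+)}$ along an affinoid cover $S=\bigcup S_i$, and since the diamond functor and fibre products of sheaves are local, it suffices to treat $S=\operatorname{Spa}(R,R^+)$ affinoid perfectoid over $\mathfrak e$. Likewise $\mathcal{Y}_S^\diamondsuit$, $\mathrm{Spd}\,\mathcal{O}_E$ and the sheaf represented by $S$ (which, $S$ being of characteristic $p$, is exactly $S^\diamondsuit$) are all pro-étale sheaves, so it is enough to evaluate on an affinoid $T=\operatorname{Spa}(A,A^+)\in\mathrm{Perf}$ and verify naturality in $S$ and $T$.

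First I would unwind the left-hand side. An element of $\mathcal{Y}_S^\diamondsuit(T)$ consists of an untilt $T^\sharp=\operatorname{Spa}(A^\sharp,A^{\sharp+})$ of $T$ together with a morphism of adic spaces $T^\sharp\to\mathcal{Y}_S$; composing with $\mathcal{Y}_S\to\operatorname{Spa}\mathcal{O}_E$ remembers the datum of $T^\sharp$ as an untilt over $\mathcal{O}_E$, i.e.\ a point of $\mathrm{Spd}\,\mathcal{O}_E(T)$. Now $\mathcal{Y}_S$ is the open subspace $\{\,|[\varpi]|\neq 0\,\}$ of $\operatorname{Spa}(W_{\mathcal{O}_E}(R^+),W_{\mathcal{O}_E}(R^+))$; every non-analytic point of the latter has open kernel, hence contains a power of $[\varpi]$, hence lies in $V([\varpi])$ and is removed. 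Therefore a morphism $T^\sharp\to\mathcal{Y}_S$ is the same datum as a continuous $\mathcal{O}_E$-algebra homomorphism $\varphi\colon W_{\mathcal{O}_E}(R^+)\to A^{\sharp+}$ (continuity with respect to the $(\pi,[\varpi])$-adic topology on the source) such that $|\varphi([\varpi])|_x\neq 0$ for all $x\in\operatorname{Spa}(A^\sharp,A^{\sharp+})$.

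The core of the argument is then the universal property of $W_{\mathcal{O}_E}$. By the adjunction $W_{\mathcal{O}_E}(-)\dashv(-)^\flat$ recorded in the lemma above, homomorphisms $W_{\mathcal{O}_E}(R^+)\to A^{\sharp+}$ correspond bijectively and naturally to homomorphisms $\psi\colon R^+\to (A^{\sharp+})^\flat=A^+$, with $\varphi\big(\sum[r_i]\pi^i\big)=\sum\psi(r_i)^\sharp\pi^i$ in terms of the multiplicative sharp map $(-)^\sharp\colon A^+\to A^{\sharp+}$; in particular $\varphi([\varpi])=\psi(\varpi)^\sharp$. Since tilting induces a homeomorphism $\operatorname{Spa}(A^\sharp,A^{\sharp+})\xrightarrow{\sim}\operatorname{Spa}(A,A^+)$ along which $|\psi(\varpi)^\sharp|$ and $|\psi(\varpi)|$ agree, the condition ``$\varphi([\varpi])$ nowhere zero'' becomes ``$\psi(\varpi)$ nowhere zero on $\operatorname{Spa}(A,A^+)$''; as $\psi(\varpi)$ is topologically nilpotent, this is equivalent to $\psi(\varpi)$ being a unit in the uniform Tate ring $A$, i.e.\ to $\psi$ extending uniquely to a morphism of adic spaces $T\to S$. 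Collecting everything, $\mathcal{Y}_S^\diamondsuit(T)$ is in natural bijection with the set of pairs (an untilt of $T$ over $\mathcal{O}_E$, a morphism $T\to S$), which is exactly $(\mathrm{Spd}\,\mathcal{O}_E\times S)(T)$; naturality is immediate from functoriality of $W_{\mathcal{O}_E}$ and of tilting, so the bijections assemble into the claimed isomorphism of diamonds.

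I expect the main difficulty to be bookkeeping rather than a single hard idea: (i) checking that a morphism $T^\sharp\to\mathcal{Y}_S$ really is faithfully encoded by the ring map $\varphi$ plus the condition $|\varphi([\varpi])|\neq 0$, with the $(\pi,[\varpi])$-adic topology matching what morphisms of adic spaces see; (ii) the compatibility $\varphi([\varpi])=\psi(\varpi)^\sharp$ and the equality of vanishing loci under tilting; and (iii) the fact that a nowhere-zero topologically nilpotent element of a perfectoid Tate ring is invertible. A further point to handle carefully is the characteristic-$p$ locus of $\mathrm{Spd}\,\mathcal{O}_E$ (untilts with $\pi\mapsto$ a topologically nilpotent element, possibly $0$), where the adjunction of the preceding lemma must be invoked in the appropriate form, or where one falls back on the identification $W_{\mathcal{O}_E}(R^+)/\pi=R^+$, so that the fibre of $\mathcal{Y}_S$ over $\pi=0$ is simply $S$ and the comparison there is tautological.
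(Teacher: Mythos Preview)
Your proposal is correct and follows essentially the same approach as the paper: reduce to affinoid $T$, use the adjunction $W_{\mathcal{O}_E}(-)\dashv(-)^\flat$ to translate maps $W_{\mathcal{O}_E}(R^+)\to A^{\sharp+}$ into maps $R^+\to A^+$, and then check that the invertibility condition on $[\varpi]$ corresponds under tilting to the invertibility of $\varpi$ (the paper does this last step via a commutative diagram of multiplicative monoids rather than your valuative argument, but the content is the same). Your write-up is in fact more careful than the paper's in handling the bookkeeping points (i)--(iii) you flag.
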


\begin{proof}
Since both sides are sheaves for the pro-étale topology, we may assume that $T=\Spa(A,A^+)\in$ Perf.
A morphism of adic spaces $T^\sharp\rightarrow\mathcal{Y}_S$ is equivalent to a morphism of rings $W_{\mathcal{O}_E}(R+)\rightarrow R^{\sharp +}$ such that the image of $[\varpi]$ is invertible in $A^\sharp$. Note that by lemma \ref{5.11} we have,
\begin{align*}
    \Hom(W_{\mathcal{O}_E}(R^+),A^{\sharp +})
    \cong\Hom(R^+,(A^{\sharp +})^\flat)
\end{align*}

Furthermore, we have isomorphisms $(A^{\sharp +})^\flat\cong (A^{\sharp \flat})^+\cong A^{+}$ \cite[Proposition~9.1.2]{bhatt2017lecture}. Note moreover, that since the adjunction is given by precomposition with the unit,
$$R^+\cong \varprojlim R^+\cong \varprojlim \Big(W_{\mathcal{O}_E}(R^+)/\pi\Big) \cong \Big(W_{\mathcal{O}_E}(R^+)\Big)^\flat $$
we have the following commutative diagram of multiplicative monoids,
\[\begin{tikzcd}
	{W_{\mathcal{O}_E}(R^+)} & {W_{\mathcal{O}_E}(R^+)[\frac{1}{[\varpi]}]} & {A^{\sharp +}[\frac{1}{t^\sharp}]} \\
	{R^+} & {R^+[\frac{1}{\varpi}]} & {A^+[\frac{1}{t}]}
	\arrow[from=1-1, to=1-2]
	\arrow[from=1-2, to=1-3]
	\arrow[from=2-1, to=1-1]
	\arrow[from=2-1, to=2-2]
	\arrow[two heads, from=2-2, to=1-2]
	\arrow[from=2-2, to=2-3]
	\arrow[two heads, from=2-3, to=1-3]
\end{tikzcd}\]

from which we see that the right vertical, map which is known as the sharp map, is surjective. Since the image of $[\varpi]$ was assumed to be invertible in ${A^{\sharp +}[\frac{1}{t}]}$, we have that the image of $\varpi$ is also invertible in ${A^+[\frac{1}{t}]}$.
\end{proof}

\begin{defn}[{\cite[Definition~10.3.1]{scholze2020berkeley}}]
    If $\mathcal{D}$ is a diamond, with presentation $\mathcal{D}=X/R$, then we set the underlying topological space of $\mathcal{D}$ to be the quotient $|X| / |R|$.
\end{defn}
\begin{rmk}
    The underlying topological space of a diamond is independent of the chosen presentation. \cite[Proposition~11.13]{etcoh}.
\end{rmk}

\begin{prop}[{\cite[Proposition~10.3.7]{scholze2020berkeley}}]
Let $X^\diamondsuit$ be the diamond associated to an analytic adic space $X$. Then we have that the underlying topological spaces of both the diamond and the analytic adic space are naturally isomorphic.
    
\end{prop}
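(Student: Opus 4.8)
\emph{Proof plan.} The strategy is to construct a natural continuous comparison map $\eta_X\colon |X^\diamondsuit|\to|X|$ and then verify it is a homeomorphism after base change along a perfectoid pro-étale cover, where everything is governed by the tilting equivalence. To build $\eta_X$: for $T=\Spa(A,A^+)\in\mathrm{Perf}$ and a pair $(T^\sharp,f\colon T^\sharp\to X)\in X^\diamondsuit(T)$, tilting supplies a natural homeomorphism $|T|\cong|T^\sharp|$, and composing with $|f|\colon|T^\sharp|\to|X|$ gives a continuous map $|T|\to|X|$ functorial in the pair. Since every point of the diamond $X^\diamondsuit$ is in the image of such a $T$, and the construction is compatible with the presentation $X^\diamondsuit=Y/R$ used to define $|X^\diamondsuit|$ (and is independent of it), this descends to a continuous map $\eta_X$, evidently natural in $X$. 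Being a homeomorphism is local on $X$ — $(-)^\diamondsuit$ and $|-|$ both carry open covers to open covers, and $\eta$ is compatible with open restriction — so we may assume $X=\Spa(R,R^+)$ is affinoid analytic, hence quasi-compact quasi-separated.

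Next I would introduce a perfectoid pro-étale cover. By the existence of perfectoid pro-étale covers of analytic adic spaces over $\mathbf{Z}_p$ (cf. \cite{etcoh}), choose a pro-étale surjection $p\colon\widetilde X\to X$ with $\widetilde X$ affinoid perfectoid, and set $R_{\widetilde X}:=\widetilde X\times_X\widetilde X$; since $\mathrm{pr}_1,\mathrm{pr}_2\colon R_{\widetilde X}\to\widetilde X$ are pro-étale, $R_{\widetilde X}$ is again affinoid perfectoid. For a perfectoid space $Y$ the diamond $Y^\diamondsuit$ is represented by its tilt $Y^\flat$, and because an untilt $T^\sharp$ of $T$ has the same pro-étale site as $T$, any map $T^\sharp\to X$ factors pro-étale-locally on $T$ through $\widetilde X$. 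Hence $X^\diamondsuit$ is the coequalizer of $R_{\widetilde X}^\flat\rightrightarrows\widetilde X^\flat$ in pro-étale sheaves, i.e. the diamond with presentation $\widetilde X^\flat/R_{\widetilde X}^\flat$ — a legitimate presentation, since $R_{\widetilde X}^\flat\hookrightarrow\widetilde X^\flat\times\widetilde X^\flat$ is the induced equivalence relation with pro-étale projections (tilts of pro-étale maps), so that $X^\diamondsuit$ is a diamond as recorded above. Passing to underlying spaces and using that tilting preserves underlying topological spaces, $|X^\diamondsuit|=|\widetilde X^\flat|/|R_{\widetilde X}^\flat|=|\widetilde X|/|R_{\widetilde X}|$.

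It then remains to identify $|X|$ with the same quotient. The map $|p|\colon|\widetilde X|\to|X|$ is surjective, and it is open because pro-étale morphisms of analytic adic spaces are open; moreover the canonical map $|R_{\widetilde X}|=|\widetilde X\times_X\widetilde X|\to|\widetilde X|\times_{|X|}|\widetilde X|$ is surjective (a standard fact for fibre products of adic spaces: the relevant completed tensor product of Huber pairs is nonzero over each pair of points with common image). Hence $|p|$ realizes $|X|$ as the topological quotient of $|\widetilde X|$ by the equivalence relation $|R_{\widetilde X}|$. Both $|\widetilde X|\to|X^\diamondsuit|$ and $|p|\colon|\widetilde X|\to|X|$ are therefore quotient maps of $|\widetilde X|$ by one and the same relation, and they are intertwined by $\eta_X$; consequently $\eta_X$ is a continuous open bijection, hence a homeomorphism, natural in $X$.

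The step I expect to be the real obstacle is precisely this topological descent in the last paragraph: showing that $|p|$ is a topological quotient map and that $|\widetilde X\times_X\widetilde X|$ surjects onto the fibre product $|\widetilde X|\times_{|X|}|\widetilde X|$. Once this — together with the tilting equivalence on topological spaces and the existence of the perfectoid pro-étale cover — is available, the remainder is formal bookkeeping.
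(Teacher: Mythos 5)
The paper itself does not prove this statement; it simply cites \cite[Proposition~10.3.7]{scholze2020berkeley}, so the comparison can only be against the argument in that reference. Your plan follows the standard proof there in all essentials: reduce to affinoid $X$, choose a pro-étale surjection $\widetilde X\to X$ from an affinoid perfectoid space with $R=\widetilde X\times_X\widetilde X$ again affinoid perfectoid, identify $X^\diamondsuit$ with $\widetilde X^\flat/R^\flat$ (using that untilts do not change the pro-étale site), invoke that tilting preserves underlying spaces to get $|X^\diamondsuit|=|\widetilde X|/|R|$, and finally show this equals $|X|$. Your identification of the two load-bearing topological facts --- that $|p|\colon|\widetilde X|\to|X|$ is a quotient map and that $|\widetilde X\times_X\widetilde X|\to|\widetilde X|\times_{|X|}|\widetilde X|$ is surjective --- is also correct.

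The one point I would push back on is your justification that $|p|$ is a quotient map \emph{because pro-\'etale morphisms of analytic adic spaces are open}. \'Etale morphisms of adic spaces are open, but a pro-\'etale morphism is a cofiltered limit of \'etale ones, and limits of open maps need not be open; pro-\'etale maps are in general only known to be \emph{generalizing}. The correct (and standard) tool here is the spectral-space quotient lemma: a surjective, quasi-compact, generalizing spectral map of spectral spaces is a topological quotient map, which applies because you have already reduced to $X$ affinoid and $\widetilde X$ affinoid perfectoid. Indeed the paper itself records exactly this tool, in a nearby argument, as Lemma \ref{5.17} (``a surjective map between affinoid adic spaces is a quotient map''). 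Replacing your openness claim with an appeal to that lemma closes the only genuine gap; the rest of the proposal is sound and matches the reference's approach.
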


We therefore have a natural projection map $w_S:|\mathcal{Y}_S|\approx|\mathcal{Y}_S^\diamondsuit|=|S\times\mathrm{Spd} \hspace{1pt}\mathcal{O}_E|\rightarrow |S|$, that is functorial in $S$.
\begin{defn}
    We define the \textit{v-topology} on the category Perfd of all perfectoid spaces, where $\{f_i:Y_i\rightarrow Y\}_{i\in I}$ is a cover if and only if for all quasi compact open subsets $V\subset Y$ we can find a finite subset $I_V\subset I$ and quasi compact opens $V_i\subset Y_i$ for $i\in I_V$ such that $V=\bigcup_{i\in I_V}f(V_i)$.
 \end{defn}

\begin{rmk}
     Note that since maps of affinoid analytic spaces are spectral, i.e quasi compact, any surjection $X\rightarrow Y$ of affinoid perfectoid spaces constitutes a v-cover.
\end{rmk}
\begin{lem}\label{5.17}
    If $f:X\rightarrow Y$ is a surjective map between affinoid adic spaces, then it is a quotient map in the topological sense.
\end{lem}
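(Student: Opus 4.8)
The plan is to use that the underlying space of an affinoid adic space is spectral and that $f$ induces a spectral map of spectral spaces --- which was noted above in the analytic case (the one we need) and is in any event standard in Huber's theory. I will then appeal to the topological description of closed subsets of a spectral space $Z$: a subset is closed precisely when it is pro-constructible (equivalently, closed in the constructible topology $Z^{\mathrm{cons}}$) and stable under specialisation. Since $f$ is continuous, it is enough to show that whenever $C\subseteq |Y|$ is a subset with $f^{-1}(C)$ closed in $|X|$, the set $C$ is both pro-constructible and specialisation-stable; it then follows that $C$ is closed, i.e.\ that $f$ is a quotient map.

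For pro-constructibility I will pass to the constructible topologies, under which $|X|$ and $|Y|$ become compact Hausdorff and $f$ stays continuous (a spectral map pulls back constructible sets to constructible sets). Then $f^{-1}(C)$, being closed in $|X|$, is closed hence quasi-compact in $|X|^{\mathrm{cons}}$; its image under $f\colon |X|^{\mathrm{cons}}\to|Y|^{\mathrm{cons}}$ is quasi-compact, hence closed in the Hausdorff space $|Y|^{\mathrm{cons}}$; and by surjectivity this image is exactly $C$. So $C$ is pro-constructible.

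For stability under specialisation, take $y\in C$ and a specialisation $y'$ of $y$, and pick (by surjectivity) $x\in|X|$ over $y$. The key point I must establish is that the specialisation $y\rightsquigarrow y'$ lifts to a specialisation $x\rightsquigarrow x'$ with $f(x')=y'$. Here I will use the valuation-theoretic description of the points of an affinoid adic space: the specialisation $y'$ of $y$ is obtained from the valuation attached to $y$ by suitably enlarging its valuation ring inside the residue field $K(y)$, and this valuation ring can be pushed along the extension $K(y)\hookrightarrow K(x)$ of (completed) residue fields induced by $x\mapsto y$ --- for instance via Chevalley's valuation extension theorem, or, in the perfectoid case, using that the completed residue fields are perfectoid (so rank one) and that surjectivity yields an extension of perfectoid affinoid fields --- producing a point $x'$ over $y'$ that specialises $x$, with the compatibility with the subrings of integral elements automatic. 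Granting this lift, $x\in f^{-1}(C)$, which is closed hence specialisation-stable, so $x'\in f^{-1}(C)$ and therefore $y'=f(x')\in C$, as needed.

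The main obstacle is exactly this specialisation-lifting step. For a general surjective spectral map of spectral spaces the statement is false --- a point that is clopen can be sent to a point that is closed but not open --- so one genuinely has to use that $X$ and $Y$ are adic spaces, exploiting the structure of their points as (continuous) valuations; the rest of the argument is a formal manipulation of the constructible topology.
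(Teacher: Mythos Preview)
The paper states this lemma without proof. Your overall plan --- spectral spaces, closed equals pro-constructible plus stable under specialisation, and the compact--Hausdorff argument in the constructible topology for the pro-constructible half --- is the standard one and is correct up to the lifting step.

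The gap is in the specialisation lift. First a sign slip: a vertical specialisation $y\rightsquigarrow y'$ corresponds to \emph{shrinking} the valuation ring inside $K(y)$, not enlarging it (coarsening gives the generalisation). Once this is corrected, your claim that ``compatibility with the subrings of integral elements [is] automatic'' fails: Chevalley (via the composite-valuation construction) produces a valuation $w$ on $K(x)$ refining $v_x$ and restricting to $v_{y'}$, so $R_w\subsetneq R_{v_x}$; but for $w$ to define a point of $\Spa(B,B^+)$ you need $B^+\subset R_w$, and the inclusion $B^+\subset R_{v_x}$ no longer gives this since $R_w$ is smaller. The perfectoid aside does not help either, since points of perfectoid spaces have arbitrary rank and the specialisations in question are precisely passages to higher rank. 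The fix used in the references the paper cites is to lift \emph{generalisations} instead: by surjectivity choose $x'$ over $y'$, then lift the generalisation $y$ of $y'$ to a generalisation $x$ of $x'$ by taking the convex hull in $\Gamma_{v_{x'}}$ of the convex subgroup of $\Gamma_{v_{y'}}$ cutting out $v_y$. Now the valuation ring enlarges, so $B^+\subset R_{v_{x'}}\subset R_{v_x}$ really is automatic, continuity is preserved under coarsening at analytic points, and the rest of your argument goes through verbatim.
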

\begin{prop}[{\cite[Proposition~II.1.3 \label{5.18}]{scholze2020berkeley}}] Let $S'\subset S$ be an affinoid open subset, then we have that the natural map $\mathcal{Y}_{S'}\rightarrow\mathcal{Y}_S$ is an open immersion of analytic adic spaces and the following diagram is cartesian,
\[\begin{tikzcd}
	{|\mathcal{Y}_{S'}|} & {|\mathcal{Y}_S|} \\
	{|S'|} & {|S|}
	\arrow[from=1-1, to=1-2]
	\arrow["{w_{S'}}"', from=1-1, to=2-1]
	\arrow["\lrcorner"{anchor=center, pos=0.125}, draw=none, from=1-1, to=2-2]
	\arrow["{w_S}", from=1-2, to=2-2]
	\arrow[from=2-1, to=2-2]
\end{tikzcd}\]
    
\end{prop}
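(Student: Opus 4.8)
The plan is to reduce to the case of a rational subset and there identify $\mathcal{Y}_{S'}$ with an explicit rational open subspace of $\mathcal{Y}_S$ cut out by Teichmüller lifts, after which the cartesian diagram drops out of the functoriality of $w_S$.

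First I would reduce to $S=\operatorname{Spa}(R,R^+)$ affinoid: $\mathcal{Y}_S$ is constructed affinoid-locally on $S$ and glued, and both assertions — being an open immersion, and the square of topological spaces being cartesian — are local on $S$. Next, since rational subsets form a basis for the topology of $S$ and a rational subset of $S'$ contained in a rational subset of $S$ is again rational in $S$, it suffices to treat $S'=\{x\in S:\ |f_1(x)|,\dots,|f_n(x)|\le|g(x)|\ne 0\}$ for suitable $f_i,g\in R$; the general affinoid open then follows by covering $S'$ by such rational subsets and gluing the resulting open immersions (this gluing is legitimate once the rational case is known, which also yields compatibility of $\mathcal{Y}_{(-)}$ with rational covers).

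For the rational case, rescaling so that $f_1,\dots,f_n,g\in R^+$ with $(f_1,\dots,f_n,g)R$ open, functoriality of $W_{\mathcal{O}_E}(-)$ applied to $R^+\to R'^+$ gives the natural morphism $\mathcal{Y}_{S'}\to\mathcal{Y}_S$, and the key claim is that it is an isomorphism onto the rational subspace $\mathcal{U}:=\mathcal{Y}_S\!\left(\tfrac{[f_1],\dots,[f_n]}{[g]}\right)$. I would prove this by comparing coordinate rings: $\mathcal{O}_{\mathcal{Y}_S}(\mathcal{U})$ is the completed rational localization of $W_{\mathcal{O}_E}(R^+)$ (with the $(\pi,[\varpi])$-adic topology, $[\varpi]$ inverted) along the $[f_i]/[g]$, while $\mathcal{O}_{\mathcal{Y}_{S'}}$ is built from $W_{\mathcal{O}_E}(R'^+)$ with $R'^+$ the $\varpi$-adic completion of the integral closure of $R^+[f_i/g]$ in $R^+[1/g]$. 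Multiplicativity of the Teichmüller map $[\,\cdot\,]\colon R^+\to W_{\mathcal{O}_E}(R^+)$ makes inverting $[g]$ match inverting $g$ (and $V([g])$ pull back to $V([g])$), and one checks that $W_{\mathcal{O}_E}(-)$ commutes with the $\varpi$-adic completed integral-closure localization producing $R'^+$; the cleanest way to package this is via the adjunction $\operatorname{Hom}(W_{\mathcal{O}_E}(-),\ -\ )\cong\operatorname{Hom}(-,(-)^{\flat})$ recorded before Proposition~\ref{5.11}, matching the universal properties of rational localizations on the two sides (equivalently, showing $\mathcal{Y}_{S'}$ and $\mathcal{U}$ represent the same functor on analytic adic spaces over $\mathcal{O}_E$). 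This identifies $\mathcal{Y}_{S'}\xrightarrow{\ \sim\ }\mathcal{U}$, giving the open immersion.

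For the cartesian square, $w_S\colon|\mathcal{Y}_S|\to|S|$ is functorial in $S$, so $|\mathcal{Y}_{S'}|\subseteq w_S^{-1}(|S'|)$; conversely a point of $|\mathcal{Y}_S|$ over $|S'|=\{|f_i|\le|g|\ne 0\}$ satisfies $|[f_i]|\le|[g]|\ne 0$ — since $w_S$ is induced, through $|\mathcal{Y}_S^{\diamondsuit}|=|S\times\operatorname{Spd}\mathcal{O}_E|$, by restricting valuations along $R^+\xrightarrow{[\cdot]}W_{\mathcal{O}_E}(R^+)\to\mathcal{O}_{\mathcal{Y}_S}$ — hence it lies in $\mathcal{U}=|\mathcal{Y}_{S'}|$. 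Thus $|\mathcal{Y}_{S'}|=w_S^{-1}(|S'|)$ as subsets of $|\mathcal{Y}_S|$, and because $|S'|\hookrightarrow|S|$ is an open immersion (so a monomorphism) the square is cartesian. I expect the main obstacle to be precisely the coordinate-ring identification in the preceding paragraph: making rigorous that $W_{\mathcal{O}_E}(-)$ commutes with the $\varpi$-adic completed rational localization defining $(R'^+ ,R'^+)$, so that the Teichmüller-lifted rational localization on $\mathcal{Y}_S$ reproduces $\mathcal{Y}_{S'}$ on the nose, integral $+$-structure included. An alternative that avoids this is to work with diamonds: by Proposition~\ref{5.11}, $\mathcal{Y}_S^{\diamondsuit}\cong S\times\operatorname{Spd}\mathcal{O}_E$, the open immersion $S'\hookrightarrow S$ induces $S'\times\operatorname{Spd}\mathcal{O}_E\hookrightarrow S\times\operatorname{Spd}\mathcal{O}_E$ identified with $w_S^{-1}(S')$, and one descends this back to adic spaces.
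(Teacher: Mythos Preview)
Your primary approach---identifying $\mathcal{Y}_{S'}$ with the rational open of $\mathcal{Y}_S$ cut out by Teichm\"uller lifts of the defining parameters of $S'$---is different from the paper's, and the obstacle you flag (that $W_{\mathcal{O}_E}(-)$ should commute with completed rational localization, including the integral $+$-structure) is real and not addressed by the adjunction alone. The paper sidesteps this computation entirely: it lets $Z\subset\mathcal{Y}_S$ be the open subspace on $w_S^{-1}(|S'|)$, gets a map $\mathcal{Y}_{S'}\to Z$ by functoriality, and checks it is an isomorphism \emph{after} base change along $\operatorname{Spa}\mathcal{O}_{E_\infty}\to\operatorname{Spa}\mathcal{O}_E$. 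Two points make this descent legitimate: the base change is a surjection of affinoids, hence a topological quotient map, so the homeomorphism on spaces descends; and the structure-sheaf maps $\mathcal{O}_Z\to p_*\mathcal{O}_{Z_{\mathcal{O}_{E_\infty}}}$ (and likewise for $\mathcal{Y}_{S'}$) are split injective---this is exactly the sousperfectoid input from Proposition~\ref{5.1}---so the sheaf isomorphism also descends. After base change both sides are perfectoid, and the isomorphism there follows from the tilting equivalence together with $\mathcal{Y}_S^{\diamondsuit}\cong S\times\operatorname{Spd}\mathcal{O}_E$. Your diamond alternative is thus close to the paper's endgame, but you are missing the mechanism for transporting an isomorphism of diamonds back to one of (non-perfectoid) adic spaces; the split injection of structure sheaves after base change to $\mathcal{O}_{E_\infty}$ is precisely that mechanism. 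What your direct approach would buy, if the coordinate-ring identification were nailed down, is an explicit description of $\mathcal{Y}_{S'}$ inside $\mathcal{Y}_S$ without invoking perfectoid descent; what the paper's route buys is that it never has to confront Witt vectors of a rational localization.
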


\begin{proof}
    Let $Z$ be the analytic space associated to the open set $|\mathcal{Y}_S|\times_{|S|}|S'|$, then by functoriality we get a map of adic spaces $\mathcal{Y}_{S'}\rightarrow Z$. To prove the proposition, we need to show that this map is an isomorphism. We claim that this can be done after base change to $\mathcal{O}_{E_\infty}$. \\
    
Indeed, as $\Spa \mathcal{O}_{E_\infty}\rightarrow\Spa O_E$ is a surjective map of affinoids it is a v-cover, which by lemma \ref{5.17} is a quotient map on topological spaces. It is well known that the topological notions of surjectivity, injectivity and the property of being an open map can be checked after base change along a surjective quotient map. Together, these mean that the underlying homeomorphism of topological spaces can be checked after the base change. \\

    We set $Z_{\mathcal{O}_{E_\infty}}:=Z\times_{\Spa  \mathcal{O}_E} \Spa \mathcal{O}_{E_\infty}$ and $\mathcal{Y}_{S_{\mathcal{O}_{E_\infty}}}:=\mathcal{Y}_{S}\times_{\Spa  \mathcal{O}_E} \Spa \mathcal{O}_{E_\infty}$ and denote by $p$ the respective projections. We have that the canonical morphism of sheaves $\mathcal{O}_Z\rightarrow p_*\mathcal{O}_{Z_{\mathcal{O}_{E_\infty}}}$ and $\mathcal{O}_{\mathcal{Y}_{S}}\rightarrow p_*\mathcal{O}_{\mathcal{Y}_{S_{\mathcal{O}_{E_\infty}}}}$ are split injective. Which imply that we can check the isomorphism of structure sheaves along the base change as well.\\

    The isomorphism on the level of the base change follows from the tilting equivalence and proposition \ref{5.11} after passing to diamonds.

\end{proof}

We can now generalize the construction of $\mathcal{Y}_S$ to an arbitrary perfectoid space $S\in$ Perf. Given a cover $\{S_i\}$ of $S$, one first constructs $\mathcal{Y}_{S_i}$. By Proposition \ref{5.18} we have that $\mathcal{Y}_{S_i\times_S S_j}\subset\mathcal{Y}_{S_i}$ is an open immersion, and these various inclusions satisfy the cocycle condition by functoriality. Thus we obtain an analytic adic space $\mathcal{Y}_S$ such that $\mathcal{Y}_S^\diamondsuit\cong S\times\mathrm{Spd}\hspace{1pt}\mathcal{O}_E$ and that $\mathcal{Y}_S\times_{\Spa\mathcal{O}_E} \Spa \mathcal{O}_{E_{\infty}}$ is perfectoid.

\begin{subsubsection}{The radius map.}\label{5.11} We briefly return to the case where $S$ is affinoid to introduce the radius map $\kappa:|\mathcal{Y}_S|\rightarrow [0,\infty)$, which measures the relative relationship between the values of the  global section $[\varpi]$ and $\pi$, at each point. In Proposition \ref{5.1} we showed that the space $\mathcal{Y}_S$ was analytic. This implies that each point $x\in\mathcal{Y}_S$ has a unique rank 1 generalization $(\eta_x)$, which we can assume to take values in the real numbers. The map,
\begin{align*}
\kappa:|\mathcal{Y}_S|&\longrightarrow [0,\infty)\subset \mathbb{R}\\
x&\longmapsto\frac{\mathrm{log}|[\varpi](\eta_x)|}{\mathrm{log}|[\pi](\eta_x)|}
\end{align*}
is well defined since equivalent rank 1 valuations have proportional logarithms. Notice that since $\eta_x$ is continuous, and $[\varpi]$ and $\pi$ are topologically nilpotent, we have that $0<|[\varpi](\eta_x)|<1,0\leq|[\pi](\eta_x)|< 1$. which implies that the fraction is always positive and well defined.\\

Furthermore, this map is continuous as the preimage of the bounded open interval $(a,a')\in \mathbb{R}$ with $a=\frac{m}{n},a'=\frac{m'}{n'}\in\mathbb{Q}_{\geq 0}$ is given by the following rational open subset

$$\mathcal{Y}_S \supset \mathcal{Y}_{S,[a,a']}:=\Big\{x\hspace{1pt}|\hspace{1pt}mn'\leq \frac{\mathrm{log}|[\varpi]^{nn'}(\eta_x)|}{\mathrm{log}|[\pi](\eta_x)|}\leq m'n\Big\}=\Big\{x\hspace{1pt}|\hspace{1pt}|[\pi]^{m'n}(\eta_x)|\leq|[\varpi]^{nn'}(\eta_x)| \leq |[\pi]^{mn'}(\eta_x)| \Big\}.$$

\end{subsubsection}
\begin{defn}For an arbitrary $S\in$ Perf, we define the analytic adic space $Y_S:=\mathcal{Y_S}\times_{\Spa\mathcal{O}_E}\Spa E$. 
\end{defn} 

\begin{rmk}
In the affinoid case, note that $\kappa(x)=0$ if and only if $|[\pi](\eta_x)| = 0$, which means that $\eta_x$ factors through $\mathbb{F}_q$, corresponding to the special point in $\Spa \mathcal{O}_E$. This implies the space $Y_S$ has the following equivalent presentations,
$$Y_S=\kappa^{-1}(0,\infty)=\mathcal{Y_S}\times_{\Spa\mathcal{O}_E}\Spa E=W_{\mathcal{O}_E}(R^+) \backslash V(\pi[\varpi])$$
\end{rmk}

\begin{defn}
    For affinoid $S=\Spa(R,R^+)\in $ Perf, the ring $W_{\mathcal{O}_E}(R^+)$ carries a Frobenius morphism
\begin{align*}
    \varphi:W_{\mathcal{O}_E}(R^+)&\xrightarrow{\cong} W_{\mathcal{O}_E}(R^+)\\
    \sum_{n=0}^\infty[a_n]\pi^n&\longmapsto \sum_{n=0}^\infty[a_n^q]\pi^n    
\end{align*}
which is a automorphism since $R^+$ is perfect. Using proposition \ref{5.18}, these morphisms glue to a morphism of $\mathcal{Y}_S$ and $Y_S$ over an arbitrary perfectoid space. 
\end{defn}
\begin{rmk}
    The action of $\varphi$ on $Y_S$ is free and totally discontinuous. To see this we can reduce to the affinoid case, where we have

    $$\kappa(\varphi(x))=\frac{\mathrm{log}|[\varpi^q](\eta_x)|}{\mathrm{log}|[\pi](\eta_x)|}=q\cdot\kappa(x)$$
\end{rmk}

\begin{defn}[{\cite[Definition~II.1.15]{fargues2024geometrizationlocallanglandscorrespondence}}]Given a perfectoid space $S\in$ Perf, we define the Fargues-Fontaine curve to be the quotient $$X_S:=Y_S/\varphi^{\mathbb{Z}}$$
\end{defn}

\begin{prop}[{\cite[Proposition~II.1.17]{fargues2024geometrizationlocallanglandscorrespondence}}]\label{5.24} We have a natural isomorphism of diamonds
$$Y_S^{\diamondsuit} \cong S\times \mathrm{Spd}\hspace{1pt}E $$
which descends to an isomorphism
$$X_S^\diamondsuit\cong S/\varphi_S^\mathbb{Z}\times\mathrm{Spd}\hspace{1pt}E$$
\end{prop}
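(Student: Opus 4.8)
The plan is to deduce both isomorphisms from the isomorphism $\mathcal{Y}_S^\diamondsuit\cong\mathrm{Spd}\,\mathcal{O}_E\times S$ established above, by first base changing along $\Spa E\to\Spa\mathcal{O}_E$ and then passing to the quotient by Frobenius. The first step is essentially formal. Since $Y_S=\mathcal{Y}_S\times_{\Spa\mathcal{O}_E}\Spa E$ and diamondification is defined pointwise by $X^\diamondsuit(T)=\{(T^\sharp,\,T^\sharp\to X)\}$, a $T$-point of $Y_S^\diamondsuit$ is an untilt $T^\sharp$ together with a map $T^\sharp\to\mathcal{Y}_S$ whose structure morphism to $\Spa\mathcal{O}_E$ factors through $\Spa E$ (equivalently, such that $\pi$ is invertible on $T^\sharp$); hence $Y_S^\diamondsuit\cong\mathcal{Y}_S^\diamondsuit\times_{\mathrm{Spd}\,\mathcal{O}_E}\mathrm{Spd}\,E$. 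Feeding in $\mathcal{Y}_S^\diamondsuit\cong\mathrm{Spd}\,\mathcal{O}_E\times S$ and using that the absolute product commutes with this fibre product over $\mathrm{Spd}\,\mathcal{O}_E$,
\[
Y_S^\diamondsuit\;\cong\;(\mathrm{Spd}\,\mathcal{O}_E\times S)\times_{\mathrm{Spd}\,\mathcal{O}_E}\mathrm{Spd}\,E\;\cong\;S\times\bigl(\mathrm{Spd}\,\mathcal{O}_E\times_{\mathrm{Spd}\,\mathcal{O}_E}\mathrm{Spd}\,E\bigr)\;\cong\;S\times\mathrm{Spd}\,E .
\]

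For the descent I would use that $X_S=Y_S/\varphi^{\mathbb Z}$ with $\varphi$ acting freely and totally discontinuously on $Y_S$; on an affinoid $S$ this is visible from $\kappa(\varphi(x))=q\,\kappa(x)$ on the radius map. Hence $Y_S\to X_S$ is a covering map, $X_S$ is again an analytic adic space, and any map $T^\sharp\to X_S$ lifts pro-étale locally on $T$ to a map $T^\sharp\to Y_S$, the lifts differing by a unique power of $\varphi$. This presents $X_S^\diamondsuit$ as the quotient $v$-sheaf $Y_S^\diamondsuit/\varphi^{\mathbb Z}$, so it only remains to compute the operator $\varphi$ induces on $Y_S^\diamondsuit\cong S\times\mathrm{Spd}\,E$.

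To identify that operator I would unwind the proof of $\mathcal{Y}_S^\diamondsuit\cong\mathrm{Spd}\,\mathcal{O}_E\times S$: a $T$-point of $\mathcal{Y}_S^\diamondsuit$, i.e.\ an untilt $T^\sharp=\Spa(A^\sharp,A^{\sharp+})$ with a ring map $W_{\mathcal{O}_E}(R^+)\to A^{\sharp+}$ sending $[\varpi]$ to a unit, corresponds to the pair consisting of the untilt $T^\sharp$ over $\mathcal{O}_E$ (the $\mathrm{Spd}\,\mathcal{O}_E$-coordinate) and the adjoint map $R^+\to(A^{\sharp+})^\flat\cong A^+$ under $W_{\mathcal{O}_E}(-)\dashv(-)^\flat$ (the $S$-coordinate). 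Precomposing $W_{\mathcal{O}_E}(R^+)\to A^{\sharp+}$ with the Frobenius $\varphi$ of $W_{\mathcal{O}_E}(R^+)$ does not alter the untilt, as $\varphi$ is $\mathcal{O}_E$-linear and in particular fixes $\pi$; and since $\varphi$ lifts the $q$-power Frobenius of $R^+$, under the adjunction it becomes precomposition of $R^+\to A^+$ with the relative Frobenius $\varphi_S$ of $S$. Thus $\varphi$ acts on $S\times\mathrm{Spd}\,E$ as $\varphi_S\times\mathrm{id}$, and passing to the quotient by $\varphi^{\mathbb Z}$ yields $X_S^\diamondsuit\cong(S/\varphi_S^{\mathbb Z})\times\mathrm{Spd}\,E$.

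The only genuinely substantive ingredient is the already-established isomorphism $\mathcal{Y}_S^\diamondsuit\cong\mathrm{Spd}\,\mathcal{O}_E\times S$. Beyond that, the main---though essentially routine---obstacle is the verification that forming the quotient by the free, totally discontinuous $\varphi^{\mathbb Z}$-action commutes with diamondification, together with the bookkeeping of the Frobenius through the $W_{\mathcal{O}_E}(-)\dashv(-)^\flat$ adjunction; a mild technical point to keep in mind is that $\mathrm{Spd}\,\mathcal{O}_E$ and the fibre product over it must be handled in the category of small $v$-sheaves rather than of diamonds, since $\Spa\mathcal{O}_E$ is not analytic.
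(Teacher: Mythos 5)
The paper states this proposition and cites Fargues--Scholze, Prop.~II.1.17, without giving its own proof; so there is no in-text proof to compare against. That said, your reconstruction is correct and follows the expected route. The reduction $Y_S^\diamondsuit\cong\mathcal{Y}_S^\diamondsuit\times_{\mathrm{Spd}\,\mathcal{O}_E}\mathrm{Spd}\,E$ is clean; one can also simply observe that $Y_S\hookrightarrow\mathcal{Y}_S$ is the open immersion onto the locus $\{\pi\neq 0\}$ and that diamondification commutes with open immersions, which sidesteps the worry about $\Spa\mathcal{O}_E$ not being analytic that you rightly flag. Your identification of the $\varphi$-action on $S\times\mathrm{Spd}\,E$ is the crux, and it is carried out correctly: since $\varphi=W_{\mathcal{O}_E}(F_{R^+})$ is $\mathcal{O}_E$-linear, precomposition by $\varphi$ does not change the untilt coordinate, while under the adjunction $W_{\mathcal{O}_E}(-)\dashv(-)^\flat$ (used in the paper's proof of $\mathcal{Y}_S^\diamondsuit\cong\mathrm{Spd}\,\mathcal{O}_E\times S$) it becomes precomposition of $R^+\to A^+$ by the $q$-power Frobenius, i.e.\ postcomposition of $T\to S$ by $\varphi_S$; hence $\varphi$ acts as $\varphi_S\times\mathrm{id}$. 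Finally, since $Y_S\to X_S$ is a $\underline{\mathbb{Z}}$-torsor for the free, totally discontinuous $\varphi$-action, passing to diamonds gives $X_S^\diamondsuit\cong Y_S^\diamondsuit/\varphi^{\mathbb Z}\cong (S/\varphi_S^{\mathbb Z})\times\mathrm{Spd}\,E$ as desired. One tiny terminological nit: the ring map $r\mapsto r^q$ on $R^+$ is the absolute $q$-power Frobenius (which agrees with the relative one over $\mathbb{F}_q$ since $\mathbb{F}_q$ is fixed by it), so calling it the ``relative Frobenius'' is harmless here but worth being explicit about.
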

Notice that the absolute Frobenius morphism on $X_S^\diamondsuit$ given by $\varphi_S\circ\varphi_{\mathrm{Spd}\hspace{1pt}E}$ is the identity on the underlying topological spaces, since it preserves all equivalence classes of valuations. This implies that we have the following chain of maps
$$|X_S|\cong |S/\varphi_S^\mathbb{Z}\times\mathrm{Spd}\hspace{1pt}E|\cong |S\times \mathrm{Spd}\hspace{1pt}E/\varphi_{\mathrm{Spd}\hspace{1pt}E}^\mathbb{Z}| \rightarrow |S|.$$
Moreover, this projection map is even a morphism of v-sites which we call
$$\tau:(X_S)_v\longrightarrow (S)_v$$
\begin{prop}[{\cite[Proposition~II.1.18]{fargues2024geometrizationlocallanglandscorrespondence}}]
Let $S\in$ Perf. The following objects are in a natural bijection
\begin{enumerate}
    \item Sections of $Y_S\rightarrow S$
    \item Maps $S\rightarrow \mathrm{Spd}\hspace{1pt}E$
\item Untilts $S^\sharp$ over $E$ of $S$.
\end{enumerate}
    Consider now the map,
    \begin{align*}
        \theta:W_{\mathcal{O}_E}(R^+)&\rightarrow (R^\sharp)^+\\
        \sum[r_i]\pi^i&\mapsto\sum r_i^\sharp \pi^i
        \end{align*}
Here we use the identification $(R^\sharp)^\flat\cong R$ together with the map $(-)^\sharp:R^\flat\rightarrow R$, which is the canonical projection under the identification of multiplicative monoids $$R^\flat\cong\varprojlim_{x\rightarrow x^p}R.$$
Therefore, given an untilt $S^\sharp$ of S, we have a natural map $S^\sharp\hookrightarrow Y_S$, which is a closed Cartier divisor. Moreover we have that the composite $S^\sharp\hookrightarrow Y_S\rightarrow X_S$ is also a closed Cartier divisor.
\end{prop}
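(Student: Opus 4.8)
The plan is to obtain the bijection $(1)\Leftrightarrow(2)\Leftrightarrow(3)$ almost formally from Proposition~\ref{5.24}, and to put the real work into upgrading an untilt to an honest closed Cartier divisor $S^\sharp\hookrightarrow Y_S$ through the map $\theta$.

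\emph{The bijections.} By definition $\mathrm{Spd}\,E=(\Spa E)^\diamondsuit$, and for $T\in\mathrm{Perf}$ the set $(\Spa E)^\diamondsuit(T)$ consists of pairs $(T^\sharp,T^\sharp\to\Spa E)$ with $T^\sharp$ an untilt of $T$; since an untilt which is an $E$-algebra carries a unique structure map to $\Spa E$, this is exactly the set of untilts of $T$ over $E$, so $(2)\Leftrightarrow(3)$ on taking $T=S$. Regarding $Y_S$ as a v-sheaf via $Y_S^\diamondsuit$, Proposition~\ref{5.24} gives $Y_S^\diamondsuit\cong S\times\mathrm{Spd}\,E$ over $S$, and a section of the first projection $S\times\mathrm{Spd}\,E\to S$ is precisely the graph of a morphism $S\to\mathrm{Spd}\,E$; this gives $(1)\Leftrightarrow(2)$, and unwinding Proposition~\ref{5.24} shows that the morphism attached to a section classifies the corresponding untilt, so the three constructions are mutually inverse.

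\emph{From an untilt to a closed Cartier divisor.} Take first $S=\Spa(R,R^+)$ affinoid and $S^\sharp=\Spa(R^\sharp,R^{\sharp+})$ an untilt over $E$, so $(R^\sharp)^\flat\cong R$ and we have the multiplicative sharp map $(-)^\sharp\colon R=(R^\sharp)^\flat\to R^\sharp$. Writing $\varpi^\sharp\in R^{\sharp+}$ for the image under the sharp map of a pseudo-uniformizer $\varpi$ of $R$, the map $\theta\colon W_{\mathcal{O}_E}(R^+)\to R^{\sharp+}$, $\sum[r_i]\pi^i\mapsto\sum r_i^\sharp\pi^i$, is a continuous $\mathcal{O}_E$-algebra homomorphism with $\theta([\varpi])=\varpi^\sharp$ and $\theta(\pi)=\pi$. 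I would establish: \emph{(a)} $\theta$ is surjective --- reduce to surjectivity modulo a power of $\varpi^\sharp$, which follows from the perfectoid tilting isomorphism $R^+/\varpi\cong R^{\sharp+}/\varpi^\sharp$, and bootstrap using $\varpi^\sharp$-adic completeness of $R^{\sharp+}$, $[\varpi]$-adic convergence in the complete ring $W_{\mathcal{O}_E}(R^+)$, and $\theta([\varpi])=\varpi^\sharp$; \emph{(b)} $\ker\theta=(\xi)$ for a distinguished element $\xi$ (primitive of degree $1$), which is a non-zero-divisor in $W_{\mathcal{O}_E}(R^+)$ --- the standard structural description of $\theta$, which I would cite from \cite{scholze2020berkeley} or \cite{fargues2024geometrizationlocallanglandscorrespondence}; \emph{(c)} the image of $\xi$ is a non-zero-divisor in each ring $R_n$ of Proposition~\ref{5.1} covering $\mathcal{Y}_S$, and $R_n/\xi R_n=R^\sharp$ for $n$ large (with $V(\xi)\cap\Spa(R_n,R_n^+)=\varnothing$ for $n$ small) --- here I would use the explicit Laurent-series presentation \eqref{1.00} of $R_n$ together with $\theta([\varpi])=\varpi^\sharp$ and $\theta(\pi)=\pi$ to identify $V(\xi)\cap\mathcal{Y}_S$ with $S^\sharp=\Spa(R^\sharp,R^{\sharp+})$, sitting at the single radius $\kappa(S^\sharp)\in(0,\infty)$. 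Since $\pi$ is invertible in $R^\sharp$, this subspace lies in $Y_S=\mathcal{Y}_S\setminus V(\pi)$. Items (a)--(c) exhibit $S^\sharp\hookrightarrow Y_S$ as a closed immersion cut out on the cover by the single non-zero-divisor $\xi$, i.e. a closed Cartier divisor. For general $S\in\mathrm{Perf}$, cover by affinoids: the construction of $\theta$ is functorial in $S$, so the local closed Cartier divisors agree on overlaps, and Proposition~\ref{5.18} lets them glue to a closed Cartier divisor $S^\sharp\hookrightarrow Y_S$, whose associated section of $Y_S^\diamondsuit\to S$ is the one furnished by the bijection.

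\emph{Descent to $X_S$, and the main obstacle.} For the composite $S^\sharp\hookrightarrow Y_S\to X_S=Y_S/\varphi^{\mathbb{Z}}$: the $\varphi$-action on $Y_S$ is free and totally discontinuous, so the translates $\varphi^n(S^\sharp)$, $n\in\mathbb{Z}$, are pairwise disjoint and $Y_S\to X_S$ restricts to an isomorphism from a $\varphi$-stable open neighbourhood of $\bigsqcup_n\varphi^n(S^\sharp)$ onto an open neighbourhood of the image of $S^\sharp$; as being a closed Cartier divisor is local on the target, it is preserved. The step I expect to be the main obstacle is \emph{(c)}: showing that $\xi$ remains a non-zero-divisor after the completed rational localisations that actually build the adic space $\mathcal{Y}_S$ --- rather than merely in $W_{\mathcal{O}_E}(R^+)$ --- and pinning down $V(\xi)\cap\mathcal{Y}_S$ precisely as $S^\sharp$ lying inside $Y_S$. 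This is where the explicit description of $\mathcal{Y}_S$ from Proposition~\ref{5.1} is indispensable; by contrast (a) and (b) are essentially Witt-vector bookkeeping combined with the tilting correspondence.
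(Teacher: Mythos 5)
The paper does not actually supply its own proof of this proposition --- it states the result (citing Fargues--Scholze II.1.18) and folds the construction of $\theta$ and the Cartier-divisor conclusion into the proposition statement itself, so there is no internal argument to compare against. Judged on its own, your sketch follows the standard Fargues--Scholze route, and the overall architecture --- the bijections via $Y_S^\diamondsuit\cong S\times\mathrm{Spd}\,E$, surjectivity of $\theta$ by reduction mod $\varpi^\sharp$ and bootstrapping, kernel generated by a degree-$1$ primitive element $\xi$, then tracking $\xi$ through the rational localisations $R_n$, and finally descending along $Y_S\to X_S$ --- is the right one. Your step \emph{(c)} is indeed where the real work lives, and it is good that you isolate it.

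Two points deserve tightening. First, the claim that $S^\sharp$ sits ``at the single radius $\kappa(S^\sharp)$'' is not correct in general: $\kappa$ restricted to $S^\sharp$ is a function on $|S^\sharp|\cong|S|$ that varies from point to point. What is true, and what you actually need, is that for $S$ affinoid this function takes values in a compact subinterval $[a,b]\subset(0,\infty)$. Second, and more substantively, the descent argument ``being a closed Cartier divisor is local on the target'' does not by itself deliver closedness of the image in $X_S$: local-on-the-target gives you that the image is \emph{locally} closed, but to conclude it is closed you must check that points of $X_S$ away from the image have neighbourhoods missing it, which is exactly the content in question. The standard fix is to show that $\bigsqcup_{n\in\mathbb{Z}}\varphi^n(S^\sharp)$ is closed in $Y_S$ --- here the boundedness of $\kappa|_{S^\sharp}$ pays off: on any quasi-compact piece $Y_{S,[q^m,q^{m+1}]}$ only finitely many translates $\varphi^n(S^\sharp)$ can meet it (since $\kappa(\varphi^n(S^\sharp))=q^n\kappa(S^\sharp)$), so the union is locally finite hence closed, and then the image in $X_S$ is closed because $Y_S\to X_S$ is a quotient map. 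Likewise, disjointness of the translates $\varphi^n(S^\sharp)$ follows not merely from total discontinuity of the action but from the fact that $|S^\sharp|\to|S|$ is a homeomorphism, so a coincidence of $S^\sharp$ and $\varphi^n(S^\sharp)$ at a point of $|S|$ would force a $\varphi^n$-fixed point, contradicting freeness; it is worth stating this explicitly since ``free and totally discontinuous'' alone does not a priori rule out a translate of a closed set intersecting the original.
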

\end{subsection}

\begin{subsection}{Vector bundles on the Curve}\label{5.2}
Fixing an affinoid perfectoid space $S=\Spa(R,R^+)$, we may use the projection map $p:Y_S\rightarrow X_S$ to study vector bundles on $X_S$. Note that we have an open cover of $Y_S$ given by,

$$Y_S=\bigcup Y_{S,[q^{-i},q^i]}$$

By the discussion in section \ref{5.11}, we see that 
$$\mathcal{O}_{Y_S}(Y_{S,[q^{-i},q^i]})=W_{\mathcal{O}_E}(R^+)\Big\langle\frac{\pi^{q^i}}{[\varpi]},\frac{[\varpi^{q^i}]}{\pi}\Big\rangle\Big[\frac{1}{[\varpi]}\Big].$$
Using this presentation we can show that
$$\mathcal{O}_{Y_S}(Y_{S,[q^{-i-1},q^{i+1}]})\rightarrow\mathcal{O}_{Y_S}(Y_{S,[q^{-i},q^i]})$$ 
is a continuous homomorphism of Banach $E$-algebras with dense image.\\

Consider now an vector bundle $\mathcal{E}$ on $X_S$, with pullback $\mathcal{E}|_{Y_S}:=\mathcal{F}$. Since $Y_{S,[q^{-i},q^i]}$ is affinoid we have that the cohomology groups
$$H^i(Y_{S,[q^{-i},q^i]}, \mathcal{F})=0$$ for $i>0$. Hence, the "\textit{topological Mittag-Leffler}" \cite[Remark~0.13.2.4]{EGA} condition implies that $R\Gamma(Y_S,\mathcal{F})=R\varprojlim R\Gamma(Y_{S,[q^{-i},q^i]},\mathcal{F})$. From which we conclude that $R\Gamma(Y_S,\mathcal{F})=\mathcal{F}(Y_S)[0]$.\\

Note now that since $Y_S$ is a $\mathbb{Z}$-torsor over $X_S$, we have that $\mathcal{E}\cong (p_*p^*\mathcal{E})^\mathbb{Z}$. Moreover this translates to an isomorphism of global sections,
\begin{align}\label{6}
    H^0(\mathcal{E})\xrightarrow{\cong}H^0((p_*p^*\mathcal{E})^\mathbb{Z})\cong H^0((p_*p^*\mathcal{E})))^\mathbb{Z}
\end{align}
Consider now the construction of $R\Gamma(X_S,\mathcal{E})$. We first find an injective resolution $\mathcal{E}\rightarrow\mathcal{I}^\bullet$, and then apply the global sections functor so that,
$$R\Gamma(X_S,\mathcal{E})=H^0(\mathcal{I}^1)\rightarrow H^0(\mathcal{I}^2)\rightarrow H^0(\mathcal{I}^3)\rightarrow\dots$$
We claim that $R\Gamma(X_S,\mathcal{E})=R\Gamma(\mathbb{Z},R\Gamma(Y_S,p^*\mathcal{E}))$, where   $R\Gamma(\mathbb{Z},(-))$ is the right derived functor of $\operatorname{Hom}_{E[t^{\pm 1}]}(E_{\mathrm{triv}},(-))$, and $E_\mathrm{triv}$ is the $E[t^{\pm 1}]$-module given the trivial action of $t$. This is the also the functor of $\mathbb{Z}$-invariants of $E$-vector spaces.\\

To this end, notice that $R\Gamma(Y_S,p^*\mathcal{E})$ is given by the complex $H^0(p_*p^*(\mathcal{I}^1))\rightarrow H^0(p_*p^*(\mathcal{I}^2))\rightarrow H^0(p_*p^*(\mathcal{I}^3))\rightarrow\dots$. We claim that each of the $H^0(p_*p^*(\mathcal{I}^1))$ are acyclic with respect to $R\Gamma(\mathbb{Z},(-))$. Indeed, 
\begin{align*}
    &\hspace{12pt}\operatorname{Ext}^i_{E[t^{\pm 1}]}(E_{\mathrm{triv}},H^0(p_*p^*(\mathcal{I})))\\
    &\cong \operatorname{Ext}^i_{E[t^{\pm 1}]}(E_{\mathrm{triv}},\prod_{i\in\mathbb{Z}}H^0(\mathcal{I}))\\
    &\cong\prod_{i\in\mathbb{Z}}\operatorname{Ext}^i_{E[t^{\pm 1}]}(E_{\mathrm{triv}},\bigoplus_n E_{\mathrm{triv}} )\\
    &\cong\bigoplus_n\prod_{i\in\mathbb{Z}}\operatorname{Ext}^i_{E[t^{\pm 1}]}(E_{\mathrm{triv}}, E_{\mathrm{triv}} )=0
\end{align*}
where the last equality follows from the fact a module $M$ over a PID $R$ is injective if and only if it is divisible.  \\

Together, this implies that $R\Gamma(\mathbb{Z},R\Gamma(Y_S,p^*\mathcal{E}))$ can be calculated by $H^0(p_*p^*(\mathcal{I}^1))^\mathbb{Z}\rightarrow H^0(p_*p^*(\mathcal{I}^2))^\mathbb{Z}\rightarrow H^0(p_*p^*(\mathcal{I}^3))^\mathbb{Z}\rightarrow\dots$, which along with the identity (\ref{6}) gives us the claim that $R\Gamma(X_S,\mathcal{E})=R\Gamma(\mathbb{Z},R\Gamma(Y_S,p^*\mathcal{E}))$.\\

Note now that we have a projective resolution of $E_{\mathrm{triv}}$ given by,
$$0\rightarrow E[t^{\pm 1}]\xrightarrow{t-1} E[t^{\pm 1}]\xrightarrow{t\rightarrow1} E_{\mathrm{triv}}\rightarrow 0.$$

Using the fact that $R\Gamma(Y_S,p^*\mathcal{E})=p^*\mathcal{E}(Y_S)[0]$, we can calculate $R\Gamma(\mathbb{Z},R\Gamma(Y_S,p^*\mathcal{E}))$ by
\begin{align*}
    \operatorname{Hom}_{E[t^{\pm 1}]}(E[t^{\pm 1}],H^0(Y_S,p^*\mathcal{E}))&\xrightarrow {(t-1)^*}\operatorname{Hom}_{E[t^{\pm 1}]}(E[t^{\pm 1}],H^0(Y_S,p^*\mathcal{E}))\\
    H^0(Y_S,p^*\mathcal{E})&\xrightarrow{\varphi-\mathrm{id}}H^0(Y_S,p^*\mathcal{E}).    
\end{align*}
In particular, since $R\Gamma(X_S,\mathcal{E})\cong [H^0(Y_S,p^*\mathcal{E})\xrightarrow{\varphi-\mathrm{id}}H^0(Y_S,p^*\mathcal{E})]$, we have that $H^i(X_S,\mathcal{E})=0$ for $i\geq2$.
\begin{subsubsection}{Vector bundles from isocrystals.}\label{5.2.1}
    We fix an algebraically closed field $k\hspace{1pt}|\mathbb{F}_q$, and denote by $\Breve{E}=W_{\mathcal{O}_E}(k)[\frac{1}{\pi}]$ the complete unramified extension of $E$ with residue field $k$. The Witt vector functor allows us to lift the Frobenius on $k$ to a Frobenius on $\Breve{E}$ which we call $\varphi_{\Breve{E}}$. If $S\in\mathrm{Perf}_k$, we have a map 
    $$Y_S\rightarrow \Spa(\Breve{E},\mathcal{O}_{\Breve{E}}).$$

    Note that any isomorphism of the trivial vector bundle $\mathcal{O}_{Y_S}^{\oplus n}$ that is twisted by $\varphi_S$ of the form
    \begin{align}\label{7}
        \xi:\mathcal{O}_{Y_S}^{\oplus n}\xrightarrow{\cong} (\varphi_{S})_*\mathcal{O}_{Y_S}^{\oplus n}
    \end{align} allows us to descend $\mathcal{O}_{Y_S}^{\oplus n}$ to $X_S$. \\

    We therefore consider the category of isocrystals, which are finite dimensional $\Breve{E}$-vector spaces with a $\varphi_{\Breve{E}}$ twisted isomorphism,
    $$\xi_{\Breve{E}}:\Breve{E}^{\oplus n}\xrightarrow{\cong}\varphi_{\Breve{E}}^*\Breve{E}^{\oplus n}$$
    The pairs $(\Breve{E}^{\oplus n},\xi_{\Breve{E}})$ are classified up to isomorphism as follows. We associate to a rational number $\lambda=\frac{s}{r}\in \mathbb{Q}$ such that $r>0$ and $(r,s)=1$, the $\Breve{E}$-vector space of dimension $r$ and the map defined by the matrix;

    \[\tilde\xi_\lambda=
 \begin{pmatrix}
    0&1\\
    &0&1\\
    &&&\ddots\\
    \pi^{-s}&&&&0&1
\end{pmatrix}\
 \in\mathrm{M}_{r\times r}(\Breve{E})\]   
precomposed with the diagonal Frobenius morphism. This defines a vector bundle on $\Spa(\Breve{E},\mathcal{O}_{\Breve{E}})$ along with a $\varphi_{\Spa(\Breve{E},\mathcal{O}_{\Breve{E}})}$ twisted isomorphism. Therefore, pulling this datum back to $Y_S$ gives exactly the data specified by (\ref{7}), allowing us to descend this to a vector bundle to $X_S$ which we call $\mathcal{O}(\lambda)$.

\begin{prop}[{\cite[Proposition~II.2.3]{fargues2024geometrizationlocallanglandscorrespondence}}]\label{5.26}
    Let $S^\sharp$ be an untilt over $E_\infty$ of a perfectoid space $S$. We have an exact sequence of $\mathcal{O}_{X_S}$-modules
    $$0\rightarrow\mathcal{O}_{X_S}\rightarrow\mathcal{O}_{X_S}(1)\rightarrow\mathcal{O}_{S^\sharp}\rightarrow0.$$
\end{prop}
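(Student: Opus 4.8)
The plan is to reduce to the affinoid case and there to write down an explicit injection $\mathcal{O}_{X_S}\hookrightarrow\mathcal{O}_{X_S}(1)$ whose zero locus is exactly the divisor $S^\sharp$. Assume first that $S=\Spa(R,R^+)$ is affinoid, so that the untilt corresponds, by Proposition~\ref{5.24} and the ensuing discussion of untilts and closed Cartier divisors, to a ring map $\theta\colon W_{\mathcal{O}_E}(R^+)\to (R^\sharp)^+$ whose kernel is generated by a primitive element $\xi$ of degree $1$; the hypothesis that $S^\sharp$ is defined over $E_\infty$ singles out a canonical such $\xi$, namely $\xi=\pi-[\varpi]$ where $\varpi\in R^+$ is the pseudouniformizer obtained by tilting the compatible system $(\pi^{1/p^n})_n$ in $(R^\sharp)^+$. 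The map $\theta$ realizes $S^\sharp$ as a closed Cartier divisor $V(\xi)\subset Y_S$, and its image $S^\sharp\hookrightarrow X_S=Y_S/\varphi^{\mathbb{Z}}$ pulls back along $p\colon Y_S\to X_S$ to the $\varphi$-orbit of $V(\xi)$, which on each annulus $Y_{S,[q^{-i},q^i]}$ is a finite sum $\sum_{|n|\le c(i)}\varphi^n(V(\xi))$ since $\kappa(\varphi^n x)=q^n\kappa(x)$.

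Next I would use the dictionary between line bundles on $X_S=Y_S/\varphi^{\mathbb{Z}}$ and $\varphi$-equivariant line bundles on $Y_S$, together with the computation $R\Gamma(X_S,\mathcal{E})=R\Gamma(\mathbb{Z},R\Gamma(Y_S,p^*\mathcal{E}))$ from Section~\ref{5.2}. By the definition of $\mathcal{O}(1)$ in \ref{5.2.1} its pullback $p^*\mathcal{O}_{X_S}(1)$ is canonically the trivial bundle $\mathcal{O}_{Y_S}$, equipped with the descent datum given by $\varphi$-semilinear multiplication by $\pi^{-1}$; consequently a map $\mathcal{O}_{X_S}\to\mathcal{O}_{X_S}(1)$ is the same as an element $t\in\Gamma(Y_S,\mathcal{O}_{Y_S})$ with $\varphi(t)=\pi t$. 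The crucial input is the existence of the period $t$: an element satisfying $\varphi(t)=\pi t$ whose divisor on $Y_S$ is exactly the $\varphi$-orbit of $V(\xi)$, with all zeros simple. For $E=\mathbb{Q}_p$ one takes $t=\log[\epsilon]$ for a compatible system $\epsilon$ of $p$-power roots of unity; in general one uses the Lubin–Tate period attached to the torsion data furnished by the untilt over $E_\infty$ (or, in equal characteristic, a suitable convergent product in the $\varphi^n(\xi)$), and I would recall or cite this construction. Granting $t$, it is a non-zero-divisor, because the coordinate rings of the annuli $Y_{S,[q^{-i},q^i]}$ are integral domains, so $\mathcal{O}_{X_S}\xrightarrow{\cdot t}\mathcal{O}_{X_S}(1)$ is injective with cokernel $\mathcal{O}_{X_S}(1)\otimes(\mathcal{O}_{X_S}/t\mathcal{O}_{X_S})=\mathcal{O}_{X_S}(1)|_{S^\sharp}$; and this restriction is canonically $\mathcal{O}_{S^\sharp}$ since $p^*\mathcal{O}_{X_S}(1)$ is the trivial bundle and $S^\sharp$ factors through $Y_S$. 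Identifying the resulting quotient map with $\theta$ yields the exact sequence $0\to\mathcal{O}_{X_S}\to\mathcal{O}_{X_S}(1)\to\mathcal{O}_{S^\sharp}\to0$.

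For a general perfectoid space $S\in\mathrm{Perf}$ I would conclude by gluing: the element $\xi$, the period $t$, and hence the entire short exact sequence are functorial in $S$ and compatible with the open immersions $\mathcal{Y}_{S'}\hookrightarrow\mathcal{Y}_S$ of Proposition~\ref{5.18}, so the sequences constructed over the members of an affinoid cover of $S$ descend to a single short exact sequence on $X_S$. I expect the construction of the period $t$ — with precisely the divisor given by the $\varphi$-orbit of $S^\sharp$ and the functional equation $\varphi(t)=\pi t$, uniformly in $S$ and for an arbitrary local field $E$ — to be the main obstacle; it is exactly here that the hypothesis that the untilt is defined over $E_\infty$ enters, and once $t$ is available the remainder is formal manipulation of Cartier divisors and the $Y_S/\varphi^{\mathbb{Z}}$ dictionary.
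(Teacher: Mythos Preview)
The paper does not actually prove this proposition: it is stated with a citation to \cite[Proposition~II.2.3]{fargues2024geometrizationlocallanglandscorrespondence} and no argument is given. So there is nothing in the paper to compare your proposal against directly. That said, your outline is essentially the standard argument one finds in the cited reference: identify maps $\mathcal{O}_{X_S}\to\mathcal{O}_{X_S}(1)$ with global sections $t\in H^0(Y_S,\mathcal{O}_{Y_S})$ satisfying $\varphi(t)=\pi t$, produce the Lubin--Tate period $t$ using the $E_\infty$-structure on the untilt, and check that its vanishing locus is exactly the $\varphi$-orbit of $S^\sharp$ with multiplicity one.

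There is one genuine gap in your justification. You claim that $t$ is a non-zero-divisor ``because the coordinate rings of the annuli $Y_{S,[q^{-i},q^i]}$ are integral domains''. This is false for general affinoid perfectoid $S=\Spa(R,R^+)$: the ring $R$ need not be a domain, and the rings $B_{R,[a,b]}$ inherit zero-divisors from $R$. The correct argument is that $\xi=\pi-[\varpi]$ is a non-zero-divisor in $W_{\mathcal{O}_E}(R^+)$ (its reduction modulo $\pi$ is the unit multiple $-[\varpi]$ of a pseudouniformizer, hence regular, and one lifts by $\pi$-adic completeness), so each $\varphi^n(\xi)$ is regular on the relevant annulus; the period $t$ differs from the finite product $\prod_{|n|\le c(i)}\varphi^n(\xi)$ by a unit on $Y_{S,[q^{-i},q^i]}$, and a finite product of regular elements is regular. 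Once you replace the integral-domain claim by this argument, the rest of your outline goes through and matches the approach of the cited source.
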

    
\end{subsubsection}
\end{subsection}

\section{Banach-Colmez Spaces}
In this section we would like to introduce the notion of Banach-Colmez spaces and discuss their various properties. Following this, we introduce the Fourier transform on Banach-Colmez spaces discussed in a recent paper by Anschütz and Le Bras \cite{anslebras}.

\begin{defn}
    We define the \textit{absolute Banach-Colmez space} attached to an isocrystal as defined in section \ref{5.2.1}, as the v-sheaf on Perf prescribed by the following assignment

    $$BC(\mathcal{O}(\lambda)):S\in \text{Perf}\longmapsto R\Gamma(X_S,\mathcal{O}(\lambda)).$$
\end{defn}
\begin{notation}
    We denote by $\underline{E}$ the v-sheaf on Perf prescribed by 
    $$T\rightarrow \mathrm{Hom}_{cont'}(|T|,E)$$
\end{notation}
\begin{prop}
    Let $\lambda\in \mathbb{Q}$, and $\mathcal{O}(\lambda)$ be the vector bundle on $X_S$ situated in degree zero as described by section \ref{5.2.1}. 

    \begin{enumerate}
        \item If $\lambda<0$ then, $H^0(X_S,\mathcal{O}(\lambda))=0$.
        \item If $\lambda=0$ the map 
        $$\underline{E}\rightarrow\mathcal{BC}(\mathcal{O})$$ is an isomorphism of pro-étale sheaves.
        \item If If $\lambda>0$, then $H^1(X_S,\mathcal{O}(\lambda))=0$ for all affinoid $S\in$ Perf.
        \item If $0<\lambda<[E:\mathbb{Q}_p]$ (or in the equal characteristic case, for all $\lambda>0)$, we have an isomorphism 
        $$\mathcal{BC}(\mathcal{O}(\lambda))\cong\mathrm{Spd}\hspace{1pt}k \llbracket x_1^{1/p^\infty},\dots,x_r^{1/p^\infty}\rrbracket$$
    where $\lambda=\frac{s}{r}$ in reduced form.
    \end{enumerate}
\end{prop}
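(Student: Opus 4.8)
The plan is to reduce all four assertions to properties of the $E$-linear operator $\varphi^{r}-\pi^{s}$ acting on the period ring $B_{S}:=\mathcal{O}_{Y_{S}}(Y_{S})$ of an affinoid $S=\Spa(R,R^{+})$. Since $\mathcal{BC}(\mathcal{O}(\lambda))$ and $\underline{E}$ are v-sheaves and the groups $H^{i}(X_{S},\mathcal{O}(\lambda))$ are computed affinoid-locally, it suffices to treat affinoid $S$. There $p^{*}\mathcal{O}(\lambda)=\mathcal{O}_{Y_{S}}^{\oplus r}$ is trivial (Section~\ref{5.2.1}), and the topological Mittag--Leffler argument of Section~\ref{5.2} gives $R\Gamma(Y_{S},\mathcal{O}_{Y_{S}}^{\oplus r})=B_{S}^{\oplus r}[0]$; feeding this into the identity $R\Gamma(X_{S},\mathcal{E})\cong R\Gamma(\mathbb{Z},R\Gamma(Y_{S},p^{*}\mathcal{E}))$ proved there, with the $\mathbb{Z}$-action now the $\varphi$-action twisted by the slope matrix $\tilde{\xi}_{\lambda}$, yields a two-term complex $R\Gamma(X_{S},\mathcal{O}(\lambda))\simeq[\,B_{S}^{\oplus r}\xrightarrow{\tilde{\xi}_{\lambda}\varphi-\mathrm{id}}B_{S}^{\oplus r}\,]$. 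Hence $H^{i}(X_{S},\mathcal{O}(\lambda))=0$ for $i\ge 2$, and the companion shape of $\tilde{\xi}_{\lambda}$ gives $H^{0}(X_{S},\mathcal{O}(\lambda))\cong\{x\in B_{S}:\varphi^{r}(x)=\pi^{s}x\}$ and $H^{1}(X_{S},\mathcal{O}(\lambda))\cong\operatorname{coker}(\varphi^{r}-\pi^{s})$. So (1) becomes injectivity of $\varphi^{r}-\pi^{s}$ for $s<0$, (3) its surjectivity for $s>0$, and (2), (4) the identification of its kernel as a functor in $S$.

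For (1): a nonzero element of the kernel stays nonzero after pullback along some geometric point $\Spa(C,\mathcal{O}_{C})\to S$, so I would reduce to $S=\Spa(C,\mathcal{O}_{C})$, where $\{x\in B_{C}:\varphi^{r}(x)=\pi^{s}x\}$ is $H^{0}(X_{C},\mathcal{O}(s/r))$ and vanishes for $s<0$ because the strictly negative graded pieces of the graded $E$-algebra $\bigoplus_{d\ge 0}B_{C}^{\varphi=\pi^{d}}$, whose $\mathrm{Proj}$ is $X_{C}$, are zero — which I quote from \cite{fargues2024geometrizationlocallanglandscorrespondence}. For (3): given $y\in B_{S}$, the series $x=-\sum_{n\ge 0}\pi^{-(n+1)s}\varphi^{nr}(y)$ formally solves $(\varphi^{r}-\pi^{s})(x)=y$, so the content is its convergence in the Fréchet algebra $B_{S}=\varprojlim_{i}\mathcal{O}_{Y_{S}}(Y_{S,[q^{-i},q^{i}]})$: using that $\varphi$ multiplies the radius map $\kappa$ by $q$ and the explicit Banach presentations $\mathcal{O}_{Y_{S}}(Y_{S,[c,d]})=W_{\mathcal{O}_{E}}(R^{+})\langle\pi^{c}/[\varpi],[\varpi^{d}]/\pi\rangle[1/[\varpi]]$ of Section~\ref{5.2}, one shows that after splitting $y$ into pieces holomorphic near the two ends $\kappa\to 0$ and $\kappa\to\infty$, the growth of $\pi^{-(n+1)s}$ for $s>0$ is dominated by the decay of $\varphi^{nr}(y)$ on each fixed annulus. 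These estimates are the principal analytic obstacle; alternatively, this being the $H^{1}$-vanishing for positive-slope bundles on the relative curve, I would cite it from \cite{fargues2024geometrizationlocallanglandscorrespondence}.

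For (2): $H^{0}(X_{S},\mathcal{O})=B_{S}^{\varphi=1}$, and refining the connected-component decomposition of the spectral space $|S|$ by a finite clopen cover reduces this to the point case $B_{C}^{\varphi=1}=E$, identifying $B_{S}^{\varphi=1}$ with the locally constant functions $\operatorname{Hom}_{\mathrm{cont}'}(|S|,E)=\underline{E}(S)$; together with $H^{1}(X_{S},\mathcal{O})=0$ (the boundary case $s=0$, handled as in (3) after the same end-decomposition, or quoted) and $H^{\ge 2}=0$ from the first step, this gives $R\Gamma(X_{S},\mathcal{O})=\underline{E}(S)[0]$, so the natural map $\underline{E}\to\mathcal{BC}(\mathcal{O})$ is an isomorphism on a basis and hence of pro-étale sheaves.

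Finally, for (4): by (3) we have $\mathcal{BC}(\mathcal{O}(\lambda))=\underline{H^{0}(X_{-},\mathcal{O}(\lambda))}$, and it remains to represent the v-sheaf $S\mapsto\{x\in B_{S}:\varphi^{r}(x)=\pi^{s}x\}$ by $\mathrm{Spd}\,k\llbracket x_{1}^{1/p^{\infty}},\dots,x_{r}^{1/p^{\infty}}\rrbracket$. I would follow Fargues--Fontaine's analysis of positive-slope Banach--Colmez spaces: the hypothesis $0<\lambda<[E:\mathbb{Q}_{p}]$ is exactly the regime in which this eigenspace is ``bounded'', i.e.\ on each affinoid it carries a natural $\mathcal{O}_{E}$-lattice of topologically nilpotent sections, whereas for $\lambda\ge[E:\mathbb{Q}_{p}]$ the fundamental exact sequence of Proposition~\ref{5.26} forces an unbounded quotient $\mathbb{G}_{a}$, as already happens for $\mathcal{O}(1)$ over $\mathbb{Q}_{p}$. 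Via the crystalline period map this lattice amounts to the datum of an untilt of $S$ together with a point of the universal cover of a formal $\pi$-divisible $\mathcal{O}_{E}$-module of the relevant dimension, and such a universal cover is pro-represented by the perfection of a formal disc, which yields the stated isomorphism. Carrying out this identification — in particular pinning down the dimension as $r$ and checking the v-descent — is the hardest part of the proof.
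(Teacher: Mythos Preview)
Your reduction to the two-term complex $[B_S^{\oplus r}\xrightarrow{\tilde\xi_\lambda\varphi-\mathrm{id}}B_S^{\oplus r}]$ is correct and is the skeleton the paper uses too, though the paper first passes to the degree-$r$ unramified extension of $E$ to reduce to integer slope $\lambda=n$, and computes on the finite \v{C}ech cover $Y_{S,[1,q]}\cup Y_{S,[1/q,1]}$ rather than on all of $B_S$; this localises the estimates to a single bounded annulus. For (1) and (2) the paper takes a genuinely different route: it uses the fundamental sequence $0\to\mathcal{O}\to\mathcal{O}(1)\to\mathcal{O}_{S^\sharp}\to 0$ of Proposition~\ref{5.26} and its twist by $\mathcal{O}(-1)$, reading off $H^0(X_S,\mathcal{O})\cong\underline{E}(S)$ as the kernel of the Lubin--Tate logarithm rather than computing $B_S^{\varphi=1}$ directly, and deducing (1) from the injectivity of $\underline{E}\hookrightarrow(\mathbb{A}^1_{S^\sharp})^{\diamondsuit}$ checked on geometric points rather than via the graded ring.

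There is a genuine gap in your (3). The single series $x=-\sum_{n\ge 0}\pi^{-(n+1)s}\varphi^{nr}(y)$ does \emph{not} converge on both halves of your end-decomposition: for a piece of $y$ holomorphic toward $\kappa\to 0$ (say $y\in B_{R,[0,1]}[\tfrac{1}{\pi}]$), the iterates $\varphi^{nr}(y)$ remain bounded on any fixed annulus while $\pi^{-(n+1)s}$ blows up. One really needs \emph{two} different series, and the paper pairs a $\varphi^{-1}$-series $g(f)=\sum_{k\ge 1}\pi^{(k-1)s}\varphi^{-k}(f)$, carrying \emph{positive} powers of $\pi^s$, with that piece; your formula is only the inverse on the piece in $[\varpi]B_{R,[1,\infty]}$. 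Relatedly, your assertion in (2) that $H^1(X_S,\mathcal{O})=0$ for affinoid $S$ as the ``boundary case $s=0$'' of (3) is false: neither series converges when $s=0$, and $H^1(X_S,\mathcal{O})$ does not vanish before pro-\'etale sheafification. This does not damage (2), since $\mathcal{BC}(\mathcal{O})=\tau_*\mathcal{O}$ is only the $H^0$-sheaf, but the sentence ``$R\Gamma(X_S,\mathcal{O})=\underline{E}(S)[0]$'' is incorrect as written.
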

\begin{proof}
    We note that we can reduce to the case $\lambda=n\in \mathbb{Z}$, by replacing $E$ with its unique unramified extension of degree $r$.\\
    
    Note that we have a Čech cover of $X_S$ given by $Y_{S,[1/q,1]}\cup Y_{S,[1,q]}$ which gives us the following Čech complex,
    \begin{align*}
        0\rightarrow\mathcal{O}_{Y_S}({Y_{S,[1/q,1]}})\oplus\mathcal{O}_{Y_S}({Y_{S,[1,q]}})\xrightarrow{res^{[1/q,1]}_{[1,1]}-res^{[1,q]}_{[1,1]}}\mathcal{O}_{Y_S}({Y_{S,[1,1]}})
    \end{align*}
    Through the identification $\mathcal{O}_{Y_{S,[1/q,1]}}$ with $\mathcal{O}_{Y_{S,[1,q]}}$ using the descent isomorphism $\pi^{-s}\varphi$, this is quasi-isomorphic to
    \begin{align*}
        0\rightarrow\mathcal{O}_{Y_S}({Y_{S,[1,q]}})\xrightarrow{ res^{[1,q]}_{[1,1]}\circ(\pi^{-n}\varphi-\mathrm{id})}\mathcal{O}_{Y_S}({Y_{S,[1,1]}})\\
        0\rightarrow\mathcal{O}_{Y_S}({Y_{S,[1,q]}})\xrightarrow{ res^{[1,q]}_{[1,1]}\circ(\varphi-\pi^{n})}\mathcal{O}_{Y_S}({Y_{S,[1,1]}})
    \end{align*}
 We let $B_{R,[a,b]}:=\mathcal{O}_{Y_S}({Y_{S,[a,b]}})$, and note that we have a canonical injection $B_{R,[1,q]}\hookrightarrow B_{R,[1,1]}$. Which allows us to rewrite the above as
 $$0\rightarrow B_{R,[1,q]}\xrightarrow{ \varphi-\pi^{n}}B_{R,[1,1]}$$
 Therefore to see part (3) of the proposition, it suffices to show that this map is surjective. To this end, note that an element of $B_{R,[1,1]}$ can be written as the sum of an element in $B_{R,[0,1]}[\frac{1}{\pi}]$ and of an element in $[\varpi]B_{R,[1,\infty]}$. Now  one can check that $g:B_{R,[0,1]}[\frac{1}{\pi}]\rightarrow B_{R,[0,q]}$, given by $$g(f) =\varphi^{-1}(f)+\pi^n\varphi^{-2}(f)+\pi^{2n}\varphi^{-3}(f)+\dots$$ is a well defined map due to convergence, and furthermore that it acts as an inverse to $\varphi-\pi^{n}$. Similarly the function $g':[\varpi]B_{R,[1,\infty]}\rightarrow B_{R,[0,q]}$ given by $$g'(h)=-\pi^nh-\pi^{2n}\varphi(h)-\pi^{3n}\varphi^2(h)\dots$$ also acts as a well defined inverse. This suffices to show the claim that the map in consideration was surjective. \\

 Having proved part (3), we now move on to part (4). In the equal characteristic case, we have that $Y_S$ is isomorphic to the punctured adic open disc. In \cite[example~4.4]{hubner2024adic} it is shown that this implies that the global sections have the form, $$\Gamma(Y_S,\mathcal{O}_{Y_S})=\Big\{\sum_{i\geq0}^{\infty}a_i\pi^i|\hspace{3pt}|a_i|\cdot\rho^i\rightarrow0 \hspace{3pt}\text{for all}\hspace{2pt} \rho \in (0,1)\Big\}.$$ Through the descent isomorphism we have that $$H^0(X_S,\mathcal{O}_{X_S}(n))=H^0(Y_S,\mathcal{O}_{Y_S})^{\varphi=\pi^n}$$
 
  which in terms of the power series representation, becomes the requirement $\varphi\left(r_i\right)=r_{i+n}$. Therefore, we are allowed to freely choose the first $n$ coefficients of the power series, with the rest already being determined by such a choice. Furthermore, the decay condition implies that the choices must be topologically nilpotent, giving us the required isomorphism.\\

 The mixed characteristic case is proved using \cite[Theorem~A, Proposition~3.1.3(iii)]{moduli-p}, however we do not show it here.\\

 For part (2), recall that in proposition \ref{5.26} we introduced the exact sequence of $\mathcal{O}_{X_S}$-modules
 \begin{align}\label{8}
     0\rightarrow\mathcal{O}_{X_S}\rightarrow\mathcal{O}_{X_S}(1)\rightarrow\mathcal{O}_{S^\sharp}\rightarrow0
 \end{align}
 for which the associated long exact sequence simplifies by part (3) to the following
 $$0\rightarrow H^0(X_S,\mathcal{O}_{X_S})\rightarrow H^0(X_S,\mathcal{O}_{X_S}(1))\rightarrow H^0(X_S,\mathcal{O}_{S^\sharp})\rightarrow H^1(X_S,\mathcal{O}_{X_S})\rightarrow 0.$$
 Note that for $S=\Spa(R,R^+)$, we have that $H^0(X_S,\mathcal{O}_{S^\sharp})=R^\sharp$. Through  \cite[proposition~II.2.3]{fargues2024geometrizationlocallanglandscorrespondence}, we have that the map $H^0(X_S,\mathcal{O}_{X_S}(1))\rightarrow R^\sharp$, corresponds to the logarithm map from Lubin-Tate theory. In this situation we know that the kernel of this morphism is given by $\underline{E}$.\\

 For part (1), we twist the exact sequence in \ref{8}, to arrive at
 $$0\rightarrow\mathcal{O}_{X_S}(-1)\rightarrow\mathcal{O}_{X_S}\rightarrow\mathcal{O}_{S^\sharp}\rightarrow0$$
 For which the associated long exact sequence is,
 $$0\rightarrow H^0(X_S,\mathcal{O}_{X_S}(-1))\rightarrow H^0(X_S,\mathcal{O}_{X_S})\rightarrow H^0(X_S,\mathcal{O}_{S^\sharp})\rightarrow H^1(X_S,\mathcal{O}_{X_S}(-1))\rightarrow 0.$$
 which, by functoriality gives the following sequence of sheaves on  $\text{Perf}_S$,
 $$0\rightarrow\mathcal{BC}(\mathcal{O}(-1))|_S\rightarrow\underline{E}\rightarrow(\mathbb{A}^1_{S^\sharp})^{\diamondsuit}\rightarrow\mathcal{BC}(\mathcal{O}(-1)[1])|_S\rightarrow0 $$
 We now claim that the map $\underline{E}\rightarrow(\mathbb{A}^1_{S^\sharp})^{\diamondsuit}$ is injective. Explicitly, for $T \in \text{Perf}_S$ we have that the inclusion $T^\sharp\hookrightarrow X_T$ realizes $T^\sharp$ as a Cartier divisor on $X_T$, and therefore, the map is given by the restriction of a global section in $H^0(X_T,\mathcal{O}_{X_T})$ to a global section in $H^0(T^\sharp,\mathcal{O}_{T^\sharp})$.\\

Since it suffices to check that this map is injective on stalks, we are  reduced to checking the claim over points of $S$. These are parametrized by adic spectra of the form $\Spa(C,\mathcal{O}_C)$, where $C$ is non-archimedean field, with topology induced by the valuation ring $\mathcal{O}_C$. The topology on this space is a that of a totally ordered chain of specializations of points. Note however, that the topology on $E$ is totally disconnected, this implies that 
$$\underline{E}(\Spa(C,\mathcal{O}_C)=\mathrm{Hom}_{cont'}(|\Spa(C,\mathcal{O}_C)|,E)=E$$

proving the injectivity claim.

\end{proof}
\begin{subsection}{Fourier Transform on Banach-Colmez spaces}
    To build up the theory of the Fourier transform on the Fargues-Fontaine curve, we require a classification result for vector bundles on the curve over a complete algebraically closed non-archimedean field over $\mathbb{F}_q$. Using this we define the slope of a vector bundle, which helps us categorize the kinds of behaviors we expect to see.

    \begin{prop}[{\cite[Proposition~II.2.10]{fargues2024geometrizationlocallanglandscorrespondence}}] Let $C$ be a complete algebraically closed non-archimedean field over $\mathbb{F}_q$, and by abuse of notation we also denote by $C$ the adic spectrum $ \Spa(C,\mathcal{O}_C)$. We have an isomorphism 
    \begin{align*}
        \mathbb{Z}&\xrightarrow{\cong}\mathrm{Pic}(X_C)\\
        n&\mapsto\mathcal{O}_{X_C}(n)
    \end{align*}
    \end{prop}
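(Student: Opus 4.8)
The plan is to verify that $n\mapsto\mathcal{O}_{X_C}(n)$ is a well-defined group homomorphism, that it is injective by a cohomology computation, and that it is surjective by a dévissage on line bundles reducing to the fundamental exact sequence of Proposition~\ref{5.26}. That it is a homomorphism is immediate: by the construction in section~\ref{5.2.1}, $\mathcal{O}_{X_C}(n)$ descends from the isocrystal whose $\varphi$-twisted isomorphism is $\pi^{-n}$ times the standard one, so tensoring isocrystals gives $\mathcal{O}_{X_C}(n)\otimes\mathcal{O}_{X_C}(m)\cong\mathcal{O}_{X_C}(n+m)$ and $\mathcal{O}_{X_C}(0)=\mathcal{O}_{X_C}$; in particular each $\mathcal{O}_{X_C}(n)$ is a line bundle. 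For injectivity, suppose $\mathcal{O}_{X_C}(n)\cong\mathcal{O}_{X_C}$ with $n\neq0$; taking duals also gives $\mathcal{O}_{X_C}(-n)\cong\mathcal{O}_{X_C}$, so we may assume $n>0$. Then part~(1) of the preceding proposition gives $H^0(X_C,\mathcal{O}_{X_C}(-n))=0$, whereas part~(2) gives $H^0(X_C,\mathcal{O}_{X_C})=\underline{E}(\Spa(C,\mathcal{O}_C))=E\neq0$; this contradiction proves injectivity.

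For surjectivity, let $\mathcal{L}$ be a line bundle on $X_C$. I would rely on two facts valid for $C$ algebraically closed. First, every closed point $x\in X_C$ has degree one, being cut out by an untilt, so twisting Proposition~\ref{5.26} by $\mathcal{O}_{X_C}(-1)$ yields an exact sequence
\[
0\longrightarrow\mathcal{O}_{X_C}(-1)\longrightarrow\mathcal{O}_{X_C}\longrightarrow\mathcal{O}_{x}\longrightarrow0,
\]
with $\mathcal{O}_x$ the skyscraper at $x$; in particular $\mathcal{O}_{X_C}(x)\cong\mathcal{O}_{X_C}(1)$. Second, $H^1(X_C,\mathcal{O}_{X_C})=0$: by the computation of section~\ref{5.2}, $R\Gamma(X_C,\mathcal{O}_{X_C})\cong\bigl[H^0(Y_C,\mathcal{O}_{Y_C})\xrightarrow{\varphi-\operatorname{id}}H^0(Y_C,\mathcal{O}_{Y_C})\bigr]$, and $\varphi-\operatorname{id}$ is surjective on $H^0(Y_C,\mathcal{O}_{Y_C})$ when $C$ is algebraically closed. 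Granting these, the argument runs as follows. I would first produce an $n$ with $H^0\bigl(X_C,\mathcal{L}\otimes\mathcal{O}_{X_C}(n)\bigr)\neq0$ — e.g.\ by restricting $\mathcal{L}$ to the affinoid $X_C\smallsetminus\{x\}$, whose Picard group is trivial, to obtain a meromorphic section of $\mathcal{L}$ with poles only along $x$, and then clearing the pole using $\mathcal{O}_{X_C}(x)\cong\mathcal{O}_{X_C}(1)$. A nonzero section $\mathcal{O}_{X_C}\hookrightarrow\mathcal{L}(n)$ (injective as $X_C$ is integral) then gives $0\to\mathcal{O}_{X_C}\to\mathcal{L}(n)\to\mathcal{Q}\to0$ with $\mathcal{Q}$ torsion coherent of finite length $d$, supported at finitely many closed points. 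Filtering $\mathcal{Q}$ by skyscrapers and inducting on $d$ — peeling off one point at a time and using that the \emph{locally free} extensions of $\mathcal{O}_x$ by $\mathcal{O}_{X_C}(k)$ are all isomorphic to $\mathcal{O}_{X_C}(k+1)$, with $H^1(X_C,\mathcal{O}_{X_C})=0$ pinning down the relevant $\operatorname{Ext}^1$ — one concludes $\mathcal{L}(n)\cong\mathcal{O}_{X_C}(d)$, hence $\mathcal{L}\cong\mathcal{O}_{X_C}(d-n)$.

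The hard part will be the step producing a twist of $\mathcal{L}$ with a nonzero global section: this amounts to the ampleness of $\mathcal{O}_{X_C}(1)$ on $X_C$ (equivalently, affinoidness of $X_C\smallsetminus\{x\}$ together with triviality of its Picard group), which is where the genuine input of Fargues--Fontaine theory is needed; alternatively one can simply transport the classical fact $\mathrm{Pic}\cong\mathbb{Z}$ for the schematic Fargues--Fontaine curve along the GAGA-type equivalence of vector bundles with the adic curve $X_C$. The homogeneity statement used in the induction — deciding which classes in $\operatorname{Ext}^1_{X_C}(\mathcal{O}_x,\mathcal{O}_{X_C}(k))$ correspond to locally free sheaves — also requires a short separate check.
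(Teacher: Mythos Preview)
The paper does not give its own proof of this proposition: it is stated with a citation to \cite[Proposition~II.2.10]{fargues2024geometrizationlocallanglandscorrespondence} and then immediately used to define the degree map. So there is nothing to compare your argument against here.

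On its own merits, your outline is the standard one and is honest about where the real content lies. The homomorphism and injectivity steps are fine and use only what the paper has already established. For surjectivity you correctly isolate the two genuine inputs: that closed points of $X_C$ are degree-one Cartier divisors (classifying untilts, valid since $C$ is algebraically closed), and the production of a nonzero section of some twist $\mathcal{L}(n)$. You are right that the latter is the crux: it is essentially the statement that $\mathcal{O}_{X_C}(1)$ is ample, equivalently that the complement of a closed point is affinoid with trivial Picard group, and this is where the actual work in Fargues--Fontaine theory sits. Your alternative of transporting the result across the GAGA equivalence with the schematic curve is exactly how the cited reference proceeds, so invoking that is not circular but simply deferring to the source. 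The remaining d\'evissage via skyscraper filtrations is routine once those inputs are granted, though your remark that the identification of locally free extensions in $\operatorname{Ext}^1_{X_C}(\mathcal{O}_x,\mathcal{O}_{X_C}(k))$ needs a separate check is well taken.
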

    We call the inverse of this isomorphism the degree map. Thus, this proposition allows us to define the degree of a vector bundle $\mathcal{E}$ on $X_C$ as
    $$\mathrm{deg}(\mathcal{E})=\mathrm{deg}(\mathrm{det}(\mathcal{E}))\in \mathbb{Z}.$$
    Here, we denote by $\mathrm{det}(\mathcal{E})$ the determinant of the vector bundle $\mathcal{E}$, which is by construction a rank one vector bundle on $X_C$. Combing this with the obvious notion of the rank of a vector bundle $\mathrm{rk}(\mathcal{E})$, we define the slope $\mu(E)$ to be 
    $$\mu(\mathcal{E}):=\frac{\mathrm{deg}(\mathcal{E)}}{\mathrm{rk}(\mathcal{E)}}\in \mathbb{Q}$$
    \begin{defn}
        Let $\mathcal{E}$ be a vector bundle on $X_S$. We say that $\mathcal{E}$ has \textit{only positive} (resp. \textit{only negative} or \textit{only non-negative}) slopes if its pullback along any $X_{\Spa(C,C^+)}\rightarrow X_S$ has a positive slope (resp. negative or non-negative slope). Here, $\Spa(C,C^+)\rightarrow S$ is a geometric point of $S$. 
    \end{defn}
\end{subsection}
\begin{subsubsection}{Stacks in $\underline{E}$-vector spaces}
    \begin{defn}
        A Picard groupoid is a symmetric monoidal category in which all morphisms are invertible and such that the semigroup of isomorphism classes of objects forms a group. 
    \end{defn}
    
\begin{ex}
    The category whose objects are line bundles on a scheme, and morphisms are isomorphisms of line bundles form a Picard groupoid under the tensor product. Here the dual line bundle, acts as the inverse under the tensor product. 
\end{ex}
\begin{ex}\label{ex1}
    A commutative group can be viewed as a Picard groupoid, where the category has one object, and the morphisms correspond to group elements. Here the symmetric monoidal structure on the morphisms is given by group composition. 
\end{ex}
    More generally one can consider sheaves of Picard groupoids on a site $\mathcal{S}$. Through example \ref{ex1}, a sheaf of rings $\mathcal{A}$ on $\mathcal{S}$ can be viewed as a sheaf of Picard groupoids, which naturally has the structure of a commutative algebra object in the category. We can therefore make sense of $\mathcal{A}$-module objects in the category of Picard sheaves over $\mathcal{S}$.

\begin{prop}[{\cite[3.2]{anslebras}}]\label{6.9}
The category of $\mathcal{A}$-module objects in the category of sheaves of Picard groupoids on $\mathcal{S}$, is equivalent to the derived category of sheaves of $\mathcal{A}$-modules having non-zero cohomology sheaves only in degrees $-1,0$.
\end{prop}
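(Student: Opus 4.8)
The plan is to reduce to Deligne's description of strictly commutative Picard stacks and then carry the $\mathcal{A}$-action along the resulting equivalence. Recall that to a complex $K=[K^{-1}\xrightarrow{d}K^0]$ of abelian sheaves on $\mathcal{S}$ concentrated in degrees $-1$ and $0$ one associates the sheaf of Picard groupoids $\mathrm{ch}(K)$ whose objects over an open $U$ are the sections in $K^0(U)$, whose morphisms $x\to y$ are the $k\in K^{-1}(U)$ with $dk=y-x$, with symmetric monoidal structure given by addition in $K^0$ and composition given by addition in $K^{-1}$. The input I would take is the theorem of Deligne recalled in \cite{anslebras}, that $\mathrm{ch}$ upgrades to an equivalence between $D^{[-1,0]}(\mathcal{S})$ --- objects the complexes as above, morphisms the maps in the derived category --- and the category of sheaves of strictly commutative Picard groupoids on $\mathcal{S}$ with morphisms the isomorphism classes of additive functors.

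First I would construct a functor $D^{[-1,0]}(\mathrm{Mod}(\mathcal{A}))\to\{\mathcal{A}\text{-module objects in }\mathrm{Pic}(\mathcal{S})\}$ lifting $\mathrm{ch}$. Given $K$, represent it by an honest two-term complex of sheaves of $\mathcal{A}$-modules with $\mathcal{A}$-linear differential. For each local section $a$ of $\mathcal{A}$, multiplication by $a$ is then an additive endofunctor of $\mathrm{ch}(K)$ --- on objects $x\mapsto ax$ in $K^0$, on arrows $k\mapsto ak$ in $K^{-1}$, well defined since $d$ is $\mathcal{A}$-linear --- and these endofunctors, together with the evident identity isomorphisms witnessing additivity and multiplicativity in $a$ and distributivity over the monoidal structure, make $\mathrm{ch}(K)$ an $\mathcal{A}$-module object. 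This is manifestly functorial in $K$.

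For essential surjectivity and full faithfulness I would argue as follows. An $\mathcal{A}$-module Picard stack $P$ has underlying Picard stack $\mathrm{ch}(K)$ for a unique $K\in D^{[-1,0]}(\mathcal{S})$, and its homotopy sheaves $\pi_0(P)\cong H^0(K)$ (isomorphism classes of objects) and $\pi_1(P)=\underline{\mathrm{Aut}}(0)\cong H^{-1}(K)$ acquire canonical $\mathcal{A}$-module structures, since an additive endofunctor descends both to isomorphism classes and to automorphisms of the unit object. The only remaining datum of $K$ is its $k$-invariant, the class in $\mathrm{Ext}^2_{\mathrm{Ab}(\mathcal{S})}(\pi_0(P),\pi_1(P))$ of the Postnikov triangle $\pi_1(P)[1]\to K\to\pi_0(P)$; since that triangle is functorial in the additive endofunctors given by scalar multiplication, the class is $\mathcal{A}$-linear, hence comes from $\mathrm{Ext}^2_{\mathrm{Mod}(\mathcal{A})}(\pi_0(P),\pi_1(P))$, and $K$ refines canonically to $D^{[-1,0]}(\mathrm{Mod}(\mathcal{A}))$. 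One then checks this refinement is inverse on objects to the functor of the previous paragraph; the analogous statement for morphisms --- an additive functor $\mathrm{ch}(K)\to\mathrm{ch}(L)$ commuting with the two scalar actions up to the given coherences is the same datum as an $\mathcal{A}$-linear morphism $K\to L$ in $D^{[-1,0]}$ --- gives full faithfulness.

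The main obstacle is exactly this last point: verifying rigorously that an $\mathcal{A}$-action up to coherent isomorphism on a Picard stack carries precisely the information of an $\mathcal{A}$-module structure on a two-term complex in the derived category, with no higher coherence lost or spuriously introduced. If the hands-on bookkeeping becomes unwieldy, the clean alternative is to observe that Deligne's equivalence is compatible with the natural tensor products --- the tensor product of Picard stacks corresponding to $\tau_{\geq-1}(-\otimes^{\mathbf{L}}-)$ on $D^{[-1,0]}(\mathcal{S})$ --- so that it induces an equivalence between module objects over the algebra object $\mathcal{A}$ (placed in degree $0$) on the two sides, and then to conclude with the standard identification of $\mathcal{A}$-module objects in $D^{[-1,0]}(\mathrm{Ab}(\mathcal{S}))$ with $D^{[-1,0]}(\mathrm{Mod}(\mathcal{A}))$.
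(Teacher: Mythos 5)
The paper itself does not prove this statement --- it is quoted as \cite[3.2]{anslebras} and only the explicit description of the functor $\mathbb{V}$ on honest two-term complexes is recalled afterwards --- so there is no in-paper argument to compare against. Your overall strategy, lifting Deligne's equivalence between $D^{[-1,0]}(\mathcal{S})$ and strictly commutative Picard stacks to the $\mathcal{A}$-linear setting, is almost certainly the intended one, and your construction of $\mathrm{ch}(K)$ with its scalar-multiplication endofunctors matches the $\mathbb{V}(K)$ of the text.

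However, your first route through Postnikov $k$-invariants has a genuine gap, which you partially acknowledge. The step ``the class is $\mathcal{A}$-linear, hence comes from $\mathrm{Ext}^2_{\mathrm{Mod}(\mathcal{A})}(\pi_0(P),\pi_1(P))$'' does not follow: the forgetful map $\mathrm{Ext}^2_{\mathrm{Mod}(\mathcal{A})}(\pi_0,\pi_1)\to\mathrm{Ext}^2_{\mathrm{Ab}(\mathcal{S})}(\pi_0,\pi_1)$ is in general neither injective nor does its image coincide with the subset of classes commuting with the $\mathcal{A}$-actions, so knowing the $k$-invariant is ``$\mathcal{A}$-equivariant'' neither picks out a lift nor guarantees one exists. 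The information that actually determines the lift is the coherence data of the $\mathcal{A}$-action on the stack --- the specified isomorphisms $a\cdot(b\cdot x)\cong ab\cdot x$, $(a+b)\cdot x\cong a\cdot x+b\cdot x$, and so on --- which your Postnikov reduction throws away by passing to $\pi_0$, $\pi_1$, and a bare $\mathrm{Ext}^2$ class. So this route cannot be completed as written.

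Your second route is the right one, and I would develop it rather than the first. To make it a proof you need two inputs. First, Deligne's equivalence is symmetric monoidal for the truncated derived tensor product $\tau_{\geq -1}(-\otimes^{\mathbf L}-)$ on $D^{[-1,0]}(\mathrm{Ab}(\mathcal S))$; this is a finite coherence check. Second, the ``standard identification'' of $\mathcal{A}$-module objects in $D^{[-1,0]}(\mathrm{Ab}(\mathcal S))$ with $D^{[-1,0]}(\mathrm{Mod}(\mathcal{A}))$ is not free: it needs the monadicity of $D(\mathrm{Mod}(\mathcal A))\to D(\mathrm{Ab}(\mathcal S))$ (Barr--Beck--Lurie or a Schwede--Shipley type statement), together with the observation that because $\mathcal{A}$ sits in degree $0$ and one is mapping into a target in $D^{[-1,0]}$, a module structure map $\tau_{\geq-1}(\mathcal{A}\otimes^{\mathbf L} K)\to K$ is the same datum as $\mathcal{A}\otimes^{\mathbf L}K\to K$, so the truncated monoidal structure sees exactly the right modules. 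Spelling these two points out, and noting that all mapping spaces in sight are $1$-truncated so the $(2,1)$-categorical data of a Picard stack is sufficient, turns your sketch into a complete argument.
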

For the special case of complexes $K$ of sheaves $\mathcal{A}$-modules such that that $K^i=0$ for $i\neq-1,0$, we can describe this functor explicitly. Given a complex $[K=K^{-1}\xrightarrow{d}K^0]$ the associated $\mathcal{A}$-module $\mathbb{V}(K)$  in the category of sheaves of Picard groupoids is the stack associated to the prestack $\mathbb{V}^{pre}(K)$ given by:
\begin{itemize}
    \item For $U\in \text{Perf}_S$, $\mathrm{Ob}(\mathbb{V}^{pre}(K)(U))=K^0(U)$.
    \item If $x,y\in K^0(U)$, a morphism from $x$ to $y$ is an element $f\in K^{-1}(U)$ such that $df=y-x$.
    \item The composition law on morphisms is given by the addition in $K^{-1}(U)$.
    \item The additive structure is given by the addition in $K^{-1}(U)$ and $K^0(U)$.
\end{itemize}

Furthermore given an $\mathcal{A}$-module $\mathcal{G}$ in the category of sheaves of Picard groupoids over $S$, we denote by $\mathcal{G}_{\bullet}$ the complex of sheaves under the equivalence identified in proposition \ref{6.9}.

\begin{defn}[{\cite[Definition~3.13]{anslebras}}] Let $S$ be a perfectoid space in Perf. We define a \textit{stack in $E$-vector spaces (over $S$)} to be an object in the category of $\underline{E}$-module objects in the category of v-sheaves of Picard groupoids on S.
    
\end{defn}
\begin{ex}
    We have the following identities,
    $$\mathbb{V}(\underline{E}[0])=\underline{E},\hspace{20pt}\mathbb{V}(\underline{E}[1])=[S/\underline{E}].$$
where in the second identification, we see $S$ as the terminal object in the category of sheaves on $\text{Perf}_S$.
\end{ex}

\begin{defn}[{\cite[Definition~3.14]{anslebras}}]
    Let $S\in$ Perf. Let $\mathcal{G}$ be a stack in $E$-vector spaces over $S$. We define the dual $\mathcal{G}^\vee$ to be the following
    $$\mathcal{G}^\vee:=\mathcal{H}om(\mathcal{G},[S/\underline{E}])$$
where the homomorphisms considered are as those of stacks of $E$-vector spaces. We say that $\mathcal{G}$ is \textit{dualizable} if the natural morphism $\mathcal{G}\rightarrow(\mathcal{G}^\vee)^\vee$ is an isomorphism.
\end{defn}
\begin{rmk}\label{6.13}
    By the considerations in proposition \ref{6.9}, for an $\underline{E}$-module $\mathcal{G}$ in the category of sheaves of Picard groupoids over $S$, we have the following
    $$(\mathcal{G}^\vee)_\bullet=\tau_{\leq0}R\mathcal{H}om(\mathcal{G}_\bullet,\underline{E}[1]).$$
\end{rmk}
The morphism of sites $\tau: (X_S)_v\rightarrow (S)_v$ introduced in proposition \ref{5.24}, allows us now to define the notion of a relative Banach-Colmez space associated with a vector bundle $\mathcal{E}$ on $X_S$. If $\mathcal{E}$ has only non-negative slopes, resp. only negative slopes, we define the v-sheaves on $\text{Perf}_S$ 

$$BC(\mathcal{E})=\tau_*\mathcal{E}\hspace{5pt}, \hspace{5pt}\text{resp.}\hspace{5pt}BC(\mathcal{E})=R^1\tau_*\mathcal{E}$$

\begin{prop}[{\cite[Corollary~3.10]{anslebras}}]\label{6.15}
Let $\mathcal{E}_1,\mathcal{E}_2$ be two vector bundles on $X_S$, we have a natural bijection 
$$R\mathcal{H}om_{\mathcal{O}_{X_S}}(\mathcal{E}_1,\mathcal{E}_2)=R\mathcal{H}om_{S_v}(R\tau_*(\mathcal{E}_1),R\tau_*(\mathcal{E}_2))$$

\end{prop}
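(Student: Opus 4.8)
The plan is to write down the tautological comparison morphism and prove it is an isomorphism by a dévissage: reduce along base change to the case of line bundles on the curve over a geometric point, where both sides become explicit Banach–Colmez sheaves.

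Since $\mathcal{E}_1$ is a vector bundle it is dualizable, so $R\mathcal{H}om_{\mathcal{O}_{X_S}}(\mathcal{E}_1,\mathcal{E}_2)=\mathcal{E}_1^\vee\otimes_{\mathcal{O}_{X_S}}\mathcal{E}_2$ is again a vector bundle, and functoriality of $R\tau_*$ together with the evaluation/composition pairing furnishes a natural morphism
$$\Phi\colon R\tau_*R\mathcal{H}om_{\mathcal{O}_{X_S}}(\mathcal{E}_1,\mathcal{E}_2)\longrightarrow R\mathcal{H}om_{S_v}\bigl(R\tau_*\mathcal{E}_1,R\tau_*\mathcal{E}_2\bigr)$$
in the derived category of $\underline{E}$-modules on $S_v$; this is the meaning of the asserted identity, and we must show $\Phi$ is an isomorphism. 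Using part (2) of the proposition on Banach–Colmez spaces, which identifies $\underline{E}\xrightarrow{\sim}BC(\mathcal{O})$, together with $R^1\tau_*\mathcal{O}_{X_S}=0$ (equivalently, surjectivity of the Lubin–Tate logarithm $H^0(X_S,\mathcal{O}_{X_S}(1))\twoheadrightarrow\mathcal{O}_{S^\sharp}$ appearing in the proof of that proposition, applied to the fundamental exact sequence of Proposition~\ref{5.26}), one gets $R\tau_*\mathcal{O}_{X_S}=\underline{E}$; hence $\Phi$ is the identity when $\mathcal{E}_1=\mathcal{O}_{X_S}$. This does not formally reduce the general case, but it is the base of the induction.

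Next I would verify that $\Phi$ is compatible with arbitrary pullback $g\colon S'\to S$ in Perf. For the source this is the base change theorem for $R\tau_*$, which is transparent from the description $R\tau_*\mathcal{E}\simeq[\,H^0(Y_{S},p^*\mathcal{E})\xrightarrow{\varphi-1}H^0(Y_{S},p^*\mathcal{E})\,]$ established in Section~\ref{5.2}, as $H^0(Y_S,-)$ of a finite projective $\varphi$-module commutes with base change; for the target one needs that $R\mathcal{H}om_{S_v}$ between complexes of the form $R\tau_*(\text{vector bundle})$ commutes with $g^*$, which is part of the good behaviour (dualizability in the appropriate sense) of Banach–Colmez sheaves. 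Granting this, and since a morphism in $D(S_v,\underline{E})$ is an isomorphism if and only if its restriction along every $\Spa(C,C^+)\to S$ with $C$ an algebraically closed complete non-archimedean field over $\mathbb{F}_q$ is one, we may assume $S=\Spa(C,C^+)$.

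Over such a point, the classification of vector bundles on $X_C$ writes $\mathcal{E}_1\cong\bigoplus_i\mathcal{O}(\lambda_i)$ and $\mathcal{E}_2\cong\bigoplus_j\mathcal{O}(\mu_j)$, and since both sides of $\Phi$ are additive in each argument it suffices to treat $\mathcal{E}_1=\mathcal{O}(\lambda)$, $\mathcal{E}_2=\mathcal{O}(\mu)$; exactly as in the proof of the Banach–Colmez proposition, replacing $E$ by the unramified extension whose degree clears the denominators of $\lambda,\mu$ reduces further to $\mathcal{E}_1=\mathcal{O}(a)$, $\mathcal{E}_2=\mathcal{O}(b)$ with $a,b\in\mathbb{Z}$. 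For these the source of $\Phi$ is $R\Gamma(X_C,\mathcal{O}(b-a))$, which by Section~\ref{5.2} and the Banach–Colmez computations is $BC(\mathcal{O}(b-a))$ in degree $0$ if $b>a$, is $\underline{E}$ if $b=a$, and is $BC(\mathcal{O}(b-a)[1])$ in degree $1$ if $b<a$; the target is the derived internal $\mathrm{Hom}$ of the two-term complexes attached to $\mathcal{O}(a)$ and $\mathcal{O}(b)$. Thus everything comes down to computing $R\mathcal{H}om_{\underline{E}}$ between the positive Banach–Colmez sheaves $BC(\mathcal{O}(n))$ ($n>0$), the unit $\underline{E}$, and the negative ones $BC(\mathcal{O}(n)[1])$ ($n<0$), and checking the answer matches $\mathrm{Hom}$ and $\mathrm{Ext}^1$ of line bundles on $X_C$ (for instance $\mathcal{H}om_{\underline{E}}(BC(\mathcal{O}(a)),BC(\mathcal{O}(b)))\cong BC(\mathcal{O}(b-a))$ for $0<a\le b$, the relevant $\mathcal{E}xt$-vanishings, and Serre duality on the curve to pass between the positive and negative regimes). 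This final step — showing derived internal $\mathrm{Hom}$'s of Banach–Colmez sheaves are again Banach–Colmez sheaves of the predicted slope, which rests on their structure theory (the presentations $BC(\mathcal{O}(n))\cong\mathrm{Spd}\,k\llbracket x_1^{1/p^\infty},\dots,x_n^{1/p^\infty}\rrbracket$, their duality, and vanishing of higher $\mathcal{E}xt$) — together with the base-change compatibility of the target invoked above, is the real content; the reductions themselves are formal.
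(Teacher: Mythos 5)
The paper itself does not prove this proposition; it is quoted directly from \cite[Corollary~3.10]{anslebras} without argument, so there is no in-paper proof to compare against. (You did correctly notice that, as printed, the two sides live on different sites — the left-hand side needs an $R\tau_*$ — and your $\Phi$ repairs this.)

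As a free-standing sketch, your dévissage has two genuine gaps. First, reducing to a geometric point $\Spa(C,C^+)$ requires that $R\mathcal{H}om_{S_v}(R\tau_*\mathcal{E}_1,R\tau_*\mathcal{E}_2)$ commute with pullback along $S'\to S$. Derived internal $\mathcal{H}om$ in a topos does not commute with pullback in general; it does once the first argument is known to be dualizable (perfect) as an $\underline{E}$-module — but that dualizability of $R\tau_*\mathcal{E}_1$ is essentially the content of the statement itself, so invoking it as ``part of the good behaviour of Banach--Colmez sheaves'' is circular. Second, even granting the reduction, the step you flag as ``the real content'' — identifying $R\mathcal{H}om_{\underline{E}}\bigl(BC(\mathcal{O}(a)),BC(\mathcal{O}(b))\bigr)$ with $R\Gamma(X_C,\mathcal{O}(b-a))$, and moreover via the comparison map $\Phi$ rather than merely as abstract objects — is exactly the difficult part and is only asserted, not argued. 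For comparison, the Anschütz--Le Bras proof avoids the geometric-point dévissage entirely: working over an arbitrary affinoid $S$, one uses the two-term $\varphi$-module presentation $R\tau_*\mathcal{E}\simeq[H^0(Y_S,p^*\mathcal{E})\xrightarrow{\varphi-1}H^0(Y_S,p^*\mathcal{E})]$ from Section~\ref{5.2} to show directly that $R\tau_*$ is symmetric monoidal on vector bundles, so that $R\tau_*\mathcal{E}_1^\vee\otimes^{\mathbb{L}}_{\underline{E}}R\tau_*\mathcal{E}_2\xrightarrow{\sim}R\tau_*(\mathcal{E}_1^\vee\otimes\mathcal{E}_2)$, and then proves a Serre-duality statement exhibiting $R\tau_*(\mathcal{E}^\vee)$ (up to shift) as the $\underline{E}$-linear dual of $R\tau_*\mathcal{E}$; the corollary is then a formal consequence of dualizability, with no base change to a point needed. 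If you want to rescue your outline, the right move is to replace the reduction-to-a-point step by a direct verification of monoidality and of Serre duality in this form.
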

\begin{prop}[{\cite[Proposition~II.3.4]{fargues2024geometrizationlocallanglandscorrespondence}}]\label{6.16}
Let $S \in$ Perf and $\mathcal{E}$ a vector bundle on $X_S$.
\begin{itemize}
    \item If $\mathcal{E}$ has only negative slopes, then $H^0(X_S,\mathcal{E})=0$.
    \item If $\mathcal{E}$ has only non-negative slopes, then there exists a pro-étale cover $\tilde{S}\rightarrow S$ such that $H^1(X_{\tilde{S}},\mathcal{E}|_{X_{\tilde{S}}})=0$.
    \item If $\mathcal{E}$ has only positive slopes, then there exists an étale cover $S'\rightarrow S$ such that for all afinoid $T\rightarrow S'$ we have that $H^1(X_{T},\mathcal{E}|_{X_{T}})=0$.
\end{itemize}
    
\end{prop}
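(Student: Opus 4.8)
The plan is to deduce all three parts from the Fargues--Fontaine classification of vector bundles on $X_C$ for $C$ algebraically closed, the Harder--Narasimhan slope formalism, and the cohomological facts already in hand: $H^0(X_C,\mathcal{O}(\lambda))=0$ for $\lambda<0$, $H^1(X_T,\mathcal{O}(\lambda))=0$ for $\lambda>0$ and $T$ affinoid, the identification of $R\Gamma(X,\mathcal{O})$ with $\underline{E}$ as pro-étale sheaves, and the vanishing $H^{\ge 2}(X_T,-)=0$ over affinoids established in \ref{5.2}. For part (i), note that $BC(\mathcal{E})=\tau_*\mathcal{E}$ is a v-sheaf on $\mathrm{Perf}_S$, so since the v-topos has enough points it suffices that its stalk at every geometric point vanishes, i.e.\ that $H^0(X_{\Spa(C,C^+)},\mathcal{E}|)=0$ for all $\Spa(C,C^+)\to S$ with $C$ algebraically closed; as the curve is insensitive to $C^+$ this is $H^0(X_C,\mathcal{E}|_{X_C})$. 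By hypothesis $\mathcal{E}|_{X_C}$ has only negative slopes, so writing $\mathcal{E}|_{X_C}\cong\bigoplus_i\mathcal{O}(\lambda_i)$ with $\lambda_i<0$ gives $H^0(X_C,\mathcal{E}|_{X_C})=\bigoplus_i H^0(X_C,\mathcal{O}(\lambda_i))=0$; equivalently, the slope-$0$ semistable bundle $\mathcal{O}_{X_C}$ admits no nonzero map into a bundle all of whose slopes are negative.

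For (iii) I would first pass to an affinoid cover of $S$, so that $|S|$ is quasi-compact; by semicontinuity of the Harder--Narasimhan polygon the slopes of $\mathcal{E}$ are then bounded below by some $\varepsilon>0$. Over a geometric point, $\mathcal{E}|_{X_C}$ is a quotient of $\mathcal{O}_{X_C}(\varepsilon)^{\oplus m}$: each summand $\mathcal{O}(\mu_i)$ with $\mu_i\ge\varepsilon$ is such a quotient because $\mathcal{O}(\mu_i)\otimes\mathcal{O}(-\varepsilon)$ is semistable of non-negative slope, hence a direct sum of copies of $\mathcal{O}(\mu_i-\varepsilon)$, hence globally generated. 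The theory of families of vector bundles on the curve then spreads this presentation out over an étale cover $S'\to S$ (here one uses the vanishing $H^1(X_T,\mathcal{O}(\varepsilon))=0$ over affinoids to lift sections), producing a surjection $\mathcal{G}\twoheadrightarrow\mathcal{E}|_{X_{S'}}$ with $\mathcal{G}$ a finite direct sum of copies of $\mathcal{O}(\varepsilon)$. For any affinoid $T\to S'$, writing $\mathcal{K}=\ker$, the long exact sequence gives $H^1(X_T,\mathcal{G}|_{X_T})\to H^1(X_T,\mathcal{E}|_{X_T})\to H^2(X_T,\mathcal{K}|_{X_T})=0$, so $H^1(X_T,\mathcal{E}|_{X_T})$ is a quotient of $H^1(X_T,\mathcal{G}|_{X_T})=0$ by the computation for $\mathcal{O}(\varepsilon)$, which is the claim.

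Part (ii) runs the same way, except that non-negative-slope bundles on $X_C$ are globally generated, so the geometric presentation is a surjection $\mathcal{O}_{X_C}^{\oplus m}\twoheadrightarrow\mathcal{E}|_{X_C}$, which the structure theory spreads out only over a pro-étale cover $\tilde S\to S$ (slope-$0$ families of semistable bundles are pro-étale, not étale, locally trivial). Now $H^1(X_{\tilde S},\mathcal{E}|_{X_{\tilde S}})$ is a quotient of $H^1(X_{\tilde S},\mathcal{O}_{X_{\tilde S}})^{\oplus m}$, again using $H^2(X_{\tilde S},\mathcal{K})=0$; and $H^1(X_{\tilde S},\mathcal{O}_{X_{\tilde S}})$ is killed after replacing $\tilde S$ by a further pro-étale cover, by the identification $R\Gamma(X,\mathcal{O})\simeq\underline{E}$ of pro-étale sheaves. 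Pulling the surjection back to that cover yields the desired $\tilde S$; this is exactly where a genuine pro-étale (rather than étale) cover is forced.

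The hard part is the structure-theoretic input used twice above: producing, over an étale (resp.\ pro-étale) cover of $S$, an honest surjection onto $\mathcal{E}$ from a direct sum of bundles of the form $\mathcal{O}(\lambda)$. Fibrewise this is the classification theorem, but promoting it to a family needs semicontinuity of the Harder--Narasimhan polygon together with the fact that families of semistable bundles of fixed slope are, locally for the pro-étale topology (or, when the slope is nonzero, the étale topology) on the base, pullbacks of isocrystals --- the substantive part of the Fargues--Scholze theory of $\mathcal{O}_{X_S}$-modules, building on Kedlaya--Liu. Everything else --- the reduction of (i) to geometric points, the long exact sequences, and the bookkeeping of covers --- is formal once this input and the cohomology computations recalled at the outset are granted.
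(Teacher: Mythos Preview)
The paper does not prove this proposition: it is quoted verbatim from \cite[Proposition~II.3.4]{fargues2024geometrizationlocallanglandscorrespondence} and used as a black box, so there is no argument in the paper to compare yours against.

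For what it is worth, your outline is essentially the Fargues--Scholze argument. Two small remarks. In (i), the claim that the curve is insensitive to $C^+$ is not literally correct, but you only need rank-$1$ geometric points $\Spa(C,\mathcal{O}_C)$, where the classification theorem is stated; more directly, a nonzero section of a vector bundle on $X_S$ is nonzero at some point of $|X_S|$, hence already over some rank-$1$ point of $S$, so there is no need to invoke enough points of the v-topos or base change for $\tau_*$. In (iii), the lower bound $\varepsilon>0$ does not require semicontinuity: the slopes of a rank-$r$ bundle lie in $\tfrac{1}{r}\mathbb{Z}$, so positivity already forces $\varepsilon\ge 1/r$ once the rank is locally bounded. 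As you correctly isolate, the substantive input is the étale (resp.\ pro-étale) local spreading-out of the fibrewise surjection from $\mathcal{O}(\lambda)^{\oplus m}$; in Fargues--Scholze this is the content of Propositions II.2.6 and II.2.19, resting on the Kedlaya--Liu descent results, and is precisely the step your sketch defers to the literature.
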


\begin{prop}
    Let $\mathcal{E}$ be a vector bundle on $X_S$ having either only positive or only negative slopes. We have
    $$BC(\mathcal{E})^\vee=BC(\mathcal{E}^\vee)$$
\end{prop}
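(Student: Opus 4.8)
The plan is to reduce the statement to Proposition \ref{6.9} together with the cohomological facts about $\tau$ recorded in Proposition \ref{6.15} and Proposition \ref{6.16}. First I would dispose of a preliminary point: the dual vector bundle $\mathcal{E}^\vee$ has only positive slopes when $\mathcal{E}$ has only negative slopes, and only negative slopes when $\mathcal{E}$ has only positive slopes, so $BC(\mathcal{E}^\vee)$ is defined by the appropriate one of the two recipes ($\tau_*$ or $R^1\tau_*$). In either case the relevant complex $R\tau_*\mathcal{E}$ (resp. $R\tau_*\mathcal{E}^\vee$) is concentrated in a single degree after a pro-étale cover by the vanishing results in Proposition \ref{6.16}, but in fact I want to work with the full complex $R\tau_*$ and only extract the Picard stack at the end, so as to avoid cover-dependence.

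The heart of the argument is the chain of identifications
\begin{align*}
    \bigl(BC(\mathcal{E})^\vee\bigr)_\bullet
    &= \tau_{\leq 0}\, R\mathcal{H}om_{S_v}\bigl(BC(\mathcal{E})_\bullet, \underline{E}[1]\bigr)\\
    &= \tau_{\leq 0}\, R\mathcal{H}om_{S_v}\bigl(R\tau_*\mathcal{E}[m], \underline{E}[1]\bigr)\\
    &= \tau_{\leq 0}\, R\mathcal{H}om_{S_v}\bigl(R\tau_*\mathcal{E}, R\tau_*\mathcal{O}_{X_S}[-m]\bigr)[1]\\
    &= \tau_{\leq 0}\, R\mathcal{H}om_{\mathcal{O}_{X_S}}\bigl(\mathcal{E}, \mathcal{O}_{X_S}\bigr)[1-m]\\
    &= \tau_{\leq 0}\, R\tau_*\bigl(\mathcal{E}^\vee\bigr)[1-m],
\end{align*}
where $m\in\{0,1\}$ is the degree shift relating $BC(\mathcal{E})$ to $R\tau_*\mathcal{E}$ (namely $m=0$ in the non-negative-slope case, $m=1$ in the negative-slope case), the first line is Remark \ref{6.13}, and the crucial third and fourth lines use Proposition \ref{6.15} with $\mathcal{E}_1 = \mathcal{E}$, $\mathcal{E}_2 = \mathcal{O}_{X_S}$, together with the computation $R\tau_*\mathcal{O}_{X_S} = \underline{E}$ following from part (2) of the Proposition classifying $BC$ of line bundles (so that $R\mathcal{H}om(R\tau_*\mathcal{E}, \underline{E})$ is rewritten as $R\mathcal{H}om(R\tau_*\mathcal{E}, R\tau_*\mathcal{O}_{X_S})$). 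One then checks that $\tau_{\leq 0}$ of $R\tau_*(\mathcal{E}^\vee)[1-m]$ is exactly the two-term complex defining $BC(\mathcal{E}^\vee)$: when $\mathcal{E}$ has only positive slopes, $\mathcal{E}^\vee$ has only negative slopes, $R\tau_*(\mathcal{E}^\vee) = R^1\tau_*(\mathcal{E}^\vee)[-1]$ so the shift by $1-m = 1-1 = 0$ lands it in degrees $\{-1,0\}$ with $BC(\mathcal{E}^\vee) = R^1\tau_*(\mathcal{E}^\vee)$ in degree $-1$... wait, I should be careful with the bookkeeping here; the point is simply that $\tau_{\leq 0}$ truncates away the irrelevant higher piece and the surviving data is precisely $\mathbb{V}$ of the complex whose single nonzero term is $BC(\mathcal{E}^\vee)$. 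Applying $\mathbb{V}(-)$ and invoking the equivalence of Proposition \ref{6.9} then yields $BC(\mathcal{E})^\vee = BC(\mathcal{E}^\vee)$.

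The main obstacle I anticipate is the careful cohomological-degree bookkeeping in the two cases (only positive vs. only negative slopes), making sure the truncation $\tau_{\leq 0}$ in Remark \ref{6.13} interacts correctly with the degree in which $BC$ places $R\tau_*$, and verifying that nothing is lost in passing between the Picard-stack picture and the derived-category picture — in particular that the dualizability hypothesis is genuinely satisfied so that $(\mathcal{G}^\vee)_\bullet = \tau_{\leq 0}R\mathcal{H}om(\mathcal{G}_\bullet,\underline{E}[1])$ may be used. A secondary technical point is justifying the step $R\mathcal{H}om_{S_v}(R\tau_*\mathcal{E}, R\tau_*\mathcal{O}_{X_S}) = R\mathcal{H}om_{\mathcal{O}_{X_S}}(\mathcal{E},\mathcal{O}_{X_S})$, which is exactly the content of Proposition \ref{6.15} but requires knowing that $R\tau_*$ of a vector bundle is a perfect enough complex for the $\mathcal{H}om$ to behave; since Proposition \ref{6.15} is quoted as an equality of $R\mathcal{H}om$'s this should be a direct citation rather than new work.
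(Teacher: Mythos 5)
Your proposal takes essentially the same route as the paper: reduce via Remark~\ref{6.13} and Proposition~\ref{6.15} to a comparison of $R\tau_*$ of vector bundles, then use the slope-driven vanishing from Proposition~\ref{6.16} together with the v-sheafification argument to concentrate $R\tau_*\mathcal{E}$ and $R\tau_*\mathcal{E}^\vee$ in a single degree. The paper only writes out the positive-slope case, while you treat both cases uniformly with the shift $m$; the degree bookkeeping you were unsure of does come out right (in both cases $R\tau_*(\mathcal{E}^\vee)[1-m]$ sits in degree $0$ as $BC(\mathcal{E}^\vee)$, so the truncation $\tau_{\leq 0}$ is harmless), and the dualizability worry is unnecessary since Remark~\ref{6.13} is simply the formula defining $\mathcal{G}^\vee$, not a consequence of $\mathcal{G}$ being dualizable.
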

\begin{proof}

We assume that $\mathcal{E}$ has only positive slopes. Note that by the third part of proposition \ref{6.16}, we have that there exists an étale cover $S'\rightarrow S$ such that for all affinoid $T\rightarrow S'$ we have $H^1(X_T,\mathcal{E}|_{X_T})=0$. Note now that an  étale cover is also a v-cover, and moreover the sheaf $R^1\tau_*\mathcal{E}$ is given by the sheafification of $T\mapsto H^1(X_T,\mathcal{E}|_{X_T})$. Together, these imply that $R^1\tau_*\mathcal{E}=0$. In addition to this, the discussion in section \ref{5.2} gives us that $H^i(X_T,\mathcal{E}_{X_T})=0$ for $i\geq2$, for which the same argument  allows us to conclude the vanishing $R^i\tau_*\mathcal{E}=0$ for $i\geq 2$. All in all, this implies that $R\tau_*\mathcal{E}\cong \tau_*\mathcal{E}\cong BC(\mathcal{E})$, which gives us the following chain of identifications,
$$ R\mathrm{H}om_{T_v}(BC(\mathcal{E}),\underline{E}[1])\overset{\ref{6.15}}{\cong} R\mathrm{H}om_{\mathcal{O}_{X_T}}(\mathcal{E},\mathcal{O}_{X_T}[1])\cong R\Gamma(X_T,\mathcal{E}^\vee)[1] $$
Note now that $\mathcal{E}^\vee$ has only negative slopes, which means that by the first part of proposition \ref{6.16} we have that $R\Gamma(X_T,\mathcal{E}^\vee)\cong H^1(X_T,\mathcal{E}^\vee)[-1]$, and therefore $R\Gamma(X_T,\mathcal{E}^\vee)[1]\cong H^1(X_T,\mathcal{E}^\vee)$. Now once again, the sheafifcation arguments mentioned in the first part of the proof give us the required result.

\end{proof}

\begin{subsubsection}{The construction of the Fourier transform for Banach-Colmez spaces}
We fix a ring $\Lambda$ of characteristic prime to $p$. As in the case of the Fourier transform for the affine line, we will first develop some theory that allows us to relate characters $E\rightarrow\Lambda^\times$, to elements in a certain derived category associated to v-stacks. We start by introducing some definitions, that allows us to talk about sheaves on a v-stack, and furthermore about their derived categories. We do not work in the same generality of stacks of $E$-vector spaces as in \cite{anslebras}, but instead choose to focus on the case of Banach-Colmez spaces.

\begin{prop}The v-presheaf on Perf represented by a perfectoid space of characteristic $p$ is a sheaf. 
\end{prop}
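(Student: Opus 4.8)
The plan is to reduce the statement to $v$-descent for the structure presheaf $(\mathcal{O},\mathcal{O}^+)$ on affinoid perfectoid spaces of characteristic $p$, which is the only non-formal ingredient.

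First I would note that the presheaf $h_X$ represented by $X$ is automatically a sheaf for the analytic topology on Perf, since morphisms of adic spaces glue Zariski-locally on the source; in fact it is already a pro-étale sheaf by \cite[Corollary~8.6]{etcoh}. The $v$-topology on Perf admits a generating pretopology consisting of open covers together with surjective maps of affinoid perfectoid spaces of characteristic $p$ (the latter are $v$-covers by the remark following Lemma~\ref{5.17}), so it is enough to verify the sheaf condition for a single such surjection $g\colon S' = \Spa(B,B^+)\twoheadrightarrow S = \Spa(A,A^+)$. Because $g$ is surjective and a topological quotient map (Lemma~\ref{5.17}), any descent datum for $h_X$ along $g$ determines a continuous map $|S|\to|X|$; pulling back an affinoid open cover of $X$ and using the analytic sheaf property once more, I may assume $X = \Spa(C,C^+)$ is itself affinoid perfectoid.

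Next I would pass to commutative algebra. For affinoid adic spaces, a morphism $T\to\Spa(C,C^+)$ is the same datum as a continuous homomorphism of Huber pairs $(C,C^+)\to(\mathcal{O}_T(T),\mathcal{O}_T^+(T))$. Here the characteristic $p$ hypothesis is essential: the fibre product $S'' := S'\times_S S'$ is again affinoid perfectoid, because its ring of functions $B\,\widehat{\otimes}_A\,B$ is a complete Tate $\mathbb{F}_p$-algebra with pseudo-uniformizer (inherited from $A$) which is perfect, being a completed tensor product of perfect rings over a perfect ring, and a perfect complete Tate $\mathbb{F}_p$-algebra is perfectoid. Hence the sheaf condition for $h_X$ along $g$ reduces to the assertion that
\begin{equation*}
\bigl(\mathcal{O}_S(S),\mathcal{O}_S^+(S)\bigr)\;\longrightarrow\;\bigl(\mathcal{O}_{S'}(S'),\mathcal{O}_{S'}^+(S')\bigr)\;\rightrightarrows\;\bigl(\mathcal{O}_{S''}(S''),\mathcal{O}_{S''}^+(S'')\bigr)
\end{equation*}
is an equalizer of topological Huber pairs.

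The hard part will be exactly this last assertion: $v$-descent for the structure presheaf on affinoid perfectoid spaces of characteristic $p$. I would argue $\varpi$-adically and $\varpi$-almost. Since $A^+$ and $B^+$ are $\varpi$-adically complete, a derived-completeness argument reduces the claim to the $\varpi$-almost exactness of the Čech--Amitsur complex $A^+/\varpi\to B^+/\varpi\to (B^+\widehat{\otimes}_{A^+}B^+)/\varpi\to\cdots$; one then inverts $\varpi$ and truncates in degrees $\le 1$ to recover the equalizer above. This almost-exactness is the integral formulation of $v$-descent for perfectoid rings: the map $\Spec(B^+/\varpi)\to\Spec(A^+/\varpi)$ is a $v$-cover of $\mathbb{F}_p$-schemes, and the integral structure sheaf satisfies almost $v$-descent along such covers, a fact ultimately reduced to a direct computation over valuation rings. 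Within the framework of this thesis I would invoke this input from \cite{scholze2020berkeley} and \cite{etcoh}; everything leading up to it is formal manipulation with the $v$-topology and with Huber's adjunction between affinoid adic spaces and Huber pairs.
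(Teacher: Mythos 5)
The paper states this proposition with no proof and no citation (in contrast to the pro-\'etale analogue a few lines earlier, which is attributed to \cite[Corollary~8.6]{etcoh}); the intended reference is the corresponding $v$-descent result for perfectoid spaces in \cite{etcoh} (in the vicinity of Theorem~8.7), so there is no in-paper argument to compare yours against. On its own merits your sketch is a faithful outline of the standard proof, and the formal reductions are carried out correctly: passing to single surjections of affinoids as a generating family, factoring through $|S|$ because a surjection of affinoids is a topological quotient map (Lemma~\ref{5.17}), gluing along an affinoid cover of the target to reduce to $X$ affinoid perfectoid, then converting the sheaf condition into an equalizer of Huber pairs via the universal property of $\mathrm{Spa}$. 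Your observation that $S'\times_S S'$ stays affinoid perfectoid in characteristic $p$ because completed tensor products of perfect Tate $\mathbb{F}_p$-algebras are perfect, hence perfectoid, is also correct, and indeed this is the content of the lemma on absolute products recorded in the paper.

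The one place where you should be more explicit is the final, non-formal step. You write that the almost $v$-descent of $\mathcal{O}^+/\varpi$ ``ultimately reduces to a direct computation over valuation rings''; the actual argument of \cite{etcoh} passes through strictly totally disconnected perfectoid spaces (products of points) rather than individual valuation rings, and the claim that these form a basis for the $v$-topology is itself a nontrivial ingredient. Phrasing the input as ``almost $v$-descent of the integral structure sheaf'' is fine, but since this is precisely where all the difficulty of the theorem is concentrated, it is worth recording the precise statement being invoked ($\mathcal{O}_X$ and $\mathcal{O}_X^+$ are $v$-sheaves on $\mathrm{Perf}$, with $\mathcal{O}^+$ having almost vanishing higher \v{C}ech cohomology) rather than gesturing at it. Also, when you invert $\varpi$ at the end, you should note explicitly that this kills the almost-ambient and upgrades almost-exactness to honest exactness in degrees $0$ and $1$, which is exactly what is needed to identify $(A,A^+)$ as the equalizer. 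With these clarifications the argument is complete.
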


\begin{defn}
    A v-sheaf $\mathcal{F}$ on Perf is called a small v-sheaf, if there exists a surjection of sheaves  $Y\rightarrow \mathcal{F}$ from a perfectoid space $Y$. 
\end{defn}
\begin{defn}

    A v-stack $X$ on Perf is called a small v-stack, if there exists a surjection of stacks from a perfectoid space $Y\rightarrow X$ for which $Y\times_XY$ is a small v-sheaf.
    
\end{defn}
 Let $X$ be a small v-stack and consider the site $X_v$ of all perfectoid spaces of characteristic $p$ over $X$, equipped with the v-topology. We have, associated to this site, the derived category of sheaves of $\Lambda$-modules, $D(X_v,\Lambda)$. \\

 We now recall what it means to be a quasi-compact, quasi separated (\textit{qcqs}) object in a categorical sense. An object in a topos is called quasi-compact if any covering family has a finite subcover. Similarly an object $Z$ is called quasi-separated if for any quasi-compact objects $X,Y$ over $Z$, the fiber product $X\times_ZY$ is quasi-compact.

 \begin{defn}
    Let $\mathcal{P}$ be a property of morphisms of perfectoid spaces. A morphism of v-sheaves, or v-stacks $f:X\rightarrow Y$ on Perf is said to have property $\mathcal{P}$, if for all morphisms $Z\rightarrow Y$ from a perfectoid space $Z$, the base change $Z\times_YX$ is a perfectoid space, and the base change map has property $\mathcal{P}$.
\end{defn}
 
 \begin{defn}
     A diamond $Y$ is called spatial if it is \textit{qcqs}, and the topological space $|Y|$ has a basis given by $|U_i|$, where $U_i$ are a quasi compact open subdiamonds of $Y$. In a more general fashion, $Y$ is called locally spatial, if it admits an open cover by spatial diamonds.
 \end{defn}

\begin{defn}
    A morphism of perfectoid spaces $X\rightarrow Y$ is called separated if $\Delta:X\rightarrow X\times_YX$ is a closed immersion.
\end{defn}
\begin{defn}
     A morphism $f:X\rightarrow Y$ of v-stacks is called locally separated if there exists an open over of $X$, over which $f$ becomes separated.
\end{defn}
Let $X$ be a small v-stack. If X is also a locally spatial diamond, the étale site $X_{ét}$ of $X$ is the category whose objects are maps $f: Y\rightarrow X$ from diamonds $Y$, such that $f$ is locally separated and étale. For the sake of completeness we give the definition of $D_{ét}(X,\Lambda)\subset D(X_v,\Lambda) $. Let $\widehat{D}(X_{ét},\Lambda)$ be the left completion of the derived category ${D}(X_{ét},\Lambda)$. 

\begin{defn}
    We define $D_{ét}(X,\Lambda)\subset D(X_v,\Lambda) $ to be the full subcategory consisting of all $A\in D(X_v,\Lambda)$ such that for all surjective maps $Y\rightarrow X$ from a locally spatial diamond $Y$, we have that $f^*A\in \widehat{D}(Y_{ét},\Lambda)$.
\end{defn}

\begin{rmk}
    Given a morphism $f:X\rightarrow Y$ of small v-stacks, we have the following functors
    \begin{enumerate}
    \item$f^*:D_{ét}(Y,\Lambda)\rightarrow D_{ét}(X,\Lambda)$
    \item ${R}f_*:D_{ét}(X,\Lambda)\rightarrow D_{ét}(Y,\Lambda)$
    \item $-\otimes_{\Lambda}^{\mathbb{L}}-: D_{ét}(Y, \Lambda) \times D_{ét}(Y, \Lambda) \rightarrow D_{ét}(Y, \Lambda)$
    \item $R \mathscr{H} \operatorname{om}_{\Lambda}(-,-): D_{ét}(Y, \Lambda)^{\mathrm{op}} \times D_{ét}(Y, \Lambda) \rightarrow D_{ét}(Y, \Lambda)$.
    \end{enumerate}
    Furthermore, in the case of the case of the Banach-Colmez space $BC(\mathcal{E})$ associated to a vector bundle $\mathcal{E}$ over a perfectoid space $S$, we have by \cite[Proposition~3.34]{anslebras}, that there exists a derived pushforward with proper support
   $${R}(\mathrm{pr})_!:D_{ét}(BC(\mathcal{E}),\Lambda)\rightarrow D_{ét}(S,\Lambda).$$
   
   These functors satisfy the usual adjoint identities that one would expect. Moreover, there also exists a derived pushforward for any base change of $pr:BC(\mathcal{E})\rightarrow S$ by another Banach-Colmez space.
\end{rmk}

Let $\mathrm{Rep}(G,\Lambda)$ denote the category of smooth representations of a locally pro-p group $G$ on $\Lambda$-modules. Moreover, let $\mathrm{Sh}_v([*/\underline{G}],\Lambda)$ denote the category of v-sheaves of $\Lambda$-modules on the v-stack $[*/\underline{G}]$. We construct a functor

$$\mathrm{Rep}(G,\Lambda)\rightarrow \mathrm{Sh}_v([*/\underline{G}],\Lambda)$$ 

by sending a smooth $G$-representation $V$ to the v-sheaf $\mathcal{F}_V$ of $\Lambda$-modules on $[*/\underline{G}]$, prescribed in the following manner. We know that the data of a perfectoid space $X$ over $[*/\underline{G}]$ is equivalent to a $\underline{G}$-torsor $\widetilde{X}\rightarrow X$. The value of $\mathcal{F}_V$ at $X\rightarrow [*/\underline{G}]$ determined by $\widetilde{X}\rightarrow X$, is given by the set of all continuous $G$-equivariant maps $|\widetilde{X}|\rightarrow V$.\\

Note that locally we can trivialize the torsor $\widetilde{X}$ so that it is isomorphic to $X\times\underline{G}$. Now one can check that $|X\times\underline{G}|=|X|\times G$, giving us an action of $G$ on $|\widetilde{X}|$.\\

By the theory of quotient stacks, if $X'$ is a v-cover of $X$, then the associated $\underline{G}$-torsor $\widetilde{X'}$ that parametrizes the morphism $X'\rightarrow[*/\underline{G}]$ is given by the pullback in the diagram below,
\[\begin{tikzcd}
	{\widetilde{X'}} & {\widetilde{X}} \\
	{X'} & X
	\arrow[from=1-1, to=1-2]
	\arrow[from=1-1, to=2-1]
	\arrow["{\underline{G}\mathrm{-torsor}}", from=1-2, to=2-2]
	\arrow["{\mathrm{v-cover}}"', from=2-1, to=2-2]
\end{tikzcd}\]

so that $\widetilde{X'}\rightarrow \widetilde{X}$ is also a v-cover. Since v-covers are in particular topological quotient maps, this ensures the exactness of the following sequence,
$$0\rightarrow\mathrm{Hom}_{\mathrm{G-eq}}(\widetilde{X},V)\rightarrow\mathrm{Hom}_{\mathrm{G-eq}}(\widetilde{X'},V)\rightrightarrows\mathrm{Hom}_{\mathrm{G-eq}}(\widetilde{X'}\times_{\widetilde{X}}\widetilde{X'},V)$$
which implies that $\mathcal{F}_V$ is indeed a v-sheaf.\\

We denote by $D(G,\Lambda)$ the derived category of smooth representations of $G$ on $\Lambda$-modules. The construction above is exactly the functor used in the following theorem, giving us an equivalence of categories.
\begin{prop}[{\cite[Theorem~V.1.1]{fargues2024geometrizationlocallanglandscorrespondence}}] We have a natural symmetric monoidal equivalence
$$D(G,\Lambda)\cong D_{ét}([*/\underline{G}],\Lambda).$$

\end{prop}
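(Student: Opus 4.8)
The plan is to promote the functor $V\mapsto\mathcal F_V$ of the preceding discussion to an equivalence by combining v-descent along the atlas $*\to[*/\underline G]$ with a $\mathrm{Hom}$-computation on compact generators. First I would record the descent presentation: the map $*\to[*/\underline G]$ is a $\underline G$-torsor, hence a v-cover, so v-descent gives
\[
D_{ét}([*/\underline G],\Lambda)\;\simeq\;\varprojlim_{[n]\in\Delta}D_{ét}(\underline{G}^{\,n},\Lambda),
\]
the totalization of the cosimplicial category attached to the Čech nerve $\underline G^{\,\bullet}$. Each $\underline G^{\,n}$ is the v-sheaf of the locally profinite set $G^{n}$, for which $D_{ét}$ is the left-completed derived category of $\Lambda$-modules on $G^{n}$ with its profinite topology and $R\Gamma(\underline G^{\,n},M)$ is just the complex of locally constant functions $G^{n}\to M$ (no higher cohomology). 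Unwinding the totalization, the descent datum attached to a complex $M$ of $\Lambda$-modules along $*\to[*/\underline G]$ is precisely a locally constant — equivalently, smooth — action of $G$ on $M$, so that an object of $D_{ét}([*/\underline G],\Lambda)$ is the same datum as an object of $D(G,\Lambda)$; and $\mathcal F_{(-)}$ is the functor realizing this identification on objects.

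The delicate point is whether this formal totalization genuinely computes $D_{ét}([*/\underline G],\Lambda)$, given that the latter is defined via left-completion of an étale-site derived category; I would pin this down on a set of compact generators. The category $D(G,\Lambda)$ is compactly generated by $\Lambda[G/K]=c\text{-}\mathrm{Ind}_K^G\Lambda$ for $K$ running over pro-$p$ compact open subgroups, and $\mathcal F_{(-)}$ carries $\Lambda[G/K]$ to $f_{K!}\Lambda$, where $f_K\colon[*/\underline K]\to[*/\underline G]$ is the étale map induced by $K\hookrightarrow G$. By the base-change identity $[*/\underline{K_1}]\times_{[*/\underline G]}[*/\underline{K_2}]\simeq[\underline{K_1}\backslash\underline G/\underline{K_2}]$ and adjunction,
\[
R\mathrm{Hom}_{[*/\underline G]}(f_{K_1!}\Lambda,f_{K_2!}\Lambda)\;\simeq\;R\Gamma\bigl([\underline{K_1}\backslash\underline G/\underline{K_2}],\Lambda\bigr),
\]
the cohomology of a stack whose coarse space is the locally profinite double-coset set $K_1\backslash G/K_2$ and whose automorphism groups are the (pro-$p$) intersections of conjugates of the $K_i$.

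The heart of the argument, and the step I expect to be the main obstacle, is the vanishing $R\Gamma([*/\underline{K'}],\Lambda)=\Lambda$ concentrated in degree $0$ for every pro-$p$ group $K'$. This holds exactly because $\mathrm{char}\,\Lambda$ is prime to $p$, and it reduces — via the pro-(finite étale) presentation of $\underline{K'}$ and a comparison of v-stack cohomology of $[*/\underline{K'}]$ with continuous group cohomology — to the classical fact that $H^{\ast}_{\mathrm{cont}}(K',\Lambda)$ is concentrated in degree $0$ for a pro-$p$ group when $p$ is invertible. Granting it, the displayed $R\mathrm{Hom}$ collapses to the $\Lambda$-module of locally constant functions on $K_1\backslash G/K_2$, which is exactly the Hecke/Mackey description of $\mathrm{Hom}_{D(G,\Lambda)}(c\text{-}\mathrm{Ind}_{K_1}^G\Lambda,c\text{-}\mathrm{Ind}_{K_2}^G\Lambda)$; hence $\mathcal F_{(-)}$ is fully faithful on a set of compact generators.

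Since $\mathcal F_{(-)}$ is exact and preserves colimits, full faithfulness on compact generators upgrades to full faithfulness on all of $D(G,\Lambda)$. Essential surjectivity then follows once one checks that the $f_{K!}\Lambda$ compactly generate the target: pullback along the atlas $*\to[*/\underline G]$ is conservative, and the usual dévissage reconstructs the unit from the $f_{K!}\Lambda$. Finally, symmetric monoidality is built into the construction: $\mathcal F_V\otimes^{\mathbb L}_\Lambda\mathcal F_W$ represents $G$-equivariant maps into $V\otimes_\Lambda W$ with the diagonal action, so $\mathcal F_{V\otimes W}\simeq\mathcal F_V\otimes^{\mathbb L}\mathcal F_W$ compatibly with the unit, associativity and commutativity constraints; and naturality in $G$ (and in $\Lambda$) is clear, since every construction above is functorial in the pair $(G,\Lambda)$.
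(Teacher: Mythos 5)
The paper does not prove this statement: it constructs the functor $V\mapsto\mathcal F_V$, verifies the v-sheaf condition, and then cites Fargues--Scholze [Theorem V.1.1] for the equivalence. Your proposal is therefore not really competing with a proof in the paper, but rather sketching the argument of the cited reference, and it does so along essentially the right lines: the reduction to compact generators $c\text{-}\mathrm{Ind}_K^G\Lambda\mapsto f_{K!}\Lambda$, the Mackey-type $R\mathrm{Hom}$ computation over the double-coset stack, and, decisively, the acyclicity $R\Gamma([*/\underline{K'}],\Lambda)\simeq\Lambda$ for pro-$p$ $K'$ when $p$ is invertible in $\Lambda$. You are also right to be suspicious of the naive ``totalization unwinds to smooth actions'' step and to route the real argument through compact generators.

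Two points deserve more care than your sketch gives them. First, the reduction of $R\Gamma([*/\underline{K'}],\Lambda)$ to continuous group cohomology of $K'$ is not merely ``classical'': it presupposes that the v-cohomology of the constant sheaf $\Lambda$ on the profinite sets $\underline{K'}^{\,n}$ in the Čech/bar resolution of $[*/\underline{K'}]$ is concentrated in degree $0$ and equal to locally constant functions. This is a genuine theorem about v- and étale cohomology of (strictly totally disconnected) diamonds, not something one gets for free from the topology of $K'$, and it is precisely where the formalism of diamonds is doing work. Second, your displayed identity $R\mathrm{Hom}(f_{K_1!}\Lambda,f_{K_2!}\Lambda)\simeq R\Gamma([\underline{K_1}\backslash\underline G/\underline{K_2}],\Lambda)$ conflates $!$- and $*$-pushforwards: adjunction and base change give $R\Gamma([*/\underline{K_1}],g_{1!}\Lambda)$ with $g_1$ the (étale, non-proper) projection from the double-coset stack, and one must decompose $g_1$ as a disjoint union over $K_1\backslash G/K_2$ of \emph{finite} étale maps $[*/\underline{K_1\cap gK_2g^{-1}}]\to[*/\underline{K_1}]$ — where $g_{1!}=g_{1*}$ applies and the pro-$p$ vanishing kicks in on each piece — to land on finitely supported functions on $K_1\backslash G/K_2$, matching the Hecke-algebra description of $\mathrm{Hom}_{D(G,\Lambda)}(c\text{-}\mathrm{Ind}_{K_1}^G\Lambda,c\text{-}\mathrm{Ind}_{K_2}^G\Lambda)$.
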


Note that we have a canonical map of v-stacks $[S/\underline{E}]\rightarrow[*/\underline{E}]$, which induces a pull back on the derived categories $f^*:D_{ét}([*/\underline{G}],\Lambda)\rightarrow D_{ét}([S/\underline{G}],\Lambda)$. By above proposition, for a character $\psi:E\rightarrow\Lambda^\times$, we have an associated element in $D_{ét}([*/\underline{G}],\Lambda)$, which we can pull back to get an element in $D_{ét}([S/\underline{G}],\Lambda)$ which we shall call $\mathcal{L}_\psi$.\\

We can now finally start to define the Fourier transform on Banach-Colmez spaces. Fixing a character $\psi:E\rightarrow\Lambda^\times$, consider the diagram 
\[\begin{tikzcd}
	& {BC(\mathcal{E})\times_SBC(\mathcal{E}^\vee)} && {[S/\underline{E}]} \\
	\\
	{BC(\mathcal{E}^\vee)} && {BC(\mathcal{E})}
	\arrow["\alpha"', from=1-2, to=1-4]
	\arrow["{\pi^\vee}", from=1-2, to=3-1]
	\arrow["\pi"', from=1-2, to=3-3]
\end{tikzcd}\]

\begin{defn}
    We define the Fourier transform associated to $\psi$ on the Banach-Colmez space $BC(\mathcal{E})$ to be the functor
    $$\mathcal{F}_\psi:D_{ét}(BC(\mathcal{E}),\Lambda)\rightarrow D_{ét}(BC(\mathcal{E^\vee}),\Lambda)$$
    given by 
    $$A\rightarrow\pi_{!}^{\vee}\left(\pi^*(A) \otimes \alpha^* \mathcal{L}_\psi\right).$$
\end{defn}
\begin{rmk}
    In \cite{anslebras}, they actually define the Fourier transform for a more general class of objects, which they call \textit{nice} stacks in $E$-vector spaces. The definitions of these objects are chosen such that they satisfy the most general available assumptions so that the derived pushforward is defined. \\
    
    In \cite[Theorem~4.2.2]{hataja}, they define what it means for a morphism $f$ to be \textit{"smooth-locally nice"}, and show that under these hypothesis the derived functors $f_!,f^!$ exists. An important property that is afforded by smooth locally nice morphisms is that they are stable under base change by morphisms which are representable in Artin v-stacks. This motivates the definition of a nice stack in $E$-vector spaces $f:\mathcal{G}\rightarrow S$ to be one such that $f$ and $f^\vee$ are representable in Artin v-stacks and are smooth locally nice. 
\end{rmk}

\begin{prop}[{\cite[Proposition~3.34]{anslebras}}]
    Let $BC(\mathcal{E})$ be the Banach-Colmez space associated to a vector bundle of either only positive or only negative slopes, then the Fourier transform $\mathcal{F}_\psi:D_{ét}(BC(\mathcal{E}),\Lambda)\rightarrow D_{ét}(BC(\mathcal{E^\vee}),\Lambda)$ gives an equivalence of categories (for any choice of nontrivial character $\psi$).
\end{prop}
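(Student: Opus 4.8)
The plan is to run Laumon's Fourier-inversion argument in this setting: rather than proving directly that $\mathcal{F}_\psi$ is an equivalence, we exhibit an explicit quasi-inverse. Since $\mathcal{E}$ is a vector bundle we have $(\mathcal{E}^\vee)^\vee\cong\mathcal{E}$, and by the duality $BC(\mathcal{E})^\vee=BC(\mathcal{E}^\vee)$ proved above the transform $\mathcal{F}_{\psi^{-1}}$ attached to $\mathcal{E}^\vee$ is a functor $D_{ét}(BC(\mathcal{E}^\vee),\Lambda)\to D_{ét}(BC(\mathcal{E}),\Lambda)$. It therefore suffices to compute the two composites $\mathcal{F}_{\psi^{-1}}\circ\mathcal{F}_\psi$ and $\mathcal{F}_\psi\circ\mathcal{F}_{\psi^{-1}}$ and show each is an autoequivalence; by the symmetry of the situation it is enough to treat $\mathcal{F}_{\psi^{-1}}\circ\mathcal{F}_\psi$.

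\textbf{The composite as a convolution.} Each of $\mathcal{F}_\psi$ and $\mathcal{F}_{\psi^{-1}}$ is an integral transform, with kernel $\alpha^*\mathcal{L}_\psi$ resp.\ its $\psi^{-1}$-variant on $BC(\mathcal{E})\times_S BC(\mathcal{E}^\vee)$. Passing to the triple product $P:=BC(\mathcal{E})\times_S BC(\mathcal{E}^\vee)\times_S BC(\mathcal{E})$ with its three partial projections, the derived pushforwards with proper support and their base-change compatibility furnished by \cite[Proposition~3.34]{anslebras}, together with the projection formula, show that $\mathcal{F}_{\psi^{-1}}\circ\mathcal{F}_\psi$ is again an integral transform, with kernel $\mathcal{K}$ on $BC(\mathcal{E})\times_S BC(\mathcal{E})$ obtained by applying $Rq_!$ along the projection $q\colon P\to BC(\mathcal{E})\times_S BC(\mathcal{E})$ that forgets the $BC(\mathcal{E}^\vee)$-factor. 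By bilinearity of the evaluation pairing and the multiplicativity isomorphism defining $\mathcal{L}_\psi$, the coefficient sheaf on $P$ is the character sheaf attached to $(x,\xi,x')\mapsto\psi(\langle x-x',\xi\rangle)$; so $\mathcal{K}=Rq_!(\beta^*\mathcal{L}_\psi)$ with $\beta$ the pairing of $\xi$ against $x-x'$.

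\textbf{The key vanishing, and conclusion.} Fix a geometric point $\Spa(C,C^+)\to S$ and a $C$-section $v$ of $BC(\mathcal{E})$. If $v\neq 0$ then $\xi\mapsto\psi(\langle v,\xi\rangle)$ is a \emph{nontrivial} multiplicative local system on $BC(\mathcal{E}^\vee)_C$, and the crucial input is that $R\Gamma_c$ of a nontrivial character sheaf on such a Banach-Colmez space vanishes --- the perfectoid counterpart of $H^*_c(\mathbb{A}^1,\mathcal{L}_0(\psi))=0$ and of the orthogonality relations lemma for the affine line, obtained by dévissage to the bundles $\mathcal{O}(\lambda)$ and a translation-equivariance argument (translating by $\xi_0$ multiplies the coefficient system by the scalar $\psi(\langle v,\xi_0\rangle)\in\Lambda^\times$, forcing the cohomology to vanish). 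If $v=0$ the coefficient system is trivial and $R\Gamma_c(BC(\mathcal{E}^\vee)_C,\Lambda)$ is an invertible object $\mathcal{T}$ of $D_{ét}(\Spa(C,C^+),\Lambda)$ (a shift, and in mixed characteristic a Tate twist, from the cohomological smoothness of $BC(\mathcal{E}^\vee)$). By proper base change this is uniform in $C$, so $\mathcal{K}$ is set-theoretically supported on the zero section $\Delta\colon BC(\mathcal{E})\hookrightarrow BC(\mathcal{E})\times_S BC(\mathcal{E})$ (the locus $x=x'$) and, matching stalks, equals $\Delta_!\mathcal{T}$. The projection formula then yields $(\mathcal{F}_{\psi^{-1}}\circ\mathcal{F}_\psi)(A)\cong A\otimes\mathcal{T}$ (possibly composed with the antipode $[-1]$ of the $\underline{E}$-module $BC(\mathcal{E})$, according to the sign convention in $\alpha$), which is an autoequivalence since $\mathcal{T}$ is $\otimes$-invertible. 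The same applies to $\mathcal{F}_\psi\circ\mathcal{F}_{\psi^{-1}}$, so $\mathcal{F}_\psi$ is an equivalence with quasi-inverse an explicit shift-and-twist of $\mathcal{F}_{\psi^{-1}}$, for any nontrivial $\psi$.

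\textbf{Main obstacle.} The decisive and least formal step is the cohomological vanishing above: that the compactly supported cohomology of a nontrivial character sheaf on a Banach-Colmez space is zero, uniformly in the test space. Unlike on $\mathbb{A}^1$, where Grothendieck--Ogg--Shafarevich and Poincaré duality settle it at once, here one must argue inside the six-functor formalism for small v-stacks --- controlling $R\tau_*$ and $R^1\tau_*$ of the line bundles $\mathcal{O}(\lambda)$, checking that the relevant Banach-Colmez spaces are cohomologically smooth of the expected dimension, and verifying that $\mathcal{L}_\psi$ is genuinely multiplicative and nontrivial along every geometric fibre. Once this and \cite[Proposition~3.34]{anslebras} are granted, the remaining manipulations (the kernel formalism, proper base change, the projection formula) are routine.
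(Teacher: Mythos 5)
The paper does not prove this proposition; it is stated and cited directly from \cite[Proposition~3.34]{anslebras}, so there is no in-text proof to compare against. Your sketch is the standard Laumon-style Fourier inversion, and in outline it is the route taken in the cited reference: realize $\mathcal{F}_{\psi^{-1}}\circ\mathcal{F}_\psi$ as an integral transform via the kernel formalism on the triple product, then argue that the kernel is $\Delta_!$ of an invertible object using (i) vanishing of compactly-supported cohomology of a nontrivial character sheaf on a Banach-Colmez space and (ii) cohomological smoothness of $BC(\mathcal{E}^\vee)$ to compute the diagonal contribution.

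There are, however, two genuine gaps that your proposal flags but does not close, and they are exactly the content of the cited proof. First, the translation-equivariance heuristic for the vanishing is not self-supporting: for $v\neq 0$ you must prove that $\langle v,-\rangle\colon BC(\mathcal{E}^\vee)_C\to[C/\underline{E}]$ lifts, after a suitable choice of trivialization, to a map with nontrivial image, i.e.\ that the evaluation pairing is nondegenerate at the level of Banach-Colmez spaces and not merely of vector bundles; and you then still need a form of Artin vanishing on $BC(\mathcal{O}(\lambda))$ (the analogue of $R\Gamma_c(\mathbb{A}^1,\mathcal{L}_0(\psi))=0$) to conclude. Second, your appeal to $Rq_!$ and proper base change along the triple projection $q$ requires that these morphisms sit inside the six-functor formalism for small v-stacks; this is precisely what the ``niceness'' hypothesis discussed in the preceding remark and \cite[Theorem~4.2.2]{hataja} guarantees, and you should invoke it explicitly (it is not automatic that $q$ is smooth-locally nice). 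Relatedly, since $\alpha$ lands in $[S/\underline{E}]$ rather than $\underline{E}$, the claim that the coefficient sheaf on $P$ is the character sheaf of $(x,\xi,x')\mapsto\psi(\langle x-x',\xi\rangle)$ needs a short biextension/Picard-groupoid argument before the translation step can even be stated. None of this suggests the approach would fail, but as written the proposal is a correct outline with the decisive lemmas left unproved, whereas the thesis simply delegates those lemmas to \cite{anslebras}.
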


\end{subsubsection}

\end{subsubsection}

\cleardoublepage
\phantomsection
\addcontentsline{toc}{section}{References}
\bibliographystyle{alphaurl}
\bibliography{ref.bib}

\end{document}